\newcommand{\projr}{\!\operatorname{proj-}\!\!}  
\newcommand{\End}{\operatorname{End}}
\newcommand{\Hom}{\operatorname{Hom}}
\newcommand{\gldim}{\operatorname{gldim}}
\newcommand{\findim}{\operatorname{findim}}
\newcommand{\Ext}{\operatorname{Ext}}
\newcommand{\Tor}{\operatorname{Tor}}
\newcommand{\add}{\!\operatorname{add}}
\newcommand{\pdim}{\operatorname{pdim}}
\newcommand{\m}{\!\operatorname{-mod}} 
\newcommand{\rmod}{\operatorname{mod-}\!\!}
\newcommand{\proj}{\!\operatorname{-proj}}
\newcommand{\St}{\Delta}
\newcommand{\Cs}{\nabla}
\renewcommand{\L}{\Lambda}
\renewcommand{\l}{\lambda}
\newcommand{\pri}{\mathfrak{p}} 
\newcommand{\id}{\operatorname{id}}
\newcommand{\mi}{\mathfrak{m}} 
\newcommand{\rad}{\operatorname{rad}}
\newcommand{\Spec}{\operatorname{Spec}}
\newcommand{\rank}{\operatorname{rank}} 
\renewcommand{\top}{\operatorname{top}}
\newcommand{\MaxSpec}{\operatorname{MaxSpec}} 
\newcommand{\Stsim}{\tilde{\St}}
\newcommand{\im}{\!\operatorname{im}} 
\newcommand{\Ann}{\operatorname{Ann}}
\newcommand{\coker}{\operatorname{coker}}
\newcommand{\sumSt}{\underset{\l\in\L}{\bigoplus}}
\newcommand{\soc}{\operatorname{soc}}
\newcommand{\characteristic}{\operatorname{char}}
\newtheorem{numberingthm}{Theorem}[subsection] 
\theoremstyle{definition}
\newtheorem{Def}[numberingthm]{Definition}
\theoremstyle{plain}
\newtheorem{Prop}[numberingthm]{Proposition}
\newtheorem{Theorem}[numberingthm]{Theorem}
\newtheorem{Cor}[numberingthm]{Corollary}
\newtheorem{Lemma}[numberingthm]{Lemma}
\newtheorem{Remark}[numberingthm]{Remark}
\newenvironment{Example}
{\pushQED{\qed}\example}
{\popQED\endexample}
\theoremstyle{remark}
\newtheorem{Observation}[numberingthm]{Observation}
\newtheorem{Notation}[numberingthm]{Notation}
\theoremstyle{empty}
\newtheorem*{thmintroduction}{Theorem}
\providecommand{\keywords}[1]
{\scriptsize
	\textbf{\textit{Keywords:}} #1 \normalsize \hfill
}
\providecommand{\msc}[1]
{\scriptsize
	\textbf{\textit{2020 Mathematics Subject Classification:}} #1 \normalsize \hfill
}
\title{Cellular Noetherian algebras with finite global dimension are split quasi-hereditary} 
\author{Tiago Cruz} 
\date{}
\newcommand{\Address}{{
		\bigskip
		\footnotesize
		
		TIAGO CRUZ,\par \textsc{Institute of Algebra and Number Theory}\par \textsc{University of Stuttgart,}\par \textsc{Pfaffenwaldring 57, 70569 Stuttgart, Germany,}\par\nopagebreak
		\textit{E-mail address}, T.~Cruz: \texttt{tiago.cruz@mathematik.uni-stuttgart.de}
		}}
\begin{document}

\maketitle
\vspace*{-2em} 

\begin{abstract}
We prove that cellular Noetherian algebras with finite global dimension are split quasi-hereditary over a regular commutative Noetherian ring with finite Krull dimension and their quasi-hereditary structure is unique, up to equivalence.
In the process, we establish that a split quasi-hereditary algebra is semi-perfect if and only if the ground ring is a local commutative Noetherian ring. We give a formula to determine the global dimension of a split quasi-hereditary algebra over a commutative regular Noetherian ring (with finite Krull dimension) in terms of the ground ring and finite-dimensional split quasi-hereditary algebras. For the general case, we give upper bounds for the finitistic dimension of split quasi-hereditary algebras over arbitrary commutative Noetherian rings. We apply these results to  Schur algebras over regular Noetherian rings and to Schur algebras over quotients rings of the integers.
\end{abstract}
 \keywords{cellular Noetherian algebras, split quasi-hereditary algebras, Schur algebras, split highest weight categories, change of rings, global dimension\\
 	\msc{16G30, 20G43, 16E10}}

 \tableofcontents

\section{Introduction}\label{Introduction}

Two main problems in representation theory of algebras are to classify the simple modules over an algebra and to understand the homological properties of an algebra.
Cellular algebras $B$ are certain algebras characterized by the existence of an involution $i$ with $i^2=\textrm{id}_B$ and a certain chain of ideals that provide a filtration of the regular module $B$. They were introduced by Graham and Lehrer \citep{zbMATH00871761}, so that the cellular structure can be used to reduce problems of representation theory to problems of linear algebra. For example, the parametrization of simple modules over finite-dimensional cellular algebras is reduced to a problem in linear algebra. Several important classes of algebras are cellular, in particular,  Iwahori-Hecke algebras, Brauer algebras, BMW algebras.
Furthermore, Schur algebras and block algebras of the category $\mathcal{O}$ of a complex semi-simple Lie algebra are both cellular and split quasi-hereditary. Over (split) quasi-hereditary finite-dimensional algebras we can also deduce properties from a certain chain of ideals, like the number of simple modules, but also homological properties like finiteness of global dimension.

In \cite{zbMATH01384521}, Koenig and Xi showed that a finite-dimensional cellular algebra is split quasi-hereditary if and only if it has finite global dimension. Previously in \cite{zbMATH01218863}, they observed that (split) quasi-hereditary algebras are cellular exactly when there is an involution that fixes a set of primitive idempotents defining the ideals that give the quasi-hereditary structure. Recently \cite{zbMATH07203140}, Coulembier proved that cellular finite-dimensional algebras which are (split) quasi-hereditary have a unique quasi-hereditary structure. 

All of these considerations so far were for finite-dimensional algebras. What can we say in the integral setup? 
The definition of cellular algebras also makes sense for Noetherian algebras over commutative Noetherian rings and they even possess a base change property. Moreover, the classical examples of cellular algebras also admit integral versions which are again cellular. But, unfortunately, in the literature, the properties mentioned above are not studied for Noetherian cellular algebras. 
In the literature for the integral setup for cellular algebras slightly different directions were taken. For instance, the homological structure and simple modules were studied for a generalization of cellular algebras known as affine cellular algebras introduced in \citep{zbMATH05994238}. 
But, unfortunately, affine cellular algebras do not behave nicely enough under change of rings in contrast to cellular algebras and they might fail to be Noetherian even if the ground ring is Noetherian. In particular, the methods used to study cellular and affine cellular algebras are distinct.

 The concept of (split) quasi-hereditary algebra admits a generalisation to Noetherian algebras due to \cite{CLINE1990126}. This concept was used as the setup of split quasi-hereditary cover theory in \cite{Rouquier2008}. Other approaches to generalise this concept can be found in \cite{zbMATH01198520, zbMATH02037023, zbMATH07203067, zbMATH06751586}.  All these approaches yield equivalent concepts  for Noetherian algebras over local commutative Noetherian rings. The affine case for quasi-hereditary algebras (over Noetherian Laurentian graded algebras) was studied in \citep{zbMATH06434447}.
So, we can ask:
\begin{enumerate}[(Q1)]
	\item Are cellular algebras  of finite global dimension over a commutative Noetherian ring split quasi-hereditary?\label{IQ1}
	\item If so, does the uniqueness of the quasi-hereditary structure still hold in the integral setup?
	\item When is a split quasi-hereditary algebra over a commutative Noetherian ring a cellular algebra?
\end{enumerate}

The aim of this paper is to give answers to these questions and to expand the theory of integral split quasi-hereditary algebras. 
Question (Q1) is answered positively in Theorem \ref{cellularfiniteglobaldimension} and extended in Theorem \ref{cellularfiniteglobaldimensiontwo} to include a characterization of cellular Noetherian algebras of finite global dimension also in terms of Cartan matrices in the spirit of \cite{zbMATH01384521}:
	\begin{thmintroduction}
	Let $R$ be a commutative Noetherian regular ring with finite Krull dimension. Let $A$ be a cellular $R$-algebra with cell datum $(\L, M, C, \iota)$. 
	Then, the following assertions are equivalent:
	\begin{enumerate}[(i)]
				\item $A$ together with the cell modules labelled by the poset $\L$ with reversed order is a split quasi-hereditary algebra;
		 \item $A$ is locally semi-perfect and for every prime ideal $\pri$ of $R$,  $$\det [\rank_{R_\pri} \Hom_{A_\pri}(P_i, P_j) ]=1, $$ where $P_i$, $i=1, \ldots, r$ for some natural number $r$, are the projective indecomposable modules of $A_\pri$;
		\item $A$ has finite global dimension.
	\end{enumerate}
\end{thmintroduction}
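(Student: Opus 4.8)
The plan is to reduce to the finite-dimensional theorem of Koenig and Xi \cite{zbMATH01384521} by base change to the residue fields $k(\pri)$, $\pri\in\Spec R$, and then to climb back up using the change-of-rings results of this paper (the global dimension formula for split quasi-hereditary algebras over regular rings, and the characterisation of split quasi-heredity through reduction to residue fields, cf.\ \cite{CLINE1990126, Rouquier2008}). A cell datum makes $A$ free of finite rank over $R$, hence a module-finite $R$-algebra, and $A\otimes_R S$ is cellular over $S$ for every commutative $R$-algebra $S$ with the inherited cell datum. Thus for $\pri\in\Spec R$ the localisation $A_\pri$ is cellular over the Noetherian local ring $R_\pri$ and $\bar A_\pri:=A\otimes_R k(\pri)$ is a finite-dimensional cellular $k(\pri)$-algebra, in both cases with cell modules obtained from those of $A$ by base change. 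If moreover $A_\pri$ is semi-perfect, then reduction modulo $\pri R_\pri$ takes a complete set of primitive orthogonal idempotents of $A_\pri$ to one of $\bar A_\pri$, so the $\bar P_i:=P_i\otimes_{R_\pri}k(\pri)$ are precisely the indecomposable projective $\bar A_\pri$-modules; since the $P_i$ are projective, each $\Hom_{A_\pri}(P_i,P_j)$ is a free $R_\pri$-module and $\Hom_{A_\pri}(P_i,P_j)\otimes_{R_\pri}k(\pri)\cong\Hom_{\bar A_\pri}(\bar P_i,\bar P_j)$. Hence the matrix occurring in the second assertion is exactly the Cartan matrix of $\bar A_\pri$, and by \cite{zbMATH01384521} the following are equivalent for each $\pri$: $\bar A_\pri$ is split quasi-hereditary with the cell modules (ordered by reversing the order of $\L$) as standard modules; $\gldim\bar A_\pri<\infty$; the determinant of the Cartan matrix of $\bar A_\pri$ is $1$.

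I would run the proof as the cycle $(1)\Rightarrow(3)\Rightarrow(2)\Rightarrow(1)$. For $(1)\Rightarrow(3)$: if $A$ is split quasi-hereditary, then $\gldim R=\dim R<\infty$ since $R$ is regular of finite Krull dimension, and every $\bar A_\pri$ is a finite-dimensional split quasi-hereditary algebra, so $\gldim\bar A_\pri\le 2(|\L|-1)$; substituting these into the formula expressing $\gldim A$ in terms of $\gldim R$ and the $\gldim\bar A_\pri$ gives $\gldim A<\infty$. For $(2)\Rightarrow(1)$: the hypothesis and the dictionary above make each $\bar A_\pri$ split quasi-hereditary over $k(\pri)$ with standard modules the cell modules in the reversed order; since the cell modules of $A$ are $R$-free and reduce modulo $\pri$ to the cell modules of $\bar A_\pri$, the change-of-rings criterion for split quasi-heredity applies and shows that $A$ itself is split quasi-hereditary with this datum.

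For $(3)\Rightarrow(2)$, the heart of the argument, fix $\pri\in\Spec R$; then $\gldim A_\pri\le\gldim A<\infty$. Choose a regular system of parameters $x_1,\dots,x_d$ of $R_\pri$, with $d=\dim R_\pri$. Because $A_\pri$ is $R_\pri$-free, the $x_i$ form a central regular sequence on $A_\pri$, each lying in $\rad A_\pri$ (for a module-finite algebra over a commutative local ring the maximal ideal of the base times the algebra lies in the radical, by a Cayley--Hamilton determinant argument), and the same persists in every successive quotient. Applying the noncommutative Rees change-of-rings theorem $d$ times along $A_\pri\twoheadrightarrow A_\pri/x_1A_\pri\twoheadrightarrow\cdots\twoheadrightarrow A_\pri/(x_1,\dots,x_d)A_\pri=\bar A_\pri$ yields $\gldim\bar A_\pri=\gldim A_\pri-d<\infty$. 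By \cite{zbMATH01384521}, $\bar A_\pri$ is split quasi-hereditary with Cartan determinant $1$; applying the change-of-rings criterion (with the $R$-free cell modules as candidate standards) gives that $A$ is split quasi-hereditary over $R$, hence $A_\pri$ is split quasi-hereditary over the local ring $R_\pri$ and therefore semi-perfect by the result recalled in the abstract. Combined with the dictionary, this is exactly assertion (2).

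The principal obstacle is the descent of finite global dimension to the residue field inside $(3)\Rightarrow(2)$: over a field the equivalence is Koenig--Xi, but integrally one has to strip off the regular parameters of $R_\pri$ one at a time, which needs the noncommutative Rees theorem together with the fact --- supplied for free by the cell datum --- that $A$ is $R$-free, so that the parameters remain central nonzerodivisors in the radical at every stage; a module-finite algebra over a non-complete Noetherian local ring need not behave well here in general (for instance $\mathbb{Z}_{(p)}[\sqrt{-1}]$ over $\mathbb{Z}_{(p)}$ has finite global dimension but is not semi-perfect, and indeed is not cellular). The remaining ingredients --- the global dimension formula and the split quasi-heredity criterion through residue fields --- are used as black boxes, the latter relying precisely on the $R$-flatness of the cell modules.
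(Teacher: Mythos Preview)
Your argument is correct, and the cycle $(1)\Rightarrow(3)\Rightarrow(2)\Rightarrow(1)$ matches the paper's logical structure. The implications $(1)\Rightarrow(3)$ and $(2)\Rightarrow(1)$ are essentially the paper's arguments: the former is the global dimension bound for split quasi-hereditary algebras over regular rings (Theorem~\ref{qhglobaldimensionnoetherian}), and the latter is K\"onig--Xi at each residue field followed by the residue-field criterion for split highest weight categories (Theorem~\ref{hwcresiduefield}).

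Where you genuinely diverge from the paper is in the descent step $(3)\Rightarrow$``$\gldim A(\pri)<\infty$''. The paper does \emph{not} strip off a regular system of parameters via the noncommutative Rees theorem. Instead, it exploits the cellular bilinear form $\phi_\l$ to produce, for each simple $A(\mi)$-module $L$, an explicit lift $X_\l\in A\m\cap R\proj$ with $X_\l(\mi)\simeq L$ (Lemma~\ref{radicalcell}: one shows $\rad(\phi_\l)(\mi)=\rad(\phi_\l^{R(\mi)})$, so $X_\l:=\theta(\l)/\rad(\phi_\l)$ is $R$-projective). A finite $A$-projective resolution of such an $R$-projective module then base-changes to a finite $A(\mi)$-projective resolution of $L$, giving $\gldim A(\mi)<\infty$. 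Your Rees-sequence argument is more general---it uses only that $A$ is $R$-free, not the cellular bilinear form---and arguably cleaner homologically; the paper's route is more constructive and isolates the statement that the simple modules over residue fields of a cellular algebra admit $R$-projective integral lifts, which the author singles out as the ``main idea'' and which has independent interest beyond this theorem (it underlies the uniqueness result, Theorem~\ref{Coulembieruniqueness}). Either approach is fine here; they buy you the same conclusion by different mechanisms.
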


The main idea behind this theorem is that over cellular finite-dimensional algebras for which integral versions exist,  the simple modules can be obtained by extension of scalars from a module over the integral cellular algebra which is also finitely generated projective over the ground ring. This fact is quite rare. Since split quasi-hereditary algebras behave quite well under change of rings  we can consider an integral version of the Grothendieck group of a split quasi-hereditary algebra which completely determines the Grothendieck groups of finite-dimensional split quasi-hereditary algebras. Combining this with the arguments of \cite{zbMATH07203140}, we obtain in Theorem \ref{Coulembieruniqueness} a positive answer to (Q2). 
Here, in the process to establish (Q1) and (Q2) we suggest a new proof for an opposite algebra of a split quasi-hereditary algebra being again split quasi-hereditary and how split heredity chains behave under techniques of change of rings (see Subsection \ref{Relation between heredity chains and standard modules}). Here, the split condition makes the argument for the opposite algebra of a split quasi-hereditary being again split quasi-hereditary a bit more technical and need more details than just the non-split case. The non-split version can be found in \cite{CLINE1990126}. Concerning change of rings, we make available more criteria to test if a given algebra is split quasi-hereditary using change of rings. In particular, we show that a quasi-hereditary algebra remains quasi-hereditary with the same quasi-hereditary structure under change of rings if and only if it is a  split quasi-hereditary algebra (see Subsection \ref{Split highest weight categories under change of rings}).  For example, all quasi-hereditary algebras over algebraically closed fields are split quasi-hereditary algebras. To obtain an analogue of the classification of cellular Noetherian algebras being split quasi-hereditary in terms of Cartan matrices we show in Subsection \ref{Split quasi-hereditary algebras and existence of projective covers} that, in general, split quasi-hereditary algebras are locally semi-perfect. This means that split quasi-hereditary algebras over local commutative Noetherian rings are semi-perfect algebras making the concept of projective cover defined in such a setup.

The answer of (Q3) relies on  the existence of a certain set of orthogonal idempotents that under tensoring with residue fields makes the one labelled by a maximal element a primitive idempotent. Furthermore,  from such set after tensoring with a residue field, a complete set of primitive orthogonal idempotents can be extracted (see Section \ref{Cellular algebras}). For example, there exists such a set of idempotents over Schur algebras.

Under this methodology, we also generalise Theorem 2 of \cite{zbMATH02164791} about finitistic dimensions of split quasi-hereditary algebras (for more details see Theorems ~\ref{finitisticqh} and ~\ref{globaldimensionqh}):
\begin{thmintroduction} Let $R$ be a commutative Noetherian ring.
	Let $A$ be a split quasi-hereditary $R$-algebra. Then,
	\begin{align*}
	\findim A&\leq  \findim R +\sup\{\gldim A/\mi A\colon \mi \text{ is a maximal ideal of } R \}\\&\leq \dim R +\sup\{\gldim A/\mi A\colon \mi \text{ is a maximal ideal of } R \}.
	\end{align*}	
These inequalities are equalities when $R$ is a regular ring: in such a case the finitistic dimension of a split quasi-hereditary coincides with the global dimension.
\end{thmintroduction}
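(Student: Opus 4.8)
The plan is to reduce the computation of projective dimensions over $A$ to projective dimensions over the finite-dimensional split quasi-hereditary algebras $A/\mi A$, where global dimension is finite, by passing through a single syzygy that is projective over $R$ and then reducing it modulo maximal ideals. Three facts are used throughout. Since ${}_{A}A$ is filtered by standard modules $\St(\l)$ that are finitely generated projective over $R$, and extensions of finitely generated projective modules split, $A$ is finitely generated projective as an $R$-module. For each maximal ideal $\mi$ of $R$ the algebra $A/\mi A=A\otimes_R R/\mi$ is a finite-dimensional split quasi-hereditary algebra over the field $R/\mi$, so $\gldim(A/\mi A)<\infty$; in fact $\gldim(A/\mi A)\leq 2|\L|-2$, so the supremum appearing in the statement is finite as soon as $\dim R<\infty$. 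Finally, by the results recalled earlier $A$ is locally semi-perfect, so each $A_\mi$ is a semi-perfect Noetherian algebra, finitely generated $A_\mi$-modules admit minimal projective resolutions, and $\rad(A_\mi/\mi A_\mi)=\rad(A_\mi)/\mi A_\mi$.

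The technical core is the following lemma: if $S$ is a local commutative Noetherian ring with maximal ideal $\mi_S$, $B$ a module-finite $S$-algebra that is semi-perfect and projective over $S$, and $N$ a finitely generated $B$-module that is projective over $S$, then $\pdim_B N=\pdim_{B/\mi_S B}(N/\mi_S N)$. One takes a minimal projective resolution $Q_\bullet\to N$ over $B$ and notes that every syzygy is projective over $S$: a short exact sequence of finitely generated $S$-projective modules splits over $S$, so the kernel of a surjection of $S$-projectives onto an $S$-projective is again $S$-projective, and one induces along the resolution. Hence $\Tor^S_{>0}$ of each syzygy with $S/\mi_S$ vanishes, so $Q_\bullet\otimes_S S/\mi_S$ is a projective resolution of $N/\mi_S N$ over $B/\mi_S B$; it is minimal because the differentials of $Q_\bullet$ land in $\rad(B)$ times the preceding term and $\rad(B/\mi_S B)=\rad(B)/\mi_S B$. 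Both resolutions being minimal, they have the same length. I expect this lemma --- that reduction modulo $\mi_S$ carries the minimal $B$-resolution of an $S$-projective module to the minimal $B/\mi_S B$-resolution of its reduction --- to be the main obstacle: it is what ties the homological algebra of $A$ to that of its finite-dimensional residue algebras, and it genuinely uses both projectivity of $A$ over $R$ and local semi-perfection.

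Now let $M$ be a finitely generated $A$-module with $\pdim_A M<\infty$. A finite $A$-projective resolution of $M$ is in particular an $R$-projective resolution, so $\pdim_R M\leq\pdim_A M<\infty$, hence $\pdim_R M\leq\findim R=:s$. Truncating an $A$-projective resolution of $M$ after $s$ steps gives an exact sequence $0\to\Omega\to P_{s-1}\to\cdots\to P_0\to M\to 0$ with each $P_i$ finitely generated $A$-projective; read over $R$ it exhibits $\Omega$ as an $s$-th syzygy of $M$ along an $R$-projective resolution, so $\Ext^1_R(\Omega,-)\cong\Ext^{s+1}_R(M,-)=0$ and $\Omega$ is projective over $R$. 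Localising at a maximal ideal $\mi$, the module $\Omega_\mi$ is finitely generated projective over the local ring $R_\mi$, so the lemma --- applied with $S=R_\mi$, $B=A_\mi$, and $A_\mi/\mi A_\mi=A/\mi A$ --- gives $\pdim_{A_\mi}\Omega_\mi=\pdim_{A/\mi A}(\Omega\otimes_R R/\mi)\leq\gldim(A/\mi A)$. Since projective dimension over a module-finite algebra is the supremum of the local projective dimensions, $\pdim_A\Omega\leq\sup_\mi\gldim(A/\mi A)=:n$, whence $\pdim_A M\leq s+\pdim_A\Omega\leq s+n$. Taking the supremum over $M$ yields $\findim A\leq\findim R+n$, and the classical inequality $\findim R\leq\dim R$ (valid for any commutative Noetherian ring) gives the second displayed bound.

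Finally suppose $R$ is regular of finite Krull dimension. Then every finitely generated $R$-module has finite projective dimension, so the estimate above applies to \emph{every} finitely generated $A$-module, giving $\pdim_A M\leq\dim R+n<\infty$; hence $\gldim A<\infty$, so $\findim A=\gldim A$, while $\findim R=\dim R$ makes the two upper bounds coincide. For the precise value one localises: $\gldim A=\sup_\mi\gldim A_\mi$, and for a fixed $\mi$ a regular system of parameters $x_1,\dots,x_{d}$ of $R_\mi$ ($d=\dim R_\mi$) consists of central nonzerodivisors on the successive quotients $A_\mi/(x_1,\dots,x_{i-1})A_\mi$, as $A_\mi$ is flat over $R_\mi$; applying the change-of-rings equality $\pdim_B L=\pdim_{B/xB}L+1$ repeatedly to a simple $A/\mi A$-module of maximal projective dimension gives $\gldim A_\mi=\dim R_\mi+\gldim(A/\mi A)$, hence the formula $\gldim A=\sup_\mi\bigl(\dim R_\mi+\gldim(A/\mi A)\bigr)$ of Theorem~\ref{globaldimensionqh}. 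This realises $\findim A$ and equals $\dim R+n$ precisely when the supremum is attained at a maximal ideal of maximal height (e.g. when $R$ is equidimensional); promoting the change-of-rings inequality to an equality and carrying out the bookkeeping over all maximal ideals is the fussy point of this last step.
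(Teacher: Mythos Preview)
Your argument is correct. The upper bound is essentially the paper's: truncate an $A$-projective resolution of $M$ at step $\findim R$ to obtain an $R$-projective syzygy $\Omega$, then bound $\pdim_A\Omega$ by reduction modulo maximal ideals. You phrase this last step via your minimal-resolution lemma (which needs local semi-perfectness), whereas the paper simply truncates once more at height $n=\sup_\mi\gldim A(\mi)$, observes that the resulting syzygy is $R$-projective and becomes $A(\mi)$-projective for every $\mi$, and invokes the projectivity criterion (Theorem~\ref{projectivitytermsmaximal}). The paper's route is lighter---it avoids minimal resolutions and the semi-perfectness result altogether---but your lemma gives the sharper information $\pdim_{A_\mi}\Omega_\mi=\pdim_{A(\mi)}\Omega(\mi)$, which is pleasant in its own right.

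For the equality when $R$ is regular, you take a genuinely different and more elementary path. The paper obtains the lower bound via a Grothendieck spectral sequence, exhibiting a nonvanishing $\Ext_A^{n+\dim R}(DA(\mi),A)$. You instead apply the Rees-type change-of-rings equality $\pdim_B L=\pdim_{B/xB}L+1$ iteratively along a regular system of parameters for $R_\mi$; flatness of $A_\mi$ over $R_\mi$ guarantees that each $x_i$ remains a central nonzerodivisor on the successive quotients, and a simple $A(\mi)$-module of maximal projective dimension then witnesses $\gldim A_\mi=\dim R_\mi+\gldim A(\mi)$. This is cleaner and uses nothing beyond standard change-of-rings. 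Your caveat about equidimensionality---that globally one obtains $\gldim A=\sup_\mi(\dim R_\mi+\gldim A(\mi))$, which need not equal $\dim R+\sup_\mi\gldim A(\mi)$ unless the supremum is attained at an ideal of maximal height---is well taken; the paper's proof reduces silently to the local case and does not address this.
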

In fact, a split quasi-hereditary algebra over a commutative Noetherian ring $R$ has finite global dimension if and only if $R$ is a regular ring with finite Krull dimension. As an application of the above result and Totaro's results on global dimension of Schur algebras \cite{zbMATH00966941}, we compute the global dimension and the finitistic dimension of integral Schur algebras (over commutative regular rings and over quotients of $\mathbb{Z}$) (see Proposition \ref{schuralgebrasglobaldimension}) extending Totaro's formula to cover more integral Schur algebras.

This paper is structured as follows:

In Subsection \ref{Notation and change of rings}, we collect some elementary results on Noetherian algebras. In Subsection \ref{Cellular_algebras}, we discuss the definition of cellular algebras and some immediate properties of these algebras. In Subsection \ref{Split highest weight category over noetherian rings}, we discuss the definition of split highest weight categories over commutative Noetherian rings introduced in \cite{Rouquier2008} and we give details why this concept coincides with the usual concept of  (split) highest weight category over finite-dimensional algebras over a field. We remark that the projectives in this definition for the integral setup are not necessarily indecomposable. In Subsection \ref{Split highest weight categories under change of rings}, we collect some properties about change of rings on split highest weight categories generalizing some methods to split highest weight categories that were available before only to specific cases of split highest weight categories. In particular, we obtain here that split highest weight categories behave quite well under change of rings contrary to non-split highest weight categories. In Subsection \ref{Uniqueness of standard modules with respect to the poset}, we study how the poset determines the standard modules in the integral setup. In Subsection \ref{Relation between heredity chains and standard modules}, we discuss the concept of split quasi-hereditary. This concept is equivalent to the concept of split highest weight category in the sense that an algebra is split quasi-hereditary if and only if its module category is a split highest weight category. Here we establish that the opposite algebra of a split quasi-hereditary algebra is again split quasi-hereditary and we discuss how some split heredity chains behave under change of rings.
In Subsection \ref{Split quasi-hereditary algebras and existence of projective covers},  we prove that split quasi-hereditary algebras are locally semi-perfect. In Subsection \ref{Global dimension of (split) quasi-hereditary algebras}, we reduce the computation of global dimension of split quasi-hereditary algebras over regular Noetherian commutative rings to computations over finite-dimensional algebras. In the cases that split quasi-hereditary algebras do not have finite global dimension, we give an upper bound for the finitistic dimension. In Section \ref{Cellular algebras}, we give answers to Questions (Q1) and (Q2) discussed above. Further, we discuss the concept of duality for Noetherian algebras whose regular module is finitely generated and free over the ground ring. In Section \ref{Finitistic dimension of some integral Schur algebras}, we show that Schur algebras are split quasi-hereditary over any commutative Noetherian ring (using only that they are quasi-hereditary over algebraically closed fields) and use their quasi-hereditary structure to compute the global dimension and the finitistic dimension of integral Schur algebras.

At the end of this paper, there are two appendices providing details that seem to be missing in the literature.
In Appendix \ref{Rsplit modules}, the reader can find detailed results about the objects called projective $R$-split $A$-modules which are foundations in the theory of split highest weight categories over commutative Noetherian rings.
In Appendix \ref{Filtrations in split highest weight categories}, we discuss the details behind the construction of filtrations into standard modules in the integral setup.

\section{Preliminaries}\label{Preeliminaries}

\subsection{Notation and change of rings}\label{Notation and change of rings}
Throughout this paper, we assume that $R$ is a Noetherian commutative ring with identity and $A$ is a projective Noetherian $R$-algebra, unless stated otherwise. By a projective Noetherian $R$-algebra $A$ we mean an $R$-algebra whose regular module is finitely generated projective as $R$-module. By $A\m$ (resp. $\rmod A$) we denote the category of all finitely generated left (resp. right) $A$-modules. By ${}_A A$ (resp. $A_A$) we denote the regular left (resp. right) $A$-module. Given $X\in A\m$ we denote by $\add X$ the additive closure of $X$ over $A$ and by $\id_X$ the identity map on $X$. We denote by $A\proj$ (resp. $\projr A$) the subcategory $\add {}_A A$ (resp. $\add A_A$). By a progenerator we mean a projective module whose additive closure coincides with the additive closure of the regular module $A$.
By $A^{op}$ we mean the opposite algebra of $A$. By $D$ we mean the standard duality functor $\Hom_R(-, R)\colon A\m\rightarrow A^{op}\m$. The projective dimension of an $A$-module $M$ is denoted by $\pdim_A M$ while $\gldim A$ and $\findim A$ denote the global dimension and the finitistic dimension of $A$, respectively.

We denote by $\Spec R$ (resp. $\MaxSpec R$) the set of all prime (resp. maximal) ideals of $R$. For every $\pri\in \Spec R$, and every $M\in A\m$ we shall write $M_\pri$ (resp. $R_\pri$) to denote the localization of $M$ (resp. $R$)  at the prime ideal $\pri$. In particular, $M_\pri\simeq R_\pri\otimes_R M$. Observe that, for every $\pri\in \Spec R$, the localization functor $A\m\rightarrow A_\pri\m$ is a dense functor (see for example Corollary 4.79 of \citep{Rotman2009a}).

Although, if $M_\mi\simeq N_\mi$ for every maximal ideal $\mi$ does not imply that $M\simeq N$ as $R$-modules we can compare them if they are both submodules of the same module.
\begin{Lemma}\label{equalitybeinglocal} Let $R$ be a commutative ring.
	Let $M, N, P$ be $R$-modules. Suppose that $N, M\subset P$. Then, $N=M$ if and only if $N_\mi=M_\mi$ for all maximal ideals $\mi$ of $R$.
\end{Lemma}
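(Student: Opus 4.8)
The plan is to prove the two directions of the biconditional separately, with the forward direction being essentially trivial and the reverse direction requiring the standard local-to-global principle for module equality.

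\medskip

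\noindent\textbf{Forward direction.} Suppose $N = M$. Localization is a functor (indeed exact), so applying $R_\mi \otimes_R -$ to the equality $N = M$ of submodules of $P$ yields $N_\mi = M_\mi$ as submodules of $P_\mi$ for every maximal ideal $\mi$. Here I use that localization at $\mi$ commutes with the inclusions $N \hookrightarrow P$ and $M \hookrightarrow P$, which holds because localization is exact, so $N_\mi$ and $M_\mi$ are genuinely identified with submodules of $P_\mi$ rather than merely abstract localizations.

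\medskip

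\noindent\textbf{Reverse direction.} Suppose $N_\mi = M_\mi$ inside $P_\mi$ for every maximal ideal $\mi$. The key idea is to reduce the equality $N = M$ to the vanishing of a single module, namely the quotient $(N + M)/N$ or, more symmetrically, to argue via the two inclusions separately. First I would show $N \subseteq M$: consider the $R$-module $(N+M)/M$, a submodule of $P/M$. For each maximal ideal $\mi$, localizing gives $\bigl((N+M)/M\bigr)_\mi \cong (N_\mi + M_\mi)/M_\mi = 0$ using the hypothesis $N_\mi \subseteq M_\mi$ (which follows from $N_\mi = M_\mi$) and the fact that localization commutes with sums and quotients of submodules. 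Since an $R$-module whose localization at every maximal ideal vanishes is itself zero, we get $(N+M)/M = 0$, i.e.\ $N \subseteq M$. By the symmetric argument with the roles of $M$ and $N$ swapped, $M \subseteq N$, hence $N = M$.

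\medskip

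\noindent\textbf{Main obstacle.} There is no serious obstacle here; this is a routine application of the fact that being zero is a local property for modules over a commutative ring. The only point requiring a modicum of care is the bookkeeping that ensures all localizations are taken inside the \emph{same} ambient module $P_\mi$, so that ``$N_\mi = M_\mi$'' is an honest equality of submodules rather than an isomorphism of abstract modules --- this is exactly why the hypothesis $N, M \subseteq P$ is needed, since in general $M_\mi \cong N_\mi$ for all $\mi$ does not force $M \cong N$. The containment argument via the quotients $(N+M)/M$ and $(N+M)/N$ handles this cleanly, as each such quotient is a well-defined $R$-module whose localizations are computed by the exactness of localization.
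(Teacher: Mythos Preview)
Your proof is correct and matches the paper's approach almost verbatim: the paper also shows $N\subseteq M$ by observing that the image of $N$ under the canonical map $P\to P/M$ (which is exactly your $(N+M)/M$) localizes to zero at every maximal ideal, hence is zero, and then concludes by symmetry. The only cosmetic difference is that you spell out the forward direction and the ambient-module bookkeeping a bit more explicitly.
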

\begin{proof}
	Let $\pi\colon P\rightarrow P/M$ be the canonical surjection. For every maximal ideal $\mi$ in $R$,
$
		\pi(N)_\mi=\pi_\mi(N_\mi)=\pi_\mi(M_\mi)=0. 
$ Thus, $\pi(N)=0$. Therefore, $N\subset M$. Symmetrically, we deduce that $M\subset N$. Hence, $M=N$.
\end{proof}

We denote by $R(\pri)$ the residue field $R_\pri/\pri_\pri$ associated to the prime ideal $\pri$ of $R$. For maximal ideals $\mi$ of $R$, $R(\pri)$ is exactly $R/\mi$. Given $M\in A\m$, by $M(\pri)$ we mean the module $R(\pri)\otimes_R M$.

The projective modules can be determined using change of rings using the following result \citep[Lemma 3.3.2]{CLINE1990126}.
\begin{Theorem}\label{projectivitytermsmaximal}
	Let $R$ be a commutative Noetherian ring. Let $A$ be a projective Noetherian $R$-algebra. Let $M\in A\m$. Then $M$ is projective over $A$ if and only if $M\in R\proj$ and $M(\mi)$ is $A(\mi)$-projective for every maximal ideal $\mi$ in $R$.
\end{Theorem}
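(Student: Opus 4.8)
**The plan is to prove Theorem \ref{projectivitytermsmaximal} by the standard local-to-global argument for finitely generated projective modules over a Noetherian ring, combined with the fact that for module-finite algebras projectivity is detected after base change to residue fields.**

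First I would dispatch the forward implication. If $M$ is projective over $A$, then since $A$ itself is projective (in fact finitely generated projective) as an $R$-module, any direct summand of a finite free $A$-module is a direct summand of a finite free $R$-module, so $M\in R\proj$. Moreover projectivity is preserved by the base-change functor $R/\mi\otimes_R-$ applied to $A$-modules (it sends $A$-projectives to $A(\mi)$-projectives, since $A(\mi)\otimes_{A}(-)$ takes $ {}_AA$ to $ {}_{A(\mi)}A(\mi)$ and commutes with finite direct sums), giving that $M(\mi)$ is $A(\mi)$-projective for every maximal ideal $\mi$.

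For the converse, assume $M\in R\proj$ and $M(\mi)$ is $A(\mi)$-projective for all maximal $\mi$. The key step is to show $M$ is \emph{locally free over $A$ at every maximal ideal}, i.e.\ $M_\mi$ is a free $A_\mi$-module; since $M$ is finitely generated and $R$ is Noetherian, establishing $M_\mi\in \add {}_{A_\mi}\!A_\mi$ as a summand of a finite free module for each $\mi$ will suffice once we globalize. Concretely, fix a maximal ideal $\mi$. Because $M(\mi)$ is $A(\mi)$-projective, choose a finite $A(\mi)$-free module $F(\mi)$ surjecting onto $M(\mi)$ with a splitting; lift a basis of $F(\mi)$ to a map $A_\mi^{\,n}\to M_\mi$ which is surjective modulo $\mi A_\mi$, hence surjective by Nakayama (here $A_\mi$ is module-finite over the local ring $R_\mi$, so Nakayama applies to the finitely generated $A_\mi$-module $M_\mi$). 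This gives a short exact sequence $0\to K\to A_\mi^{\,n}\to M_\mi\to 0$; since $M_\mi$ is $R_\mi$-projective (as $M\in R\proj$), the sequence splits over $R_\mi$, so tensoring with $R(\mi)$ stays exact, yielding $0\to K(\mi)\to A(\mi)^{\,n}\to M(\mi)\to 0$, which splits because $M(\mi)$ is $A(\mi)$-projective. Comparing $R_\mi$-ranks, $K$ has the same $R_\mi$-rank as a complement of $M(\mi)$ in $A(\mi)^n$; in particular $K/\mi K$ has the right dimension, and choosing a lift $A_\mi^{\,m}\to K$ surjective mod $\mi$ (Nakayama again) and splitting over $R_\mi$ shows the composite $A_\mi^{\,m}\to A_\mi^{\,n}\to M_\mi$ has a section already over $R_\mi$; a diagram chase comparing with the $R(\mi)$-level splitting, together with the fact that the obstruction to lifting the $A(\mi)$-splitting to an $A_\mi$-splitting lives in a group that vanishes because the relevant sequence is $R_\mi$-split, produces an $A_\mi$-splitting of $A_\mi^{\,n}\to M_\mi$. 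Hence $M_\mi$ is $A_\mi$-projective for every maximal $\mi$. Finally, to pass from ``$M_\mi$ projective over $A_\mi$ for all $\mi$'' to ``$M$ projective over $A$'', pick a finite presentation $A^{\,b}\xrightarrow{\ \phi\ } A^{\,a}\to M\to 0$; projectivity of $M$ is equivalent to the surjection $A^{\,a}\to M$ admitting a section, i.e.\ to $\Ext^1_A(M,\Omega)=0$ where $\Omega=\ker(A^a\to M)$, and $\Ext^1_A(M,\Omega)$ is a finitely generated $R$-module (as $A$ is Noetherian and the modules are finitely generated) whose localization at each maximal ideal is $\Ext^1_{A_\mi}(M_\mi,\Omega_\mi)=0$; by Lemma \ref{equalitybeinglocal} (applied with the zero submodule) this $\Ext$ group is zero, so $M$ is $A$-projective.

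The main obstacle is the local step: upgrading the $A(\mi)$-level splitting of $0\to K\to A_\mi^{\,n}\to M_\mi\to 0$ to an $A_\mi$-level splitting. The leverage that makes this work is that the sequence is already split as a sequence of $R_\mi$-modules (because $M\in R\proj$), so $K$ is $R_\mi$-projective and $\Hom_{A_\mi}(M_\mi,K)\otimes_{R_\mi}R(\mi)\to \Hom_{A(\mi)}(M(\mi),K(\mi))$ is surjective; combined with Nakayama on the finitely generated $R_\mi$-module $\operatorname{coker}$ of the evaluation-at-the-identity map, a section lifts. This is precisely the point where the hypotheses ``$M\in R\proj$'' and ``$M(\mi)$ is $A(\mi)$-projective'' must be used in tandem — neither alone suffices — and it is the heart of \citep[Lemma 3.3.2]{CLINE1990126}.
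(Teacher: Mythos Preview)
The paper does not give its own proof of this theorem; it simply cites \citep[Lemma 3.3.2]{CLINE1990126}. So the comparison is with the correctness of your argument rather than with any argument in the paper.

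Your forward implication and your globalisation step (vanishing of $\Ext^1_A(M,\Omega)$ by localising at maximal ideals) are both fine. The problem is the local step, which you correctly flag as the crux. You attempt to lift an $A(\mi)$-section $M(\mi)\to A(\mi)^n$ to an $A_\mi$-section $M_\mi\to A_\mi^{\,n}$ and assert that
\[
\Hom_{A_\mi}(M_\mi,K)\otimes_{R_\mi}R(\mi)\longrightarrow \Hom_{A(\mi)}(M(\mi),K(\mi))
\]
is surjective ``because the sequence is $R_\mi$-split''. This is the gap: the $R_\mi$-splitting only tells you $K\in R_\mi\text{-proj}$; it does not let you lift maps \emph{out of} $M_\mi$. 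That base-change map is an isomorphism precisely when $M_\mi$ is already $A_\mi$-projective, which is what you are trying to prove. The surrounding sentences (``a diagram chase\dots the obstruction lives in a group that vanishes'') do not supply the missing mechanism.

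The repair is to lift a \emph{retraction} rather than a section. A retraction $\bar r\colon A(\mi)^{\,n}\to K(\mi)$ exists since $M(\mi)$ is $A(\mi)$-projective, and it lifts to some $r\colon A_\mi^{\,n}\to K$ because $\Hom_{A_\mi}(A_\mi^{\,n},K)\simeq K^{n}$ surjects onto $K(\mi)^{n}\simeq\Hom_{A(\mi)}(A(\mi)^{\,n},K(\mi))$. Then $r\circ\iota\in\End_{A_\mi}(K)$ satisfies $(r\circ\iota)(\mi)=\id_{K(\mi)}$. Now you use the hypothesis $M\in R\proj$: the $R_\mi$-splitting forces $K\in R_\mi\text{-proj}$, so Lemma~\ref{nakayamalemmasurjectiveproj} gives that $r\circ\iota$ is an isomorphism; hence $(r\circ\iota)^{-1}\circ r$ is an honest $A_\mi$-retraction and the sequence splits over $A_\mi$. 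With this correction your outline becomes a valid proof.
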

For Noetherian algebras, the Nakayama's Lemma plays an important role. For modules belonging to $A\m\cap R\proj$ the following version of the Nakayama's Lemma is useful:
\begin{Lemma}Let $R$ be a commutative ring.
	Let $N\in R\proj$. Let $\psi\in \Hom_R(M, N)$ such that $\psi(\mi)$ is an isomorphism for every maximal ideal $\mi$ in $R$. Then, $\psi$ is an isomorphism. \label{nakayamalemmasurjectiveproj}
\end{Lemma}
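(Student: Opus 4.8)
The plan is to show $\psi$ is both surjective and injective by localizing at each maximal ideal and then invoking the finitely generated version of Nakayama's Lemma. Implicitly $M$ must be finitely generated over $R$ here (as it is in the intended applications, where $M\in A\m$): otherwise the statement fails, e.g.\ for $\psi\colon\mathbb{Q}\to 0$ over $\mathbb{Z}$. So I assume $M$ is finitely generated, and I use that $\psi(\mi)$ being an isomorphism means it is both surjective and injective.

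For surjectivity, I would set $C=\coker\psi=N/\im\psi$, which is finitely generated since $N\in R\proj$. Applying the right exact functor $R(\mi)\otimes_R-$ to $M\xrightarrow{\psi}N\to C\to 0$ yields $C(\mi)\cong\coker\bigl(\psi(\mi)\bigr)=0$, because $\psi(\mi)$ is surjective. Since $C(\mi)$ coincides with $C_\mi/\mi_\mi C_\mi$ and $C_\mi$ is finitely generated over the local ring $R_\mi$, Nakayama's Lemma forces $C_\mi=0$ for every maximal ideal $\mi$, hence $C=0$ and $\psi$ is surjective.

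For injectivity, I would use that $N$ is projective: the short exact sequence $0\to K\to M\xrightarrow{\psi}N\to 0$ with $K=\ker\psi$ splits, so $M\cong N\oplus K$ with $\psi$ identified with the projection onto $N$. In particular $K$, being a direct summand of the finitely generated module $M$, is finitely generated. Applying $R(\mi)\otimes_R-$ to the split sequence identifies $\psi(\mi)$ with the projection $N(\mi)\oplus K(\mi)\to N(\mi)$, and injectivity of $\psi(\mi)$ forces $K(\mi)=0$ for every maximal $\mi$. Running the Nakayama argument of the previous paragraph with $K$ in place of $C$ gives $K=0$, so $\psi$ is injective, hence an isomorphism.

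I do not expect a real obstacle: the only point needing care is checking that $\coker\psi$ and $\ker\psi$ are finitely generated before applying Nakayama, which follows from $N\in R\proj$ and from $M$ being finitely generated. As an alternative to the splitting argument, injectivity can be obtained directly after localization: $\psi_\mi\colon M_\mi\to N_\mi$ is a surjection of finitely generated $R_\mi$-modules onto the free module $N_\mi$, hence splits, and the induced map modulo $\mi_\mi$ being injective forces its kernel to vanish by Nakayama over $R_\mi$.
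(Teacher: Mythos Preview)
Your proof is correct and follows essentially the same route as the paper: Nakayama for surjectivity, then use projectivity of $N$ to split $\psi$ and apply Nakayama a second time to kill the kernel. The paper phrases the second step in terms of the section $\delta\colon N\to M$ (showing $\delta(\mi)=\psi(\mi)^{-1}$ is an isomorphism, hence $\delta$ is surjective and thus bijective), whereas you phrase it in terms of the complementary summand $K$; these are the same argument. Your explicit remark that $M$ must be finitely generated is a useful clarification that the paper leaves implicit.
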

\begin{proof}
	By Nakayama's Lemma, $\psi$ is surjective. Since $N\in R\proj$ there exists an $R$-homomorphism $\delta\colon N\rightarrow M$ such that $\psi\circ \delta =\id_N$. In particular, $\delta$ is injective. Applying $R(\mi)\otimes_R -$, we get $\psi(\mi)\circ \delta(\mi)=\id_{N(\mi)}$. Thus, $\delta(\mi)$ is an isomorphism for every maximal ideal $\mi$ in $R$. By Nakayama's Lemma, $\delta$ is surjective. Thus, $\delta$ is an isomorphism, and $\psi$ is an isomorphism as well.  
\end{proof}
A local Noetherian commutative ring is called \textbf{regular} if it has finite global dimension. In such a case, the global dimension coincides with the Krull dimension of $R$ which coincides with the projective dimension of the residue field (see for example \citep[Theorem 8.62, Proposition 8.60]{Rotman2009a}). A Noetherian commutative ring is called \textbf{regular} if every localization $R_\pri$ is regular for every prime ideal $\pri$ of $R$.

As usual, in the integral setup $(A, R)$-exact sequences are better suited to work with techniques involving change of rings. By an \textbf{$(A, R)$-exact sequence} we mean an exact sequence of $A$-modules that splits when viewed as exact sequence of $R$-modules. An homomorphism $f\in \Hom_A(M, N)$ is called an \textbf{$(A, R)$-monomorphism} if the sequence $0\rightarrow M\xrightarrow{f} N$ is $(A, R)$-exact. Analogously, $(A, R)$-epimorphisms are defined. A module $M\in A\m$ is known as \textbf{$(A, R)$-projective module} if the functor $\Hom_A(M, -)$ sends $(A, R)$-epimorphisms to epimorphisms.

\subsection{Cellular algebras}\label{Cellular_algebras}

Graham and Lehrer introduced cellular algebras over commutative rings. However, in applications, cellular algebras are considered over a field.  Some of the properties we are interested in can be found in \citep{zbMATH00871761}, \citep{zbMATH01218863}, \citep{zbMATH01463504}, \citep{zbMATH01384521}, \citep{zbMATH01475237}. 

We will start by collecting some properties of cellular algebras which remain valid over commutative Noetherian rings.

Explicitly, the common definition of cellular algebras used for practical purposes is the following:

\begin{Def}\label{cellularbasis}
	Let $R$ be a commutative Noetherian ring. Let $A$ be a free Noetherian $R$-algebra, that is, $A$ is free as $R$-module. $A$ is called \textbf{cellular} with cell datum $(\L, M, C, \iota)$ if the following holds:
	\begin{enumerate}[(C1)]
		\item The finite set $\L$ is partially ordered. Associated with each $\l\in \L$ there is a finite set $M(\l)$. The algebra $A$ has an $R$-basis \begin{align}
			\{ C_{S, T}^\l \ | \ S, T\in M(\l), \ \l \in \L \}.
		\end{align}
		\item The map $\iota\colon A\rightarrow A$ is an $R$-linear anti-isomorphism with $\iota^2=\id_A$ which sends $C_{S, T}^\l$ to $C_{T, S}^\l$, $S, T\in M(\l)$, $\l\in \L$.
		\item For each $\l\in \L$ and $S, T\in M(\l)$ and each $a\in A$ we can write
		\begin{align}
			aC_{S, T}^\l = \sum_{U\in M(\l)} r_a(U, S)C_{U, T}^\l + r',
		\end{align}where $r'$ is a linear combination of basis elements with upper index $\mu$ strictly smaller than $\l$, and where the coefficients $r_a(U, S)\in R$ do not depend on $T$.
	\end{enumerate}
\end{Def}

\begin{Lemma} Consider the following condition.
	\begin{enumerate}[(C3')]
		\item For each $\l\in \L$ and $S, T\in M(\l)$ and each $a\in A$ we can write
		\begin{align}
			C_{S, T}^\l a = \sum_{U\in M(\l)} r_a(U, T)C_{S, U}^\l + r',
		\end{align}where $r'$ is a linear combination of basis elements with upper index $\mu$ strictly smaller than $\l$, and where the coefficients $r_a(U, T)\in R$ do not depend on $S$.
	\end{enumerate}
	Under conditions (C2) and (C1), condition (C3) is equivalent to (C3').
\end{Lemma}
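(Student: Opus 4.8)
The plan is to use the anti-automorphism $\iota$ from (C2) to convert a statement about right multiplication into one about left multiplication, and vice versa. Since $\iota^2=\id_A$, the two implications (C3)$\Rightarrow$(C3') and (C3')$\Rightarrow$(C3) are mirror images of each other, so it suffices to treat one of them with care and then quote symmetry.

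First I would record the only structural point that needs attention. For $\mu\in\Lambda$ let $A^{<\mu}$ denote the $R$-span of the basis elements $C^\nu_{X,Y}$ with $\nu<\mu$. By (C2) the map $\iota$ permutes the cellular basis via $C^\nu_{X,Y}\mapsto C^\nu_{Y,X}$, and in particular it fixes the upper index of every basis element; hence $\iota(A^{<\mu})=A^{<\mu}$ for every $\mu$. This is exactly what makes the ``lower-order remainder'' $r'$ of (C3) transform correctly: $r'\in A^{<\lambda}$ forces $\iota(r')\in A^{<\lambda}$.

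Now assume (C3) and fix $\lambda\in\Lambda$, $S,T\in M(\lambda)$ and $a\in A$. Using that $\iota$ reverses products, $\iota(C^\lambda_{S,T}\,a)=\iota(a)\,\iota(C^\lambda_{S,T})=\iota(a)\,C^\lambda_{T,S}$. Applying (C3) to the element $\iota(a)$ and the basis element $C^\lambda_{T,S}$ gives $\iota(a)\,C^\lambda_{T,S}=\sum_{U\in M(\lambda)}r_{\iota(a)}(U,T)\,C^\lambda_{U,S}+r'$ with $r'\in A^{<\lambda}$ and with each coefficient $r_{\iota(a)}(U,T)$ independent of $S$. Applying $\iota$ once more, using $\iota^2=\id_A$, the identity $\iota(C^\lambda_{U,S})=C^\lambda_{S,U}$ and $\iota(r')\in A^{<\lambda}$, I obtain $C^\lambda_{S,T}\,a=\sum_{U\in M(\lambda)}r_{\iota(a)}(U,T)\,C^\lambda_{S,U}+\iota(r')$. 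Setting $r_a(U,T):=r_{\iota(a)}(U,T)$, which does not depend on $S$, this is precisely the expansion demanded by (C3'). The converse implication is the identical computation with (C3) and (C3') interchanged. I do not anticipate any genuine obstacle: the entire content is the compatibility of $\iota$ with the upper-index filtration of $A$, combined with the fact that $\iota$ swaps the two lower slots of $C^\lambda_{S,T}$ while the structure constants are precisely required to be independent of one of those slots.
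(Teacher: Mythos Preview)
Your proof is correct and follows essentially the same route as the paper: apply $\iota$ to turn $C^\lambda_{S,T}a$ into $\iota(a)\,C^\lambda_{T,S}$, invoke (C3) for $\iota(a)$, then apply $\iota$ again, using that $\iota$ preserves the upper-index filtration and swaps the lower slots. Your explicit remark that $\iota(A^{<\lambda})=A^{<\lambda}$ is exactly the point the paper makes more tersely.
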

\begin{proof} Assume that (C3) holds.
	We can write, for $a=i(x)\in A$, $x\in A$,
	\begin{align}
		C_{S, T}^\l a \overset{(C2)}{=} \iota(C_{T, S}^\l) \iota(x) & = \iota(x C_{T, S}^\l) \overset{(C3)}{=} \iota\left( \sum_{U\in M(\l)}r_x(U, T)C_{U, S}^\l  +r'\right) = \sum_{U\in M(\l)} r_x(U, T) \iota(C_{U, S}^\l) +\iota(r')\\
		&=\sum_{U\in M(\l)} r_x(U, T) C_{S, U}^\l +\iota(r').
	\end{align}Since $\iota$ only changes the lower indexes the upper indexes of the basis elements in the linear combination of $\iota(r')$ are strictly smaller than $\l$. Putting $r_a(U, T)$ equal to $r_{\iota(a)}(U, T)$, condition (C3') follows. The converse implication is analogous.
\end{proof}
The map $\iota\colon A\rightarrow A$ is called an \textbf{involution} of $A$.

\begin{Cor}
	$A$ is a cellular $R$-algebra with cell datum $(\L, M, C, \iota)$ if and only if the opposite algebra $A^{op}$ is a cellular $R$-algebra with cell datum $(\L, M, \iota(C), \iota)$.
\end{Cor}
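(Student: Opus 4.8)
The plan is to verify the three axioms (C1), (C2), (C3) of Definition \ref{cellularbasis} directly for the algebra $A^{op}$ equipped with the proposed cell datum $(\Lambda, M, \iota(C), \iota)$, where $\iota(C)$ denotes the indexed family whose $(S,T)$-entry at level $\lambda$ is $\iota(C_{S,T}^\lambda) = C_{T,S}^\lambda$ (using (C2) for $A$). Since $A$ and $A^{op}$ share the same underlying $R$-module, since $\iota^2 = \id_A$, and since $(A^{op})^{op} = A$ and $\iota(\iota(C)) = C$, the converse implication follows at once from the forward implication applied to $A^{op}$. Hence it suffices to prove one direction.

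For (C1): the poset $\Lambda$ and the finite sets $M(\lambda)$ are unchanged, and the family $\{C_{T,S}^\lambda \colon S,T\in M(\lambda),\ \lambda\in\Lambda\}$ is obtained from the $R$-basis $\{C_{S,T}^\lambda\}$ of $A$ merely by permuting the two lower indices, so it is again an $R$-basis of $A = A^{op}$. For (C2): I would note that an $R$-linear anti-automorphism of $A$ is automatically an $R$-linear anti-automorphism of $A^{op}$ — writing $*$ for the product of $A^{op}$, the identity $\iota(ab)=\iota(b)\iota(a)$ in $A$ gives $\iota(a*b)=\iota(ba)=\iota(a)\iota(b)=\iota(b)*\iota(a)$ — while $\iota^2=\id$ is unaffected; finally $\iota$ sends the $(S,T)$-entry $C_{T,S}^\lambda$ of $\iota(C)$ to $C_{S,T}^\lambda$, which is precisely the $(T,S)$-entry of $\iota(C)$, as required.

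The substantive point is (C3), and here the preceding Lemma does exactly the work needed. Fix $\lambda\in\Lambda$, $S,T\in M(\lambda)$ and $a\in A$. Computing in $A^{op}$, the left action of $a$ on the basis element $C_{T,S}^\lambda$ equals the $A$-product $C_{T,S}^\lambda a$. By the Lemma (equivalence of (C3) and (C3') under (C1) and (C2)), condition (C3') yields $C_{T,S}^\lambda a = \sum_{U\in M(\lambda)} r_a(U,S)\, C_{T,U}^\lambda + r'$, where $r'$ is a linear combination of basis elements with strictly smaller upper index and the coefficients $r_a(U,S)$ do not depend on $T$. Rewriting $C_{T,U}^\lambda = \iota(C_{U,T}^\lambda)$, i.e. the $(U,T)$-entry of $\iota(C)$, and observing that $\iota$ fixes upper indices so $r'$ remains a combination of $\iota(C)$-basis elements of strictly smaller upper index, we obtain exactly the form demanded by (C3) for $A^{op}$, with coefficients independent of the second lower index. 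This establishes the forward direction and hence the Corollary. I do not expect a genuine obstacle: the only care required is bookkeeping of which lower index the structure coefficients are permitted to depend on, and the Lemma is tailored precisely to absorb the switch between left and right multiplication.
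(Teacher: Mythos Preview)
Your proof is correct and follows exactly the approach the paper intends: the Corollary is stated immediately after the Lemma with no separate proof, so the paper is implicitly relying on the same verification you carried out---namely that (C3') for $A$ gives (C3) for $A^{op}$ with the reindexed basis $\iota(C)$, once one checks the routine facts about (C1) and (C2). You have simply spelled out what the paper leaves to the reader.
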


Iwahori-Hecke algebras are a classical example of cellular algebras (see \citep[Example 1.2]{zbMATH00871761}). The cell basis is the Kazhdan-Lusztig basis. In fact,  the axioms of cellular basis presented in Definition \ref{cellularbasis} are based on the Kazhdan-Lusztig basis of Hecke algebras.

There is a more abstract definition of cellular algebras due to Koenig and Xi \citep{zbMATH01218863} which illustrates better its structural properties. 

\begin{Def}\label{abstractcellular}
	Let $R$ be a commutative Noetherian ring and let $A$ be a projective Noetherian $R$-algebra. Assume that there is an $R$-linear anti-isomorphism $\iota$ on $A$ with $\iota^2=\id_A$.
	\begin{enumerate}[(i)]
		\item A two-sided ideal $J$ of $A$ is called a \textbf{cell ideal} (with respect to $\iota$) if
		\begin{enumerate}[(a)]
			\item $\iota(J)=J$;
			\item There exists a left ideal $\theta\in A\m$, free as $R$-module, such that $\theta\subset J$;
			\item There is an isomorphism of $A$-bimodules $\alpha\colon J \rightarrow \theta \otimes_R \iota(\theta)$ making the following diagram commutative:
			\begin{center}\vspace{-\abovedisplayskip}
				\begin{tikzcd}
					J \arrow[r, "\alpha"] \arrow[d, "\iota"] & \theta\otimes_R \iota(\theta) \arrow[d, "x\otimes y \mapsto \iota(y)\otimes \iota(x)"] \\
					J \arrow[r, "\alpha", swap] & \theta\otimes_R \iota(\theta) \\
				\end{tikzcd}.
			\end{center}\vspace{-1.5\belowdisplayskip}
		\end{enumerate}
		\item The algebra $A$ (with involution $\iota$) is called \textbf{cellular} if 
		\begin{enumerate}[(a)]
			\item There is an $R$-module decomposition $A=J_1'\bigoplus \cdots\bigoplus J_n'$ (for some $n$) with $\iota(J_j')=J_j'$ for each $j$;
			\item Setting $J_j=\bigoplus_{l=1}^j J_l'$ gives a chain of two-sided ideals of $A$, called \textbf{cell chain}: \linebreak${0\subset J_1\subset\cdots\subset J_n=A}$ (each of them fixed by $\iota$);
			\item For each $j$ ($j=1, \ldots, n$) the quotient $J_j'=J_j/J_{j-1}$ is a cell ideal (with respect to the involution induced by $\iota$ on the quotient) of $A/J_{j-1}$.
		\end{enumerate}
	\end{enumerate} 
\end{Def}
In particular, this definition requires that every cell ideal is a free $R$-module. The modules $\theta(j)$ associated with each cell ideal $J_j'$ are called \textbf{cell modules.}

We note that the original definition in \citep{zbMATH01218863} requires $R$ to be an integral domain but the arguments easily pass to the general case. For sake of completeness, we will write the proof of equivalence of both notions.

\begin{Prop}\label{cellulardeficoincide}
	The two above definitions of cellular algebras are equivalent.
\end{Prop}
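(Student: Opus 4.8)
The plan is to show each of the two definitions of cellular algebra implies the other, for a fixed $R$-linear anti-involution $\iota$ with $\iota^2 = \id_A$.

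\medskip

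\emph{From Definition \ref{cellularbasis} to Definition \ref{abstractcellular}.} Suppose $A$ has a cell datum $(\L, M, C, \iota)$ in the basis sense. Fix a total refinement $\l_1 < \l_2 < \cdots < \l_n$ of the partial order on $\L$ (note $\L$ is finite). For each $j$, let $J_j$ be the $R$-span of all basis elements $C_{S,T}^{\l_k}$ with $k \le j$; this is an $R$-submodule of $A$, and it is free on that subset of the basis. Axiom (C3) shows $a J_j \subseteq J_j$ and (C3$'$) shows $J_j a \subseteq J_j$, so $J_j$ is a two-sided ideal, and $\iota$ swaps the indices so $\iota(J_j) = J_j$. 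Choosing $J_j'$ to be the $R$-span of the basis elements $C_{S,T}^{\l_j}$ gives the $R$-module decomposition $A = \bigoplus_{j} J_j'$ with $\iota(J_j') = J_j'$ and $J_j = \bigoplus_{l \le j} J_l'$. It remains to verify that $J_j / J_{j-1}$ is a cell ideal of $A/J_{j-1}$: take $\theta(j)$ to be the $R$-span (in the quotient) of $\{C_{S, T_0}^{\l_j} : S \in M(\l_j)\}$ for a fixed choice $T_0 \in M(\l_j)$, which is free as an $R$-module; define $\alpha\colon J_j/J_{j-1} \to \theta(j) \otimes_R \iota(\theta(j))$ on the basis by $C_{S,T}^{\l_j} \mapsto C_{S, T_0}^{\l_j} \otimes \iota(C_{T_0, T}^{\l_j})$ (reading $\iota(C_{T_0,T}^{\l_j}) = C_{T, T_0}^{\l_j}$ in $\iota(\theta(j))$), and check using (C3) and (C3$'$) that $\alpha$ is an $A$-bimodule isomorphism and that it intertwines $\iota$ with the swap-and-$\iota$ map on the tensor product.

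\medskip

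\emph{From Definition \ref{abstractcellular} to Definition \ref{cellularbasis}.} Conversely, given a cell chain $0 \subset J_1 \subset \cdots \subset J_n = A$ with cell ideals $J_j' = J_j/J_{j-1} \cong \theta(j) \otimes_R \iota(\theta(j))$, set $\L = \{1, \dots, n\}$ with its natural order and $M(j)$ an index set for an $R$-basis of the free module $\theta(j)$. For $S \in M(j)$, let $c_S \in \theta(j) \subseteq A/J_{j-1}$ be the corresponding basis vector, pick a preimage in $A$, and define $C_{S,T}^j$ to be (a chosen lift of) $\alpha_j^{-1}(c_S \otimes \iota(c_T))$. Since each $J_j'$ is free over $R$ and these elements form an $R$-basis of $J_j'$, and $A = \bigoplus_j J_j'$ as $R$-modules, the $\{C_{S,T}^j\}$ form an $R$-basis of $A$, giving (C1). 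The commutative square in the cell-ideal axiom gives $\iota(C_{S,T}^j) = C_{T,S}^j$ modulo $J_{j-1}$, and choosing lifts compatibly (or replacing $C_{S,T}^j$ by $\frac{1}{?}$— better: choose the basis of $J_j'$ as a genuine $R$-submodule complement fixed by $\iota$, which exists by averaging is not available, so instead lift the whole free module $J_j'$ to a free $\iota$-stable $R$-submodule of $A$, using that $A = \bigoplus J_l'$) gives (C2). Finally, left multiplication by $a \in A$ descends to $A/J_{j-1}$, and the $A$-bimodule structure on $\theta(j) \otimes_R \iota(\theta(j))$ is $a \cdot (x \otimes y) = (ax) \otimes y$; expanding $ac_S$ in the basis of $\theta(j)$ yields exactly the relation (C3) with coefficients $r_a(U,S)$ independent of $T$, the error term $r'$ landing in $J_{j-1}$, i.e.\ a combination of basis elements with strictly smaller upper index.

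\medskip

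The main obstacle I anticipate is the bookkeeping around lifts: in the abstract definition $J_j'$ lives as a subquotient, and to produce an honest $R$-basis of $A$ one must choose the $R$-module complements $J_j'$ inside $A$ compatibly with $\iota$, then show the multiplication rules (C3)/(C3$'$) only acquire lower-order correction terms — this is where the fact that the ideals $J_j$ are two-sided and $\iota$-stable, together with the explicit $A$-bimodule isomorphism $\alpha_j$, does all the work. The reverse direction (basis $\Rightarrow$ abstract) is the easier one since $J_j$ is literally the span of basis elements. I would present the basis-to-abstract direction first since it is cleaner, then do the converse.
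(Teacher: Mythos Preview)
Your approach is the correct one and is essentially what K\"onig--Xi do in \cite{zbMATH01218863}; the paper itself does not reprove the statement but simply observes that their argument goes through without the integral-domain hypothesis on $R$. So in substance you are doing more than the paper does.

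That said, your treatment of the direction \emph{abstract $\Rightarrow$ basis} contains a confusion you should resolve. You worry about whether the lifts $C_{S,T}^j$ can be chosen so that $\iota(C_{S,T}^j) = C_{T,S}^j$ holds exactly in $A$ rather than only modulo $J_{j-1}$, and you start reaching for averaging tricks before abandoning them. This worry is unfounded: Definition~\ref{abstractcellular}(i) already hands you an $R$-module decomposition $A = J_1' \oplus \cdots \oplus J_n'$ with each $J_j'$ an $\iota$-stable $R$-submodule of $A$. The canonical $R$-linear isomorphism $J_j' \xrightarrow{\sim} J_j/J_{j-1}$ (inclusion followed by projection) therefore intertwines the restriction of $\iota$ to $J_j'$ with the induced involution on the quotient. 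Since the commutative square for $\alpha_j$ says that $\iota$ acts on $J_j/J_{j-1}$ by swapping the tensor factors, pulling the basis $\alpha_j^{-1}(c_S \otimes \iota(c_T))$ back along this isomorphism gives elements of $J_j' \subset A$ on which $\iota$ swaps $S$ and $T$ \emph{on the nose}. No averaging, no ad hoc choice of complement, no lower-order correction: the $\iota$-stable splitting is part of the input data.

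Once you see this, the garbled parenthetical in your (C2) paragraph can be deleted entirely and replaced by a single sentence. The rest of your outline (in both directions) is fine; just tidy the notation for $\alpha$ in the first direction so that the second tensor factor visibly lies in $\iota(\theta(j))$.
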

\begin{proof}
The arguments used on pages 8-9 of \cite{zbMATH01218863} do not require the ground ring to be integral. 
\end{proof}

From the proof of Proposition \ref{cellulardeficoincide}, we can deduce the following result.

\begin{Cor}Let $A$ be a cellular $R$-algebra with cell datum $(\L, M, C, \iota)$.\label{cellmodules}
	Let $A(<\l)$, $\l\in \L$, be the free $R$-module with $R$-basis 
	\begin{align}
		\{ C_{S, T}^\mu \colon \mu<\l, \ S, T\in M(\mu) \}.
	\end{align}The (left) cell modules are the $A$-modules which are free over $R$ with $R$-basis
	\begin{align}
		\theta_l(\l)=\{C_{S, T_0}^\l + A(<\l) \colon S\in M(\l)  \}, \quad \text{for some} \quad T_0\in M(\l),  \quad \l\in \L.
	\end{align}
	The (right) cell modules are the right $A$-modules which are free over $R$ with basis
	\begin{align}
		\theta_r(\l)=\{C_{S_0, T}^\l + A(<\l) \colon T\in M(\l)  \}, \quad \text{for some} \quad S_0\in M(\l), \quad \l\in \L.
	\end{align}
\end{Cor}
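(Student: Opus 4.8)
The plan is to read the cell modules off the construction used in the proof of Proposition \ref{cellulardeficoincide}, so the argument is mostly extraction plus a check that nothing depends on the auxiliary choices. First I would record the basic ideals: by (C3) and the equivalent condition (C3'), the $R$-submodule $A(<\l)$ spanned by the $C_{S,T}^\mu$ with $\mu<\l$ is a two-sided ideal of $A$, and likewise $A(\leq\l):=A(<\l)+\sum_{S,T\in M(\l)}R\,C_{S,T}^\l$ is a two-sided ideal; both are fixed by $\iota$ since $\iota$ does not alter upper indices. Hence the subquotient $A(\leq\l)/A(<\l)$ is an $A$-bimodule, free over $R$ with basis the residue classes of the $C_{S,T}^\l$, and it depends only on the poset $\L$. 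Fixing $T_0\in M(\l)$, condition (C3) gives $a\,C_{S,T_0}^\l\equiv\sum_{U\in M(\l)}r_a(U,S)\,C_{U,T_0}^\l\pmod{A(<\l)}$ with coefficients not involving $T_0$, so the $R$-span of $\theta_l(\l):=\{C_{S,T_0}^\l+A(<\l)\colon S\in M(\l)\}$ inside $A(\leq\l)/A(<\l)$ is a left $A$-submodule, free over $R$ on $\theta_l(\l)$. The same formula shows that $C_{S,T_0}^\l+A(<\l)\mapsto C_{S,T_0'}^\l+A(<\l)$ is an isomorphism of left $A$-modules, so the resulting module does not depend on the choice of $T_0$ up to isomorphism; applying $\iota$ (which fixes $A(<\l)$ and sends $C_{S,T_0}^\l$ to $C_{T_0,S}^\l$) turns it into the right $A$-module with basis $\theta_r(\l)$, taking $S_0=T_0$, and alternatively one runs the symmetric argument with (C3') in place of (C3).

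Next I would match these modules with the cell modules of Definition \ref{abstractcellular}. Following the proof of Proposition \ref{cellulardeficoincide}, refine the partial order on $\L$ to a total order $\l_1,\dots,\l_n$ with $\l_a\leq\l_b\Rightarrow a\leq b$, and let $J_j$ be the span of all $C_{S,T}^{\l_a}$ with $a\leq j$; by (C3)/(C3') this is a chain $0=J_0\subset J_1\subset\cdots\subset J_n=A$ of $\iota$-stable two-sided ideals with $A(<\l_j)\subseteq J_{j-1}$. The classes $C_{S,T_0}^{\l_j}+J_{j-1}$ are part of an $R$-basis of $J_j/J_{j-1}$, so the canonical surjection $A/A(<\l_j)\twoheadrightarrow A/J_{j-1}$ restricts to an isomorphism of left $A$-modules from $\theta_l(\l_j)$ onto the left ideal $\theta\subseteq A/J_{j-1}$ spanned by those classes, the two actions agreeing by (C3). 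Since the map $\alpha\colon J_j/J_{j-1}\to\theta\otimes_R\iota(\theta)$ sending the class of $C_{S,T}^{\l_j}$ to $(C_{S,T_0}^{\l_j}+J_{j-1})\otimes(C_{T_0,T}^{\l_j}+J_{j-1})$ is precisely the $A$-bimodule isomorphism produced in that proof and is compatible with $\iota$ by (C2), the cell module attached to the cell ideal $J_j/J_{j-1}$ of $A/J_{j-1}$ is exactly $\theta_l(\l_j)$. Letting $j$ range over $1,\dots,n$ exhausts all $\l\in\L$, which proves the claim for left cell modules; the statement for right cell modules then follows by applying $\iota$, or by the symmetric construction.

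I expect the only real friction to be the bookkeeping showing that $\theta_l(\l)$, which I define through the poset-only ideal $A(<\l)$, is genuinely insensitive both to the chosen refinement $\l_1,\dots,\l_n$ (it enters only through the larger ideal $J_{j-1}\supseteq A(<\l_j)$, and passing from $A(<\l_j)$ to $J_{j-1}$ does not collapse the relevant classes, since they are part of an $R$-basis of $J_j/J_{j-1}$) and to the choice of $T_0\in M(\l)$. Once these independence points are pinned down, the rest is transport of structure along the isomorphisms already established in the proof of Proposition \ref{cellulardeficoincide}, so nothing beyond routine verification remains.
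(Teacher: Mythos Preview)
Your proposal is correct and follows essentially the same approach as the paper: the paper simply records that the description of cell modules is read off from the construction in the proof of Proposition~\ref{cellulardeficoincide}, and that the right-module version follows by using (C3') in place of (C3). You have spelled out precisely this extraction in detail, including the independence of the choice of $T_0$ and of the refinement of the partial order, so there is nothing to add.
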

The statement for right modules follows using condition (C3') instead of (C3).

\begin{Prop}Let $A$ be a cellular $R$-algebra with cell datum $(\L, M, C, \iota)$. \label{twistedmodules}
	Let $M\in A\m$. Then, $M^{\iota}$ denotes a right $A$-module, known as the \textbf{twisted module} of $M$,  by setting $x\cdot_{\iota} a = \iota(a)x$. Similarly, for any $N\in \rmod A$, ${}^{\iota}$ denotes a left $A$-module by setting $a\cdot_{\iota} x= x\iota(a)$.  Moreover, 
	\begin{enumerate}[(i)]
		\item $\theta_l(\l)^\iota\simeq \theta_r(\l)$ as right $A$-modules, $\l\in \L$;
		\item ${}^{\iota}\theta_r(\l)\simeq \theta_l(\l)$ as left $A$-modules, $\l\in \L$.
	\end{enumerate}
\end{Prop}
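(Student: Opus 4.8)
The plan is to dispose of the well-definedness of the twisted modules by a direct check, and then to realise \emph{both} isomorphisms (i) and (ii) as restrictions of a single anti-automorphism of the quotient $A/A(<\l)$ induced by $\iota$.

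First I would verify that, for $M\in A\m$, the rule $x\cdot_\iota a:=\iota(a)x$ makes $M$ a right $A$-module: associativity is $(x\cdot_\iota a)\cdot_\iota b=\iota(b)\iota(a)x=\iota(ab)x=x\cdot_\iota(ab)$, and together with $R$-bilinearity this is immediate from $\iota$ being an $R$-linear anti-automorphism; the analogous statement for $N\in\rmod A$ is symmetric. Since $\iota^2=\id_A$, the two twist operations are mutually inverse (one computes, for the reconstituted left action, $a\cdot x=x\cdot_\iota\iota(a)=\iota(\iota(a))x=ax$), and the twist of an isomorphism is again an isomorphism; I will use this last remark to deduce (ii) from (i).

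For (i), the key preliminary point is that $A(<\l)$ is a two-sided ideal of $A$ fixed by $\iota$: condition (C3) applied to basis elements of upper index $\mu<\l$ gives $A\cdot A(<\l)\subseteq A(<\l)$, condition (C3') gives $A(<\l)\cdot A\subseteq A(<\l)$, and $\iota$ preserves $A(<\l)$ because it fixes the upper index of each cellular basis element. Hence $\iota$ descends to an $R$-linear anti-automorphism $\bar\iota$ of $A/A(<\l)$ with $\bar\iota^2=\id$. Because $\iota(C^\l_{S,T_0})=C^\l_{T_0,S}$, the map $\bar\iota$ sends the $R$-span $\theta_l(\l)$ of $\{C^\l_{S,T_0}+A(<\l):S\in M(\l)\}$ bijectively onto the $R$-span of $\{C^\l_{T_0,T}+A(<\l):T\in M(\l)\}$, which is the right cell module $\theta_r(\l)$ (choosing $S_0=T_0$, which is harmless since cell modules are independent of this choice up to isomorphism by Corollary \ref{cellmodules}). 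I then check that this restriction intertwines the twisted action with the genuine one: for $x\in\theta_l(\l)$ and $a\in A$ the anti-homomorphism property of $\iota$ yields $\bar\iota(x\cdot_\iota a)=\bar\iota(\iota(a)x)=\bar\iota(x)a$, so $\bar\iota$ is a morphism of right $A$-modules; being $R$-linear and bijective, it is the desired isomorphism $\theta_l(\l)^\iota\simeq\theta_r(\l)$. Part (ii) then follows by applying the functor ${}^\iota(-)$ to (i) together with ${}^\iota(\theta_l(\l)^\iota)\simeq\theta_l(\l)$, or equally well by the symmetric direct computation that $\bar\iota\colon{}^\iota\theta_r(\l)\to\theta_l(\l)$ is an isomorphism of left $A$-modules via $\bar\iota(a\cdot_\iota x)=\bar\iota(x a)=a\,\bar\iota(x)$.

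I expect no serious obstacle: the only things needing a little care are the routine check that $A(<\l)$ is an $\iota$-stable two-sided ideal, so that $\bar\iota$ exists at all, and the bookkeeping of the two distinguished indices $S_0$ and $T_0$ so that $\bar\iota$ lands in $\theta_r(\l)$ on the nose rather than merely in an isomorphic copy.
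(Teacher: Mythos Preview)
Your proof is correct and follows essentially the same approach as the paper: both construct an explicit $R$-linear bijection on the cellular bases and verify it intertwines the twisted action with the genuine one. The paper writes the map as $C_{S,T_0}^\lambda+A(<\lambda)\mapsto C_{S_0,S}^\lambda+A(<\lambda)$ for an arbitrary $S_0$ and checks the module property by a direct (C3)-computation on both sides, whereas you take $S_0=T_0$ so that the map is literally the restriction of the induced anti-automorphism $\bar\iota$ on $A/A(<\lambda)$, which lets you replace that computation by the one-line identity $\bar\iota(\iota(a)x)=\bar\iota(x)\,a$; this is a tidier packaging of the same argument rather than a different route.
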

\begin{proof}
	Consider the map $\psi\colon \theta_l(\l)^{\iota}\rightarrow \theta_r(\l)$ that sends $C_{S, T_0}^\l+A(<\l)$ to $C_{S_0, S}^\l + A(<\l)$. Thus, $\psi$ is bijective. We want to show that $\psi$ is an $A$-isomorphism. To obtain that we can observe that
	\begin{align}
		\psi( (C_{S, T_0}^\l + A(<\l))\cdot_{\iota} \iota(a)   )&=\psi(\iota^2(a)\cdot (C_{S, T_0}^\l + A(<\l))  )= \psi(a\cdot (C_{S, T_0}^\l +A(<\l) )  ) \\&= \psi \left( \sum_{U\in M(\l)} r_a(U, S)C_{U, T_0}^\l + A(<\l)\right) = \sum_{U\in M(\l)} r_a(U, S)\psi( C_{U, T_0}^\l + A(<\l) ) \nonumber\\&= \sum_{U\in M(\l)} r_a(U, S)( C_{S_0, U}^\l + A(<\l)  ).
	\end{align}
	On the other hand,
	\begin{align}
		\psi (C_{S, T_0}^\l + A(<\l))\cdot \iota (a) &= (C_{S_0, S}^\l + A(<\l)  )\iota(a)  = \iota (C_{S, S_0}^\l + A(<\l) )\iota(a)= \iota ( a\cdot C_{S, S_0}^\l + A(<\l) )\\
		&= \iota( \sum_{U\in M(\l)} r_a(U, S)(C_{U, S_0}^\l + A(<\l)) ) = \sum_{U\in M(\l)} r_a(U, S)(C_{S_0, U}^\l + A(<\l)).\nonumber
	\end{align}
	Therefore, $\psi$ is a right $A$-isomorphism. Using the map ${}^\iota \theta_r(\l)\rightarrow \theta_l(\l)$, mapping $C_{S_0, T}^\l+A(<\l)$ to \linebreak $C_{T, T_0}^\l + A(<\l)$, (ii) follows.
\end{proof}

We can define a duality functor ${}^\natural(-)\colon A\m\rightarrow A\m$ which sends $M$ to $D M^{\iota}$ and a duality functor $(-)^\natural\colon \rmod A\rightarrow \rmod A$ which sends $N$ to $D {}^\iota N$. The following corollary is an immediate consequence of Proposition \ref{twistedmodules}.

\begin{Cor}
	Let $A$ be a cellular $R$-algebra with cell datum $(\L, M, C, \iota)$.  Let $\l\in \L$. Then,\begin{enumerate}[(i)]
		\item ${}^\natural \theta_l(\l)\simeq D\theta_r(\l)$ as left $A$-modules;
		\item $\theta_r(\l)^\natural\simeq D\theta_l(\l)$ as right $A$-modules.
	\end{enumerate}
\end{Cor}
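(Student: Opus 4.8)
The plan is to deduce both statements directly from the two $A$-module isomorphisms furnished by Proposition \ref{twistedmodules} by applying the standard duality functor $D=\Hom_R(-,R)$. Recall the relevant conventions: $D$ sends a left $A$-module to a right $A$-module and a right $A$-module to a left $A$-module (under the identification of $A^{op}\m$ with $\rmod A$), and, being an additive functor, it carries isomorphisms to isomorphisms. Since $M(\l)$ is finite, the cell modules $\theta_l(\l)$ and $\theta_r(\l)$ are free of finite rank over $R$, so there is no finiteness obstruction to forming $D$ of the relevant twisted modules.

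For part (i): by definition of the duality functor ${}^\natural(-)$ we have ${}^\natural\theta_l(\l)=D(\theta_l(\l)^\iota)$. Proposition \ref{twistedmodules}(i) provides an isomorphism $\theta_l(\l)^\iota\simeq\theta_r(\l)$ of right $A$-modules; applying $D$ and using that $D$ preserves isomorphisms gives ${}^\natural\theta_l(\l)=D(\theta_l(\l)^\iota)\simeq D\theta_r(\l)$ as left $A$-modules, which is the assertion. For part (ii): by definition $\theta_r(\l)^\natural=D({}^\iota\theta_r(\l))$, and Proposition \ref{twistedmodules}(ii) gives ${}^\iota\theta_r(\l)\simeq\theta_l(\l)$ as left $A$-modules; applying $D$ yields $\theta_r(\l)^\natural\simeq D\theta_l(\l)$ as right $A$-modules.

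There is essentially no genuine obstacle here: the corollary is a formal consequence of Proposition \ref{twistedmodules} together with functoriality of $D$. The only point requiring mild care is the bookkeeping of the left/right module structures as one passes through the twist $(-)^\iota$ and the duality $D$, and this is already pinned down by the conventions recalled above and in the statement of Proposition \ref{twistedmodules}.
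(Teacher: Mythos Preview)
Your proof is correct and matches the paper's approach exactly: the paper states this corollary as ``an immediate consequence of Proposition \ref{twistedmodules}'' with no further argument, and you have simply spelled out that immediate consequence by applying $D$ to the twist isomorphisms.
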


\begin{Prop}
	Let $A$ be a cellular $R$-algebra with cell datum $(\L, M, C, \iota)$. Then, $A\in \mathcal{F}(\theta_{\l\in \L})$.
\end{Prop}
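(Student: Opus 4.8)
The plan is to build an explicit filtration of the regular left module ${}_AA$ whose successive quotients are (direct sums of) cell modules, by refining the partial order on $\Lambda$ to a total order and cutting $A$ along the resulting filtration by upper indices of cell basis elements. First I would fix a linear extension $\lambda_1,\dots,\lambda_n$ of the poset $\Lambda$, i.e. an enumeration with the property that $\lambda_i<\lambda_j$ in $\Lambda$ implies $i<j$. For $0\le k\le n$ let $J_k$ be the $R$-submodule of $A$ spanned by $\{C^{\lambda_i}_{S,T}\colon 1\le i\le k,\ S,T\in M(\lambda_i)\}$, so that $0=J_0\subseteq J_1\subseteq\cdots\subseteq J_n=A$. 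The first check is that each $J_k$ is a left ideal of $A$: by (C3), for $a\in A$ and $i\le k$ one has $aC^{\lambda_i}_{S,T}=\sum_{U\in M(\lambda_i)}r_a(U,S)C^{\lambda_i}_{U,T}+r'$ with $r'\in A(<\lambda_i)$; and since every $\mu<\lambda_i$ equals some $\lambda_j$ with $j<i\le k$, we get $A(<\lambda_i)\subseteq J_{i-1}\subseteq J_k$, whence $aC^{\lambda_i}_{S,T}\in J_k$. (The same computation using (C3') shows the $J_k$ are in fact two-sided ideals forming a cell chain, but only the left-ideal property is needed here.)

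Next I would analyse each layer $J_k/J_{k-1}$ as a left $A$-module. It is free over $R$ on the images of $\{C^{\lambda_k}_{S,T}\colon S,T\in M(\lambda_k)\}$, and grouping these basis elements by the second index $T$ gives an $R$-module decomposition $J_k/J_{k-1}=\bigoplus_{T\in M(\lambda_k)}V_T$, where $V_T$ is the $R$-span of $\{C^{\lambda_k}_{S,T}+J_{k-1}\colon S\in M(\lambda_k)\}$. Using the displayed consequence of (C3) above together with $A(<\lambda_k)\subseteq J_{k-1}$ (so that the term $r'$ vanishes in the quotient), each $V_T$ is a left $A$-submodule, and the assignment $C^{\lambda_k}_{S,T}+A(<\lambda_k)\mapsto C^{\lambda_k}_{S,T}+J_{k-1}$ defines an isomorphism of left $A$-modules $\theta_l(\lambda_k)\xrightarrow{\ \sim\ }V_T$: it is bijective on the $R$-bases given in Corollary \ref{cellmodules}, and it is $A$-linear because both sides obey the same formula from (C3) with coefficients $r_a(U,S)$ independent of $T$. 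Hence $J_k/J_{k-1}\cong\theta_l(\lambda_k)^{\oplus|M(\lambda_k)|}$ as left $A$-modules.

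Putting this together, $0=J_0\subseteq J_1\subseteq\cdots\subseteq J_n=A$ is a filtration of ${}_AA$ by left submodules whose successive quotients are direct sums of cell modules; inserting the intermediate submodules $J_{k-1}+V_{T_1}+\cdots+V_{T_t}$ between each $J_{k-1}$ and $J_k$ refines it to a filtration in which each single step is a copy of some $\theta_l(\lambda)$, each $\lambda\in\Lambda$ occurring exactly $|M(\lambda)|$ times. Therefore $A\in\mathcal{F}(\theta_{\lambda\in\Lambda})$. I expect the only delicate point to be the bookkeeping with the linear extension — concretely the inclusion $A(<\lambda_k)\subseteq J_{k-1}$, which is what makes the lower-order terms $r'$ disappear in each layer and identifies the layers with the genuine cell modules of Corollary \ref{cellmodules}; everything else is read off directly from the cellular axioms.
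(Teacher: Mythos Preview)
Your proof is correct. You work directly with the cellular basis of Definition~\ref{cellularbasis}: after choosing a linear extension of $\Lambda$, the $R$-span $J_k$ of all basis elements with upper index among $\lambda_1,\dots,\lambda_k$ is a left ideal by (C3), and each layer $J_k/J_{k-1}$ decomposes, column by column, into $|M(\lambda_k)|$ copies of $\theta_l(\lambda_k)$. The only point that needs care---that $A(<\lambda_k)\subseteq J_{k-1}$---you flag explicitly, and it is exactly the property of a linear extension.

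The paper argues instead through the abstract Definition~\ref{abstractcellular}: by induction on the length of the cell chain, using that a cell ideal $J$ satisfies $J\simeq\theta\otimes_R\iota(\theta)$ as $A$-bimodules, so $J$ already sits at the bottom of the filtration with quotient $\theta\otimes_R U$ for the free $R$-module $U=\iota(\theta)$, and the rest is handled by the inductive hypothesis on $A/J$. Your argument is more elementary in that it never invokes the bimodule isomorphism $J\simeq\theta\otimes_R\iota(\theta)$ or the equivalence of the two definitions; the paper's argument is shorter once that machinery is in place and makes the multiplicity $U_i=\iota(\theta_i)$ visible without any basis computation. Both yield the same filtration with $\theta(\lambda)$ occurring $|M(\lambda)|$ times.
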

\begin{proof}We can consider an increasing bijection between the posets $\L$ and $\{1, \ldots, n\}$.
	We want to show that there exists a filtration 
	\begin{align}
		0=P_0\subset P_1\subset \cdots \subset P_n=A
	\end{align}with $P_i/P_{i-1}\simeq \theta_i\otimes_R U_i$ for some free $R$-module $U_i$ where the cell module $\theta_i$ is associated with the cell ideal $J_i'$. We shall proceed by induction on $n$. Assume $n=1$. Then, $A$ is a cell ideal of $A$. Thus, there exists $\theta_1\subset A$ such that $A\simeq \theta_1\otimes_R \iota(\theta_1)$ and $\iota(\theta_1)$ is $R$-free. So, $A\in \mathcal{F}(\theta_1)$. Assume now that the result holds for $n-1$. The modules $\theta_j$, $j>1$, are cell modules of $A/J_1$. By induction, $A/J\in \mathcal{F}(\theta_{j>1})$. So, there exists a filtration 
	$
	0=P_1'\subset \cdots\subset P_n'=A/J, \quad P_i'/P_{i-1}'=\theta_i\otimes_R U_i,
	$ where $U_i$ is a free $R$-module. Thus, there exists a chain 
	\begin{align}
		J=P_1\subset \cdots\subset P_n=A, \quad P_i/P_{i-1}\simeq P_i/J/P_{i-1}/J\simeq P_i'/P_{i-1}'.
	\end{align}Since $J$ is a cell ideal, $J\simeq \theta_1\otimes_R \iota(\theta_1)$. Putting $U_1=\iota(\theta_1)$, the result follows.
\end{proof}

Cellular algebras have a base change property (see \citep[(1.8)]{zbMATH00871761}). 

\begin{Prop}\label{changeringcellular}
	Let $S$ be a commutative $R$-algebra which is a Noetherian ring. Let $A$ be a cellular $R$-algebra with cell datum $(\L, M, C, \iota)$ then $S\otimes_R A$ is cellular with cell datum $(\L, M, 1_S\otimes_R C, \id_S\otimes_R \iota)$.
\end{Prop}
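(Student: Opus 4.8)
The plan is to verify directly that the triple $(\L, M, 1_S\otimes_R C, \id_S\otimes_R\iota)$ satisfies the three axioms (C1), (C2), (C3) of Definition \ref{cellularbasis} for the $S$-algebra $S\otimes_R A$. The poset $\L$ and the finite sets $M(\l)$ are unchanged, so there is nothing to check about the combinatorial data; the work is entirely in transporting the basis, the anti-isomorphism, and the multiplication rule along the functor $S\otimes_R(-)$.

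First I would address (C1). Since $A$ is free as an $R$-module with basis $\{C_{S,T}^\l\}$, the module $S\otimes_R A$ is free as an $S$-module with basis $\{1_S\otimes_R C_{S,T}^\l \colon S,T\in M(\l),\ \l\in\L\}$; this is just the standard fact that extension of scalars sends a free module with a given basis to a free module with the correspondingly indexed basis. (Here one should also note $S\otimes_R A$ is a Noetherian ring and free as $S$-module, so it is a free Noetherian $S$-algebra as required.) Next, (C2): the map $\id_S\otimes_R\iota$ is $S$-linear because $\iota$ is $R$-linear, it is an anti-homomorphism because $\iota$ is and because in a tensor product of algebras $(s\otimes a)(s'\otimes a')=ss'\otimes aa'$ so reversing the order in the second tensor factor reverses the product, it squares to the identity because $\iota^2=\id_A$, and it sends $1_S\otimes_R C_{S,T}^\l$ to $1_S\otimes_R C_{T,S}^\l$ by construction. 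Finally, (C3): an arbitrary element of $S\otimes_R A$ is an $S$-linear combination $\sum_k s_k\otimes_R a_k$; by $S$-bilinearity of the multiplication and $S$-linearity of everything in sight it suffices to check the rule for a pure tensor $s\otimes_R a$ acting on $1_S\otimes_R C_{S,T}^\l$. Applying $S\otimes_R(-)$ to the identity $aC_{S,T}^\l=\sum_{U\in M(\l)}r_a(U,S)C_{U,T}^\l+r'$ from (C3) for $A$ and multiplying by $s$ in the first factor gives
\begin{align}
(s\otimes_R a)(1_S\otimes_R C_{S,T}^\l)=\sum_{U\in M(\l)}(s\cdot r_a(U,S))\,(1_S\otimes_R C_{U,T}^\l)+(s\otimes_R r'),
\end{align}
where $s\cdot r_a(U,S)$ denotes the image of $r_a(U,S)\in R$ in $S$ multiplied by $s$, which lies in $S$ and does not depend on $T$, and where $s\otimes_R r'$ is an $S$-linear combination of basis elements $1_S\otimes_R C_{V,W}^\mu$ with $\mu<\l$. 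This is exactly (C3) for $S\otimes_R A$ with coefficients $r_{s\otimes a}(U,S)=s\cdot r_a(U,S)$.

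There is no serious obstacle here; the only point requiring any care is bookkeeping — confirming that ``upper index strictly smaller than $\l$'' is preserved under $S\otimes_R(-)$ (immediate, since tensoring does not mix the grading by $\L$) and that the coefficients still fail to depend on the index $T$ (immediate from the corresponding property over $R$). One could alternatively invoke Definition \ref{abstractcellular} and Proposition \ref{cellulardeficoincide}, checking instead that a cell chain $0\subset J_1\subset\cdots\subset J_n=A$ with $R$-free cell ideals base-changes to a cell chain of $S\otimes_R A$ with $S$-free cell ideals (using that $S\otimes_R(-)$ is additive, sends the bimodule isomorphism $\alpha\colon J\to\theta\otimes_R\iota(\theta)$ to $S\otimes_R\alpha\colon S\otimes_R J\to(S\otimes_R\theta)\otimes_S(S\otimes_R\iota(\theta))$, and sends $R$-free modules to $S$-free modules); but the direct verification via Definition \ref{cellularbasis} above is the shortest route, since the base-change behaviour of a cellular \emph{basis} is transparent.
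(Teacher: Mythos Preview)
Your proof is correct and follows essentially the same approach as the paper: a direct verification of axioms (C1), (C2), (C3) for $S\otimes_R A$, checking (C3) on pure tensors $s\otimes_R a$ and reading off the coefficients $sr_a(U,S)$ from the original cell datum. The paper does not mention the alternative via Definition~\ref{abstractcellular}, but otherwise the argument is the same.
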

\begin{proof}
	The algebra $S\otimes_R A$ has an $S$-basis $\{S\otimes_R C_{U, T}^\l\ | \ U, T\in M(\l), \l\in \L \}$. Hence, condition (C1) holds. Since $\iota$ is an anti-isomorphism over $R$, so it is $S\otimes_R \iota$ over $S$. Moreover, $(\id_S\otimes_R \iota)^2=\id_S\otimes_R \iota^2=\id_S\otimes_R \id_A=\id_{S\otimes_R A}$ and $\id_S\otimes_R \iota (1_S\otimes_R C_{U, T}^\l)=1_S\otimes_R \iota(C_{U, T}^\l)=1_S\otimes_R C_{T, U}^\l$ for $U, T\in M(\l)$, $\l\in \L$. So, condition (C2) holds. It remains to check condition (C3). For $s\otimes_R a\in S\otimes_R A$,
	\begin{align}
		(s\otimes_R a) (1_S\otimes C_{V, T}^\l)&=s\otimes aC_{V, T}^\l=s\otimes \left( \sum_{U\in M(\l)} r_a(U, V)C_{U, T}^\l + r' \right) = \sum_{U\in M(\l)} s\otimes r_a(U, V)C_{U, T}^\l + s\otimes r' \\
		&= \sum_{U\in M(\l)} sr_a(U, V) (1_S\otimes_R C_{U, T}^\l) +s\otimes r',
	\end{align}where $s\otimes r'$ is a linear combination of basis elements $1_S\otimes_R C_{l, t}^\mu$ with upper index $\mu$ strictly smaller than $\l$ and $\l\in \L$, $l, t\in M(\mu)$,  $V, T\in M(\l)$.
\end{proof}

The following result due to \citep[Proposition 4.3]{zbMATH01218863} is fundamental to understand under what conditions an endomorphism algebra of a projective module over a cellular algebra remains cellular. However, we need further assumptions on the ground ring. By a commutative \textbf{projective-free} ring $R$ we mean a commutative ring $R$ with every finitely generated projective $R$-module being free. Properties about these rings can be found in \citep{zbMATH02166983}.
\begin{Prop}\label{endomorphismcellular}
	Let $R$ be a commutative Noetherian projective-free ring. Let $A$ be a cellular $R$-algebra with involution $\iota$ and with cell chain
	\begin{align}
		0\subset J_1\subset\cdots\subset J_n=A.
	\end{align} Let $e$ be an idempotent of $A$ which is fixed by $\iota$. Then, $eAe$ is a cellular $R$-algebra with involution $\iota_{|_{eAe}}$ and with cell chain 
	\begin{align}
		0\subset eJ_1e \subset\cdots\subset eJ_ne=eAe.
	\end{align} 
\end{Prop}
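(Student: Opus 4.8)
The plan is to verify, for $eAe$, the abstract definition of cellularity (Definition~\ref{abstractcellular}), reducing everything to the case of a single cell ideal. First I would record the easy structural facts: since $\iota(e)=e$ and $\iota(eae)=\iota(e)\iota(a)\iota(e)=e\iota(a)e$, the map $\iota$ restricts to an $R$-linear anti-isomorphism $\iota_{|_{eAe}}$ of $eAe$ with square the identity; each $eJ_ke$ is a two-sided ideal of $eAe$ fixed by $\iota$ (because $\iota(eJ_ke)=e\iota(J_k)e=eJ_ke$); and, using $eJ_{k-1}e=eAe\cap J_{k-1}$, the obvious map $eAe/eJ_{k-1}e\to e(A/J_{k-1})e$, $eae+eJ_{k-1}e\mapsto e\bar a e$, is an isomorphism of algebras intertwining the two induced involutions. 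Granting these, an induction on the length $n$ of the cell chain reduces the statement to two tasks: (1) show that if $J$ is a single cell ideal of $A$ with respect to $\iota$ and $\iota(e)=e$, then $eJe$ is $0$ or a cell ideal of $eAe$ with respect to $\iota_{|_{eAe}}$; and (2) check that the resulting subquotients assemble into an $\iota$-stable $R$-module decomposition of $eAe$ adapted to the chain $0\subset eJ_1e\subset\cdots\subset eJ_ne=eAe$.

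For (1) I would start from the defining bimodule isomorphism $\alpha\colon J\xrightarrow{\ \sim\ }\theta\otimes_R\iota(\theta)$, where $\theta\subset J$ is a left ideal of $A$ free over $R$ and $\alpha$ intertwines $\iota$ with the twist $x\otimes y\mapsto\iota(y)\otimes\iota(x)$. Since the left $A$-action on $\theta\otimes_R\iota(\theta)$ is via the first factor and the right action via the second, applying $e(-)e$ identifies $e(\theta\otimes_R\iota(\theta))e$ with $e\theta\otimes_R\iota(\theta)e$; and $\iota(\theta)e=\iota(e\theta)$ because $\iota(ex)=\iota(x)\iota(e)=\iota(x)e$. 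So $\alpha$ restricts to an $eAe$-bimodule isomorphism $eJe\xrightarrow{\ \sim\ }e\theta\otimes_R\iota(e\theta)$, and since $\iota_{|_{eAe}}$ is the restriction of $\iota$ and $e$ is $\iota$-fixed, one checks this restriction still intertwines $\iota_{|_{eAe}}$ with the twist. To turn $e\theta$ into an actual left ideal of $eAe$ contained in $eJe$, I would fix an $R$-basis $y_1,\dots,y_k$ of $\iota(e\theta)$ and take $\theta':=\alpha^{-1}(e\theta\otimes y_1)$, which is a left $eAe$-submodule of $eJe$ isomorphic to $e\theta$ as a left $eAe$-module, and for which the restricted $\alpha$ yields $eJe\cong\theta'\otimes_R\iota(\theta')$ compatibly with $\iota_{|_{eAe}}$.

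The crucial point — and essentially the only non-formal one — is the freeness over $R$ of the truncated cell module $e\theta$. This is where projective-freeness of $R$ enters: the idempotent $e$ acts on the finitely generated $R$-free module $\theta$, so $\theta=e\theta\oplus(1-e)\theta$ as $R$-modules, hence $e\theta$ is a finitely generated projective $R$-module, hence $R$-free; consequently $\iota(e\theta)\cong e\theta$ is $R$-free, which is exactly what makes the preceding construction work. For a general Noetherian $R$ this step fails, which is why the hypothesis is imposed; over an integral domain one recovers \citep[Proposition~4.3]{zbMATH01218863}, whose argument I expect to carry over with only this extra input. For (2) I would note that each $eJ_ke$ is an $R$-module summand of the $R$-free module $J_k$ through $e(-)e$, hence $R$-free by projective-freeness, and similarly for the quotients $eJ_ke/eJ_{k-1}e\cong e(J_k/J_{k-1})e$; so the filtration $(eJ_ke)_k$ splits over $R$, and the complements can be chosen $\iota$-stably by lifting an $\iota$-symmetric generating set of the truncated cell module through the projection and reconstructing the complement $\iota$-equivariantly (deleting any step with $eJ_ke=eJ_{k-1}e$, which occurs precisely when the corresponding truncated cell ideal is zero).

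So the main obstacle is really just the freeness of $e\theta$ over $R$, for which the projective-free hypothesis is tailor-made; everything else is bookkeeping that transports the $\iota$-compatible bimodule data of each cell ideal through the $e(-)e$ truncation. A secondary point requiring some care is arranging the $R$-module decomposition of $eAe$ in step (2) to be genuinely $\iota$-stable, since the decomposition of $A$ is not preserved by $e(-)e$; this is handled by the lifting procedure just described rather than by naive truncation.
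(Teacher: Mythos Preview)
Your proposal is correct and follows essentially the same strategy as the paper: restrict $\iota$ to $eAe$, show $e\theta$ is $R$-free via projective-freeness (the paper phrases this as $e\theta=eA\otimes_A\theta\in\add_R\theta$), transport the bimodule isomorphism $\alpha$ through the functors $eA\otimes_A-$ and $-\otimes_A Ae$, and induct on the chain. The paper's proof is considerably terser and does not explicitly address your point (2) about arranging the $\iota$-stable $R$-module decomposition of $eAe$, so your extra care there fills in a detail the paper leaves implicit.
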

\begin{proof}
	Since $\iota$ fixes the idempotent $e$, the restriction of $\iota\colon A\rightarrow A$ to $eAe$ has image in $eAe$. Thus, $\iota_{|_{eAe}}$ is an involution of $eAe$. Let $J$ be a cell ideal of $A$. We claim that $eJe$ is a cell ideal of $eAe$. Let $j\in J$. By assumption, there exists $j'$ such that $\iota(j')=j$. Hence, $\iota(ej'e)=\iota(e)\iota(ej')=\iota(e)\iota(j')\iota(e)=eje$. This shows that $\iota_{|_{eAe}}(eJe)=eJe$. Let $\theta$ be the left ideal associated with $J$. Then, $e\theta=eA\otimes_A \theta\in \add_R \theta$. Hence, $e\theta\in R\proj$. Since $R$ is projective-free $e\theta$ is $R$-free and $\iota(e\theta)=\iota(\theta)e$. Applying the functors $eA\otimes_A -$ and $-\otimes_A Ae$ to $\alpha$ we obtain an isomorphism $e\alpha e$ compatible with the desired commutative diagram. So, $eJe$ is a cell ideal. Proceeding by induction, multiplication by $e$ on both sides on a cell chain of $A$ yields a cell chain for $eAe$.
\end{proof}
Of course, $\mathbb{Z}$ is a principal ideal domain, and thus it is a projective-free ring. Due to \citep{zbMATH03627311}, the Laurent polynomial ring $\mathbb{Z}[X, X^{-1}]$ is projective-free. These observations are important to give proofs of Hecke algebras being cellular using the cellularity of $q$-Schur algebras using Proposition \ref{endomorphismcellular}.

In \citep{zbMATH01463504}, it is shown that in characteristic two not every projective module can be given by an idempotent fixed by the involution. 
Hence, being cellular is not a categorical concept. The situation becomes even worse for cellular algebras over commutative rings which are not projective-free. Still in \citep{zbMATH01463504}, they show that cellular algebras over fields of characteristic different from two are preserved under Morita equivalence.
This is another evidence that cellular algebras have nicer properties over $\mathbb{Z}[\frac{1}{2}]$ and over Laurent polynomial rings over $\mathbb{Z}[\frac{1}{2}]$.

\subsection{Split highest weight categories over commutative Noetherian rings} \label{Split highest weight category over noetherian rings} 

Quasi-hereditary algebras, introduced in \citep{MR961165}, play an important role in the representation theory of Lie algebras and algebraic groups. During the last 30 years, many approaches have been suggested \cite{zbMATH01198520, Rouquier2008, CLINE1990126, zbMATH02037023} to this subject. 
In this section, we will follow Rouquier's approach.  For the study of quasi-hereditary algebras over fields, we refer to \cite{MR961165}, \cite{PS88}, \cite{Dlab1989d}, \cite{MR1128706}, \citep[A]{MR1284468},  \cite{zbMATH00140218}, \cite{Dlab1989}.

\begin{Def}\label{splithwc}
	Let $R$ be a Noetherian commutative ring. Let $A$ be a projective Noetherian $R$-algebra. Let $\Lambda$ be a finite preordered set. We say that $(A\m, \Lambda)$ is a \textbf{split highest weight category} if there exist finitely generated modules $\{\Delta(\lambda)\colon\lambda\in \Lambda \}$ such that \begin{enumerate}[(i)]
		\item $\Delta(\lambda)$ is a projective $R$-module;
		\item If $\Hom_A(\Delta(\lambda'), \Delta(\lambda''))\neq 0$, then $\lambda'\leq \lambda''$.
		\item If $N\in A\m$ is such that $\Hom_A(\Delta(\lambda), N)=0$ for all $\lambda\in \Lambda$, then $N=0$.
		\item For each $\lambda\in \Lambda$, there exists a projective $A$-module $P(\lambda)$ such that there is an exact sequence
		\begin{align*}
			0\rightarrow C(\lambda)\rightarrow P(\lambda)\xrightarrow{\pi_\l} \Delta(\lambda)\rightarrow 0,
		\end{align*} where $C(\lambda)$ has a finite filtration by modules of the form $\St(\mu)\otimes_R U_\mu$ with $U_\mu\in R\proj$ and $\mu>\l$. 
		\item $\End_A(\Delta(\lambda))\simeq R$ for all $\lambda\in \Lambda$.
	\end{enumerate}
\end{Def}
We say also that $(A\m, \{\St(\l)_{\l\in \L}  \})$ is a split highest weight category. For simplicity, occasionally we just write $\St$ to mean the set $\{\Delta(\lambda)\colon\lambda\in \Lambda \}$.

Here, Condition (iii) in  \ref{splithwc} means that the direct sum of all projective modules $P(\lambda)$ constructed in Condition \ref{splithwc}(iv) is a progenerator of $A\m$. In fact, under Conditions (i), (ii), (iv), (v) the Condition (iii) is equivalent to $\bigoplus_{\l\in \St} P(\l)$ being a progenerator of $A\m$.

Before we proceed any further we shall see that when $R$ is a field and $R$ is a splitting field for $A$ then split highest weight category here defined is the classical notion of highest weight category.
First, we need the following technical lemma.
\begin{Lemma}\label{lemmamultiplicityofdelta}
	Let $(A\m, \{\Delta(\lambda)_{\lambda\in \Lambda}\})$ be a split highest weight category over $R$.  Then, the following assertions hold.
	\begin{enumerate}[(i)]
		\item For any $\lambda\in \Lambda$, $\Hom_A(P(\lambda), \Delta(\lambda))\simeq \End_A(\Delta(\lambda))\simeq R$. \label{multiplicityofdelta}
		\item If $P(\mu)$ is an $A$-summand of $P(\lambda)$, then $\lambda\leq \mu$. \label{ordering on summands of projectives}
		\item If $\Hom_A(P(\mu), \Delta(\lambda))\neq 0$, then $\mu\leq \lambda$. \label{multiplicity of delta part 2}
		\item Assume that $R$ is a field. If $\Hom_A(P(\mu), \rad \Delta(\lambda))\neq 0$, then $\mu<\lambda$.\label{radical multiplicity} 
		\item Assume that $R$ is a field. For each $\l\in \L$, $\top \Delta(\lambda)$ is simple. Moreover, $\St(\lambda)$ is indecomposable. \label{field case projective indc}
	\end{enumerate}
\end{Lemma}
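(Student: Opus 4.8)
The plan is to prove the five assertions in turn, relying on two recurring tools. The first: splicing the exact sequence of Definition~\ref{splithwc}$(iv)$ with the filtration of $C(\mu)$ shows that each $P(\mu)$ admits a finite filtration whose subquotients are of the form $\Delta(\nu)\otimes_R U_\nu$ with $U_\nu\in R\proj$, all labels satisfying $\nu\geq\mu$ (the topmost subquotient being $\Delta(\mu)$ itself). The second is a vanishing principle: if $U\in R\proj$ then $\Delta(\nu)\otimes_R U$ is an $A$-direct summand of a finite power of $\Delta(\nu)$, so $\Hom_A(\Delta(\nu),\Delta(\lambda))=0$ forces $\Hom_A(\Delta(\nu)\otimes_R U,\Delta(\lambda))=0$; combined with left exactness of $\Hom_A(-,\Delta(\lambda))$, any module filtered by subquotients $\Delta(\nu)\otimes_R U_\nu$ with $\Hom_A(\Delta(\nu),\Delta(\lambda))=0$ for every occurring $\nu$ admits no nonzero homomorphism to $\Delta(\lambda)$. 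I will use $\mu>\lambda$ only in the form $\mu\not\leq\lambda$, so that Definition~\ref{splithwc}$(ii)$ yields $\Hom_A(\Delta(\mu),\Delta(\lambda))=0$.

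For $(i)$, apply $\Hom_A(-,\Delta(\lambda))$ to $0\to C(\lambda)\to P(\lambda)\xrightarrow{\pi_\lambda}\Delta(\lambda)\to 0$: since $C(\lambda)$ is filtered by $\Delta(\mu)\otimes_R U_\mu$ with $\mu>\lambda$, the vanishing principle gives $\Hom_A(C(\lambda),\Delta(\lambda))=0$, so the long exact sequence yields $\Hom_A(P(\lambda),\Delta(\lambda))\simeq\End_A(\Delta(\lambda))\simeq R$ by Definition~\ref{splithwc}$(v)$. For $(ii)$, if $P(\mu)$ is an $A$-summand of $P(\lambda)$ then $\Hom_A(P(\lambda),\Delta(\mu))$ contains the nonzero map $\pi_\mu$, hence is nonzero; feeding the $\Delta$-filtration of $P(\lambda)$ (all labels $\geq\lambda$) into $\Hom_A(-,\Delta(\mu))$ and using left exactness produces an occurring label $\nu\geq\lambda$ with $\Hom_A(\Delta(\nu),\Delta(\mu))\neq 0$, so $\nu\leq\mu$ and thus $\lambda\leq\mu$. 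Part $(iii)$ is the same argument applied to the $\Delta$-filtration of $P(\mu)$: from $\Hom_A(P(\mu),\Delta(\lambda))\neq 0$ one extracts $\nu\geq\mu$ with $\Hom_A(\Delta(\nu),\Delta(\lambda))\neq 0$, whence $\mu\leq\nu\leq\lambda$.

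Now let $R$ be a field, so $A$ is a finite-dimensional algebra. For $(iv)$, apply the exact functor $\Hom_A(P(\mu),-)$ to $0\to\rad\Delta(\lambda)\to\Delta(\lambda)\to\top\Delta(\lambda)\to 0$; then $\Hom_A(P(\mu),\rad\Delta(\lambda))\neq 0$ forces $\Hom_A(P(\mu),\Delta(\lambda))\neq 0$, hence $\mu\leq\lambda$ by $(iii)$, while $\mu=\lambda$ is excluded because $(i)$ gives $\dim_R\Hom_A(P(\lambda),\Delta(\lambda))=1$ whereas additivity of dimension along the same sequence, together with $\Hom_A(P(\lambda),\top\Delta(\lambda))\neq 0$ (the composite $P(\lambda)\to\Delta(\lambda)\twoheadrightarrow\top\Delta(\lambda)$ is nonzero), would make that dimension at least $2$. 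For $(v)$, write the semisimple module $\top\Delta(\lambda)$ as $\bigoplus_T T^{\oplus m_T}$ over its distinct simple constituents $T$, each $m_T\geq 1$, and let $Q(T)$ denote the projective cover of $T$; as $\Delta(\lambda)$ is a quotient of $P(\lambda)$, the module $\top\Delta(\lambda)$ is a quotient, hence a direct summand, of the semisimple module $\top P(\lambda)$, so by the Krull--Schmidt theorem $\bigoplus_T Q(T)^{\oplus m_T}$ is an $A$-summand of $P(\lambda)$. Lifting a surjection $\Delta(\lambda)\twoheadrightarrow T$ through $Q(T)\twoheadrightarrow T$ gives $\Hom_A(Q(T),\Delta(\lambda))\neq 0$ for each such $T$, so $1=\dim_R\Hom_A(P(\lambda),\Delta(\lambda))\geq\sum_T m_T\dim_R\Hom_A(Q(T),\Delta(\lambda))\geq\sum_T m_T$; hence $\top\Delta(\lambda)$ is a single simple module, and a module with simple top is indecomposable, so $\Delta(\lambda)$ is indecomposable.

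The parts $(i)$--$(iii)$ are essentially direct applications of the two tools above. The main obstacle is $(v)$: passing from the rank-one datum $\Hom_A(P(\lambda),\Delta(\lambda))\simeq R$ to the simplicity of $\top\Delta(\lambda)$ requires using the finite-dimensionality of $A$ together with Krull--Schmidt to realise the projective cover of $\top\Delta(\lambda)$ as a summand of $P(\lambda)$, and then the multiplicity bookkeeping with the $m_T$ is the delicate point to get right.
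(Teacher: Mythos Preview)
Your proof is correct and follows essentially the same route as the paper. Parts (i)--(iii) match the paper's arguments almost verbatim, only organised more systematically via your two stated tools. In (iv) and (v) there is a minor stylistic difference: where the paper identifies a nonzero morphism with a scalar multiple of $\pi_\lambda$ (using $\dim_R\Hom_A(P(\lambda),\Delta(\lambda))=1$) and derives a contradiction from surjectivity, you instead exploit additivity of dimension and a multiplicity count over the simple constituents of $\top\Delta(\lambda)$. Both rely on the same rank-one input from (i), and neither is substantially shorter or more general than the other.
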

\begin{proof}
	Consider $h\in \End_A(\Delta(\lambda))$. By \ref{splithwc}(iv), we have a surjective map $P(\lambda)\xrightarrow{\pi_\lambda} \St(\l)$, hence we have a map $P(\lambda)\xrightarrow{\pi_\lambda}\Delta(\lambda)\xrightarrow{h}\Delta(\lambda)\in \Hom_A(P(\lambda), \Delta(\lambda))$. Take a map $g\in \Hom_A(P(\lambda), \Delta(\lambda))$.
	
	Consider the following commutative diagram,
	\begin{center}
		\begin{tikzcd}
			0\arrow[r]& C(\lambda)\arrow[r, "i"]\arrow[dr, "g\circ i", swap]& P(\lambda)\arrow[r, "\pi_\lambda"]\arrow[d, "g"]& \Delta(\lambda)\arrow[r]& 0\\
			& &  \Delta(\lambda) & &
		\end{tikzcd}.
	\end{center} 
	Assume $g\circ i\neq 0$. Since $C(\lambda)$ has a finite filtration into modules of the form $\St\otimes_R X$, $X\in R\proj$, there exists $\Delta(\mu)$ with $\mu>\lambda$ such that $\Hom_A(\Delta(\mu), \Delta(\lambda))\neq 0$. By \ref{splithwc}(ii) we get that $\mu\leq \lambda$, which contradicts $\mu>\lambda$. Hence, $g\circ i=0$. So, $g$ induces uniquely a map $g'\in \End_A(\Delta(\lambda))$ such that $g'\circ \pi_\lambda=g$. Notice that $(h\circ \pi_\lambda)'$ satisfies $(h\circ \pi_\lambda)'\circ \pi_\lambda=h\circ \pi_\lambda$. Since $\pi_\lambda$ is an epimorphism, we get $(h\circ \pi_\lambda)'=h$. It follows that $\Hom_A(P(\lambda), \Delta(\lambda))\simeq \End_A(\Delta(\lambda))$ as $R$-modules. So, (i) follows.
	
	Assume that $P(\mu)$ is an $A$-summand of $P(\lambda)$.	Consider the following commutative diagram
	\begin{center}
		\begin{tikzcd}
			0\arrow[r]& C(\lambda)\arrow[r]\arrow[ddr]& P(\lambda)\arrow[r, "\pi_\lambda"]\arrow[d, twoheadrightarrow]& \Delta(\lambda)\arrow[r]& 0\\
			& &  P(\mu) \arrow[d, "\pi_\mu", twoheadrightarrow] & &\\
			& & \Delta(\mu) & &
		\end{tikzcd}.
	\end{center} If the map $C(\lambda)\rightarrow \Delta(\mu)$ is non-zero, then exists some module $\Delta(l)$ factor of $C(\lambda)$, hence $l>\lambda$ such that $\Hom_A(\Delta(l), \Delta(\mu))\neq 0$. By \ref{splithwc}(ii), we get $l\leq \mu$, which implies $\lambda<l\leq \mu$.
	
	If the map $C(\lambda)\rightarrow \Delta(\mu)$ is zero, then there exists a non-zero $A$-homomorphism $h\colon\Delta(\lambda)\rightarrow \Delta(\mu)$ which makes the diagram commutative. By \ref{splithwc}(ii), $\lambda\leq \mu$. Hence, (ii) follows.
	
		Let $0\neq \phi\in \Hom_A(P(\mu), \Delta(\lambda))$. Denote by $i$ the inclusion $C(\mu)\hookrightarrow P(\mu)$. If $\phi\circ i=0$, then $\phi$ induces a non-zero map in $\Hom_A(\Delta(\mu), \Delta(\lambda))$. By \ref{splithwc}(ii), $\mu\leq \lambda$. If $\phi\circ i\neq 0$ then exists by \ref{splithwc}(iv) $l>\mu$ such that $\Hom_A(\Delta(l), \Delta(\lambda))\neq 0$. By \ref{splithwc}(ii), $l\leq \lambda$. So, (iii) follows.
		
		Assume that $R$ is a field. Let $h\in \Hom_A(P(\mu), \rad \Delta(\lambda))$. Denote by $i$ the inclusion $\rad \Delta(\lambda)\hookrightarrow \Delta(\lambda)$.
		Applying \ref{multiplicity of delta part 2} with $i\circ h\neq 0$ it follows that $\mu\leq \lambda$. Now consider additionally that $\dim_R \Hom_A(P(\lambda), \Delta(\lambda))=1$. If $\mu=\lambda$, then $i\circ h=\alpha \pi_{\lambda}$ for some $\alpha\in R$. Thus, $i\circ h$ is surjective. Consequently, $i$ is an isomorphism. By Nakayama's Lemma, we get a contradiction. Hence, (iv) follows.
		
	It remains to prove (v).	If $\top P(\l)$ is simple, then there is nothing to prove. 	Assume that $\top P(\lambda)$ is not simple. Choose $S$ a simple module summand of $\top \St(\lambda)$ which is a summand of $\top P(\lambda)$. Denote  by $P$ the projective cover of $S$. Hence, $P$ is an indecomposable summand of $P(\lambda)$. And so, the canonical map $P\rightarrow \top P(\lambda)$ factoring through $S$ is non-zero. 
		We have a commutative diagram
		\begin{center}
			\begin{tikzcd}
				S=\top P\arrow[r, hookrightarrow] &\top P(\lambda)\arrow[r] & \top \Delta(\lambda)\\
				P\arrow[u]\arrow[rr, "\exists f\neq 0"]& & \Delta(\lambda) \arrow[u, twoheadrightarrow]
			\end{tikzcd}
		\end{center}
		Note that the existence of such non-zero map $f$ is due to $P$ being projective and the upper row being a monomorphism.
		Since $\dim_R \Hom_A(P(\lambda), \Delta(\lambda))=1$, there exists $\alpha\in R$ such that $f\circ \pi=\alpha \pi_\lambda$, where $\pi$ denotes the projection $P(\lambda)\twoheadrightarrow P$ and $i$ denotes the inclusion $P\hookrightarrow P(\lambda)$. If $\alpha=0$, then $f=f\circ \pi \circ i$ would be zero. Since $R$ is a field, $\alpha \id_{\Delta(\lambda)}$ is an isomorphism. Hence, $f$ is surjective. By the commutativity of the diagram, the map $S\hookrightarrow \top P(\lambda)\rightarrow \top \Delta(\lambda)$ is  surjective. Since $S$ is simple, it is an isomorphism. Therefore, $P$ is the projective cover of $\St(\l)$ and $\St(\l)$ is indecomposable.
\end{proof}

\begin{Remark} Let $R$ be a field.
	Condition \ref{splithwc}(iii) ensures that each simple module appears as a $\top$ of a standard module. Denote by $\rad A$ the Jacobson radical of $A$ and $S$ a simple $A$-module. Since $S$ is simple, either $\rad A S=0$ or $\rad AS=S$. By Nakayama's Lemma, if $\rad AS=S$, then $S=0$. Thus, $\rad AS=0$ and hence $\top S=S$. If $\Hom_A(\Delta(\lambda), S)\neq 0$, then $\top \Delta(\lambda)\rightarrow \top S=S$ is surjective. In other words, $S$ would appear as a summand of $\top \Delta(\lambda)$. Therefore, if $S$ never occurs as a summand of $\top \Delta(\lambda)$ for some $\lambda$, it would follow that $\Hom_A(\Delta(\lambda), S)=0$ for every $\lambda$. So, \ref{splithwc}(iii) would have implied $S=0$. \label{simples in top of standards field case}
\end{Remark}

We see immediately that even for split quasi-hereditary algebras over fields the conditions in Definition \ref{splithwc} are not enough to enforce the projectives $P(\l)$ to be indecomposable.
For example, we can consider a semi-simple algebra with two simple modules say $S_1$ and $S_2$ over an algebraically closed field. Fixing $P(1)=S_1\oplus S_2$ and $P(2)=S_2$ with $\St(1)=S_1$ together with the usual order we see that all conditions  of Definition \ref{splithwc} are satisfied. However, $P(1)$ is not indecomposable.
As we will see next, for split quasi-hereditary algebras over fields we can replace the projectives $P(\l)$ with the projective covers of the standard modules.

\begin{Prop}\label{splithwcfieldcase} (See also \citep[Lemma 4.31]{zbMATH05707949}).
	Let $R$ be a splitting field for $A$. Then, $(A\m, \{\Delta(\lambda)_{\lambda\in \Lambda}\})$ is a split highest weight category according to Definition \ref{splithwc} if and only if there is a correspondence between the poset $\Lambda$ and the isomorphism classes of simple $A$-modules which we denote by $S(\lambda)=\top \St(\l)$, and for all $\lambda\in\Lambda$, $\Delta(\lambda)$ satisfies
	\begin{enumerate}[label=(A\arabic*)]
		\item There is an exact sequence $0\rightarrow K(\lambda)\rightarrow \Delta(\lambda)\rightarrow S(\lambda)\rightarrow 0$ and the composition factors of $K(\lambda)$, $S(\mu)$, satisfy $\mu<\lambda$.
		\item There is an exact sequence $0\rightarrow C(\lambda)\rightarrow P_{c}(\lambda)\rightarrow \Delta(\lambda)\rightarrow 0$ and $C(\lambda)$ is filtered by modules $\Delta(\mu)$ with $\mu>\lambda$, where $P_c(\l)$ denotes the projective cover of $\St(\l)$.
	\end{enumerate}
\end{Prop}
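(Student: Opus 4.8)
The plan is to prove both implications by comparing Definition~\ref{splithwc} (with arbitrary projectives $P(\lambda)$) to the variant that uses the projective covers $P_c(\lambda)$. For the ``only if'' direction, assume $(A\m,\{\Delta(\lambda)\})$ is a split highest weight category in the sense of Definition~\ref{splithwc}. By Lemma~\ref{lemmamultiplicityofdelta}\ref{field case projective indc}, $\top\Delta(\lambda)$ is simple, so setting $S(\lambda):=\top\Delta(\lambda)$ is well-defined; the filtration of $C(\lambda)$ by $\Delta(\mu)\otimes_R U_\mu$ with $U_\mu\in R\proj$ collapses (over a field) to a $\Delta(\mu)$-filtration with $\mu>\lambda$, so condition (II) holds once we replace $P(\lambda)$ by $P_c(\lambda)$. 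To do that replacement, I would invoke Lemma~\ref{lemmamultiplicityofdelta}\ref{field case projective indc} again, which identifies $P_c(\lambda)$ as a direct summand of $P(\lambda)$ via the projective cover of $\top\Delta(\lambda)$; restricting $\pi_\lambda$ to this summand gives a surjection $P_c(\lambda)\twoheadrightarrow\Delta(\lambda)$, and its kernel is a summand of $C(\lambda)$, hence still $\Delta$-filtered with the required ordering. Condition (I) is obtained from condition \ref{splithwc}$(ii)$ and Remark~\ref{simples in top of standards field case}: the kernel $K(\lambda)=\rad\Delta(\lambda)$ has the property that if $S(\mu)$ is a composition factor then $\Hom_A(P_c(\mu),\rad\Delta(\lambda))\neq 0$, so Lemma~\ref{lemmamultiplicityofdelta}\ref{radical multiplicity} forces $\mu<\lambda$. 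Finally I must check the correspondence is a bijection: surjectivity of $\lambda\mapsto S(\lambda)$ onto simple modules is Remark~\ref{simples in top of standards field case}, and injectivity follows because $|\Lambda|$ equals the number of simples — which one sees by counting indecomposable summands of $\bigoplus_\lambda P_c(\lambda)$, a progenerator by condition \ref{splithwc}$(iii)$, together with Lemma~\ref{lemmamultiplicityofdelta}\ref{ordering on summands of projectives} to rule out $P_c(\mu)$ occurring in $P_c(\lambda)$ for $\mu\neq\lambda$.

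For the ``if'' direction, assume the correspondence and conditions (I), (II). I would verify the five axioms of Definition~\ref{splithwc} with $P(\lambda):=P_c(\lambda)$. Axiom $(i)$ is automatic: over a field every module is projective as an $R$-module. Axiom $(v)$, $\End_A(\Delta(\lambda))\simeq R$, follows because $R$ is a splitting field and $\top\Delta(\lambda)=S(\lambda)$ is simple with all other composition factors $S(\mu)$, $\mu<\lambda\neq\mu$; any endomorphism either is an isomorphism (hence a nonzero scalar, as $R$ is a splitting field) or has image inside $\rad\Delta(\lambda)$, but (I) shows $S(\lambda)$ is not a composition factor of $\rad\Delta(\lambda)$, so a non-isomorphism endomorphism is nilpotent and, having simple top $S(\lambda)$ not appearing below, must be zero. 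Axiom $(ii)$: if $\Hom_A(\Delta(\lambda'),\Delta(\lambda''))\neq 0$ then $S(\lambda')=\top\Delta(\lambda')$ is a composition factor of $\Delta(\lambda'')$; by (I) this forces $\lambda'\leq\lambda''$. Axiom $(iv)$ is exactly (II) after noting that over a field a $\Delta(\mu)$-filtration is a filtration by $\Delta(\mu)\otimes_R R$. Axiom $(iii)$: if $\Hom_A(\Delta(\lambda),N)=0$ for all $\lambda$, then since $S(\lambda)$ is a quotient of $\Delta(\lambda)$ we get $\Hom_A(\Delta(\lambda),\soc N)=0$, hence $\soc N$ has no simple summand $S(\lambda)$; as every simple is some $S(\lambda)$, $\soc N=0$, so $N=0$.

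I expect the main obstacle to be the careful bookkeeping in the ``only if'' direction needed to pass from the possibly-decomposable $P(\lambda)$ of Definition~\ref{splithwc} to the projective cover $P_c(\lambda)$, and to confirm that the kernel of the restricted surjection inherits a $\Delta$-filtration with the strict ordering $\mu>\lambda$. Concretely, one needs that the complement of $P_c(\lambda)$ inside $P(\lambda)$ maps into $C(\lambda)$ and that splitting off this complement does not destroy the filtration — this uses that direct summands of $\Delta$-filtered modules over a field remain $\Delta$-filtered together with the ordering control from Lemma~\ref{lemmamultiplicityofdelta}\ref{multiplicity of delta part 2} and \ref{ordering on summands of projectives}. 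Everything else reduces to the standard dictionary between highest weight categories and quasi-hereditary structures over a field; I would cite \citep[Lemma 4.31]{zbMATH05707949} for the parallel statement and only spell out the points where the split condition over a not-necessarily-algebraically-closed field requires $R$ to be a splitting field (namely in axiom $(v)$ and in the identification $\End_A(\Delta(\lambda))\simeq R$).
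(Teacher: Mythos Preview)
Your proposal is correct and follows essentially the same strategy as the paper's proof, with only minor variations in the details. For the ``if'' direction your argument for axiom~(v) is actually more transparent than the paper's: you argue directly that a non-isomorphism endomorphism of $\Delta(\lambda)$ has image in $\rad\Delta(\lambda)$, hence (by (I)) has no composition factor $S(\lambda)$, forcing it to be zero; the paper instead cites Lemma~\ref{lemmamultiplicityofdelta}\ref{multiplicityofdelta}, whose hypothesis is the split highest weight structure, which is mildly awkward in this direction. For the ``only if'' direction you obtain (II) for $P_c(\lambda)$ by showing $\pi_\lambda|_Q=0$ (via Lemma~\ref{lemmamultiplicityofdelta}\ref{ordering on summands of projectives} and \ref{multiplicity of delta part 2}) so that the kernel is a direct summand of $C(\lambda)$, whereas the paper appeals to closure of $\mathcal{F}(\Delta)$ under summands (trace filtrations or Proposition~\ref{everyprojectivehasafiltration}) applied to $P_c(\lambda)$ directly; both routes need the same closure-under-summands fact, and your identification of the complement $Q$ mapping into $C(\lambda)$ is exactly the missing bookkeeping you flagged.
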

\begin{proof}
	Let $(A\m, \{\Delta(\lambda)_{\lambda\in \Lambda}\})$ be a highest weight category in the classical sense, that is, satisfying (A1) and (A2). Let $\Lambda$ be the set of isomorphism classes of simple $A$-modules. \ref{splithwc}(i) is trivially checked since $R$ is a field. By (A1) and Lemma \ref{lemmamultiplicityofdelta}\ref{multiplicityofdelta}, Condition \ref{splithwc}(v) holds. Condition \ref{splithwc}(ii) is also satisfied since every non-zero map between standard modules $\St(\mu)$ and $\St(\l)$ can be extended to a non-zero map between the projective cover of $\St(\mu)$ and $\St(\l)$. Consequently, such a case would lead to the multiplicity of $S(\mu)$ in $\St(\l)$ being positive. By (A1), this occurs only if $\mu\leq \l$. Define $P_c(\lambda)$ to be the projective cover of $S(\lambda)$. By Axiom (A2) of highest weight categories, it follows that \ref{splithwc}(iv) is satisfied. Assume that $N\in A\m$ such that $\Hom_A(\Delta(\lambda), N)=0$ for all $\lambda\in \Lambda$. If $N\neq 0$, then $\soc N\neq 0$. Let $S(\lambda)\subset \soc N$. By Axiom (A1), there exists an exact sequence 
	\begin{center}
		\begin{tikzcd}
			0\arrow[r]& K(\lambda) \arrow[r] &\Delta(\lambda)\arrow[r]\arrow[rd, "\neq 0"]&S(\lambda)\arrow[r]\arrow[d, hookrightarrow]&0\\
			& & & N &
		\end{tikzcd}
	\end{center}
	This contradicts our assumption that $\Hom_A(\Delta(\lambda), N)=0$. So, $N=0$ and \ref{splithwc}(iii) holds. So, $(A, \{\Delta(\lambda)_{\lambda\in \Lambda}\})$ is a split highest weight category.
	
	Conversely, assume that $(A\m, \{\Delta(\lambda)_{\lambda\in \Lambda}\})$ is a split highest weight category. Since $$\dim_R \Hom_A(P(\lambda), \Delta(\lambda))=\dim_R \End_A(\Delta(\lambda))=1$$ all standard modules have a simple top. It can be seen that the category of objects admitting a filtration by standard modules is closed under direct summands for example by using trace filtrations (see  \citep[A.2]{MR1284468}). Hence, $P_c(\l)$ satisfies (A2). Alternatively, we can also apply Proposition \ref{everyprojectivehasafiltration} to see that $P_c(\l)$ satisfies (A2). 
	By Remark \ref{simples in top of standards field case}, $|\Lambda|$ is greater than or equal to the number of classes of non-isomorphic simple $A$-modules. Assume that there exist $\lambda$ and $\mu$ such that $\Delta(\lambda)$ and $\Delta(\mu)$ have the same projective cover. By Lemma \ref{lemmamultiplicityofdelta}\ref{ordering on summands of projectives} now using $P_c$ instead of $P$ we deduce that $\lambda=\mu$. Hence, $|\Lambda|$ is equal to the number of non-isomorphic simple $A$-modules. Now $[\Delta(\lambda)\colon S(\lambda)]=\dim_R \Hom_A(P_c(\lambda), \Delta(\lambda))=1$. By Lemma \ref{lemmamultiplicityofdelta}\ref{radical multiplicity}, if $[\rad \Delta(\lambda)\colon S(\mu)]\neq 0$, then $\mu< \lambda$. So, Axiom (A1) holds.
\end{proof}

The standard modules with maximal index can be generalised into the concept of projective $R$-split module. 
An $A$-module $L\in A\proj$ which is faithful over $R$ is called \textbf{projective $R$-split $A$-module} if the canonical morphism 
\begin{align}
	\tau_{L, P}\colon L\otimes_R \Hom_A(L, P)\rightarrow P, \quad l\otimes f\mapsto f(l)
\end{align} is an $(A, R)$-monomorphism for all projective $A$-modules $P$. We denote by $\tau_{L}$ the homomorphism $\tau_{L, A}$.
By $\mathcal{M}(A)$ we mean the set of isomorphism classes of projective $R$-split $A$-modules.  These modules were introduced in \cite{Rouquier2008}. In Appendix \ref{Rsplit modules} we give a detailed exposition on these modules. We highlight the following result which allows us to stratify split highest weight categories.  

\begin{Lemma}(see \citep[Lemma 4.12]{Rouquier2008})
	Let $A$ be a projective Noetherian $R$-algebra. Let ${\{\St(\l)\colon \l\in\L \}}$ be a finite set of modules in $A\m$ together with a poset structure on $\L$. Let $\alpha$ be a maximal element in $\L$. Then, $(A\m, \{\Delta(\lambda)_{\lambda\in \Lambda}\})$ is a split highest weight category if and only if $\St(\alpha)\in \mathcal{M}(A)$ and $(A/J\m, \{\Delta(\lambda)_{\lambda\in \Lambda\backslash \{\alpha \} } \})$  is a split highest weight category, where $J=\im \tau_{\St(\alpha)}$ is an ideal of $A$.
\end{Lemma}
\begin{proof}
	See for example Lemma \ref{splithwcinduction}.
\end{proof}

\begin{Notation}\label{filtrationnotation}
	Denote by $\mathcal{F}(\St)$ the full subcategory of $A\m$ whose objects have filtration by objects in $\St$. Denote $\Stsim(\l)=\St(\l)\otimes_R U$, $U\in R\proj$, $\l\in\L$. Denote by $\mathcal{F}(\Stsim)$ the full subcategory of $A\m$ whose objects have filtrations by objects of the form $\St(\l)\otimes_R U$, \mbox{$U\in R\proj$,} $\l\in \L$. Here, we are abusing the notation by writing $\mathcal{F}(\Stsim)$ instead of $\mathcal{F}(\Stsim_{\l\in \L})$. 
\end{Notation}

Notice that the modules of $\mathcal{F}(\Stsim)$ with the $U_j$ being free $R$-modules are exactly the modules in $\mathcal{F}(\Stsim)$ and $\mathcal{F}(\St)=\mathcal{F}(\Stsim)$ exactly when all projective $R$-modules are free.
By Proposition \ref{everyprojectivehasafiltration} (see also \citep[Proposition 4.13]{Rouquier2008}) the category $\mathcal{F}(\Stsim)$ contains all projective $A$-modules.

\begin{Prop}(see \citep[Proposition 4.13]{Rouquier2008})
	\label{extonstandards}	Suppose $(A\m, \{\Delta(\lambda)_{\lambda\in \Lambda}\})$ is a split highest weight category. Then, 
	\begin{enumerate}[(i)]
		\item If $\Ext_A^1(\St(\l), \St(\mu))\neq 0$, then $\l<\mu$.
		\item If $\Ext_A^i(\St(\l), \St(\mu)\neq 0$ for some $i>0$, then $\l<\mu$. In particular, $\Ext_A^i(\St(\l), \St(\l))=0, \ i>0$.
	\end{enumerate}
\end{Prop}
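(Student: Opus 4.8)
The plan is to run the standard dimension-shifting argument on the presentation $0 \to C(\lambda) \to P(\lambda) \xrightarrow{\pi_\lambda} \St(\lambda) \to 0$ supplied by \ref{splithwc}$(iv)$, in which $C(\lambda)$ carries a finite filtration whose layers have the form $\St(\nu) \otimes_R U_\nu$ with $U_\nu \in R\proj$ and $\nu > \lambda$, together with an induction on the cohomological degree $i$.

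First, for part (a): applying $\Hom_A(-, \St(\mu))$ to the presentation and using that $P(\lambda)$ is projective, so $\Ext^1_A(P(\lambda), \St(\mu)) = 0$, exhibits $\Ext^1_A(\St(\lambda), \St(\mu))$ as a quotient of $\Hom_A(C(\lambda), \St(\mu))$. Thus a nonzero $\Ext^1$ forces $\Hom_A(C(\lambda), \St(\mu)) \neq 0$; walking down the finite filtration of $C(\lambda)$ and invoking the long exact sequence of $\Hom_A(-, \St(\mu))$ at each step produces a layer $\St(\nu) \otimes_R U_\nu$, $\nu > \lambda$, with $\Hom_A(\St(\nu) \otimes_R U_\nu, \St(\mu)) \neq 0$. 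Since $U_\nu$ is finitely generated projective over $R$, it is an $R$-summand of some $R^n$, so $\St(\nu) \otimes_R U_\nu$ is an $A$-summand of $\St(\nu)^n$; hence $\Hom_A(\St(\nu), \St(\mu)) \neq 0$, and \ref{splithwc}$(ii)$ gives $\nu \leq \mu$, whence $\lambda < \nu \leq \mu$.

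For part (b) I would induct on $i \geq 1$, the base case $i = 1$ being (a). For $i \geq 2$, projectivity of $P(\lambda)$ kills both $\Ext^{i-1}_A(P(\lambda), \St(\mu))$ and $\Ext^i_A(P(\lambda), \St(\mu))$, so the long exact sequence of $\Ext_A(-, \St(\mu))$ gives an isomorphism $\Ext^i_A(\St(\lambda), \St(\mu)) \cong \Ext^{i-1}_A(C(\lambda), \St(\mu))$. If this is nonzero, the same filtration walk — now using that $\Ext$ is additive over the layers and passes to direct summands — yields a layer $\St(\nu) \otimes_R U_\nu$ with $\nu > \lambda$ and $\Ext^{i-1}_A(\St(\nu), \St(\mu)) \neq 0$; the induction hypothesis forces $\nu < \mu$, so $\lambda < \nu < \mu$. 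The final ``in particular'' assertion is then immediate, since $\lambda < \lambda$ cannot hold.

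The only points requiring care rather than insight are the filtration-walk lemma — proved by induction on the length of the filtration, using that in a short exact sequence $0 \to X' \to X \to X'' \to 0$ nonvanishing of $\Ext^j_A(X, \St(\mu))$ forces nonvanishing of $\Ext^j_A(X', \St(\mu))$ or $\Ext^j_A(X'', \St(\mu))$ — and the reduction from $\St(\nu) \otimes_R U_\nu$ to $\St(\nu)$ via the summand argument, which is the only place the hypothesis $U_\nu \in R\proj$ is used. I do not expect a genuine obstacle; the mildest subtlety is to record that $C(\lambda)$ and the terms of its filtration belong to $A\m$, which holds because $A$ is Noetherian and $P(\lambda)$ is finitely generated.
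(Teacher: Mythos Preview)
Your proof is correct and follows essentially the same approach as the paper: both use the presentation $0\to C(\lambda)\to P(\lambda)\to\St(\lambda)\to 0$, dimension-shift along it, and then walk the filtration of $C(\lambda)$ to land on some $\St(\nu)$ with $\nu>\lambda$. Your induction on $i$ for part (b) is in fact cleaner than the paper's somewhat informal ``repeat the process until $i-1=1$ or the standard becomes projective'' argument, but the underlying idea is the same.
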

\begin{proof}
Assume that $\Ext_A^1(\St(\l), \St(\mu))\neq 0$.	Consider the exact sequence $\delta\colon 0\rightarrow C(\l)\rightarrow P(\l)\rightarrow \St(\l)\rightarrow 0$. Applying $\Hom_A(-, \St(\mu))$ we obtain the exact sequence\begin{align*}
		\Hom_A(C(\l), \St(\mu))\rightarrow \Ext_A^1(\St(\l), \St(\mu))\rightarrow \Ext_A^1(P(\l), \St(\mu)).
	\end{align*} We deduce that $\Hom_A(C(\l), \St(\mu))\neq 0$. So, there is a factor of $C(\l)$, say $\St(\alpha)\otimes_R U_\alpha$ such that $\Hom_A(\St(\alpha), \St(\mu))\neq 0$. Thus, $\alpha\leq \mu$. Since $\St(\alpha)\otimes_R U_\alpha$ is a factor of $C(\l)$ we get that $\alpha>\l$. Thus, $\mu\geq \alpha>\l$. So, (i) follows.
	
	Now assume $\Ext_A^i(\St(\l), \St(\mu))\neq 0$ for some $i>0$. Applying $\Hom_A(-, \St(\mu))$ to $\delta$ we deduce that \linebreak$0\neq \Ext_A^i(\St(\l), \St(\mu))\simeq \Ext_A^{i-1}(C(\l), \St(\mu))$. Now consider the following filtration of $C(\l)$
	\begin{align}
		0\rightarrow C_1(\l)\rightarrow C(\l)\rightarrow \St(\alpha)\otimes_R U_\alpha \rightarrow 0.
	\end{align} Recall that its factors are of the form $\St(\alpha)\otimes_R U_\alpha$ with $\alpha>\l$ and $U_\alpha\in R\proj$. Applying again $\Hom_A(-, \St(\mu))$ it yields the exact sequence
	\begin{align}
		\Ext_A^{i-1}(C_1(\l), \St(\mu))\rightarrow \Ext_A^{i-1}(C(\l), \St(\mu))\rightarrow \Ext_A^{i-1}(\St(\alpha)\otimes_R U_\alpha, \St(\mu)).
	\end{align}
	We can assume that $\l$ is the maximal term that satisfies $\Ext_A^i(\St(\l), \St(\mu))\neq 0$ for some $i>0$. Otherwise, we can consider $\Ext_A^{i-1}(\St(\alpha), \St(\mu))\neq 0$ and repeat the process until either $\St(\alpha)$ is chosen to be projective or $i-1=1$. Then, we are in situation (i) and we are done since $\mu>\alpha>\l$. Thus, now assume $\Ext_A^{i-1}(\St(\alpha), \St(\mu))=0$. Hence, $\Ext_A^i(C_1(\l), \St(\mu))\neq 0$, we can continue the procedure using the factors of $C_1(\l)$ until either we get $\Ext_A^l(\St(\alpha), \St(\mu))\neq 0$ and $\alpha>\l$ which by previous discussion leads to $\mu>\alpha>\l$. In case,  $\Hom_A(C_1(\l), \St(\mu))\neq 0$ we will get $\Hom_A(\St(\alpha), \St(\mu))\neq 0$ for some $\St(\alpha)$ factor of $C_1(\l)$. Thus, $\l<\alpha\leq \mu$. In particular, if $\Ext_A^i(\St(\l), \St(\l))\neq 0$ for some $i>0$, then by (ii), $\l<\l$ which is an absurd.
\end{proof}

As in the classical case, Proposition \ref{extonstandards} says that all filtrations can be rearranged so that the standard modules with higher weight appear at the bottom of the filtration (see for example Proposition \ref{usualfiltrationisthisoneaswell}). In particular, all filtrations in $\mathcal{F}(\Stsim)$ can take the form of Proposition \ref{everyprojectivehasafiltration} (see also \citep[Proposition 4.13]{Rouquier2008}).

\section{Some properties of split highest weight categories}

Some known results in the theory of split highest weight categories over fields will be generalised to split highest weight categories over commutative Noetherian rings. The first step is to look at change of rings and at reducing questions on split highest weight categories over commutative Noetherian rings to questions on split highest weight categories over fields.

\subsection{Split highest weight categories under change of rings}\label{Split highest weight categories under change of rings}

\begin{Prop}\label{hwctensorproduct}
	(see \citep[Proposition 4.14]{Rouquier2008}) Let $S$ be a commutative $R$-algebra and Noetherian ring. Let $(A\m, \{\Delta(\lambda)_{\lambda\in \Lambda}\})$ be a split highest weight category. Then, $(S\otimes_R A\m, \{S\otimes_R\Delta(\lambda)_{\lambda\in \Lambda}\})$ is a split highest weight category. Moreover, $S\otimes_R (A/J)\m\simeq (S\otimes_RA)/(S\otimes_R J)\m$, where $J=\im \tau_{\St(\l)}$ for all $\l\in \L$ so that $\St(\l)\in \mathcal{M}(A)$.
\end{Prop}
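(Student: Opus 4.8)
The plan is to verify the axioms of Definition \ref{splithwc} for the pair $(S\otimes_R A\m, \{S\otimes_R\Delta(\lambda)_{\lambda\in\Lambda}\})$ by applying the exact functor $S\otimes_R-$ to the data certifying that $(A\m,\{\Delta(\lambda)_{\lambda\in\Lambda}\})$ is a split highest weight category, using everywhere that $\Delta(\lambda)$, $C(\lambda)$, $P(\lambda)$ and the relevant $U_\mu$ are all finitely generated and projective over $R$, so that $S\otimes_R-$ commutes with $\Hom_A$ on these modules and preserves projectivity over both $R$ and $A$. First I would record the base-change isomorphism $\Hom_{S\otimes_R A}(S\otimes_R M, S\otimes_R N)\simeq S\otimes_R\Hom_A(M,N)$, valid when $M\in A\m\cap R\proj$ (this is the standard flat/projective base change for $\Hom$; it is the tool also underlying Theorem \ref{projectivitytermsmaximal}). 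Axiom (i) is immediate since $S\otimes_R(-)$ sends $R\proj$ to $S\proj$. Axiom (v), $\End_A(\Delta(\lambda))\simeq R$, base-changes to $\End_{S\otimes_R A}(S\otimes_R\Delta(\lambda))\simeq S\otimes_R R= S$. Axiom (ii) follows similarly: if $\Hom_{S\otimes_R A}(S\otimes_R\Delta(\lambda'),S\otimes_R\Delta(\lambda''))\simeq S\otimes_R\Hom_A(\Delta(\lambda'),\Delta(\lambda''))\neq 0$, then $\Hom_A(\Delta(\lambda'),\Delta(\lambda''))\neq 0$, whence $\lambda'\leq\lambda''$.

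For axiom (iv) I would apply the exact functor $S\otimes_R-$ to $0\to C(\lambda)\to P(\lambda)\xrightarrow{\pi_\lambda}\Delta(\lambda)\to 0$, obtaining an exact sequence $0\to S\otimes_R C(\lambda)\to S\otimes_R P(\lambda)\to S\otimes_R\Delta(\lambda)\to 0$; here $S\otimes_R P(\lambda)$ is projective over $S\otimes_R A$ (projectivity of $P(\lambda)$ is preserved by $S\otimes_R-$, either directly or via Theorem \ref{projectivitytermsmaximal}), and applying $S\otimes_R-$ to the finite filtration of $C(\lambda)$ by modules $\Delta(\mu)\otimes_R U_\mu$ with $\mu>\lambda$, $U_\mu\in R\proj$, gives a finite filtration of $S\otimes_R C(\lambda)$ by modules $(S\otimes_R\Delta(\mu))\otimes_S(S\otimes_R U_\mu)$ with $S\otimes_R U_\mu\in S\proj$ and $\mu>\lambda$ — exactly the shape required. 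For axiom (iii): as remarked after Definition \ref{splithwc}, under (i),(ii),(iv),(v) it is equivalent to $\bigoplus_{\lambda}P(\lambda)$ being a progenerator of $A\m$, i.e.\ $\add\bigl(\bigoplus_\lambda P(\lambda)\bigr)=A\proj$; applying $S\otimes_R-$ and using that $S\otimes_R(-)$ takes $A\proj$ onto a generating set of $(S\otimes_R A)\proj$ (every projective $S\otimes_R A$-module is a summand of a free one, and free $S\otimes_R A$-modules are base-changed free $A$-modules up to summands), one concludes $\bigoplus_\lambda S\otimes_R P(\lambda)$ is a progenerator of $S\otimes_R A\m$. Alternatively one can cite the stratification Lemma \ref{splithwcinduction}-style argument, but the progenerator route is cleanest.

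For the last sentence, fix $\lambda\in\Lambda$ with $\Delta(\lambda)\in\mathcal{M}(A)$ and $J=\im\tau_{\Delta(\lambda)}$. I would first check that $S\otimes_R\Delta(\lambda)\in\mathcal{M}(S\otimes_R A)$ and that $\im\tau_{S\otimes_R\Delta(\lambda)}=S\otimes_R J$; this uses that $\tau_{L,P}$ base-changes compatibly — since $L\otimes_R\Hom_A(L,P)$ and $P$ are $R$-projective, $S\otimes_R\tau_{L,P}$ is identified with $\tau_{S\otimes_R L,\,S\otimes_R P}$ via the $\Hom$ base-change isomorphism, and $S\otimes_R-$ preserves $(A,R)$-monomorphisms (an $(A,R)$-mono splits over $R$, and the splitting base-changes). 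Hence $S\otimes_R J=S\otimes_R\im\tau_{\Delta(\lambda)}=\im(S\otimes_R\tau_{\Delta(\lambda)})=\im\tau_{S\otimes_R\Delta(\lambda)}$. Then $S\otimes_R(A/J)\simeq (S\otimes_R A)/(S\otimes_R J)$ is just right-exactness of $S\otimes_R-$ applied to $0\to J\to A\to A/J\to 0$ (noting $S\otimes_R J\hookrightarrow S\otimes_R A$ because $J$ is an $R$-summand of $A$, as $J$ and $A/J$ are $R$-projective in the split highest weight setting), and this is an isomorphism of algebras, hence of module categories. The main obstacle — really the only place care is needed — is the commutation of $\Hom_A$ with $S\otimes_R-$ and the compatibility of $\tau_{L,P}$ under base change, both of which rest on the $R$-projectivity of all the modules in sight; once those are in place every axiom transfers mechanically. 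I would also note this is \citep[Proposition 4.14]{Rouquier2008}, so a short proof citing that reference plus the $\mathcal{M}(A)$-compatibility suffices.
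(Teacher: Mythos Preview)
Your direct verification of the axioms is a reasonable alternative to the paper's inductive approach, but there is a genuine gap in your key base-change step. You claim the isomorphism
\[
\Hom_{S\otimes_R A}(S\otimes_R M,\,S\otimes_R N)\;\simeq\;S\otimes_R\Hom_A(M,N)
\]
is ``valid when $M\in A\m\cap R\proj$''. This is not a standard fact and is not true for arbitrary (non-flat) commutative Noetherian $R$-algebras $S$: the usual hypothesis is $M\in A\proj$, not merely $M\in R\proj$. Concretely, taking an $A$-projective presentation $A^a\to A^b\to M\to 0$ and applying $\Hom_A(-,N)$ gives $0\to\Hom_A(M,N)\to N^b\to N^a$; tensoring with $S$ need not preserve this kernel unless the image of $N^b\to N^a$ is $R$-projective, which you have no control over. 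You invoke this exactly for axioms (ii) and (v), where the first argument $\Delta(\lambda')$ is \emph{not} $A$-projective in general. Your argument can be repaired by routing through the $A$-projectives: since $P(\lambda')\twoheadrightarrow\Delta(\lambda')$ and $P(\lambda')\in A\proj$, one has
\[
\Hom_{S\otimes_R A}(S\otimes_R\Delta(\lambda'),\,S\otimes_R\Delta(\lambda''))\hookrightarrow \Hom_{S\otimes_R A}(S\otimes_R P(\lambda'),\,S\otimes_R\Delta(\lambda''))\simeq S\otimes_R\Hom_A(P(\lambda'),\Delta(\lambda'')),
\]
and then Lemma~\ref{lemmamultiplicityofdelta}(i),(iii) (which use only the axioms you have already transported) handle (ii) and (v). As written, though, the justification is incorrect.

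The paper takes a different route that sidesteps this difficulty entirely: it proceeds by induction on $|\Lambda|$ via Lemma~\ref{splithwcinduction}, peeling off a maximal $\alpha$. The point is that $\Delta(\alpha)\in\mathcal{M}(A)\subset A\proj$, so the base change for $\tau_{\Delta(\alpha)}$ and for $\End_A(\Delta(\alpha))$ is unproblematic (this is Lemma~\ref{splitmoduleschangering}); one then checks $S\otimes_R(A/J)\simeq (S\otimes_R A)/(S\otimes_R J)$ using $A/J\in R\proj$, and applies the inductive hypothesis to $A/J$. This approach never needs to base-change $\Hom_A$ on non-$A$-projective first arguments. Your treatment of axioms (i), (iv), (iii) and of the final sentence is fine and matches the paper's reasoning for those parts.
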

\begin{proof}
	We shall proceed by induction on $t=|\L|$. Assume $t=1$. Hence, $\St(\l)\in \mathcal{M}(A)$. By Lemma \ref{splitmoduleschangering}, \mbox{$S\otimes_R \St(\l)\in \mathcal{M}(S\otimes_R A)$}. Fix $J=\im \tau_{\St(\l)}$. By Condition \ref{splithwc}(iii) of split highest weight category $A/J\m=0$ and $S\otimes_R J=S\otimes_R \im \tau_{\St(\l)}=\im (S\otimes_R \tau_{\St(\l)})=J_S$ is a submodule of $S\otimes_R A$ since $S\otimes_R \tau_{\St(\l)}$ is an $(S\otimes_R A, S)$-monomorphism. 
	
	By Proposition \ref{split modules characterization}, it follows that $A/J$ is projective over $R$, thus $\Tor_1^R(S, A/J)=0$. Thus, the sequence $0\rightarrow J \rightarrow A \rightarrow A/J\rightarrow 0$ remains exact under the functor $S\otimes_R -$. Hence, the sequence \begin{align}
		0\rightarrow J_S\rightarrow S\otimes_R A\rightarrow S\otimes_R A/S\otimes_R J\rightarrow 0
	\end{align}is exact. Thus, $S\otimes_R A/J\simeq S\otimes_R A/S\otimes_R J$ as $S$-algebras. Now assume the result is known for $t-1$. Let \mbox{$\alpha\in \L$} be a maximal element. Then, $\St(\alpha)\in \mathcal{M}(A)$ and $(A/J\m, \L\backslash \{\l\})$ is a split highest weight category with \mbox{$J=\im \tau_{\St(\alpha)}$}. Analogous to the case $t=1$, $S\otimes_R \St(\l)\in \mathcal{M}(S\otimes_R A)$ and $S\otimes_R A/S\otimes_R J=S\otimes_R A/J$ as $S$-algebras. By induction, $(S\otimes_R A/J\m, \L\backslash\{\alpha\})$ is split highest weight category with standard modules \mbox{$\{S\otimes_R \St(\l)\colon \l\in \L\backslash\{\alpha\} \}$.} By Lemma \ref{splithwcinduction}, $(S\otimes_R A\m, \L)$ is a split highest weight category.
\end{proof}

\begin{Theorem}
	(see \citep[Theorem 4.15]{Rouquier2008}) Let $A$ be a projective Noetherian $R$-algebra and let $\{\Delta(\lambda)_{\lambda\in \Lambda}\}$ be a set of finitely generated $A$-modules indexed by a poset.
	$(A\m, \{\Delta(\lambda)_{\lambda\in \Lambda}\})$ is a split highest weight category if and only if $\Delta(\lambda)$ are projective $R$-modules, $\l\in\L$ and $(A(\mi)\m, \{\Delta(\lambda)(\mi)_{\lambda\in \Lambda}\})$ is a split highest weight category for every maximal ideal $\mi$ of $R$.\label{hwcresiduefield}
\end{Theorem}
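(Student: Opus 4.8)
The plan is to prove the two implications separately, with the forward direction being essentially immediate from the change-of-rings machinery already established, and the converse being the substantive part requiring an induction on $|\Lambda|$ together with the projectivity criterion of Theorem~\ref{projectivitytermsmaximal}.

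For the forward direction, suppose $(A\m, \{\St(\l)_{\l\in\L}\})$ is a split highest weight category. Then each $\St(\l)$ is a projective $R$-module by \ref{splithwc}$(i)$, and applying Proposition~\ref{hwctensorproduct} with $S = A(\mi) = R/\mi$ (viewed as a commutative Noetherian $R$-algebra) gives that $(A(\mi)\m, \{\St(\l)(\mi)_{\l\in\L}\})$ is a split highest weight category for every maximal ideal $\mi$. This direction needs no extra work.

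For the converse, assume $\St(\l)\in R\proj$ for all $\l$ and that $(A(\mi)\m, \{\St(\l)(\mi)_{\l\in\L}\})$ is a split highest weight category for every maximal ideal $\mi$. I would induct on $t=|\L|$. For $t=1$ the key point is to verify that $\St(\l)\in\mathcal{M}(A)$, i.e. that $\St(\l)$ is a projective $R$-split $A$-module: using that $\St(\l)(\mi)$ is (a sum of) projective $A(\mi)$-modules and projective over $R(\mi)$, and that $\St(\l)\in R\proj$, one checks via Theorem~\ref{projectivitytermsmaximal} that $\St(\l)$ is $A$-projective; then the criterion for membership in $\mathcal{M}(A)$ from Appendix~\ref{Rsplit modules} (the $R$-split condition on $\tau_{\St(\l),P}$) can be tested fibrewise over each $\mi$, where it holds because $(A(\mi)\m,\{\St(\l)(\mi)\})$ is a split highest weight category with a single (maximal) weight. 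Conditions \ref{splithwc}$(ii)$ and $(v)$ of the definition: $(v)$ says $\End_A(\St(\l))\simeq R$, which follows from $\St(\l)\in R\proj$, the fibrewise statement $\End_{A(\mi)}(\St(\l)(\mi))\simeq R(\mi)$, and Lemma~\ref{nakayamalemmasurjectiveproj} applied to the structure map $R\to \End_A(\St(\l))$; $(ii)$ is vacuous for a single weight; $(iii)$ and $(iv)$: with $J=\im\tau_{\St(\l)}$ one has $A/J$ projective over $R$ by Proposition~\ref{split modules characterization}, and $A(\mi)/J(\mi)=0$ for all $\mi$ forces $A/J=0$, so $A\simeq \St(\l)\otimes_R\Hom_A(\St(\l),A)$ and \ref{splithwc}$(iii)$, $(iv)$ hold.

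For the inductive step, pick a maximal element $\alpha\in\L$. By the $t=1$ analysis applied to the weight $\alpha$ (using that $\alpha$ is still maximal in each fibre and that $\{\St(\l)(\mi)\}$ is a split highest weight category), one gets $\St(\alpha)\in\mathcal{M}(A)$; set $J=\im\tau_{\St(\alpha)}$, so $A/J$ is a projective Noetherian $R$-algebra. By Proposition~\ref{hwctensorproduct} (the "Moreover" part, which commutes $J$ with base change since $\St(\alpha)$ is $R$-split) we have $(A/J)(\mi)\simeq A(\mi)/J(\mi)$, and by the fibrewise Lemma~\ref{splithwcinduction}-type statement $(A(\mi)/J(\mi)\m, \{\St(\l)(\mi)_{\l\in\L\setminus\{\alpha\}}\})$ is a split highest weight category; also each $\St(\l)$ for $\l\neq\alpha$ remains projective over $R$ and is unchanged on passing to $A/J$ (the standardization being the same). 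By the induction hypothesis, $(A/J\m, \{\St(\l)_{\l\in\L\setminus\{\alpha\}}\})$ is a split highest weight category, and then Lemma~\ref{splithwcinduction} (the $\mathcal{M}(A)$-stratification lemma quoted after Proposition~\ref{splithwcfieldcase}) assembles this with $\St(\alpha)\in\mathcal{M}(A)$ to conclude that $(A\m,\{\St(\l)_{\l\in\L}\})$ is a split highest weight category. The main obstacle I anticipate is the $t=1$ base case, specifically checking that the $(A,R)$-monomorphism condition defining $\mathcal{M}(A)$ can genuinely be verified fibrewise — this is where one must lean on the detailed characterizations of projective $R$-split modules from Appendix~\ref{Rsplit modules} (Proposition~\ref{split modules characterization}) rather than on anything elementary, and it is the place where the hypothesis $\St(\l)\in R\proj$ is indispensable.
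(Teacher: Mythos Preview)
Your proposal is correct and follows essentially the same inductive strategy as the paper: forward direction via Proposition~\ref{hwctensorproduct}, converse by induction on $|\L|$ using Lemma~\ref{splithwcinduction}, with the crux being that $\St(\alpha)\in\mathcal{M}(A)$ can be checked fibrewise. The one refinement is that the paper invokes Lemma~\ref{splitmoduleschangering} directly for this fibrewise criterion (it states precisely that $L\in\mathcal{M}(A)$ iff $L\in R\proj$ and $L(\mi)\in\mathcal{M}(A(\mi))$ for all $\mi$), so you need not redo parts of it via Proposition~\ref{split modules characterization} and Theorem~\ref{projectivitytermsmaximal}; in particular, $\End_A(\St(\l))\simeq R$ is already contained in $\St(\l)\in\mathcal{M}(A)$, and condition~\ref{splithwc}$(iii)$ in the base case is checked in the paper by the simpler argument that $\Hom_A(\St(\l),M)=0$ forces $\Hom_{A(\mi)}(\St(\l)(\mi),M(\mi))=0$, hence $M(\mi)=0$ for all $\mi$, hence $M=0$.
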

\begin{proof}
	For every maximal ideal $\mi$ in $R$, the residue field $R(\mi)$ is a Noetherian commutative algebra over $R$, so by Proposition \ref{hwctensorproduct} $(A(\mi)\m, \L)$ is split highest weight category with standards $\St(\l)(\mi)$. The modules $\St(\l)$ are projective over $R$ by definition of $A\m$ being a split highest weight category.
	
	Conversely, we shall proceed by induction on $t=|\L|$. Let $\mi$ be a maximal ideal of $R$. Assume $t=1$. By assumption $\St(\l)(\mi)\in \mathcal{M}(A(\mi))$. By Lemma \ref{splitmoduleschangering}, $\St(\l)\in \mathcal{M}(A)$. Let $M\in A\m$ be such that $\Hom_A(\St(\l), M)=0$. Then, $$\Hom_{A(\mi)}(\St(\l)(\mi), M(\mi))=0.$$ Since $A(\mi)\m$ is a split highest weight category $M(\mi)=0$ for every maximal ideal $\mi$ in $R$. Thus, $M=0$. Therefore, $(A\m, \L)$ is a split highest weight category.
	
	Now assume the result known for $t-1$. Let $\alpha$ be a maximal element in $\L$. By assumption, $(A(\mi)\m, \L)$ is a split highest weight category for every maximal ideal $\mi$ in $R$. By Lemma \ref{splithwcinduction}, $\St(\alpha)(\mi)\in \mathcal{M}(A(\mi))$ and $(A(\mi)/J(\mi)\m, \L\backslash\{\alpha \})$ is split highest weight category for every maximal ideal $\mi$ in $R$. Since $\St(\alpha)$ is projective over $R$, it follows, by Lemma \ref{splitmoduleschangering}, that $\St(\alpha)\in \mathcal{M}(A)$. Here, \begin{align*}
		J(\mi)=\im \tau_{R(\mi)\otimes_R \St(\alpha)}=R(\mi)\otimes_R \im \tau_{\St(\alpha)}=R(\mi)\otimes_R J.\end{align*} As $\St(\alpha)\in \mathcal{M}(A)$, $A/\im \tau_{\St(\alpha)}=A/J$ is a projective $R$-module. So, $\Tor_1^R(R(\mi), A/J)=0$. We deduce that \begin{align}
		A/J(\mi)=R(\mi)\otimes_R A/J\simeq R(\mi)\otimes_R A/R(\mi)\otimes_R J=A(\mi)/J(\mi).	\end{align}
	Thus, $(A/J(\mi)\m, \L\backslash\{\alpha \})$ is a split highest weight category for every maximal ideal $\mi$ in $R$. By induction, $(A/J\m, \L\backslash\{\alpha \})$ is a split highest weight category. Finally,  by Lemma \ref{splithwcinduction}, the result follows.
\end{proof}

\begin{Theorem}\label{hwclocalization}
	Let $A$ be a projective Noetherian $R$-algebra and let $\{\Delta(\lambda)_{\lambda\in \Lambda}\})$ be a set of finitely generated $A$-modules indexed by a poset. $(A\m, \{\Delta(\lambda)_{\lambda\in \Lambda}\})$ is a split highest weight category if and only if  $(A_\mi\m, \{(\Delta(\lambda)_\mi)_{\lambda\in \Lambda}\})$ is a split highest weight category for every maximal ideal $\mi$ of $R$. 
\end{Theorem}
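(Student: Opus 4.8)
The plan is to reduce this statement to Theorem~\ref{hwcresiduefield}, which already characterizes split highest weight categories via passage to residue fields, together with the observation that for a maximal ideal $\mi$ of $R$, the residue field $R(\mi)=R/\mi$ factors through the localization: $R(\mi)=R_\mi/\mi R_\mi=R_\mi(\mi_\mi)$, where $\mi_\mi$ is the unique maximal ideal of $R_\mi$. So the forward direction is immediate: if $(A\m,\{\St(\l)\})$ is a split highest weight category, then by Proposition~\ref{hwctensorproduct} applied to the Noetherian commutative $R$-algebra $R_\mi$, the category $(A_\mi\m,\{\St(\l)_\mi\})$ is a split highest weight category for each maximal ideal $\mi$.

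For the converse, I would argue as follows. First, the $A$-modules $\St(\l)$ must be shown to be projective over $R$. Each $\St(\l)_\mi$ is a projective $R_\mi$-module (being one of the standard modules of the split highest weight category $A_\mi\m$, by Definition~\ref{splithwc}$(i)$). Since a finitely generated module over a commutative Noetherian ring is projective if and only if all its localizations at maximal ideals are projective (projectivity is local, cf.\ the usual flat-plus-finitely-presented criterion), it follows that $\St(\l)\in R\proj$. Next, one needs $(A(\mi)\m,\{\St(\l)(\mi)\})$ to be a split highest weight category for each maximal ideal $\mi$. But $A(\mi)=A/\mi A = A_\mi/\mi_\mi A_\mi = A_\mi(\mi_\mi)$ and likewise $\St(\l)(\mi)=\St(\l)_\mi(\mi_\mi)$; since $(A_\mi\m,\{\St(\l)_\mi\})$ is a split highest weight category and $R_\mi$ is a local Noetherian commutative ring with maximal ideal $\mi_\mi$, applying the (already-proven) forward direction of Theorem~\ref{hwcresiduefield} over the base ring $R_\mi$ gives that $(A_\mi(\mi_\mi)\m,\{\St(\l)_\mi(\mi_\mi)\})$ is a split highest weight category, i.e.\ $(A(\mi)\m,\{\St(\l)(\mi)\})$ is one. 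Having verified both hypotheses of Theorem~\ref{hwcresiduefield}, we conclude that $(A\m,\{\St(\l)\})$ is a split highest weight category.

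\textbf{Main obstacle.} The only genuinely delicate point is making sure that the standard modules $\St(\l)$ supplied in the hypothesis — which a priori are just finitely generated $A$-modules indexed by the poset $\L$ — interact correctly with localization, i.e.\ that $(\St(\l))_\mi$ really is the standard module of the split highest weight category $A_\mi\m$ in the sense required by Theorem~\ref{hwcresiduefield}. This is built into the statement (the candidate standards over $A_\mi$ are exactly the $\St(\l)_\mi$), so no compatibility across different $\mi$ is needed; the argument is purely a two-step descent: local-on-$\Spec R$ for $R$-projectivity, then a single application of Theorem~\ref{hwcresiduefield} over each $R_\mi$. I expect the write-up to be short, with the bulk of the care going into the identifications $A(\mi)=A_\mi(\mi_\mi)$ and $\St(\l)(\mi)=\St(\l)_\mi(\mi_\mi)$ and into citing the correct local criterion for projectivity of a finitely generated module.
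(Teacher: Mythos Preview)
Your proof is correct and takes a genuinely different route from the paper. The paper re-runs the inductive argument of Theorem~\ref{hwcresiduefield} directly: it inducts on $|\L|$, uses Lemma~\ref{splitmoduleschangering} to deduce $\St(\alpha)\in\mathcal{M}(A)$ from $\St(\alpha)_\mi\in\mathcal{M}(A_\mi)$, uses flatness of $R_\mi$ to identify $(A/J)_\mi\simeq A_\mi/J_\mi$, and invokes Lemma~\ref{splithwcinduction}. Your argument instead reduces the converse entirely to Theorem~\ref{hwcresiduefield} via the identification $R(\mi)=R_\mi(\mi_\mi)$, together with the local criterion for projectivity of finitely generated $R$-modules. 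This is shorter and avoids repeating the induction; the paper's approach has the mild virtue of being self-contained and exactly parallel to the residue-field case, but your reduction is the cleaner proof. Your handling of the potential subtleties (that $\St(\l)$ is finitely generated over $R$ because $A$ is, and that the candidate standards over $A_\mi$ are precisely $\St(\l)_\mi$ by hypothesis) is correct.
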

\begin{proof}
	The proof is analogous to Theorem \ref{hwcresiduefield}. For every maximal ideal $\mi$ in $R$, $R_\mi$ is a Noetherian commutative ring which is an $R$-algebra. By Proposition \ref{hwctensorproduct}, if $(A\m, \{\Delta(\lambda)_{\lambda\in \Lambda}\})$ is a split highest weight category, then $(A_\mi\m, \{(\Delta(\lambda)_\mi)_{\lambda\in \Lambda}\})$ is a split highest weight category for every maximal ideal $\mi$ in $R$.
	
	Conversely, we shall proceed by induction on $t=|\L|.$ By assumption $\St(\l)_\mi\in \mathcal{M}(A_\mi)$ for every maximal ideal $\mi$ in $R$. By Lemma \ref{splitmoduleschangering}, $\St(\l)\in \mathcal{M}(A)$. Let $M\in A\m$ be such that $\Hom_A(\St(\l),M )=0$. Then, $\Hom_{A_\mi}(\St(\l)_\mi, M_\mi)=0$ which implies that $M_\mi=0$. Hence, $M=0$. Therefore, the result holds  for $t=1$.
	
	Assume the result known for $t-1$. Let $\alpha$ be a maximal element in $\L$. By Lemma \ref{splithwcinduction},  $\St(\alpha)_\mi\in \mathcal{M}(A_\mi)$ and $(A_\mi/J_\mi\m, \L\backslash\{\alpha \})$ is a split highest weight category for every maximal ideal $\mi$ in $R$. By Lemma \ref{splitmoduleschangering}, \mbox{$\St(\alpha)\in \mathcal{M}(A)$}. Since $R_\mi$ is flat over $R$, we deduce that $(A/\im \tau_{\St(\alpha)})_\mi=(A/J)_\mi\simeq A_\mi/J_\mi$. By induction, $(A/J\m, \L\backslash\{\alpha \})$ is a split highest weight category. By Lemma \ref{splithwcinduction}, the result follows.
\end{proof}

Parallelly to Lemma \ref{splitmodulesalgebraicclosure}, the following result says that determining whether a given module category with a certain collection of finitely generated modules is a split highest weight category can be reduced to determining if after extension of scalars to an algebraically closed field this collection of modules gives a highest weight structure to the module category of a finite-dimensional algebra over an algebraically closed field.

\begin{Theorem}\label{splitqhalgebraicallyclosed}
	Let $A$ be a finite-dimensional $k$-algebra for some field $k$ and let $\{\Delta(\lambda)_{\lambda\in \Lambda}\})$ be a set of finitely generated $A$-modules indexed by a poset. If $\overline{k}$ is the algebraic closure of $k$, then $(A\m, \{\Delta(\lambda)_{\lambda\in \Lambda}\})$ is a split highest weight category if and only if $(\overline{k}\otimes_k A\m, \{\overline{k}\otimes_k\Delta(\lambda)_{\lambda\in \Lambda}\})$ is a split highest weight category.
\end{Theorem}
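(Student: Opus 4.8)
\emph{Proof plan.} The forward implication needs nothing new: $\overline{k}$ is a field, hence a Noetherian commutative $k$-algebra, so Proposition \ref{hwctensorproduct} with $S=\overline{k}$ gives that $(\overline{k}\otimes_k A\m,\{\overline{k}\otimes_k\St(\l)_{\l\in\L}\})$ is a split highest weight category whenever $(A\m,\{\St(\l)_{\l\in\L}\})$ is. So the plan is to prove the converse by the same induction on $t=|\L|$ used in the proof of Theorem \ref{hwcresiduefield}, with the base change $\overline{k}\otimes_k-$ replacing $R(\mi)\otimes_R-$ and Lemma \ref{splitmodulesalgebraicclosure} replacing Lemma \ref{splitmoduleschangering}. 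The two poset-independent facts I would use repeatedly are: first, $\overline{k}$ is faithfully flat over $k$, so $\overline{k}\otimes_k-$ is exact, reflects vanishing of modules, commutes with $\Hom_A$ out of the finitely presented modules $\St(\l)$ (hence also with $\End_A(\St(\l))$), and commutes with images, in particular $\overline{k}\otimes_k\im\tau_{\St(\l)}=\im\tau_{\overline{k}\otimes_k\St(\l)}$; second, by Lemma \ref{splitmodulesalgebraicclosure}, $\St(\l)\in\mathcal{M}(A)$ if and only if $\overline{k}\otimes_k\St(\l)\in\mathcal{M}(\overline{k}\otimes_k A)$, which applies because, $k$ being a field, every $\St(\l)$ is a free and hence faithful $k$-module, its non-vanishing being visible after base change.

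For the induction, let $\alpha$ be a maximal element of $\L$ and put $J=\im\tau_{\St(\alpha)}$, and assume $(\overline{k}\otimes_k A\m,\{\overline{k}\otimes_k\St(\l)_{\l\in\L}\})$ is a split highest weight category. By Lemma \ref{splithwcinduction}, $\overline{k}\otimes_k\St(\alpha)\in\mathcal{M}(\overline{k}\otimes_k A)$ and $((\overline{k}\otimes_k A)/\overline{J}\m,\{\overline{k}\otimes_k\St(\l)_{\l\in\L\setminus\{\alpha\}}\})$ is a split highest weight category, where $\overline{J}=\im\tau_{\overline{k}\otimes_k\St(\alpha)}=\overline{k}\otimes_k J$ by the first fact. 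The second fact gives $\St(\alpha)\in\mathcal{M}(A)$; since $A/J$ is free over $k$, applying the exact functor $\overline{k}\otimes_k-$ to $0\to J\to A\to A/J\to 0$ identifies $(\overline{k}\otimes_k A)/\overline{J}$ with $\overline{k}\otimes_k(A/J)$ as $\overline{k}$-algebras, and faithful flatness forces $J$ to annihilate each $\St(\l)$ with $\l\neq\alpha$, so these become $A/J$-modules. If $\L=\{\alpha\}$ we are done, since Lemma \ref{splithwcinduction} reduces being a split highest weight category, on both sides, to the displayed membership in $\mathcal{M}$ together with vanishing of the quotient algebra ($A/J$, resp. $\overline{k}\otimes_k(A/J)$), and faithful flatness makes these two vanishing conditions equivalent. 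If $|\L|>1$, the induction hypothesis applies to the finite-dimensional $k$-algebra $A/J$ with the poset $\L\setminus\{\alpha\}$ of size $t-1$, so $(A/J\m,\{\St(\l)_{\l\in\L\setminus\{\alpha\}}\})$ is a split highest weight category; a final application of Lemma \ref{splithwcinduction} (with $\St(\alpha)\in\mathcal{M}(A)$ and $J=\im\tau_{\St(\alpha)}$) yields that $(A\m,\{\St(\l)_{\l\in\L}\})$ is a split highest weight category.

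The only ingredient genuinely attached to the passage to the algebraic closure is Lemma \ref{splitmodulesalgebraicclosure}; granting it, I do not anticipate a real obstacle, and the most delicate points are the two identifications $\overline{k}\otimes_k\im\tau_{\St(\alpha)}=\im\tau_{\overline{k}\otimes_k\St(\alpha)}$ and $\overline{k}\otimes_k(A/J)=(\overline{k}\otimes_k A)/\overline{J}$, which are dealt with exactly as in the proofs of Theorems \ref{hwcresiduefield} and \ref{hwclocalization}.
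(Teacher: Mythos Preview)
Your proposal is correct and follows essentially the same approach as the paper: the paper's proof simply cites Proposition \ref{hwctensorproduct}, Lemma \ref{splithwcinduction}, and Lemma \ref{splitmodulesalgebraicclosure}, and your argument is precisely the induction on $|\L|$ that these three results assemble into, modeled on the proofs of Theorems \ref{hwcresiduefield} and \ref{hwclocalization} with Lemma \ref{splitmodulesalgebraicclosure} replacing Lemma \ref{splitmoduleschangering} and faithful flatness of $\overline{k}/k$ replacing the local/residue-field arguments.
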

\begin{proof}
	The result follows by Proposition \ref{hwctensorproduct}, Lemma \ref{splithwcinduction} and \ref{splitmodulesalgebraicclosure}.
\end{proof}

Given the formulations of Theorems \ref{hwcresiduefield} to \ref{splitqhalgebraicallyclosed}, we can ask whether there is a version involving the quotient field of an integral domain. The following tries to address this question and it aims to generalise Lemma 1.6 of \citep{zbMATH01139665}.

\begin{Lemma}\label{splithwcintermsofquotientfield}
	Let $R$ be a regular domain with quotient field $K$. Let $A$ be a projective Noetherian $R$-algebra. Assume that $\{\Delta(\lambda)_{\lambda\in \Lambda}\})$ is a set of finitely generated $A$-modules indexed by a poset and the following conditions hold:
	\begin{enumerate}[(i)]
		\item For $\l\in \L$, $\St(\l)\in R\proj$;
		\item For each $\l\in \L$, there exists a projective $A$-module $P(\l)$ so that there is an exact sequence
		\begin{align}
			0\rightarrow C(\l)\rightarrow P(\l)\rightarrow \St(\l)\rightarrow 0,
		\end{align}where $C(\l)\in \mathcal{F}(\Stsim(\mu)_{\mu>\l})$;
		\item $\displaystyle  \bigoplus_{\l\in \L} P(\l)$ is a progenerator for $A\m$.
	\end{enumerate}
	Then, $(A\m, \{\Delta(\lambda)_{\lambda\in \Lambda}\})$ is a split highest weight category if and only if $(K\otimes_R A\m, K\otimes_R\Delta(\lambda)_{\lambda\in \Lambda}\})$ is a split highest weight category. 
\end{Lemma}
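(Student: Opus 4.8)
\emph{Plan.} The two implications are quite asymmetric: the forward one is a direct application of an already-available change-of-rings result, and the converse is where the regularity of $R$ enters.

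For the forward implication there is nothing to do beyond citing the general theory: $K$ is a field, hence a commutative Noetherian $R$-algebra, so if $(A\m,\{\Delta(\l)_{\l\in\L}\})$ is a split highest weight category then Proposition \ref{hwctensorproduct} yields that $(K\otimes_R A\m,\{K\otimes_R\Delta(\l)_{\l\in\L}\})$ is one as well. For the converse I would verify the five conditions of Definition \ref{splithwc} for $(A\m,\{\Delta(\l)_{\l\in\L}\})$. Conditions \ref{splithwc}$(i)$ and \ref{splithwc}$(iv)$ are precisely hypotheses $(i)$ and $(ii)$ of the lemma; and once conditions \ref{splithwc}$(ii)$ and \ref{splithwc}$(v)$ are known, condition \ref{splithwc}$(iii)$ follows from hypothesis $(iii)$ via the observation recorded just after Definition \ref{splithwc} (assuming \ref{splithwc}$(i),(ii),(iv),(v)$, condition \ref{splithwc}$(iii)$ is equivalent to $\bigoplus_{\l\in\L}P(\l)$ being a progenerator of $A\m$). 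So everything reduces to conditions \ref{splithwc}$(ii)$ and \ref{splithwc}$(v)$.

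For \ref{splithwc}$(ii)$, fix $\l',\l''\in\L$. Since $\Delta(\l'),\Delta(\l'')\in R\proj$, the $R$-module $\Hom_R(\Delta(\l'),\Delta(\l''))$ is projective, hence torsion-free over the domain $R$; therefore its submodule $\Hom_A(\Delta(\l'),\Delta(\l''))$ is a finitely generated torsion-free $R$-module and injects into $K\otimes_R\Hom_A(\Delta(\l'),\Delta(\l''))$. As $A$ is Noetherian and $\Delta(\l')$ is finitely generated, $\Hom_A(\Delta(\l'),-)$ commutes with the flat base change $R\to K$, so $K\otimes_R\Hom_A(\Delta(\l'),\Delta(\l''))\simeq\Hom_{K\otimes_R A}(K\otimes_R\Delta(\l'),K\otimes_R\Delta(\l''))$. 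If $\l'\not\leq\l''$, then since $\L$ is a poset and $(K\otimes_R A\m,\{K\otimes_R\Delta(\l)_{\l\in\L}\})$ is a split highest weight category, condition \ref{splithwc}$(ii)$ over $K$ forces this last module to be zero, whence $\Hom_A(\Delta(\l'),\Delta(\l''))=0$. This is exactly \ref{splithwc}$(ii)$. For \ref{splithwc}$(v)$, apply $\Hom_A(-,\Delta(\l))$ to the sequence $0\to C(\l)\to P(\l)\to\Delta(\l)\to 0$ of hypothesis $(ii)$. Since $C(\l)\in\mathcal{F}(\Stsim(\mu)_{\mu>\l})$ and, for $\mu>\l$ and $U\in R\proj$, $\Hom_A(\Delta(\mu)\otimes_R U,\Delta(\l))\simeq\Hom_R(U,\Hom_A(\Delta(\mu),\Delta(\l)))=0$ by \ref{splithwc}$(ii)$ (just proved, as $\mu>\l$ gives $\mu\not\leq\l$ in the poset $\L$), a straightforward induction along the filtration of $C(\l)$ gives $\Hom_A(C(\l),\Delta(\l))=0$, so $\End_A(\Delta(\l))\simeq\Hom_A(P(\l),\Delta(\l))$. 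As $P(\l)$ is a direct summand of some $A^n$, the right-hand side is a direct summand of $\Delta(\l)^n$, hence a finitely generated projective $R$-module, and base change to $K$ identifies its rank with $\dim_K\End_{K\otimes_R A}(K\otimes_R\Delta(\l))=1$. Now $M:=\End_A(\Delta(\l))$ is a rank-one finitely generated projective $R$-module which is an $R$-algebra, and $R\cdot\id_{\Delta(\l)}\subseteq M$ is isomorphic to $R$ (as $M$ is torsion-free and $\Delta(\l)\neq 0$, since $K\otimes_R\Delta(\l)\neq 0$). I claim $R\cdot\id_{\Delta(\l)}=M$, which may be checked after localizing at each maximal ideal $\mi$ of $R$: there $M_\mi$ is free of rank $1$ over the local ring $R_\mi$, and writing $M_\mi=R_\mi e$ with $1_{M_\mi}=ae$ and $e\cdot e=be$ one gets $e=1_{M_\mi}\cdot e=(ab)e$, so $ab=1$, $a\in R_\mi^\times$, and hence $M_\mi=R_\mi e=R_\mi\cdot 1_{M_\mi}=(R\cdot\id_{\Delta(\l)})_\mi$. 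Since the cokernel of $R\cdot\id_{\Delta(\l)}\hookrightarrow M$ is finitely generated and localizes to zero at every maximal ideal, it vanishes, so $\End_A(\Delta(\l))=R\cdot\id_{\Delta(\l)}\simeq R$, establishing \ref{splithwc}$(v)$ and completing the verification.

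The hard part is condition \ref{splithwc}$(v)$: knowing only that $\End_A(\Delta(\l))$ is a rank-one $R$-module containing the scalars does not force it to equal $R$, and the argument genuinely uses both the regularity of $R$ (to make a finitely generated projective $R$-module locally free) and the ring structure on $\End_A(\Delta(\l))$. Condition \ref{splithwc}$(ii)$ is comparatively soft, needing only torsion-freeness of $\Hom$-modules together with flat base change along $R\to K$, and the reduction of \ref{splithwc}$(iii)$ to hypothesis $(iii)$ is bookkeeping. A minor variant would be to first reduce to $R$ local via Theorem \ref{hwclocalization} --- each localization $R_\mi$ is again a regular domain with quotient field $K$, and hypotheses $(i)$--$(iii)$ localize --- which makes the rank-one projective module automatically free and slightly streamlines the last step.
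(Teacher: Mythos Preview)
Your proof is correct, and for condition \ref{splithwc}$(v)$ it is both different from and more elementary than the paper's argument. The paper handles \ref{splithwc}$(ii)$ exactly as you do, but for \ref{splithwc}$(v)$ it only records the \emph{monomorphism} $\End_{A_\pri}(\St(\l)_\pri)\hookrightarrow\Hom_{A_\pri}(P(\l)_\pri,\St(\l)_\pri)$ at height-one primes $\pri$, uses regularity of $R$ to know that $R_\pri$ is a DVR (so that a finitely generated torsion-free $R_\pri$-module is free), deduces $\End_{A_\pri}(\St(\l)_\pri)\simeq R_\pri$, and then invokes the Auslander--Goldman theory of maximal orders to globalize. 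You instead push the monomorphism to an \emph{isomorphism} $\End_A(\St(\l))\simeq\Hom_A(P(\l),\St(\l))$ by first killing $\Hom_A(C(\l),\St(\l))$ via the freshly established \ref{splithwc}$(ii)$; this makes $\End_A(\St(\l))$ a finitely generated projective $R$-module of rank one outright, and your local rank-one-algebra trick ($1=ae$, $e^2=be$ forces $a\in R_\mi^\times$) finishes without any external input.

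One small correction to your self-assessment: your argument does \emph{not} use regularity. Finitely generated projective modules over a local commutative Noetherian ring are free regardless of regularity, so Step~4 of your proof goes through for any Noetherian domain $R$. In other words, you have actually proved the lemma without the hypothesis that $R$ be regular --- a mild strengthening of the stated result. The paper's route, by contrast, genuinely consumes regularity (in codimension one) to upgrade torsion-freeness to freeness before it can appeal to Auslander--Goldman.
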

\begin{proof}
	By Proposition \ref{hwctensorproduct}, one of the implications is clear. 
	
	Conversely, assume that \mbox{$(K\otimes_R A\m, K\otimes_R\Delta(\lambda)_{\lambda\in \Lambda}\})$} is a split highest weight category.  It is enough to show that $\End_A(\St(\l))\simeq R$ and Condition \ref{splithwc}(ii). Suppose that $\Hom_A(\St(\l), \St(\mu))\neq 0$. Then,
	\begin{align}
		0\neq K\otimes_R \Hom_A(\St(\l), \St(\mu))\simeq \Hom_{K\otimes_R A}(K\otimes_R \St(\l), K\otimes_R \St(\mu)).
	\end{align} Hence, $\l\leq \mu$. Let $\pri$ be a prime ideal of $R$ with height one. $K$ is the quotient field of $R_\pri$ and $\dim R_\pri=1$. In particular,
	\begin{align}
		K\otimes_{R_\pri} \End_{A_\pri}(\St(\l)_\pri)\simeq \End_{K\otimes_{R_\pri}A_\pri} (K\otimes_{R_\pri}\St(\l)_\pri)\simeq \End_{K\otimes_R A}(K\otimes_R \St(\l))\simeq K. \label{eqqh54}
	\end{align}On the other hand, using the monomorphism $\End_{A_\pri}(\St_\pri(\l))\rightarrow \Hom_{A_\pri}(P(\l)_\pri, \St(\l)_\pri)$ we obtain that  $\End_{A_\pri}(\St_\pri(\l))\in R_\pri\proj$. Thus, (\ref{eqqh54}) implies that $\End_{A_\pri}(\St_\pri(\l))\simeq R_\pri$. This shows that $\End_{A_\pri}(\St_\pri)$ is a maximal order in $K$. By Theorem 1.5 of \citep{zbMATH03190382}, $\End_A(\St(\l))$ is a maximal order in $K$. By Theorem 4.3 of \citep{zbMATH03190382}, we conclude that $\End_A(\St(\l))\simeq R$.
\end{proof}

\begin{Remark}
	If, in addition to knowing \ref{splithwcintermsofquotientfield}(i) we know that $\St(\l)$ is $R$-faithful, then we can consider another approach without using maximal orders. In fact, $\End_A(\St(\l))$ is torsion free over $R$ and there exists an exact sequence $0\rightarrow R\rightarrow \End_A(\St(\l))\rightarrow X\rightarrow 0$. By Proposition 3.4 of \citep{zbMATH03151673}, if $X\neq 0$, then $X_\pri\neq 0$ for some prime ideal of $R$ with height one. But, as we showed this cannot happen.
\end{Remark}

\subsection{Uniqueness of standard modules with respect to the poset $\L$}\label{Uniqueness of standard modules with respect to the poset}

We are now ready to address some questions concerning the uniqueness of standard modules and the projective modules $P(\l)$. Given the existence of $\St(\l)$ we saw that the projective modules $P(\l)$ given by the condition \ref{splithwc}(iv) of split highest weight category are not unique up to isomorphism. However, we saw that for split quasi-hereditary algebras over fields we could replace the projective modules in \ref{splithwc}(iv) with indecomposable projective modules. In the following, we will see a sort of generalisation of this phenomenon to general commutative rings.

\begin{Prop}\label{uniquenessofprojgivenst}
	Let $(A\m, \{\Delta(\lambda)_{\lambda\in \Lambda}\})$ be a split highest weight category so that the projective modules $P(\l)$ in \ref{splithwc}(iv) become indecomposable under $R(\mi)\otimes_R -$ for every maximal ideal $\mi$ of $R$. Assume that there exists $Q(\l)\in A\proj$ which becomes indecomposable under $R(\mi)\otimes_R -$ together with an exact sequence \begin{align*}
		0\rightarrow S(\l)\rightarrow Q(\l) \xrightarrow{p_\l}\St(\l)\rightarrow 0, \quad  \text{such that}\quad  S(\l)\in \mathcal{F}(\Stsim_{\mu>\l}).
	\end{align*} Then, there is an isomorphism  $g\colon Q(\l)\rightarrow P(\l)$ making the following diagram commutative
	\begin{center}
		\begin{tikzcd}
			Q(\l)\arrow[rr,"p_\l"] \arrow [dr, "g", swap] & & \St(\l)\\
			& P(\l)\arrow[ur, "\pi_\l", swap] &
		\end{tikzcd}.
	\end{center} 
\end{Prop}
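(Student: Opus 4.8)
The plan is to construct $g$ directly using projectivity of $Q(\l)$ and then show it is an isomorphism via Nakayama-type arguments (Lemma \ref{nakayamalemmasurjectiveproj}), checking the relevant conditions after applying $R(\mi)\otimes_R -$ for each maximal ideal $\mi$. First I would observe that since $p_\l\colon Q(\l)\to \St(\l)$ and $\pi_\l\colon P(\l)\to \St(\l)$ are surjective and $Q(\l)$, $P(\l)$ are projective, there exist $A$-homomorphisms $g\colon Q(\l)\to P(\l)$ and $h\colon P(\l)\to Q(\l)$ with $\pi_\l\circ g = p_\l$ and $p_\l\circ h = \pi_\l$. The diagram in the statement then commutes by construction, so it only remains to prove $g$ is an isomorphism.

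Next I would consider the composite $h\circ g\colon Q(\l)\to Q(\l)$, which satisfies $p_\l\circ(h\circ g) = \pi_\l\circ g = p_\l$. The strategy is to show $h\circ g$ is an isomorphism; symmetrically $g\circ h$ will be an isomorphism, forcing $g$ to be an isomorphism. To handle $h\circ g$, I would pass to the residue fields: for each maximal ideal $\mi$, the module $Q(\l)(\mi)$ is indecomposable projective over $A(\mi)$ by hypothesis, hence its endomorphism ring is local. The endomorphism $(h\circ g)(\mi)$ of $Q(\l)(\mi)$ is therefore either an isomorphism or lies in the radical of $\End_{A(\mi)}(Q(\l)(\mi))$; in the latter case it would be nilpotent, so some power would be zero, but composing $p_\l(\mi)\circ (h\circ g)(\mi) = p_\l(\mi)$ repeatedly shows $p_\l(\mi)\circ (h\circ g)(\mi)^n = p_\l(\mi)\neq 0$ for all $n$ (note $\St(\l)(\mi)\neq 0$ since $\St(\l)$ is $R$-faithful, being a projective $R$-split module, indeed $\End_A(\St(\l))\simeq R$), a contradiction. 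Hence $(h\circ g)(\mi)$ is an isomorphism for every maximal ideal $\mi$. Since $Q(\l)\in R\proj$, Lemma \ref{nakayamalemmasurjectiveproj} gives that $h\circ g$ is an isomorphism. The same argument with the roles of $P(\l)$, $Q(\l)$ swapped — using that $P(\l)(\mi)$ is indecomposable and $\pi_\l(\mi)\circ(g\circ h)(\mi) = \pi_\l(\mi)$ — shows $g\circ h$ is an isomorphism. Therefore $g$ has both a left and a right inverse (up to the automorphisms $h\circ g$ and $g\circ h$), so $g$ is an isomorphism.

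The main obstacle I anticipate is the bookkeeping around faithfulness: one must be sure that $\St(\l)(\mi)\neq 0$ so that the contradiction in the local-ring argument is genuine. This follows from condition \ref{splithwc}$(v)$, $\End_A(\St(\l))\simeq R$, which forces $\St(\l)$ to be $R$-faithful, hence $\St(\l)(\mi)\simeq R(\mi)\otimes_R\St(\l)\neq 0$ by Nakayama's Lemma (as $\St(\l)\neq 0$ and $\St(\l)\in R\proj$). A secondary subtlety is that the argument needs $\End_{A(\mi)}(Q(\l)(\mi))$ and $\End_{A(\mi)}(P(\l)(\mi))$ to be local rings, which holds precisely because those modules are indecomposable over a finite-dimensional algebra (Fitting's Lemma); this is exactly where the hypothesis ``$P(\l)$ and $Q(\l)$ become indecomposable under $R(\mi)\otimes_R -$'' is used. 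Everything else is a routine diagram chase combined with the projectivity-detection and Nakayama lemmas already established in Subsection \ref{Notation and change of rings}.
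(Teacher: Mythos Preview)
Your proposal is correct and follows essentially the same strategy as the paper: construct lifts $g,h$ via projectivity, pass to residue fields, use indecomposability of $P(\l)(\mi)$ and $Q(\l)(\mi)$ to show the composites $(h\circ g)(\mi)$ and $(g\circ h)(\mi)$ are isomorphisms, then lift back via Lemma~\ref{nakayamalemmasurjectiveproj}. The only cosmetic difference is at the residue-field step: the paper phrases it in the language of projective covers (both $P(\l)(\mi)$ and $Q(\l)(\mi)$ are projective covers of $\St(\l)(\mi)$, so the superfluous kernel forces surjectivity of the composite), whereas you use the local endomorphism ring directly (a non-unit would be nilpotent, contradicting $p_\l(\mi)\circ(h\circ g)(\mi)^n=p_\l(\mi)\neq 0$); these are equivalent standard arguments, and your explicit check that $\St(\l)(\mi)\neq 0$ is a point the paper leaves implicit.
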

\begin{proof}
	Since $P(\l)$ and $Q(\l)$ are projective $A$-modules, there are $A$-homomorphisms $f$ and $g$ making the following diagram commutative:
	\begin{equation}
		\begin{tikzcd}
			P(\l)\arrow[r, "\pi_\l"]\arrow[d, "f"]& \St(\l)\arrow[r] \arrow[d, equal]& 0\\
			Q(\l)\arrow[r,"p_\l"] \arrow [d, "g"] & \St(\l)\arrow[r] \arrow[d, equal]& 0\\
			P(\l)\arrow[r, "\pi_\l"] & \St(\l)\arrow[r]& 0
		\end{tikzcd}.
	\end{equation} 
	Applying the right exact functor $R(\mi)\otimes_R -$ for every maximal ideal $\mi$ of $R$ we obtain the commutative diagram
	\begin{equation}
		\begin{tikzcd}
			P(\l)(\mi)\arrow[r, "\pi_\l(\mi)"]\arrow[d, "f(\mi)"]& \St(\l)(\mi)\arrow[r] \arrow[d, equal]& 0\\
			Q(\l)(\mi)\arrow[r,"p_\l(\mi)"] \arrow [d, "g(\mi)"] & \St(\l)(\mi)\arrow[r] \arrow[d, equal]& 0\\
			P(\l)(\mi)\arrow[r, "\pi_\l(\mi)"] & \St(\l)(\mi)\arrow[r]& 0
		\end{tikzcd}. \label{qhtikz52}
	\end{equation} Note that $g(\mi)\circ f(\mi)=g\otimes \id_{R(\mi)} \circ f \otimes \id_{R(\mi)}=g\circ f\otimes \id_{R(\mi)}=g\circ f(\mi)$. For any maximal ideal $\mi$ of $R$, $(A(\mi), \{\Delta(\lambda)(\mi)_{\lambda\in \Lambda}\})$ is a split highest weight category with projectives $P(\l)(\mi)$ and $Q(\l)(\mi)$. Further, $(Q(\l)(\mi), \pi_\l(\mi))$ and $(P(\l)(\mi), p_\l(\mi))$ are projective covers of $\St(\l)(\mi)$. It follows by diagram (\ref{qhtikz52}) that $P(\l)\subset \im g\circ f(\mi)+\ker pi_\l(\mi)$. Since $\ker \pi_\l(\mi)$ is a superfluous module, it follows that $g\circ f(\mi)$ is surjective for every maximal ideal $\mi$ of $R$. By Nakayama's lemma, $g\circ f$ is surjective. Since $g\circ f\in \End_A(P(\l))$, this surjective must be an isomorphism by Nakayama's Lemma for endomorphisms. Since  $(Q(\l)(\mi), \pi_\l(\mi))$ is a projective cover of $\St(\l)(\mi)$, it follows, by symmetry, that $f\circ g$ is an isomorphism. Hence, both $f$ and $g$ are isomorphisms. So, the claim follows.
\end{proof}

Before we proceed any further we should pay attention to the following fact.

\begin{Observation}\label{obs563}
	Assume that $R$ is a local commutative Noetherian ring with unique maximal ideal $\mi$ and $A\m$ is a split highest weight category with standard modules $\St(\mu)$, $\mu\in \L$. Then, we can pick the projective modules in \ref{splithwc}(iv) so that they become indecomposable under $R(\mi)\otimes_R -$. Such construction can be made by reverse induction. If $\l\in \L$ is maximal, then define $P(\l):=\St(\l)$. For the induction step, assume that $\mu$ is maximal in $\L\backslash \{\l \}$ and $\l$ is maximal in $\L$. The Picard group of $R$ is trivial and the multiplicity of $\St(\l)$ in the projective associated with $\St(\mu)$ is controlled by $\Ext_A^1(\St(\mu), \St(\l))\in R\m$ in view of Lemma \ref{AandAmoduloJproj}. Since all extensions between $\St(\mu)$ and $\St(\l)$ are $(A, R)$-exact sequences we can pick by Nakayama's Lemma a minimal set of generators for $\Ext_A^1(\St(\mu), \St(\l))$ of size $\dim_{R(\mi)} \Ext_{A(\mi)}^1(\St(\mu)(\mi), \St(\l)(\mi))$. Using Lemma \ref{AandAmoduloJproj}, this means that we can construct $P(\mu)$ so that the multiplicities of $\St(\l)$ in $P(\mu)$ and of $\St(\l)(\mi)$ in the projective cover of $\St(\mu)(\mi)$ over $A$ coincide. Hence, $P(\mu)$ can be constructed so that $P(\mu)(\mi)$ is the projective cover of $\St(\mu)(\mi)$.
\end{Observation}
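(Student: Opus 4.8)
The plan is to run the recursive construction of the projective modules underlying condition \ref{splithwc}$(iv)$ (see Appendix \ref{Filtrations in split highest weight categories}) while exploiting the one degree of freedom it leaves, namely the choice of the multiplicity modules $U_\mu\in R\proj$ occurring in the $\St$-filtration of the kernel $C(\l)$. Over a local ring $R$ every finitely generated projective module is free, so each $U_\mu$ has a well-defined rank; the idea is to make all of these ranks as small as possible and then to recognise the outcome, after $-\otimes_R R(\mi)$, as the projective cover of $\St(\l)(\mi)$. I would organise this as a downward induction along a linear extension of $\L$ (i.e.\ reverse induction on $\L$): the base case is a maximal $\l$, where $C(\l)=0$ forces $P(\l)\cong\St(\l)$, and $\St(\l)(\mi)$ has a simple top and is indecomposable by Lemma \ref{lemmamultiplicityofdelta}\ref{field case projective indc} applied to the split highest weight category $(A(\mi)\m,\{\St(\l)(\mi)\})$ (Proposition \ref{hwctensorproduct}).

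For the inductive step, to build $P(\mu)$ one starts from $\St(\mu)$ and forms successive universal extensions by terms $\St(\nu)\otimes_R U_\nu$ with $\nu>\mu$, processed from the bottom up so that, by Proposition \ref{extonstandards}, no already-resolved $\Ext^1$ is disturbed. At the stage governed by $\nu$, the term to be added is controlled by $\Ext_A^1(M,\St(\nu))$ of the partially built module $M$; by Lemma \ref{AandAmoduloJproj} this is precisely the $R$-module recording the multiplicity of $\St(\nu)$ in $P(\mu)$, and since $M$ and $\St(\nu)$ lie in $R\proj$ every such extension is $(A,R)$-exact. Nakayama's Lemma then lets one take $U_\nu$ free of rank $\dim_{R(\mi)}\Ext_{A(\mi)}^1(M(\mi),\St(\nu)(\mi))$, using the base-change isomorphism $R(\mi)\otimes_R\Ext_A^1(M,\St(\nu))\simeq\Ext_{A(\mi)}^1(M(\mi),\St(\nu)(\mi))$. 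With these minimal choices the resulting $A$-module $P(\mu)$ is still $R$-projective, and $P(\mu)(\mi)$ is built over $\St(\mu)(\mi)$ by exactly the analogous minimal construction over the field $R(\mi)$, hence is the projective cover of $\St(\mu)(\mi)$; so $P(\mu)$ is $A$-projective by Theorem \ref{projectivitytermsmaximal} (there is a single maximal ideal to check), and $P(\mu)(\mi)$ is indecomposable because $\St(\mu)(\mi)$ has a simple top by Lemma \ref{lemmamultiplicityofdelta}\ref{field case projective indc}.

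The step I expect to be the main obstacle is the verification that this over-$R$ construction base-changes faithfully to the minimal construction over the residue field: one must check that forming a universal extension commutes with $-\otimes_R R(\mi)$ — which is where the $(A,R)$-exactness of each stage is used, so that $\Tor_1^R$ does not interfere — and that the ranks accumulated by the $U_\nu$ add up to the multiplicities of the $\St(\nu)(\mi)$ in the projective cover of $\St(\mu)(\mi)$ over $A(\mi)$. Once that bookkeeping is in place, via Lemma \ref{AandAmoduloJproj}, Proposition \ref{extonstandards} and the filtration construction of Appendix \ref{Filtrations in split highest weight categories}, the indecomposability of $P(\mu)(\mi)$ reduces to $\St(\mu)(\mi)$ having a local endomorphism ring, equivalently a simple top.
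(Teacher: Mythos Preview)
Your proposal is correct and follows essentially the same approach as the paper: reverse induction on $\L$, with the base case $P(\l)=\St(\l)$ for maximal $\l$, and the inductive step using Lemma \ref{AandAmoduloJproj} to build $P(\mu)$ via an extension whose free multiplicity module has rank chosen minimally by Nakayama's Lemma to match $\dim_{R(\mi)}\Ext_{A(\mi)}^1(-,-)$. The paper's Observation is itself only a proof sketch; your account fleshes out the same construction (and correctly flags the base-change compatibility of the Ext group as the point requiring care, which the paper addresses in the Lemma immediately following the Observation).
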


In the field case, given an order on $\L$, the standard modules when defined are unique (see for example \citep[A]{MR1284468}). This result can be extended to local commutative Noetherian rings in the following way.

\begin{Prop}\label{uniquenessofstandardgivenproj}
	Let $(A\m, \{\Delta(\lambda)_{\lambda\in \Lambda}\})$ be a split highest weight category over a local commutative Noetherian ring. Let $\St\rightarrow \{1, \ldots, t\}$, $\St_i\mapsto i$ be an increasing bijection. Choose $P_i\in A\proj$ so that $P_i(\mi)$ is the projective cover of $\St_i(\mi)$ for all $i\in \{1, \ldots, t \}$.
	Define \begin{align*}
		U_i=\sum_{j>i}\sum_{f\in \Hom_A(P_j, P_i)} \im f.
	\end{align*} Then, $\St_i\simeq P_i/U_i$.
\end{Prop}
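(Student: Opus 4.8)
The strategy is to reduce the statement to the field case via Nakayama-type arguments, using that everything in sight is finitely generated over the local ring $R$ and that the standard modules and the projectives $P_i$ are $R$-projective (or have $R$-projective kernels after the obvious modifications). First I would recall from Observation \ref{obs563} that, over a local commutative Noetherian ring with maximal ideal $\mi$, one can choose the projectives in \ref{splithwc}(iv) so that $P_i(\mi)$ is the projective cover of $\St_i(\mi)$; by Proposition \ref{uniquenessofprojgivenst} these are the modules $P_i$ in the statement, up to a unique isomorphism compatible with the projections onto $\St_i$. So it suffices to show $\St_i \simeq P_i/U_i$ for this particular choice, where $U_i = \sum_{j>i}\sum_{f\in\Hom_A(P_j,P_i)}\im f$.

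The key step is to compare $U_i$ with $C(\St_i) := \ker(\pi_i\colon P_i\to \St_i)$. I would prove two inclusions. For $U_i\subseteq C(\St_i)$: given $j>i$ and $f\in\Hom_A(P_j,P_i)$, I need $\pi_i\circ f = 0$. The composite $\pi_i\circ f\colon P_j\to\St_i$ lies in $\Hom_A(P_j,\St_i)$, and by Lemma \ref{lemmamultiplicityofdelta}\ref{multiplicity of delta part 2} (part (iii), which holds over any commutative Noetherian ring since its proof only uses \ref{splithwc}(ii),(iv)) a nonzero such map forces $j\le i$, contradicting $j>i$. Hence $\im f\subseteq C(\St_i)$, giving $U_i\subseteq C(\St_i)$ and so a surjection $P_i/U_i\twoheadrightarrow \St_i$. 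For the reverse inclusion $C(\St_i)\subseteq U_i$: recall $C(\St_i)\in\mathcal F(\Stsim_{\mu>i})$, so $C(\St_i)$ is an extension of modules $\St_\mu\otimes_R V_\mu$ with $\mu>i$ and $V_\mu\in R\proj$; moreover $C(\St_i)$ is $R$-projective, so the inclusion $C(\St_i)\hookrightarrow P_i$ is an $(A,R)$-monomorphism. The point is to produce enough maps $P_j\to P_i$ whose images cover $C(\St_i)$. Using that $\bigoplus_k P_k$ is a progenerator, one can cover $C(\St_i)$ by a projective $\bigoplus_{j} P_j^{m_j}\twoheadrightarrow C(\St_i)\hookrightarrow P_i$; I would argue that only indices $j$ appearing in a $\St$-filtration of $C(\St_i)$ — hence $j>i$ — are needed, because a summand $P_j$ with $j\le i$ mapping to $C(\St_i)$ would, after composing with the filtration quotients $C(\St_i)\twoheadrightarrow \St_\mu\otimes_R V_\mu$ with $\mu>i$, have to be zero on top of each such quotient (again by Lemma \ref{lemmamultiplicityofdelta}(iii): $\Hom_A(P_j,\St_\mu)\ne0$ forces $j\le\mu$, incompatible with $\mu>i\ge j$ unless the relevant pieces vanish — here one needs the ordering to cut down the contributing $j$). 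Assembling, every element of $C(\St_i)$ lies in $\sum_{j>i}\sum_{f}\im f = U_i$, so $C(\St_i)=U_i$ and $\St_i\simeq P_i/C(\St_i) = P_i/U_i$.

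The main obstacle I expect is the inclusion $C(\St_i)\subseteq U_i$, i.e. showing that the maps $P_j\to P_i$ with $j>i$ already generate all of $C(\St_i)$, rather than just the simple field case where one argues with composition series. The clean way is probably to reduce modulo $\mi$: since $C(\St_i)$ is $R$-projective, $C(\St_i)(\mi) = \ker(\pi_i(\mi)) = \rad P_i(\mi)$-type submodule is exactly $U_i(\mi)$ by the field-case statement (which is \citep[A]{MR1284468} applied to $A(\mi)$), and $U_i(\mi) = \sum_{j>i}\sum_{\bar f}\im\bar f$ equals the reduction of $U_i$ because $\Hom_A(P_j,P_i)$ is $R$-projective (both $P_j$ and $P_i$ are in $A\proj\subseteq R\proj$, and $\Hom_A(P_j,P_i)$ is a summand of $\Hom_A(P_j, A^n)\simeq (P_i)$-type module, hence $R$-projective, so $\Hom_A(P_j,P_i)(\mi)\simeq \Hom_{A(\mi)}(P_j(\mi),P_i(\mi))$). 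Then $U_i\subseteq C(\St_i)$ with $U_i(\mi) = C(\St_i)(\mi)$ and both $U_i$, $C(\St_i)$ submodules of $P_i$ forces $U_i = C(\St_i)$ by Lemma \ref{equalitybeinglocal} (specialised to a local ring, so only the unique maximal ideal is tested), giving $\St_i\simeq P_i/U_i$ at once and bypassing the delicate direct argument.
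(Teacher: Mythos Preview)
Your overall strategy is sound and quite close to the paper's; both reduce to the field case via Nakayama. There is, however, a genuine slip in your ``clean way'': Lemma~\ref{equalitybeinglocal} compares submodules after \emph{localization}, not after passing to the \emph{residue field}. Over a local ring $R$, localization at $\mi$ is the identity, so the lemma is vacuous there. What you really need is plain Nakayama applied to the cokernel $C(\St_i)/U_i$: the right-exact sequence $U_i(\mi)\to C(\St_i)(\mi)\to (C(\St_i)/U_i)(\mi)\to 0$ shows it suffices that the map $U_i(\mi)\to C(\St_i)(\mi)$ is surjective. Since $C(\St_i)\in R\proj$, $C(\St_i)(\mi)\hookrightarrow P_i(\mi)$, and the image of $U_i$ in $P_i(\mi)$ is precisely $\sum_{j>i}\sum_{\bar f}\im \bar f$ (surjectivity of $\Hom_A(P_j,P_i)\to\Hom_{A(\mi)}(P_j(\mi),P_i(\mi))$), which equals $C(\St_i)(\mi)$ by the field case. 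So $(C(\St_i)/U_i)(\mi)=0$ and Nakayama finishes. Note you cannot literally write ``$U_i(\mi)=C(\St_i)(\mi)$'' as submodules of $P_i(\mi)$, because you do not know $P_i/U_i\in R\proj$; only the image statement is available. Your first, direct attempt at $C(\St_i)\subseteq U_i$ does not work as written: knowing that a map $P_j\to C(\St_i)$ with $j\le i$ vanishes on each filtration \emph{quotient} does not make it redundant.

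The paper argues the mirror image: it proves $C(\St_i)\subseteq U_i$ directly by showing $\Hom_A(M,P_i/U_i)=0$ for $M\in\mathcal F(\St_{j>i})$ (lift any $\St_j\to P_i/U_i$ through $P_j\to P_i$, whose image lies in $U_i$ by definition), and then uses Nakayama to get surjectivity of $C(\St_i)\hookrightarrow U_i$. So you and the paper establish the same two inclusions, but swap which one is direct and which one needs Nakayama. Amusingly, combining your direct inclusion $U_i\subseteq C(\St_i)$ (via Lemma~\ref{lemmamultiplicityofdelta}\ref{multiplicity of delta part 2}) with the paper's direct inclusion $C(\St_i)\subseteq U_i$ (via the $\Hom$-vanishing claim) yields $U_i=C(\St_i)$ with no appeal to the field case or Nakayama at all.
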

\begin{proof}Let $\mi$ be the unique maximal ideal of $R$.
	By Theorem \ref{hwcresiduefield},  $(A(\mi), \{\Delta(\lambda)(\mi)_{\lambda\in \Lambda}\})$ is split highest weight category. Since $R(\mi)$ is a field, $\St_i(\mi)\simeq P_i(\mi)/C_i(\mi)$. We have,
	\begin{align*}
		C_i(\mi)&\simeq \sum_{j>i}\sum_{f\in \Hom_{A(\mi)}(P_j(\mi), P_i(\mi))} \im f\simeq \sum_{j>i}\sum_{f\in \Hom_{A}(P_j, P_i)(\mi)}\im f\\
		&\simeq \sum_{j>i}\sum_{f\in \Hom_{A}(P_j, P_i)} \im (f\otimes_R \id_{R(\mi)})\simeq \left( \sum_{j>i}\sum_{f\in \Hom_{A}(P_j, P_i)} \im f\right) (\mi)=U_i(\mi)
	\end{align*}
	
	\textit{Claim.} $\Hom_A(M, P_i/U_i)=0$ for $M\in \mathcal{F}(\St_{j>i})$.
	We shall proceed by induction on the size of the filtration of $M$. Assume $t=1$. Then, $M\simeq \St_j$ for some $j>i$. Let $g\in \Hom_A(\St_j, P_i/U_i)$. Since $P_j$ is projective over $A$ we have a commutative diagram
	\begin{center}
		\begin{tikzcd}
			P_j\arrow[r, "\pi_j"]\arrow[drr, "\exists f", swap] & \St_j \arrow[r, "g"]& P_i/U_i\\
			& & P_i \arrow[u, twoheadrightarrow, "\pi"]
		\end{tikzcd}.
	\end{center}
	By definition of $\pi$ and $U_i$, $0=\pi \circ f= g\circ \pi_j $. Then, $g=0$, since $\pi_j$ is surjective. Now consider the result known for filtrations of size less than $t$. Assume that $M$ has a filtration with size $t$. Let $g\in \Hom_A(\St_j, P_i/U_i)$. Consider the exact sequence $0\rightarrow M_{t-1}\xrightarrow{i}M\xrightarrow{k} \St_j\rightarrow 0$, $j>i$. By induction, $\Hom_A(M_{t-1}, P_i/U_i)=0$. In particular, $g\circ i=0$. So, $g$ induces a map $g'\in \Hom_A(\St_j, P_i/U_i)$ such that $g'\circ k=g$. By $t=1$, $g'=0$. Therefore, $g=0$ and the claim follows.
	
	Consider the following diagram
	\begin{center}
		\begin{tikzcd}
			0\arrow[r]& C_i\arrow[r, "k_i"]& P_i\arrow[d, equal]\arrow[r, "\pi_i"]& \St_i\arrow[r]&0\\
			0\arrow[r]&U_i\arrow[r, "k"]&P_i \arrow[r, "\pi"]&P_i/U_i\arrow[r]&0
		\end{tikzcd}.
	\end{center}
	Since $C_i\in \mathcal{F}(\St_{j>i})$ we get that $\Hom_A(C_i, P_i/U_i)=0$. In particular, $\pi\circ k_i=0$. So, the image of $k_i$ is contained  in $\ker\pi=\im k$, and thus there exists an $A$-homomorphism $f\colon C_i\rightarrow U_i$ which makes the previous diagram commutative. On the other hand, since $\pi\circ k_i=0$ there exists a map $\tilde{\pi}\in \Hom_A(\St_i, P_i/U_i) $  such that $\tilde{\pi}\circ \pi_i=\pi$. By Snake Lemma, $f$ is injective and $\tilde{\pi}$ is surjective. For every maximal ideal $\mi$ of $R$, applying the right exact functor $R(\mi)\otimes_R -$ yields the commutative diagram with exact rows:
	\begin{center}
		\begin{tikzcd}
			0\arrow[r]& C_i(\mi)\arrow[r, "k_i(\mi)"]\arrow[d, "f(\mi)"]& P_i(\mi)\arrow[d, equal]\arrow[r, "\pi_i(\mi)"]& \St_i(\mi)\arrow[d, "\tilde{\pi}(\mi)"]\arrow[r]&0\\
			&U_i(\mi)\arrow[r, "k(\mi)"]&P_i(\mi) \arrow[r, "\pi(\mi)"]&P_i/U_i(\mi)\arrow[r]&0
		\end{tikzcd}.
	\end{center}
	The first row is exact since $\St_i$ is projective over $R$. By the commutativity of the diagram, \linebreak${k(\mi)\circ f(\mi)}=k_i(\mi)$ is injective, which implies that $f(\mi)$ is a monomorphism. Since $C_i(\mi)\simeq U_i(\mi)$, we have $\dim_{R(\mi)} C_i(\mi)=\dim_{R(\mi)} U_i(\mi)$, thus $f(\mi)$ is an $R(\mi)$-isomorphism for every maximal ideal $\mi$ of $R$. Thus, $f(\mi)$ is an $A(\mi)$-isomorphism. By Nakayama's Lemma, $f$ is surjective. Hence, $f$ is an isomorphism. By Snake Lemma, $\tilde{\pi}$ is an isomorphism and it follows that $\St_i\simeq P_i/U_i$. 
\end{proof}

Note that this does not guarantee uniqueness of standard modules as in the field case, since in Noetherian rings we can have many choices for the projective modules $P(\l)$ even when they are indecomposable.

A natural question that arises is whether or not the projective modules $P(\l)$ are indecomposable. In the following proposition, we find a positive answer for  commutative Noetherian local rings.

\begin{Prop}\label{projisindecomposable}
	Let $(A\m, \{\Delta(\lambda)_{\lambda\in \Lambda}\})$ be a split highest weight category. If $R$ has no non-trivial idempotents, then all $\St(\l)$ are indecomposable. Furthermore, if $R$ is a commutative Noetherian local ring,  then there exists a choice of $P(\l)$ satisfying \ref{splithwc}(iv) so that  $\End_A(P(\l))$ is a local ring.
\end{Prop}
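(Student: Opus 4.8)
The plan is to prove the two assertions separately, as they require rather different arguments.

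\medskip

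\textbf{Indecomposability of the $\St(\l)$ when $R$ has no non-trivial idempotents.} The key point is condition \ref{splithwc}$(v)$: $\End_A(\St(\l))\simeq R$. A ring has no non-trivial idempotents if and only if $0$ and $1$ are its only idempotents. So $\End_A(\St(\l))\simeq R$ has no non-trivial idempotents, and a module whose endomorphism ring has no non-trivial idempotents cannot be written as a direct sum of two non-zero submodules (since such a decomposition would produce the corresponding projection as a non-trivial idempotent in the endomorphism ring). Hence $\St(\l)$ is indecomposable. This part is short.

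\medskip

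\textbf{Existence of a choice of $P(\l)$ with $\End_A(P(\l))$ local, when $R$ is commutative Noetherian local.} Let $\mi$ be the unique maximal ideal of $R$. First I would invoke Observation \ref{obs563}: since $R$ is local, we may (and do) choose the projective modules $P(\l)$ in \ref{splithwc}$(iv)$ so that $P(\l)(\mi)$ is the projective cover of $\St(\l)(\mi)$ over $A(\mi)$; in particular $P(\l)(\mi)$ is indecomposable. Now I want to transfer indecomposability back from the residue field. Consider the ring $E:=\End_A(P(\l))$. Since $P(\l)\in A\proj$ and $A$ is a projective Noetherian $R$-algebra, $E$ is a module-finite $R$-algebra, hence a Noetherian (in general non-commutative) ring which is finitely generated as an $R$-module. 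I would show $E/\mi E\simeq \End_{A(\mi)}(P(\l)(\mi))$: indeed, because $P(\l)$ is a projective $A$-module and $R(\mi)\otimes_R-$ is right exact while $\Hom_A(P(\l),-)$ is exact on projective presentations, one gets $R(\mi)\otimes_R\End_A(P(\l))\simeq \End_{A(\mi)}(P(\l)(\mi))$ — concretely, writing $P(\l)$ as a summand of $A^n$, the natural map $\Hom_A(A^n,A^n)(\mi)\to\Hom_{A(\mi)}(A(\mi)^n,A(\mi)^n)$ is an isomorphism, and it restricts compatibly to the summand $\End_A(P(\l))$.

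\medskip

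Since $P(\l)(\mi)$ is indecomposable and finite-dimensional over the field $R(\mi)$, the ring $E/\mi E\simeq\End_{A(\mi)}(P(\l)(\mi))$ is local, i.e.\ $E/\mi E$ modulo its Jacobson radical is a division ring. Now I would deduce that $E$ itself is local by a standard Nakayama-type argument for module-finite algebras over a local ring: an idempotent of $E$ maps to an idempotent of $E/\mi E$, and idempotents lift uniquely modulo the radical, so $E$ has only the trivial idempotents $0,1$ (using that $\mi E$ is contained in the Jacobson radical of $E$ because $E$ is module-finite over the local ring $R$, together with idempotent lifting; this is where I would cite the relevant elementary fact about semiperfect/module-finite algebras). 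A Noetherian ring finite over $R$ with no non-trivial idempotents and with $E/\rad E$ a division ring is local; equivalently, $P(\l)$ has local endomorphism ring. Alternatively, and perhaps more cleanly, I would argue directly that $P(\l)$ is indecomposable: any decomposition $P(\l)=X\oplus Y$ with $X,Y\neq 0$ would give, after applying $R(\mi)\otimes_R-$, a decomposition $P(\l)(\mi)=X(\mi)\oplus Y(\mi)$; by Nakayama $X(\mi),Y(\mi)\neq 0$, contradicting indecomposability of $P(\l)(\mi)$ — so $P(\l)$ is indecomposable, and then $E=\End_A(P(\l))$ being module-finite over the complete-enough (Noetherian local) ring $R$ with $E/\mi E$ local forces $E$ local.

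\medskip

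The main obstacle is the last implication: passing from ``$E/\mi E$ is local'' to ``$E$ is local'' for a module-finite $R$-algebra $E$ over a Noetherian local ring $R$ that need not be complete. The clean route is: $\mi E\subseteq\rad E$ (because $E$ is module-finite over $R$, so $\mi E$ is a two-sided ideal consisting of elements $x$ with $1+x$ invertible by Nakayama applied to the finitely generated $R$-module $E$), hence $E/\rad E$ is a quotient of $E/\mi E/\rad(E/\mi E)$, which is a division ring; a ring whose quotient by its radical is a division ring and in which idempotents lift modulo the radical is local; and idempotents do lift modulo $\rad E$ here since $\rad E$ is nil modulo $\mi E$... actually more simply, since $E/\mi E$ is local it has no non-trivial idempotents, and one shows $E$ has no non-trivial idempotents either (Nakayama again: if $e^2=e$, then $e\otimes 1_{R(\mi)}$ is $0$ or $1$ in $E/\mi E$, so $e\in\mi E$ or $1-e\in\mi E$, and an idempotent in $\rad E$ is $0$). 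This pins down the argument; I would present it in that order, citing a standard reference for ``$E$ module-finite over local $R$ $\Rightarrow$ $\mi E\subseteq\rad E$'' and for idempotent lifting.
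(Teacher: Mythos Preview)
Your proof is correct. Part 1 matches the paper exactly. For Part 2 you take a somewhat different route: you show $\mi E\subseteq\rad E$ for the module-finite $R$-algebra $E=\End_A(P(\l))$, then deduce that $E/\rad E\simeq (E/\mi E)/\rad(E/\mi E)$ is a division ring, hence $E$ is local. The paper instead argues elementwise and more directly: given $f\in\End_A(P(\l))$, locality of $\End_{A(\mi)}(P(\l)(\mi))$ forces either $f(\mi)$ or $(\id_{P(\l)}-f)(\mi)$ to be an isomorphism, and then Lemma~\ref{nakayamalemmasurjectiveproj} (using that $P(\l)\in R\proj$) immediately promotes this to an isomorphism over $A$; thus every $f$ or $\id-f$ is a unit, so $E$ is local. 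The paper's argument is shorter and entirely self-contained within the lemmas already established, while yours imports the standard fact $\mi E\subseteq\rad E$ for module-finite algebras; both are valid, and in fact your key step (invertibility of $1-xy$ for $x\in\mi E$) unwinds to exactly the same Nakayama reasoning the paper uses. One small cleanup: your ``alternative'' paragraph (showing $P(\l)$ indecomposable via Nakayama on summands, then asserting $E$ local) is not a separate argument---indecomposability alone does not give a local endomorphism ring, so you still need the $\mi E\subseteq\rad E$ step you already gave; I would drop that paragraph.
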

\begin{proof}
	Assume by contradiction that $\St(\l)=X_1\oplus X_2$ then $\St(\l)\twoheadrightarrow X_1 \hookrightarrow \St(\l)$ is a non-trivial idempotent in $\End_A(\St(\l))^{op}.$ Thus, we have a non-trivial idempotent in $R$. 
	
	Assume that $R$ is local. Let $f\in \End_A(P(\l))$. Let $\mi$ be the unique maximal ideal in $R$. Then, $f(\mi)\in \End_{A(\mi)}(P(\l)(\mi))$, since $P(\l)\in A\proj$. By Observation \ref{obs563}, we can consider projective modules $P(\l)$ so that $P(\l)(\mi)$ is indecomposable. In view of Proposition \ref{splithwcfieldcase}, $\End_{A(\mi)}(S)\simeq R(\mi)$ for all simple $A(\mi)$-modules. Thus, the endomorphism ring of a finite-dimensional indecomposable $A(\mi)$-module is a local ring. In particular, $\End_{A(\mi)}(P(\l)(\mi))$ is a local ring. Hence, if $f(\mi)$ is not an isomorphism, then $\id_{P(\mi)}-f(\mi)$ is an isomorphism. Note that $\id_{P(\mi)}-f(\mi)=(\id_P-f)(\mi)$. Applying Nakayama Lemma's \ref{nakayamalemmasurjectiveproj}, it follows that $\id_P-f$ is an isomorphism or $f$ is an isomorphism or both. Thus, $\End_A(P(\l))$ is a local ring and  $P(\l)$ is indecomposable.
\end{proof}

\subsection{Split heredity chains} \label{Relation between heredity chains and standard modules}

We shall now discuss another approach to split highest weight categories using split heredity chains.

	\begin{Def}
	Let $R$ be a commutative Noetherian ring and let $A$ be a projective Noetherian $R$-algebra. Let $J$ be an ideal of $A$. We call $J$ a \textbf{split heredity ideal} of $A$ if the following holds:
	\begin{enumerate}[(i)] \setlength\itemsep{0em}
		\item $A/J$ is projective over $R$;
		\item $J$ is projective as left ideal over $A$;
		\item $J^2=J$;
		\item The $R$-algebra $\End_A(_AJ)^{op}$ is Morita equivalent to $R$.
	\end{enumerate}\label{splithereditydef}
\end{Def} 
Split heredity ideals were defined by \citep{CLINE1990126} and later studied in \citep{Rouquier2008}.

\begin{Def}\label{qhdef}
	A projective Noetherian $R$-algebra $A$ is called \textbf{split quasi-hereditary} if there exists a finite split heredity chain of ideals $0=J_{t+1}\subset J_t\subset \cdots \subset J_1=A$ such that $J_i/J_{i+1}$ is a split heredity ideal in $A/J_{i+1}$ for $1\leq i\leq t$. 
\end{Def} It is clear from the definition that if the ground ring is a field then this concept coincides with the usual concept of split quasi-hereditary algebras over fields.

The difference between split quasi-hereditary algebras and quasi-hereditary lies in Condition (iv) of Definition \ref{splithereditydef}. In fact, for quasi-hereditary finite-dimensional algebras over a field instead of (iv) it is only necessary to impose that the endomorphism algebra $\End_A(J)^{op}$ is semi-simple. Over algebraically closed fields, all quasi-hereditary algebras are split quasi-hereditary. Although we prefer to work with split hereditary algebras because they are better equipped for the integral setup behaving quite well under change of rings in contrast with (non-split) quasi-hereditary algebras.

From definition, it is clear that for split quasi-hereditary algebras the regular module $A$ is faithful as $R$-module.

	\begin{Prop} \label{quotientbysplitheredity}
		Let $A$ be a projective Noetherian $R$-algebra. 
	The algebra $A$ is split quasi-hereditary if and only if $A/J$ is split quasi-hereditary for some split heredity ideal $J$ of $A$.
\end{Prop}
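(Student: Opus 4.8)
The plan is to prove both implications of this equivalence, one of which is essentially trivial. If $A$ is split quasi-hereditary, then by definition there is a split heredity chain $0 = J_{t+1} \subset J_t \subset \cdots \subset J_1 = A$; taking $J = J_t$ gives a split heredity ideal of $A$, and the chain $0 = J_{t+1}/J_t \subset \cdots \subset J_1/J_t = A/J$ exhibits $A/J$ as split quasi-hereditary (here one uses that $J_i/J_{t}$ inside $A/J_t$ behaves correctly, which is immediate from $J_i/J_{i+1} \simeq (J_i/J_t)/(J_{i+1}/J_t)$ and the transitivity of taking quotients). So the forward direction requires only that one split heredity ideal sits at the bottom of some chain, which is what the definition provides.

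The substantive direction is the converse: assume $J$ is a split heredity ideal of $A$ and $A/J$ is split quasi-hereditary, and deduce that $A$ is split quasi-hereditary. First I would note that $A/J$ being split quasi-hereditary means there is a split heredity chain $0 = \bar J_{s+1} \subset \bar J_s \subset \cdots \subset \bar J_1 = A/J$ in $A/J$. Pulling these back along the quotient map $A \to A/J$ gives ideals $J = J_{s+1} \subset J_s \subset \cdots \subset J_1 = A$ of $A$, where $J_i$ is the preimage of $\bar J_i$. The key point is that $J_i/J_{i+1}$ as an ideal of $A/J_{i+1}$ is identified with $\bar J_i/\bar J_{i+1}$ as an ideal of $(A/J)/\bar J_{i+1} \simeq A/J_{i+1}$, so it remains a split heredity ideal there — this is just the standard isomorphism theorem applied at each layer. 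Appending $J$ itself at the bottom, i.e. $0 \subset J = J_{s+1} \subset \cdots \subset J_1 = A$, produces a split heredity chain for $A$ provided $J$ is a split heredity ideal of $A$, which is our hypothesis. Hence $A$ is split quasi-hereditary.

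The main obstacle — really the only point needing care — is checking that the four conditions of Definition \ref{splithereditydef} are stable under the two operations involved: pulling an ideal back along $A \to A/J$ and forming successive quotients. Specifically, if $K/J$ is a split heredity ideal of $A/J$, one must verify that $A/K \simeq (A/J)/(K/J)$ is projective over $R$ (clear, since it equals the corresponding quotient of $A/J$), that $K/J$ being $A/J$-projective as a left ideal together with $J$ being split heredity forces appropriate projectivity statements, that idempotency $K^2 = K$ follows from $(K/J)^2 = K/J$ and $J^2 = J$ (using $J \subseteq K$ and a short argument that $K = J + K^2$ combined with $J = J^2 \subseteq K^2$), and that the endomorphism-algebra condition (iv) transfers. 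Most of these reductions are already implicitly available from the machinery built earlier in the paper around split heredity ideals and the stratification Lemma \ref{splithwc}-style results; since the statement is phrased as "if and only if $A/J$ is split quasi-hereditary for \emph{some} split heredity ideal $J$", I would simply organize the proof as: (forward) read off the bottom ideal of a chain; (backward) concatenate the given split heredity ideal with a lifted chain from $A/J$, citing the isomorphism-theorem identifications layer by layer. I expect no genuine difficulty here, only bookkeeping, so I would keep the write-up short and lean on Definition \ref{qhdef} and Definition \ref{splithereditydef} directly.
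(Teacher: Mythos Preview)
Your proposal is correct and follows essentially the same approach as the paper: both directions rely on the third isomorphism theorem to identify $A/J_{i+1}\simeq (A/J)/(J_{i+1}/J)$ and, under this identification, $J_i/J_{i+1}$ with $(J_i/J)/(J_{i+1}/J)$, so that being a split heredity ideal transfers directly. One remark: the ``obstacles'' you list in your final paragraph are mostly beside the point---you never need $K^2=K$ or $A$-projectivity for the preimage $K\subset A$ itself, only that each successive quotient $J_i/J_{i+1}$ is split heredity in $A/J_{i+1}$, and this is immediate from the isomorphism of pairs just mentioned; the paper's proof is accordingly very short.
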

\begin{proof}
	Assume that $A$ has a split heredity chain $0=J_{t+1}\subset J_t\subset \cdots J_1=A$. Consider $J=J_t$. The chain of ideals $0=J_t/J\subset J_{t-1}/J \subset J_1/J=A/J$ in $A/J$ is split heredity. In fact, $J_i/J/J_{i+1}/J\simeq J_i/J_{i+1}$ in $A/J/J_{i+1}/J\simeq A/J_{i+1}$. As $J_i/J_{i+1}$ is split heredity in $A/J_{i+1}$ and $\End_{A/J/J_{i+1}/J}(J_i/J/J_{i+1}/J)\simeq \End_{A/J_{i+1}}(J_i/J_{i+1})$ we obtain that $A/J$ is a split quasi-hereditary algebra.
	
Conversely, assume that $A/J$ is split quasi-hereditary where $J$ is some split heredity ideal.	By assumption, $0=I_t\subset I_{t-1} \subset\cdots\subset I_1=A/J$ is a split heredity chain. Now each ideal in $A/J$ can be written as $J_i/J=I_i, \ t\leq i \leq 1$ by the correspondence theorem for quotient rings. Here, $J_i/J_{i+1}\simeq J_i/J/J_{i+1}/J\simeq I_i/I_{i+1}$ as $A$-modules and $\End_{A/J/J_{i+1}/J}(J_i/J/J_{i+1}/J)\simeq \End_{A/J_{i+1}}(J_i/J_{i+1})$. Therefore, $J_i/J_{i+1}$ is split heredity in $A/J_{i+1}$. So, $0\subset J\subset J_{t-1} \subset \cdots \subset J_1=A$ is a split heredity chain.
\end{proof}

The following result is Theorem 4.16 of \cite{Rouquier2008}. 
\begin{Theorem}\label{quasihereditaryhwc}
	Let $A$ be a projective Noetherian $R$-algebra. $(A\m, \{\Delta(\lambda)_{\lambda\in \Lambda}\})$ is a split highest weight category if and only if $A$ is a split quasi-hereditary algebra. Let $\St\rightarrow \{1, \ldots, t\}$, $\St_i\mapsto i$ be an increasing bijection. Here the standard modules and the split heredity chain are related in the following way:
	\begin{align*}
		\im \tau_{\St_i}=J_i/J_{i+1} \text{ in } A/J_{i+1}, \quad J_{t+1}=0\subset J_t\subset J_{t-1}\subset \cdots \subset J_1=A \quad \text{is a split heredity chain.}
	\end{align*}
Here, $\tau_{\St_i}$ denotes the map $\St_i\otimes_R \Hom_{A/J_{i+1}}(\St_i, A/J_{i+1})\rightarrow A/J_{i+1}, $ given by $ l\otimes f\mapsto f(l)$.
\end{Theorem}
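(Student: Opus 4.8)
The plan is to prove the equivalence together with the stated correspondence by induction on $t=|\L|$, chaining the two stratification statements — Lemma~\ref{splithwcinduction} on the split highest weight side and Proposition~\ref{quotientbysplitheredity} on the split quasi-hereditary side — with the dictionary between projective $R$-split modules and split heredity ideals developed in Appendix~\ref{Rsplit modules}. I fix the increasing bijection $\St_i\mapsto i$ once and for all and observe that the element of $\L$ mapped to $t$ is automatically maximal, so at each stage of the induction the maximal element to be peeled off is determined.

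For the base case $t=1$, say $\L=\{1\}$: on the split highest weight side there is no $\mu>1$, so the module $C(1)$ appearing in \ref{splithwc}(iv) is forced to be zero, whence $P(1)\simeq\St_1$ and, by \ref{splithwc}(iii) and the remark following Definition~\ref{splithwc}, $\St_1$ is a projective progenerator of $A\m$; since $A\in R\proj$ every summand of a free module lies in $R\proj$, so $\St_1\in R\proj$, and together with $\End_A(\St_1)\simeq R$ from \ref{splithwc}(v) this is precisely the assertion that $A$ is Morita equivalent to $R$ and that $\tau_{\St_1}$ is an isomorphism, i.e.\ $\im\tau_{\St_1}=A=J_1$ and $J_2=0$. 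One then reads off that $A$ is a split heredity ideal of itself, so $A$ is split quasi-hereditary with the chain $0\subset A$ and $\im\tau_{\St_1}=J_1/J_2$ in $A/J_2$. Conversely, if $A$ is a split heredity ideal of itself, Appendix~\ref{Rsplit modules} provides a projective $R$-split module $L$ with $\im\tau_L=A$, and checking \ref{splithwc}(i)--(v) for $\{L\}$ is immediate. (The degenerate case $t=0$, where both sides say $A=0$, is trivial.)

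For the inductive step, let $\alpha\in\L$ be the maximal element with $\alpha\mapsto t$ and put $J:=\im\tau_{\St(\alpha)}$, an ideal of $A=A/J_{t+1}$. By Lemma~\ref{splithwcinduction}, $(A\m,\{\St(\lambda)\})$ is a split highest weight category if and only if $\St(\alpha)\in\mathcal{M}(A)$ and $(A/J\m,\{\St(\lambda)\}_{\lambda\neq\alpha})$ is one; by Appendix~\ref{Rsplit modules}, the condition $\St(\alpha)\in\mathcal{M}(A)$ with $\im\tau_{\St(\alpha)}=J$ is equivalent to $J$ being a split heredity ideal of $A$, in which case $A/J$ is again a projective Noetherian $R$-algebra and the induction hypothesis applies to it with the bijection induced on $\{1,\dots,t-1\}$. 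By that hypothesis, $(A/J\m,\{\St(\lambda)\}_{\lambda\neq\alpha})$ is a split highest weight category if and only if $A/J$ is split quasi-hereditary with a split heredity chain $0=\bar J_t\subset\bar J_{t-1}\subset\cdots\subset\bar J_1=A/J$ such that $\im\tau_{\St_i}=\bar J_i/\bar J_{i+1}$ in $(A/J)/\bar J_{i+1}$ for $i<t$. Pulling this chain back along $A\twoheadrightarrow A/J$, letting $J_i$ be the preimage of $\bar J_i$ and $J_t:=J$, produces $0=J_{t+1}\subset J_t\subset\cdots\subset J_1=A$; Proposition~\ref{quotientbysplitheredity} shows this is a split heredity chain of $A$, and the isomorphisms $(A/J)/\bar J_{i+1}\simeq A/J_{i+1}$ turn the identities just stated into $\im\tau_{\St_i}=J_i/J_{i+1}$ in $A/J_{i+1}$ for every $i\leq t$ (the case $i=t$ being the definition of $J_t$). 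Running the whole chain of equivalences in reverse yields the converse, which closes the induction.

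The hard part will be the appendix input: that a module $L\in\mathcal{M}(A)$ is exactly the module-theoretic counterpart of the split heredity ideal $\im\tau_L$ — in particular, when $\im\tau_L=A$, that $A$ is Morita equivalent to $R$. This is where condition (iv) of Definition~\ref{splithereditydef}, as opposed to the weaker semisimplicity requirement of the non-split case, is used in full; once this dictionary and Lemma~\ref{splithwcinduction} are in place, the induction above is bookkeeping. A small point to be careful about is that one must remove the maximal element of index $t$, not an arbitrary maximal element, so that the chain obtained is indexed compatibly with the fixed bijection.
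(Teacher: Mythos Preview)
Your proof is correct and follows essentially the same route as the paper: induction on $t=|\L|$, using Lemma~\ref{splithwcinduction} to peel off the maximal standard module, Proposition~\ref{bijectionsplitheredity} (the dictionary between $\mathcal{M}(A)$ and split heredity ideals) at each step, and the correspondence theorem to pull the heredity chain of $A/J$ back to $A$. The only cosmetic differences are that you run both directions simultaneously as a chain of equivalences while the paper treats them separately, and your base case argues directly via Morita theory that $\tau_{\St_1}$ is an isomorphism whereas the paper invokes Proposition~\ref{bijectionsplitheredity} and Corollary~\ref{full subcategory in terms of split module}; both are valid.
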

\begin{proof}
	Let $A$ be split quasi-hereditary with split heredity chain: $J_{t+1}=0\subset J_t\subset J_{t-1}\subset \cdots \subset J_1=A$. We shall proceed by induction on the size of the split heredity chain of $A$ to show that $A\m$ can have a split highest weight category structure.
	
	Assume $t=1$. Then, $0\subset A$ is a split heredity chain. So, $A$ is split heredity in $A$. By Proposition \ref{bijectionsplitheredity}, there is $L\in \mathcal{M}(A)$ such that $\im \tau_L=A$. Put $\St(1)=L$ and since $A/\im \tau_L=0$, it follows by Lemma \ref{splithwcinduction} that $(A\m, \{\St(1)\} )$ is a split highest weight category.
	
	Assume now that the result holds for $t-1$. Fix $J=J_t$. $A/J$ is split quasi-hereditary with split heredity chain $0\subset J_{t-1}/J\subset \cdots \subset J_1/J=A/J$. By induction, $A/J\m$ is a split highest weight category with standards $\St(i)$, \mbox{$1\leq i\leq t-1$,} satisfying $\im \tau_{\St(i)}=(J_i/J)/(J_{i+1}/J)\simeq J_i/J_{i+1}$, $1\leq i\leq t-1$. By Proposition \ref{bijectionsplitheredity}, there is \mbox{$L\in \mathcal{M}(A)$} such that $\im \tau_L=J$. Put $\St(t)=L$. Since each $\St(i)\in A/J\m$, we get that $\Hom_A(\St(t), \St(i))=0, \ 1\leq i\leq t-1$, by Corollary  \ref{full subcategory in terms of split module}. So, we can consider the usual order $t\geq i,\ 1\leq i\leq t-1$. By  Lemma \ref{splithwcinduction}, $(A\m, \{ \St(i)_{i\in \{1, \ldots, t\} } \})$ is a split highest weight category.
	
	Now assume that $(A\m, \{\Delta(\lambda)_{\lambda\in \Lambda}\})$ is a split highest weight category. Let $\Lambda\rightarrow \{1, \ldots, t\}$, $\lambda\mapsto i_\l$ be an increasing bijection.
	We shall proceed by induction on $t$. If $t=|\L|=1$, then $L=\St(1)\in \mathcal{M}(A)$. By Proposition \ref{bijectionsplitheredity}, there exists $J$ split heredity such that $J=\im \tau_L$. By Corollary \ref{full subcategory in terms of split module}, $A/J\m=0$. In particular, $A/J=0$, so $J=A$. Thus, $0\subset J=A$ is a split heredity chain.
	
	Now assume the result known for $t-1$. Consider  a maximal element $\alpha\in \L$ satisfying $i_\alpha=t$. By Lemma \ref{splithwcinduction}, $\St(t)\in \mathcal{M}(A)$ and $(A/J\m, \{\Delta(i)_{1\leq i\leq t-1}\})$ is a split highest weight category, where $J=\im \tau_{\St(t)}$. By induction, there exists a split heredity chain 
	\begin{align*}
		0\subset I_{t-1}\subset \cdots\subset I_1=A/J\quad \text{ such that } \im \tau_{\St(i)}=I_i/I_{i+1}.
	\end{align*} Fix $J_t=J$.
	By the correspondence theorem, there are ideals $J_i$ of $A$ such that $I_i=J_i/J$. It follows that $J_i/J_{i+1}\simeq J_i/J/J_{i+1}/J=I_i/I_{i+1}=\im \tau_{\St(i)}$ split heredity in $A/J/J_{i+1}/J\simeq A/J_{i+1}$ for $i=1, \ldots, t-1$. Now since $J_t$ is split heredity in $A$, it follows by the discussed argument above that $0=J_{t+1}\subset J_t\subset J_{t-1}\subset \cdots\subset J_1=A$ is a split heredity chain of $A$ satisfying $\im \tau_{\St(i)}= J_i/J_{i+1}$.
\end{proof}

Due to Theorem \ref{quasihereditaryhwc}, we can say that $(A, \{\Delta(\lambda)_{\lambda\in \Lambda}\})$ is a split quasi-hereditary algebra when \linebreak$(A\m, \{\Delta(\lambda)_{\lambda\in \Lambda}\})$ is a split highest weight category without mentioning the split heredity chain.

Note that, by the bijection given in Proposition \ref{bijectionsplitheredity}, the standard modules are not unique in this construction unless the Picard group is trivial.

In the construction of the standard modules, in general, there are many choices of suitable standard modules that can be given using the same heredity chain. Although, they all lead to the same highest weight structure for $A$. In fact, we have the following concept.

\begin{Def}\label{equivalencehwc}
	Let $(A\m, \{\Delta(\lambda)_{\lambda\in \Lambda}\})$ and $(B\m, \{\Omega(\chi)_{\chi\in X}\})$ be two split highest weight categories. A functor $F\colon A\m\rightarrow B\m$ is an \textbf{equivalence of split highest weight categories} if 
	\begin{enumerate}[(i)]
		\item it is an equivalence of categories;
		\item there is a bijection $\phi\colon \Lambda \rightarrow X$ and invertible $R$-modules $U_\l$ such that $F(\St(\l))\simeq \Omega(\phi(\lambda))\otimes_R U_\l, \ \l\in\L$.
	\end{enumerate}
\end{Def}

It is also clear from the definition of split quasi-hereditary that if two projective Noetherian $R$-algebras $A$ and $B$ are Morita equivalent and if $(A, \St)$ is a split quasi-hereditary algebra then so is $(B, F\St)$, for some equivalence of categories $F$.

\begin{Remark}\label{Remarkpassingtofields}
	Note that when passing from $A$ to $A(\mi)$, there is no confusion if we take equivalent standard modules. That is, $\St_i'(\mi)=\St_i(\mi)$.
\end{Remark}
In fact, consider $\St_i'=\St_i\otimes_R F, \ F\in Pic(R)$. Then, there exists $G$ such that $G\otimes_R F\simeq R$. Moreover,
\begin{align*}
	F(\mi)\otimes_{R(\mi)}G(\mi)\simeq F\otimes_R R(\mi)\otimes_R G \simeq F\otimes_R G\otimes_R R(\mi)\simeq R\otimes_R R(\mi)\simeq R(\mi). 
\end{align*}Hence, $F(\mi)\in Pic(R(\mi))=\{R(\mi)\}$ since $R(\mi)$ is a field.
Therefore, \begin{align*}
	\St_i'(\mi)=\St_i\otimes_R F (\mi)\simeq \St_i(\mi)\otimes_{R(\mi)}F(\mi)\simeq \St_i(\mi).
\end{align*}

Now we must observe that Remark 4.18 in \cite{Rouquier2008} is not accurate. Theorem 3.3 in \citealp{CLINE1990126} does not involve split quasi-hereditary algebras, but instead, it involves (non-split) quasi-hereditary algebras. On the other hand, in general, we cannot construct standard modules $\St$ just knowing the modules over the residue field. Here the difficulty lies that a priori there is not an $R$-homomorphism that its image under the functor $-\otimes_R R(\mi)$ is the isomorphism. This problem also occurs when dealing with localizations.  

So to conclude the split version of Corollary 3.4 of \cite{CLINE1990126}, a direct approach like in its proof might not work in this case. We suggest the following:

\begin{Prop}
	Let $R$ be a Noetherian commutative ring and $A$ a projective Noetherian $R$-algebra. $J$ is a split heredity ideal in $A$ if and only if $J$ is split heredity ideal in $A^{op}$. \label{heredityop}
\end{Prop}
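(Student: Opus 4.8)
**Proof strategy for Proposition \ref{heredityop} (split heredity ideals are preserved under passing to the opposite algebra).**

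The plan is to verify the four axioms of Definition \ref{splithereditydef} for $J$ viewed as an ideal of $A^{op}$, given that they hold for $J\subset A$. Since $J=\iota$-free of the structure here, symmetry handles (iii) immediately: the condition $J^2=J$ is two-sided and does not see the multiplication order, so it transfers verbatim. Axiom (i), that $A^{op}/J$ is projective over $R$, is also immediate because $A^{op}/J$ and $A/J$ coincide as $R$-modules. The two genuinely asymmetric conditions are (ii), that $J$ is projective as a left $A^{op}$-module, i.e.\ projective as a \emph{right} $A$-module; and (iv), that $\End_{A^{op}}({}_{A^{op}}J)^{op}=\End_A(J_A)$ is Morita equivalent to $R$. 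So the real content is moving between "left" and "right" statements about $J$.

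First I would set up the dictionary via the standard duality $D=\Hom_R(-,R)$ together with localization. The key auxiliary fact is that a split heredity ideal behaves well under change of rings (Proposition \ref{hwctensorproduct} and Theorems \ref{hwcresiduefield}, \ref{hwclocalization}), so it suffices to check the conclusion after applying $R(\mi)\otimes_R-$ for each maximal ideal $\mi$ (using Theorem \ref{projectivitytermsmaximal} for projectivity of $J$ as a right $A$-module, noting $J\in R\proj$ because $0\to J\to A\to A/J\to 0$ is $(A,R)$-exact by (i)). This reduces everything to the field case. Over a field, by Proposition \ref{bijectionsplitheredity} and Theorem \ref{quasihereditaryhwc}, $J=\im\tau_L$ for a projective $R$-split module $L$ with $L$ corresponding to a maximal weight; one then has $J\cong L\otimes_R\Hom_A(L,A)$ as an $A$-bimodule, so $J$ as a right $A$-module is a sum of copies of the right module $\Hom_A(L,A)$, and one checks this is projective and that $\End_{A^{op}}({}_{A^{op}}J)$ is again a full matrix ring over the field. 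Alternatively — and this is probably cleaner — apply $D$ directly: $D$ sends the left heredity ideal data for $A$ to right heredity ideal data, and one identifies $DJ$ inside $DA$, using that $J$ is an $\iota$-stable-type idempotent ideal so that $DJ$ can be located as an ideal of $A^{op}=DA$-as-algebra; here $\End_{A^{op}}(DJ)^{op}\cong \End_A(J)$ via $D$, which gives (iv) for free once the ideal identification is made.

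The step I expect to be the main obstacle is precisely this identification: showing that $D$ (or the localized/residue-field reductions) carries the \emph{left} split heredity ideal $J\subset A$ to a \emph{left} split heredity ideal of $A^{op}$ as an actual two-sided ideal, not merely up to abstract isomorphism. The difficulty flagged in the paragraph preceding the statement is real — there need not be an $R$-linear comparison map whose reduction mod $\mi$ is the isomorphism one wants, so a naive pointwise argument fails. My resolution would be to avoid trying to build such a global map and instead argue that the candidate ideal $J'\subset A^{op}$ (namely $J$ itself, with reversed multiplication) satisfies each defining property: (i) and (iii) directly; for (ii), reduce to residue fields via Theorem \ref{projectivitytermsmaximal} and invoke the field-case statement (the classical fact that a heredity ideal of a finite-dimensional algebra is heredity in the opposite algebra, or the $\mathcal{M}$-module description); and for (iv), exhibit the isomorphism $\End_A(J_A)\cong \End_A({}_AJ)^{op}$ coming from the $A$-bimodule structure of $J$ together with the idempotent-ideal property $J^2=J$ (which lets one write $J=AeA$-type data locally), then transport Morita equivalence with $R$ across this isomorphism. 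Finally I would note that, by Proposition \ref{quotientbysplitheredity} and induction on the length of a split heredity chain, this ideal-level statement upgrades to the assertion that $A$ is split quasi-hereditary iff $A^{op}$ is, which is the intended downstream consequence.
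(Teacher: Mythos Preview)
Your overall architecture is right and matches the paper: conditions (i) and (iii) are symmetric, and right-projectivity of $J$ (condition (ii) for $A^{op}$) is obtained by reducing to residue fields via Theorem~\ref{projectivitytermsmaximal} and invoking the classical field case. The paper does exactly this.

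The genuine gap is in your resolution of (iv). The isomorphism you propose, $\End_A(J_A)\cong \End_A({}_AJ)^{op}$, is \emph{false} in general. Take $A=kQ$ for the quiver $1\xrightarrow{\alpha}2$ and $J=Ae_1A$. Then ${}_AJ\simeq Ae_1$ is indecomposable, so $\End_A({}_AJ)\simeq e_1Ae_1\simeq k$; but $J_A\simeq (e_1A)^2$, so $\End_A(J_A)\simeq M_2(k)$. Both are Morita equivalent to $k$, which is all that is required, but they are not isomorphic rings. The bimodule structure and $J^2=J$ do not force the left and right ranks to agree.

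What the paper does instead --- and what you essentially sketched for the field case but did not carry to general $R$ --- is work with $L\in\mathcal{M}(A)$ satisfying $\im\tau_L=J$. Using Remark~\ref{remendprojonma} and Proposition~\ref{bijection splits} one has $\Hom_{A^{op}}(J^{op},A)\simeq\End_R(L)$ and $\Hom_A(J,A)\simeq\End_R(\Hom_A(L,A))$. Since $J^2=J$ gives $\Hom_A(J,A/J)=0=\Hom_{A^{op}}(J^{op},A^{op}/J^{op})$ (Lemma~\ref{A/Jmod}), applying $\Hom$ to $0\to J\to A\to A/J\to 0$ on both sides yields $\End_A({}_AJ)\simeq\End_R(\Hom_A(L,A))$ and $\End_{A^{op}}(J^{op})\simeq\End_R(L)$. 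Both $L$ and $\Hom_A(L,A)$ are $R$-progenerators, so both endomorphism rings are Morita equivalent to $R$ --- without ever comparing them to each other. That is the missing step in your proposal: replace the nonexistent direct isomorphism by two separate Morita equivalences with $R$ coming from the $R$-split module description.
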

\begin{proof} %
Assume first that $R$ is a field. In such a case, we can write $J=AeA$ for some primitive idempotent $e$ of $A$.
	The result holds for heredity ideals (see \citep[Theorem 4.3 (b)]{PS88}). Now assume that $\End_A(AeA)^{op}$ is Morita equivalent to $ R$. 
	We have that 
	\begin{align*}
		\End_A(Ae)^{op}\simeq eAe \simeq \End_A(eA).
	\end{align*}Since $\End_A(_A AeA)^{op}$ is Morita equivalent to $\End_A(Ae)^{op}$ and $	\End_A(AeA_A)$ is Morita equivalent to $ \End_A(eA)$ we obtain that $\End_A(_A AeA)^{op}$ is Morita equivalent to $ \End_A(AeA_A)$.
Hence,  $\End_A(AeA_A)$ is Morita equivalent  $R$, and the result follows for fields.
	
	Now assume that $R$ is a commutative Noetherian  ring and $J$ is a split heredity ideal in $A$. Conditions (i) and (ii) of Definition \ref{splithereditydef} for $J^{op}$ in $A^{op}$ are clear. 
	Moreover, for any maximal ideal $\mi$ of $R$, $A/J(\mi)\simeq A(\mi)/J(\mi)$, since $A/J$ is projective over $R$, and thus $\Tor_1^R(A/J, R(\mi))=0$.
	We have $J(\mi)^2=J\otimes_R R(\mi) J\otimes_R R(\mi)= J^2\otimes_R R(\mi)=J(\mi)$, and clearly $J(\mi)$ is projective as left $A(\mi)$-module.
	
	Since $\End_A(_AJ)^{op}$ is Morita equivalent to $ R$, there exists an $R$-progenerator $P$ such that $\End_A(_AJ)^{op}\simeq \End_R(P)^{op}$. Therefore,
	\begin{align*}
		\End_{A(\mi)}(J(\mi))^{op}&\simeq R(\mi)\otimes_R \End_A(J)^{op}, \text{ since } J\in A\proj\\
		&\simeq R(\mi)\otimes_R \End_R(P)^{op} \simeq \End_{R(\mi)}(P(\mi))^{op}, \text{ since } P\in R\proj.
	\end{align*}
	Since the functor $R(\mi)\otimes_R -$ preserves finite direct sums, it preserves the progenerators, hence $P(\mi)$ is an $R(\mi)$-progenerator and $\End_{A(\mi)}(J(\mi))^{op}$ is Morita equivalent to  $R(\mi)$. So, $J(\mi)$ is split heredity in $A(\mi)$. Since $R(\mi)$ is a field, $J(\mi)^{op}$ is split heredity in $A(\mi)^{op}$. In particular, $J(\mi)$ is projective as right $A(\mi)$-module for every maximal ideal $\mi$ of $R$. By Theorem \ref{projectivitytermsmaximal}, $J$ is projective as right $A$-module. 
	
	Consider $L\in \mathcal{M}(A)$ such that $\im \tau_L=J$. By Proposition \ref{bijection splits}, $\End_R(\Hom_A(L, A))\simeq \Hom_A(J, A)$. By Remark \ref{remendprojonma}, $\End_R(L)\simeq \Hom_{A^{op}}(J^{op}, A)$. Since both $\Hom_A(L, A)$ and $L$ are $R$-progenerators, it follows that $\add_R \Hom_A(L, A) =\add_R L$, thus $\End_R(\Hom_A(L, A))^{op}$ is Morita equivalent to $\End_R(L)^{op}$.
	
	Now applying $\Hom_A(J, -)$ and $\Hom_{A^{op}}(J^{op}, -)$ to the exact sequence $0\rightarrow J\rightarrow A\rightarrow A/J\rightarrow 0$ yields the following exact sequences
	\begin{align*}
		0\rightarrow \Hom_A(J, J)\rightarrow \Hom_A(J, A)\rightarrow \Hom_A(J, A/J)\rightarrow 0\\
		0\rightarrow \Hom_{A^{op}}(J^{op}, J^{op})\rightarrow \Hom_{A^{op}}(J^{op}, A^{op})\rightarrow \Hom_{A^{op}}(J^{op}, A^{op}/J^{op})\rightarrow 0.
	\end{align*}
	By Lemma \ref{A/Jmod}, $\Hom_{A^{op}}(J^{op}, A^{op}/J^{op})=\Hom_A(J, A/J)=0$. So, we conclude that \begin{align*}
		\Hom_A(J, J)\simeq \End_R(\Hom_A(L, A)) \text{ and } \Hom_{A^{op}}(J^{op}, J^{op})\simeq \End_R(L).
	\end{align*} Therefore, $R$ is Morita equivalent to $\End_A(J)^{op}\simeq \End_R(\Hom_A(L, A))^{op}$ which  in turn is Morita equivalent to $\End_R(L)\simeq \End_{A^{op}}(J^{op})^{op}$. So, $J^{op}$ is split heredity in $A^{op}$.
\end{proof} 

\begin{Theorem}\label{spliqhop}
	$A$ is split quasi-hereditary with split heredity chain
	$
	0\subset J_t\subset \cdots\subset J_1=A
	$ if and only if $A^{op}$ is split quasi-hereditary with split heredity chain $
	0\subset J_t^{op}\subset \cdots\subset J_1^{op}=A^{op}.
	$
\end{Theorem}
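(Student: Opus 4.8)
The plan is to deduce this directly from Proposition~\ref{heredityop}, applied one layer of the chain at a time. Since $(A^{op})^{op}=A$, since $(J_i^{op})^{op}=J_i$, and since a subset of $A$ is a two-sided ideal of $A$ exactly when it is a two-sided ideal of $A^{op}$, it suffices to prove one implication: if $0=J_{t+1}\subset J_t\subset\cdots\subset J_1=A$ is a split heredity chain of $A$, then $0=J_{t+1}^{op}\subset J_t^{op}\subset\cdots\subset J_1^{op}=A^{op}$ is a split heredity chain of $A^{op}$; the converse then follows by applying this implication to $A^{op}$ in place of $A$.

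The preliminary observations are the two elementary identifications used throughout: for a two-sided ideal $J$ of $A$ one has $(A/J)^{op}\simeq A^{op}/J^{op}$ as $R$-algebras, and for nested ideals $J_{i+1}\subseteq J_i$ the ideal $(J_i/J_{i+1})^{op}$ of $(A/J_{i+1})^{op}$ is identified with $J_i^{op}/J_{i+1}^{op}$ inside $A^{op}/J_{i+1}^{op}$; together with the fact that each $A/J_{i+1}$ is a projective Noetherian $R$-algebra (as is built into the well-formedness of Definition~\ref{qhdef}: Noetherianity is clear, and $R$-projectivity comes from condition~(i) of Definition~\ref{splithereditydef} applied along the chain, which gives $A/J_i\in R\proj$ for each $i$), this is all that is needed in order to invoke Proposition~\ref{heredityop} at each level.

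With these in hand the argument is a single pass over the chain. Fix $i$ with $1\le i\le t$. By hypothesis $J_i/J_{i+1}$ is a split heredity ideal of the projective Noetherian $R$-algebra $A/J_{i+1}$, so Proposition~\ref{heredityop}, applied to the algebra $A/J_{i+1}$ and its ideal $J_i/J_{i+1}$, shows that $(J_i/J_{i+1})^{op}$ is a split heredity ideal of $(A/J_{i+1})^{op}$; by the identifications above this says that $J_i^{op}/J_{i+1}^{op}$ is a split heredity ideal of $A^{op}/J_{i+1}^{op}$. As this holds for every $i$, the chain $0=J_{t+1}^{op}\subset J_t^{op}\subset\cdots\subset J_1^{op}=A^{op}$ satisfies Definition~\ref{qhdef}, so $A^{op}$ is split quasi-hereditary with the asserted chain. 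The entire technical content is already packed into Proposition~\ref{heredityop}; the only point meriting attention is the one noted above, namely checking that the intermediate quotients $A/J_{i+1}$ meet the standing hypotheses, so that Proposition~\ref{heredityop} is legitimately available level by level.
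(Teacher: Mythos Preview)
Your proof is correct and follows essentially the same approach as the paper: apply Proposition~\ref{heredityop} to each quotient $A/J_{i+1}$ and its ideal $J_i/J_{i+1}$, using the identifications $(A/J_{i+1})^{op}\simeq A^{op}/J_{i+1}^{op}$ and $(J_i/J_{i+1})^{op}\simeq J_i^{op}/J_{i+1}^{op}$. Your write-up is in fact slightly more careful than the paper's one-line argument, since you explicitly verify that each $A/J_{i+1}$ is a projective Noetherian $R$-algebra so that Proposition~\ref{heredityop} is applicable at each level.
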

\begin{proof}
	By Proposition \ref{heredityop}, $J_i/J_{i+1}$ is split heredity in $A/J_{i+1}$ if and only if $(J_i/J_{i+1})^{op}=J_i^{op}/J_{i+1}^{op}$ is split heredity in $(A/J_{i+1})^{op}=A^{op}/J_{i+1}^{op}$ for all $1\leq i\leq t$.
\end{proof}

In the following, we want to obtain further insight into what information about split heredity chains can we gain from applying change of rings techniques on split heredity chains.

\begin{Lemma}\label{splithereditychainequalunderresiduefield}
	Let $R$ be a commutative Noetherian ring and let $A$ be a projective Noetherian $R$-algebra. Assume that $A$ has two split heredity chains\begin{align}
		0\subset J_t\subset J_{t-1}\subset \cdots\subset J_1=A\\
		0\subset I_t\subset I_{t-1}\subset \cdots\subset I_1=A.
	\end{align}
	If $J_j(\mi)=I_j(\mi)$ for every maximal ideal $\mi$ of $R$, then $J_j=I_j$ for all $j$.
\end{Lemma}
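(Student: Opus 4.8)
The plan is to peel the two chains apart from the bottom by a downward induction on $j$, thereby reducing to a statement about genuine split heredity ideals, which one then settles by combining idempotency with Nakayama's Lemma at each localization.

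Running through the chain conditions from $i=t$ down to $i=1$, each quotient $A/J_j$ and $A/I_j$ is projective over $R$ (a split heredity ideal has $R$-projective quotient algebra); in particular $0\to J_j\to A\to A/J_j\to 0$ is $(A,R)$-exact, so $J_j(\mi)\hookrightarrow A(\mi)$ and the hypothesis $J_j(\mi)=I_j(\mi)$ is an equality of submodules of $A(\mi)$, which I write $\overline{J_j}^{\,\mi}=\overline{I_j}^{\,\mi}$; also each $J_j,I_j$ is finitely generated over the Noetherian ring $R$. The induction starts at $j=t+1$, where $J_{t+1}=0=I_{t+1}$. For the step, assume $J_{j+1}=I_{j+1}=:K$ and pass to $B:=A/K$: then $J_j/K$ and $I_j/K$ are split heredity ideals of $B$, and from $B(\mi)=A(\mi)/\overline{K}^{\,\mi}$ together with $K\subseteq J_j$ one checks that the image of $J_j/K$ in $B(\mi)$ is $\overline{J_j}^{\,\mi}/\overline{K}^{\,\mi}$, likewise for $I_j$; since $\overline{J_j}^{\,\mi}=\overline{I_j}^{\,\mi}$, the split heredity ideals $J_j/K$ and $I_j/K$ of $B$ have the same image in every $B(\mi)$. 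So it suffices to prove: if $J$ and $I$ are split heredity ideals of a projective Noetherian $R$-algebra $B$ with $\overline{J}^{\,\mi}=\overline{I}^{\,\mi}$ in $B(\mi)$ for every maximal ideal $\mi$, then $J=I$.

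For this core statement I would prove $J\subseteq I$ (then $I\subseteq J$ by symmetry). Put $N:=B/I$, a finitely generated $R$-module, and consider the submodule $JN=(J+I)/I$ of $N$. The image of $JN$ in $N(\mi)=N/\mi N$ is $\overline{J}^{\,\mi}\cdot N(\mi)=\overline{I}^{\,\mi}\cdot N(\mi)$, and the latter vanishes since $I$ annihilates $B/I$; hence $JN\subseteq \mi N$ for every maximal ideal $\mi$. Now use that $J^2=J$: this gives $JN=J\cdot(JN)$, and since $R$ acts centrally one has $J\cdot(\mi N)=\mi\cdot(JN)$, so
\[
JN=J\cdot(JN)\subseteq J\cdot(\mi N)=\mi\cdot(JN)\subseteq JN,
\]
i.e. $JN=\mi\cdot(JN)$ for every maximal $\mi$. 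Localizing at $\mi$ and applying Nakayama's Lemma to the finitely generated $R_\mi$-module $(JN)_\mi$ gives $(JN)_\mi=0$ for all $\mi$, whence $JN=0$, that is $J\subseteq I$.

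The delicate point is the core statement, specifically the passage from $JN\subseteq\mi N$ for all $\mi$ to $JN=0$: these containments by themselves are not enough, as $\bigcap_\mi \mi N$ may be nonzero; what saves the argument is that idempotency $J^2=J$ upgrades $JN\subseteq \mi N$ to $JN=\mi\cdot(JN)$, exactly the form Nakayama's Lemma requires. This is also why the downward induction cannot be skipped: a general term $J_j$ of the chain only satisfies $J_j=J_j^2+J_{j+1}$, not $J_j^2=J_j$, so one must first quotient out $J_{j+1}=I_{j+1}$ to be among bona fide split heredity ideals.
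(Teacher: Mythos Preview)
Your proof is correct and takes a genuinely different, more elementary route than the paper's. The paper first passes to the projective $R$-split modules $\Delta,L\in\mathcal{M}(A)$ associated to $J,I$ via Proposition~\ref{bijectionsplitheredity}; from $J(\mi)=I(\mi)$ and triviality of $Pic(R(\mi))$ it deduces $\Delta(\mi)\simeq L(\mi)$, lifts this to an $A$-map $f\colon L\to\Delta$ using projectivity of $L$, localizes and applies Lemma~\ref{nakayamalemmasurjectiveproj} to get $\Delta_\mi\simeq L_\mi$, whence $J_\mi=I_\mi$, and then invokes Lemma~\ref{equalitybeinglocal}. You instead work directly with the ideals, exploiting only two defining features of a split heredity ideal: idempotency and $R$-projectivity of the quotient. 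Your key observation---that $J^2=J$ upgrades the weak containment $JN\subseteq\mi N$ to the Nakayama-ready equality $JN=\mi\cdot(JN)$---is a nice trick that sidesteps the entire $\mathcal{M}(A)$/Picard-group machinery. What the paper's approach buys is a demonstration of how the structural correspondence between split heredity ideals and projective $R$-split modules interacts with change of rings, which is thematically relevant to the surrounding section; what your approach buys is a self-contained argument that would work verbatim for any idempotent ideal with $R$-projective quotient, not just split heredity ideals.
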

\begin{proof}
	Let $J$ and $I$ be split heredity ideals of $A$ satisfying $I(\mi)=J(\mi)$ for every maximal ideal $\mi$ of $R$. Let $\St$ and $L$ be the modules in $\mathcal{M}(A)$ associated with $J$ and $I$, respectively. Let $\mi$ be a maximal ideal of $R$. By Proposition \ref{bijectionsplitheredity}, $\St(\mi)\simeq L(\mi)$. Therefore, we have surjective $A$-maps $\pi_\St \colon \St\rightarrow \St(\mi) $, $\pi_L\colon L\rightarrow \St(\mi)$. In particular, $\pi_\St(\mi)$ and $\pi_L(\mi)$ are $A(\mi)$-isomorphisms. Since $L\in A\proj$ there exists an $A$-homomorphism $f\in \Hom_A(L, \St)$ satisfying $\pi_\St\circ f=\pi_L$. Therefore, $f(\mi)$ is an isomorphism. It follows by Lemma \ref{nakayamalemmasurjectiveproj}, $\St_\mi\simeq L_\mi$.
	The following commutative diagram
	\begin{equation}
		\begin{tikzcd}
			\St_\mi \otimes_{R_\mi} \Hom_{A_\mi}(\St_\mi, A_\mi) \arrow[r, "\tau_{\St_\mi}"] \arrow[d, "f_\mi\otimes \Hom_{A_\mi}(f_\mi^{-1}{,}A_\mi)", swap]& A_\mi \arrow[d, equal] \\
			L_\mi\otimes_{R_\mi} \Hom_{A_\mi}(L_\mi, A_\mi) \arrow[r, "\tau_{L_\mi}"] & A_\mi
		\end{tikzcd}
	\end{equation}
	yields that $I_\mi=\im \tau_{L_\mi} =\im \tau_{\St_\mi}=J_\mi$. The choice of $\mi$ is arbitrary, thus this equality holds for every maximal ideal $\mi$ of $R$. By Lemma \ref{equalitybeinglocal}, $I=J$. As $J_{t-1}/J_t\in R\proj$ we can write
	\begin{align}
		J_{t-1}/J_t(\mi)\simeq J_{t-1}(\mi)/J_t(\mi)\simeq I_{t-1}(\mi)/I_t(\mi)\simeq I_{t-1}/I_t(\mi),
	\end{align}for every maximal ideal $\mi$ of $R$. Thus, we obtain $J_{t-1}/J_t=I_{t-1}/J_t$. It follows that $J_{t-1}=I_{t-1}$. Continuing this argument, by induction on $t$, we conclude the result.
\end{proof}

Another interpretation of Observation \ref{obs563} is the following statement.

\begin{Lemma}Let $R$ be a commutative Noetherian ring.
	Let $A$ be a split quasi-hereditary $R$-algebra and $J$ be a split heredity ideal in $A$.
	Then, for each maximal ideal $\mi$ of $R$, the canonical map \begin{align}
		\Ext_A^1(A/J, J)(\mi)\rightarrow \Ext_{A(\mi)}^1(A(\mi)/J(\mi), J(\mi)) 
	\end{align}is an isomorphism.
\end{Lemma}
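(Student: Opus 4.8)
The plan is to reduce the statement to a $\Tor$-vanishing result via the standard long exact sequence for $\Ext$ under base change, exactly as in the surrounding arguments. First I would recall the exact sequence of $A$-modules $0\rightarrow J\rightarrow A\rightarrow A/J\rightarrow 0$. Since $J$ is a split heredity ideal, both $J$ and $A/J$ are finitely generated projective over $R$ (the former because $J$ is projective as a left $A$-module and $A\in R\proj$, the latter by condition (i) of Definition \ref{splithereditydef}), so this is an $(A,R)$-exact sequence. Fix a maximal ideal $\mi$ of $R$. Applying $\Hom_A(-,J)$ gives a long exact sequence, and tensoring with $R(\mi)$ over $R$; the goal is to compare the result with the long exact sequence obtained by applying $\Hom_{A(\mi)}(-,J(\mi))$ to the sequence $0\rightarrow J(\mi)\rightarrow A(\mi)\rightarrow A(\mi)/J(\mi)\rightarrow 0$, which is exact precisely because $\Tor_1^R(R(\mi),A/J)=0$ (as $A/J\in R\proj$), and which identifies $A(\mi)/J(\mi)$ with $(A/J)(\mi)$.

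The key technical ingredient is a base-change isomorphism for $\Ext$: for a module $M\in A\m$ with a projective resolution $P_\bullet$ by finitely generated $A$-modules whose terms lie in $R\proj$ (here one wants the resolution of $A/J$), and for a target $N\in A\m\cap R\proj$, one has $\Ext_A^i(M,N)(\mi)\simeq \Ext_{A(\mi)}^i(M(\mi),N(\mi))$ whenever the needed $\Tor$-terms vanish. In our situation $A/J$ has a projective resolution starting $0\rightarrow J\rightarrow A\rightarrow A/J\rightarrow 0$ with both $J$ and $A$ in $R\proj$; truncating, the $\Ext$'s in degrees $0$ and $1$ are the cohomology of the two-term complex $\Hom_A(A,J)\rightarrow \Hom_A(J,J)$, i.e. $\Hom_A(A/J,J)=0$ in degree $0$ (by Lemma \ref{A/Jmod}, since $\Hom_A(A/J, J)\subseteq \Hom_A(A/J,A)$ and $J^2=J$ forces this to vanish) and $\Ext_A^1(A/J,J)=\coker(\Hom_A(A,J)\rightarrow\Hom_A(J,J))$. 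Since $\Hom_A(A,J)\simeq J$ and $\Hom_A(J,J)=\End_A(J)$ are both finitely generated projective $R$-modules (the latter because $J\in A\proj$ together with the characterization of split heredity ideals giving $\End_A(J)^{op}\Morita R$, so $\End_A(J)\in R\proj$), tensoring the right-exact sequence $J\rightarrow \End_A(J)\rightarrow \Ext_A^1(A/J,J)\rightarrow 0$ with $R(\mi)$ is right exact, and the first map becomes, after base change, the corresponding map for $A(\mi)$ because $\Hom$ commutes with $R(\mi)\otimes_R-$ on modules in $A\proj$. Hence the cokernel base-changes correctly, giving the claimed isomorphism.

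More concretely, I would phrase the proof as: apply $\Hom_A(-,J)$ to $0\rightarrow J\rightarrow A\rightarrow A/J\rightarrow 0$, obtaining
\begin{align*}
0\rightarrow \Hom_A(A/J,J)\rightarrow \Hom_A(A,J)\rightarrow \Hom_A(J,J)\rightarrow \Ext_A^1(A/J,J)\rightarrow \Ext_A^1(A,J)=0,
\end{align*}
where the last term vanishes since $A$ is projective. As $\Hom_A(A/J,J)=0$ by Lemma \ref{A/Jmod}, this yields a four-term exact sequence with all terms except possibly $\Ext_A^1(A/J,J)$ being finitely generated projective $R$-modules; tensoring with the flat-on-the-relevant-part… rather, using that $\Tor_1^R(R(\mi),\Hom_A(J,J))=0$, the sequence stays exact after $-\otimes_R R(\mi)$. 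Doing the same with $\Hom_{A(\mi)}(-,J(\mi))$ applied to $0\rightarrow J(\mi)\rightarrow A(\mi)\rightarrow A(\mi)/J(\mi)\rightarrow 0$ and using the canonical isomorphisms $\Hom_A(A,J)(\mi)\simeq \Hom_{A(\mi)}(A(\mi),J(\mi))$ and $\Hom_A(J,J)(\mi)\simeq\Hom_{A(\mi)}(J(\mi),J(\mi))$ (valid since $J\in A\proj$), a diagram chase on the five-lemma / cokernel comparison gives $\Ext_A^1(A/J,J)(\mi)\simeq \Ext_{A(\mi)}^1(A(\mi)/J(\mi),J(\mi))$.

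The main obstacle I anticipate is bookkeeping rather than any deep difficulty: one must verify carefully that $\End_A(J)$ (equivalently $\Hom_A(J,J)$) is finitely generated projective over $R$ so that $-\otimes_R R(\mi)$ is exact on the relevant portion of the $\Hom$-complex, and that the base-change map $\Hom_A(M,N)(\mi)\rightarrow \Hom_{A(\mi)}(M(\mi),N(\mi))$ is an isomorphism for the $M$'s involved — this needs $M\in A\proj$ (true for $M=A$ and $M=J$, but not for $M=A/J$, which is why one works with the truncated resolution and not directly with a projective resolution of $A/J$). Once these local projectivity facts are in place, the comparison of the two long exact sequences and the identification $A(\mi)/J(\mi)\simeq (A/J)(\mi)$ (already used repeatedly in Proposition \ref{heredityop} and Theorem \ref{hwcresiduefield}) make the isomorphism formal.
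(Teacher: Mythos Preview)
Your proposal is correct and follows essentially the same route as the paper's proof: apply $\Hom_A(-,J)$ to the $(A,R)$-exact sequence $0\rightarrow J\rightarrow A\rightarrow A/J\rightarrow 0$, identify the resulting right-exact sequence $J\rightarrow\End_A(J)\rightarrow\Ext_A^1(A/J,J)\rightarrow 0$, tensor with $R(\mi)$, and compare with the analogous sequence over $A(\mi)$ using that $J\in A\proj$ gives $\End_A(J)(\mi)\simeq\End_{A(\mi)}(J(\mi))$. Your write-up supplies more of the justifications (e.g.\ $\Hom_A(A/J,J)=0$ via Lemma~\ref{A/Jmod}, and $\End_A(J)\in R\proj$) that the paper leaves implicit, but the argument is the same cokernel comparison.
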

\begin{proof}
	Consider the $(A, R)$-exact sequence $0\rightarrow J\rightarrow A\rightarrow A/J\rightarrow 0$. Applying $\Hom_A(-, J)$ and the tensor product $R(\mi)\otimes_R -$ we obtain the commutative diagram with exact rows
	\begin{equation}
		\begin{tikzcd}
			J(\mi) \arrow[r] \arrow[d, equal] & \End_A(J)(\mi)\arrow[r] \arrow[d, "\simeq"] & \Ext_A^1(A/J, J)(\mi) \arrow[r] \arrow[d] & 0\\
			J(\mi) \arrow[r] & \End_{A(\mi)}(J(\mi)) \arrow[r] & \Ext_{A(\mi)}^1(A(\mi)/J(\mi), J(\mi))\arrow[r]& 0
		\end{tikzcd}.
	\end{equation}By diagram chasing, we obtain the result.
\end{proof}
Hence, the extensions between the projective $A/J$-modules and the projective standard module of $A$ commute with functor $R(\mi)\otimes_R -$. 

The following gives a parallel result to Theorem  \ref{hwcresiduefield} now using split heredity chains.

\begin{Theorem}\label{splitqhchainschangeofring}
	Let $R$ be a commutative Noetherian ring. Let $A$ be a projective Noetherian $R$-algebra. Assume that $A$ admits a set of orthogonal idempotents $\{e_1, \ldots, e_t \}$ such that for each maximal ideal $\mi$ of $R$ $\{e_1(\mi), \ldots, e_t(\mi) \}$ becomes a complete set of primitive orthogonal idempotents of $A(\mi)$. Then, $A$ is split quasi-hereditary with split heredity chain \begin{align}
		0\subset Ae_tA\subset \cdots \subset A(e_1+\cdots+ e_t)A=A\label{eqqh53}
	\end{align} if and only if for each maximal ideal $\mi$ of $R$, $A(\mi)$ is split quasi-hereditary with  split heredity chain
	\begin{align}
		0\subset A(\mi)e_t(\mi)A(\mi)\subset \cdots \subset A(\mi)(e_1(\mi)+\cdots+ e_t(\mi))A(\mi)=A(\mi). \label{eqqd54}
	\end{align}
\end{Theorem}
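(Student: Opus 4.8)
The plan is to handle the two implications separately: the forward one ($A$ split quasi-hereditary with \eqref{eqqh53} $\Rightarrow$ every $A(\mi)$ split quasi-hereditary with \eqref{eqqd54}) by base change along $R\to R(\mi)$, and the converse by induction on $t$, peeling off the bottom term $Ae_tA$. For the forward direction, write $\epsilon_i=e_i+\cdots+e_t$, so $J_i=A\epsilon_iA$. By condition (i) of Definition \ref{splithereditydef} each $A/J_i$ is $R$-projective; hence $0\to J_i\to A\to A/J_i\to 0$ is $(A,R)$-exact, each $J_i$ is $R$-projective, and since $A/J_i$ is $R$-projective so is $J_i/J_{i+1}=\ker(A/J_{i+1}\to A/J_i)$. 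Therefore $\Tor_1^R(R(\mi),A/J_i)=0$, so $J_i\otimes_R R(\mi)$ embeds in $A(\mi)$ as the image of $J_i$ under $A\to A(\mi)$, namely the two-sided ideal $A(\mi)\epsilon_i(\mi)A(\mi)$; likewise $J_i/J_{i+1}$ base-changes to $J_i(\mi)/J_{i+1}(\mi)$ inside $A(\mi)/J_{i+1}(\mi)$. Since split heredity ideals remain split heredity under base change — write such an ideal as $\im\tau_L$ with $L\in\mathcal{M}(A)$ and apply Lemma \ref{splitmoduleschangering} and Proposition \ref{bijectionsplitheredity}, exactly as in the proof of Proposition \ref{hwctensorproduct} — applying $R(\mi)\otimes_R-$ to each statement ``$J_i/J_{i+1}$ is split heredity in $A/J_{i+1}$'' yields that \eqref{eqqd54} is a split heredity chain of $A(\mi)$.

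For the converse I would induct on $t$. If $t=1$, then $e_1(\mi)=1_{A(\mi)}$ is primitive while $A(\mi)$ is Morita equivalent to the field $R(\mi)$, forcing $A(\mi)\cong R(\mi)$; hence $A$ is an $R$-projective $R$-algebra with one-dimensional residue fibres, so the unit map $R\to A$ is injective with cokernel vanishing at every maximal ideal, hence (being finitely generated) an isomorphism, and $0\subset Ae_1A=A$ is a (trivial) split heredity chain. For $t\ge 2$, put $e=e_t$ and $J=AeA$; the key point is that $J$ is a split heredity ideal of $A$. Indeed $Ae\in A\proj\cap R\proj$; it is $R$-faithful (if $re=0$ then $r(\mi)e(\mi)=0$ in $A(\mi)$ with $e(\mi)$ a nonzero idempotent over the field $R(\mi)$, so $r(\mi)=0$ for all $\mi$); and $Ae(\mi)=A(\mi)e(\mi)$ lies in $\mathcal{M}(A(\mi))$, because $A(\mi)e(\mi)A(\mi)$ is split heredity in $A(\mi)$ with $e(\mi)$ primitive, so $A(\mi)e(\mi)$ is the module attached to that ideal by Proposition \ref{bijectionsplitheredity} — equivalently, $A(\mi)e(\mi)\otimes_{R(\mi)}e(\mi)A(\mi)\xrightarrow{\ \sim\ }A(\mi)e(\mi)A(\mi)$. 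By Lemma \ref{splitmoduleschangering}, $Ae\in\mathcal{M}(A)$, and by Proposition \ref{bijectionsplitheredity} the ideal $\im\tau_{Ae}=AeA=J$ is split heredity.

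Once $J$ is split heredity, $A/J$ is $R$-projective, so $\bar A:=A/J$ is a projective Noetherian $R$-algebra with $\bar A(\mi)=A(\mi)/A(\mi)e(\mi)A(\mi)$, and the images $\bar e_i:=e_i+J$ ($1\le i\le t-1$) are orthogonal idempotents for which $\{\bar e_i(\mi)\}$ is a complete set of primitive orthogonal idempotents of $\bar A(\mi)$ — primitivity because $\bar e_i(\mi)\bar A(\mi)\bar e_i(\mi)$ is a quotient of the local ring $e_i(\mi)A(\mi)e_i(\mi)$, completeness because $\sum_{i<t}\bar e_i(\mi)=\overline{1-e(\mi)}=1$. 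Applying Proposition \ref{quotientbysplitheredity} over $R(\mi)$, $\bar A(\mi)$ is split quasi-hereditary with the chain obtained from \eqref{eqqd54} by dividing out $A(\mi)e(\mi)A(\mi)$, which is exactly $0\subset\bar A(\mi)\bar e_{t-1}(\mi)\bar A(\mi)\subset\cdots\subset\bar A(\mi)$. By the induction hypothesis $\bar A$ is split quasi-hereditary with chain $0\subset\bar A\bar e_{t-1}\bar A\subset\cdots\subset\bar A$; pulling it back along $A\twoheadrightarrow\bar A$ as in the proof of Proposition \ref{quotientbysplitheredity}, and noting that the preimage of $\bar A(\bar e_i+\cdots+\bar e_{t-1})\bar A$ equals $A(e_i+\cdots+e_t)A$ (the $e_j$ with $i\le j\le t$ all lie in $A(e_i+\cdots+e_t)A$ by orthogonality, and conversely $AeA$ and $A(e_i+\cdots+e_{t-1})A$ lie there too), we conclude that $A$ is split quasi-hereditary with precisely the chain \eqref{eqqh53}.

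I expect the principal obstacle to be the claim that $A(\mi)e(\mi)\in\mathcal{M}(A(\mi))$: this is where one must unwind the structure of a split heredity ideal generated by a primitive idempotent over a field and identify it with its standard module. A secondary, recurring nuisance is bookkeeping — keeping track of which two-sided ideal of $A(\mi)$ a given tensor product lands in — which in each instance reduces to $\Tor$-vanishing coming from $R$-projectivity of the appropriate quotient of $A$.
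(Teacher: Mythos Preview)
Your proof is correct and follows essentially the same route as the paper's: in the converse direction both arguments show $Ae_t\in\mathcal{M}(A)$ by verifying $A(\mi)e_t(\mi)\in\mathcal{M}(A(\mi))$ and invoking Lemma~\ref{splitmoduleschangering}, then deduce that $Ae_tA=\im\tau_{Ae_t}$ is split heredity and pass to $A/Ae_tA$ by induction. For the step $A(\mi)e_t(\mi)\in\mathcal{M}(A(\mi))$ the paper invokes Lemma~\ref{standardfromsplitquasi} directly rather than Proposition~\ref{bijectionsplitheredity}, and in the forward direction it verifies the split heredity conditions by hand instead of going through the $\mathcal{M}$-correspondence, but these are cosmetic differences.
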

\begin{proof}
	Assume that $A$ is split quasi-hereditary. Let $\mi$ be a maximal ideal of $R$. As $A/Ae_tA\in R\proj$, we can write $A e_tA(\mi)\simeq A(\mi)e_t(\mi)A(\mi)\in A(\mi)\proj$ and $A/Ae_tA(\mi)\simeq A(\mi)/A(\mi)e_t(\mi)A(\mi)$. Also, for an $\End_{A}(Ae_tA)$-progenerator $P$ we can write $$
	R(\mi)\simeq \End_{\End_{A}(Ae_tA)}(P)(\mi)\simeq \End_{\End_{A(\mi)}(A(\mi)e_t(\mi)A(\mi))}(P(\mi)).
	$$ Hence, $A(\mi)e_t(\mi)A(\mi)$ is a split heredity ideal of $A(\mi)$. By going through the split heredity chain of $A$ we obtain that $A(\mi)$ is split quasi-hereditary a with split heredity chain (\ref{eqqd54}).
	
	Conversely, assume that $A(\mi)$ is split quasi-hereditary for every maximal ideal $\mi$ of $R$ with split heredity chain \ref{eqqd54}. By Lemma \ref{standardfromsplitquasi}, $A(\mi)e_t(\mi)\in \mathcal{M}(A(\mi))$ for every maximal ideal $\mi$ of $R$. Since $Ae_t$ is an $A$-summand of $A$, the inclusion $Ae_t\rightarrow A$ remains exact under the functor $R(\mi)\otimes_R -$. So, $Ae_t(\mi)= A(\mi)e_t(\mi)\in \mathcal{M}(A(\mi))$. By Lemma \ref{splitmoduleschangering}, $Ae_t\in \mathcal{M}(A)$. Hence, $AeA=\im \tau_{Ae}$ is a split heredity ideal of $A$. In particular, $A/Ae_tA(\mi)\simeq A(\mi)/ A(\mi)e_t(\mi)A(\mi)$. Continuing the same argument with $A/Ae_tA$ we obtain that $A$ is split quasi-hereditary with  split hereditary chain (\ref{eqqh53}).
\end{proof}

\subsection{Split quasi-hereditary algebras and the existence of projective covers} \label{Split quasi-hereditary algebras and existence of projective covers}
Recall that a ring $A$ is called \textbf{semi-perfect} if every  finitely generated left $A$-module has a projective cover.
We call an $R$-algebra $A$ \textbf{locally semi-perfect} if the localization $A_\pri$ is semi-perfect for every prime ideal $\pri$ in $R$.
\begin{Theorem} \label{qhlocalissemiperfectring}
	Every split quasi-hereditary algebra over a local commutative Noetherian ring is semi-perfect.
\end{Theorem}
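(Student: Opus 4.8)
The plan is to use the criterion that a Noetherian ring $A$ which is finitely generated as a module over its Noetherian center (or, more simply here, finitely generated projective over the local ring $R$) is semi-perfect if and only if $A/\rad A$ is semisimple and idempotents lift modulo $\rad A$; equivalently, it suffices to produce a complete set of primitive orthogonal idempotents of $A$. Since $R$ is local with maximal ideal $\mi$ and residue field $k = R(\mi) = R/\mi$, the finite-dimensional $k$-algebra $A(\mi) = A/\mi A$ is semi-perfect, so it has a complete set of primitive orthogonal idempotents, and $\mi A \subseteq \rad A$ because $\mi$ is contained in the Jacobson radical of $R$ and $A$ is module-finite over $R$ (so $\mi A$ is a nil-type/topologically nilpotent ideal in the relevant sense, in any case contained in $\rad A$). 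Thus the real content is: idempotents of $A(\mi)$ lift to idempotents of $A$, and a decomposition of $1_{A(\mi)}$ into primitive orthogonal idempotents lifts to one of $1_A$.

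First I would invoke the split quasi-hereditary structure to get a handle on the indecomposable projective $A$-modules. By Theorem \ref{quasihereditaryhwc}, $(A\m, \{\Delta(\lambda)_{\lambda\in\Lambda}\})$ is a split highest weight category; by Observation \ref{obs563} (valid precisely because $R$ is local), one can choose the projectives $P(\lambda)$ in \ref{splithwc}(iv) so that each $P(\lambda)(\mi)$ is the projective cover of $\Delta(\lambda)(\mi)$ over $A(\mi)$, hence indecomposable over $A(\mi)$; and by Proposition \ref{projisindecomposable} this choice makes $\End_A(P(\lambda))$ a local ring, so each $P(\lambda)$ is indecomposable over $A$. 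Moreover $\bigoplus_{\lambda\in\Lambda} P(\lambda)$ is a progenerator of $A\m$ by \ref{splithwc}(iii), so ${}_A A$ is a direct summand of a finite direct sum of the $P(\lambda)$'s; since each $P(\lambda)$ has local endomorphism ring, the Krull--Schmidt property holds for this additive category, and therefore ${}_A A \simeq \bigoplus_{j=1}^{r} P(\lambda_j)$ for a suitable finite multiset of weights. Writing $1_A = e_1 + \cdots + e_r$ for the corresponding orthogonal idempotents (the projections onto these summands), each $e_j A e_j \simeq \End_A(P(\lambda_j))$ is local, so the $e_j$ are primitive. Hence $\{e_1,\dots,e_r\}$ is a complete set of primitive orthogonal idempotents of $A$, and $A$ is semi-perfect.

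The step I expect to be the main obstacle — and the one I would spell out carefully — is the Krull--Schmidt decomposition of ${}_A A$ itself, i.e. that ${}_A A$ really does decompose as a finite direct sum of the indecomposable modules $P(\lambda)$ with local endomorphism rings, rather than merely being a summand of such a sum. This is where one uses that $R$ is local: over a general base the projective $P(\lambda)$ need not be indecomposable and the endomorphism rings need not be local, which is exactly why the theorem is stated for local $R$. Concretely, one shows ${}_A A \in \add\left(\bigoplus_{\lambda\in\Lambda}P(\lambda)\right)$ from \ref{splithwc}(iii), and then applies the exchange-property/Krull--Schmidt theorem for objects with local endomorphism rings (Azumaya's theorem) inside the additive category $\add\left(\bigoplus_{\lambda}P(\lambda)\right)$ to conclude ${}_A A$ is itself a finite direct sum of copies of the $P(\lambda)$. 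The remaining verifications — that $\mi A \subseteq \rad A$, that $e_jAe_j$ local forces $e_j$ primitive, and that a ring with a complete set of primitive orthogonal idempotents whose corner rings are local is semi-perfect — are standard and I would only cite them.
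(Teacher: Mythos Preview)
Your proposal is correct and follows essentially the same approach as the paper: choose the $P(\lambda)$ via Proposition~\ref{projisindecomposable} so that each $\End_A(P(\lambda))$ is local, use that $\bigoplus_\lambda P(\lambda)$ is a progenerator together with Krull--Schmidt--Azumaya to decompose ${}_A A$ into summands with local endomorphism rings, and conclude semi-perfectness. The only minor difference is in the last step: the paper invokes Theorem~\ref{spliqhop} to get the analogous decomposition of $A_A$ and then cites the criterion that $A\simeq\End_A(A_A)$ is semi-perfect, whereas you go directly through the idempotent criterion (noting that $e_jAe_j\simeq\End_A(P(\lambda_j))^{op}$, hence local); your route is slightly more economical since it avoids the detour through the opposite algebra.
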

\begin{proof}
	According to Proposition \ref{projisindecomposable}, we can choose $P(\l)$ in \ref{splithwc}(iv) so that $ \End_A(P(\l))$ is local. Hence, $\sumSt P(\l)$ is a direct sum of modules with local endomorphism rings. Let $_{A}A\simeq Q_0\bigoplus \cdots \bigoplus Q_t$ be a decomposition into indecomposable $A$-modules of regular module $A$. It follows by  Definition  \ref{splithwc} and Theorem \ref{quasihereditaryhwc} that $\sumSt P(\l)$ is an $A$-progenerator. Thus, there is $K\in A\m$ such that \begin{align}
		\left( \sumSt P(\l)\right) ^t\simeq A\bigoplus K.
	\end{align} By Krull-Schmidt-Remak-Azumaya Theorem \citep[Theorem 2.12]{MR3025306} any two direct sum decompositions into indecomposable modules of $\left( \sumSt P(\l)\right) ^t$ are isomorphic. Hence, every $Q_i$ is isomorphic to a projective indecomposable module $P(\l_i)$. Hence, $_{A}A$ is a finite direct sum of $A$-modules with local endomorphism rings. By Theorem \ref{spliqhop}, $A^{op}$ is split quasi-hereditary over $R$, thus  by this discussion $A_{A}$ is a finite direct sum of $A$-modules with local endomorphism rings. By \citep[Proposition 3.14]{MR3025306}, $A\simeq \End_A(A_{A})$ is a semi-perfect ring.
\end{proof}

We observe that as a consequence of Theorem \ref{qhlocalissemiperfectring}, for any $\l\in \L$, we can choose $P(\l)$ so that $P(\l)$ is the projective cover of $\St(\l)$, when $R$ is a local commutative Noetherian  ring.

In fact, assume that $R$ is a local commutative Noetherian  ring. By Theorem  \ref{qhlocalissemiperfectring}, there exists a projective cover $Q$ of $\St(\l)$. Using the surjective homomorphism $\pi_\l\colon P(\l)\rightarrow \St(\l)$ given by \ref{splithwc}(iv) with $P(\l)$ having a local endomorphism ring, it follows that $Q$ is an $A$-summand of $P(\l)$. As $P(\l)$ is indecomposable $P(\l)\simeq Q$. By Nakayama's Lemma, we deduce that $(P(\l), \pi_\l)$ is the projective cover of $\St(\l)$.

\subsection{Global dimension of split quasi-hereditary algebras} \label{Global dimension of (split) quasi-hereditary algebras}
We will now show that split quasi-hereditary algebras over a commutative Noetherian ring have finite global dimension.

\begin{Lemma}
	Let $\cdots \rightarrow P_2\rightarrow P_1\xrightarrow{\alpha_1} P_0\xrightarrow{\alpha_0} M\rightarrow 0$ be a projective $A$-resolution. Define $N=\ker \alpha_{k-1}$. Then, $\pdim_A M\leq k+\pdim_A N$. \label{pd of kernel}
\end{Lemma}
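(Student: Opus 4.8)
The plan is to identify $N$ as the $k$-th syzygy of $M$ with respect to the given resolution, and then to build a finite projective resolution of $M$ by splicing a projective resolution of $N$ onto a suitable truncation of $P_\bullet$.

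First I would use exactness of the resolution to record that $\alpha_0$ is surjective and $\im\alpha_i=\ker\alpha_{i-1}$ for every $i\ge 1$. From this it follows that the truncated sequence
\begin{align*}
0\to N\hookrightarrow P_{k-1}\xrightarrow{\alpha_{k-1}}P_{k-2}\to\cdots\xrightarrow{\alpha_1}P_0\xrightarrow{\alpha_0}M\to 0
\end{align*}
is exact: it is exact at $M$ since $\alpha_0$ is onto, exact at $P_i$ for $0\le i\le k-2$ by exactness of the original resolution, exact at $P_{k-1}$ because $\im\alpha_{k-1}=\ker\alpha_{k-2}$ while $\ker\alpha_{k-1}=N$ by the definition of $N$, and exact at $N$ trivially. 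Thus $M$ admits a length-$k$ exact complex of projectives terminating in $N$.

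If $\pdim_A N=\infty$ the asserted inequality is vacuous, so assume $d:=\pdim_A N<\infty$ and fix a projective resolution $0\to Q_d\to Q_{d-1}\to\cdots\to Q_0\xrightarrow{\varepsilon}N\to 0$. Splicing this onto the truncated sequence above — that is, replacing the inclusion $N\hookrightarrow P_{k-1}$ by the composite $Q_0\xrightarrow{\varepsilon}N\hookrightarrow P_{k-1}$ — yields an exact sequence
\begin{align*}
0\to Q_d\to\cdots\to Q_0\to P_{k-1}\to\cdots\to P_0\to M\to 0
\end{align*}
of projective $A$-modules, i.e. a projective resolution of $M$ of length $d+k$. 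Hence $\pdim_A M\le d+k=k+\pdim_A N$, as claimed.

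I do not expect a genuine obstacle; the only point demanding care is the index bookkeeping in the truncation, specifically verifying exactness at $P_{k-1}$, where $N=\ker\alpha_{k-1}$ has to be matched with $\im\alpha_{k-1}=\ker\alpha_{k-2}$. As an alternative to the splicing argument one can break the truncated sequence into short exact sequences $0\to L_{i+1}\to P_i\to L_i\to 0$ with $L_0=M$ and $L_k=N$, and use the elementary fact that a short exact sequence with projective middle term satisfies $\pdim_A L_i\le\pdim_A L_{i+1}+1$; an induction on $i$ then gives the same bound. Equivalently, dimension shifting yields $\Ext_A^{\,j}(M,-)\cong\Ext_A^{\,j-k}(N,-)$ for $j\ge k+1$, whence $\Ext_A^{\,i}(M,-)=0$ once $i>k+\pdim_A N$.
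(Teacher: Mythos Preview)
Your proof is correct. Your main argument splices a finite projective resolution of $N$ onto the truncated complex $0\to N\to P_{k-1}\to\cdots\to P_0\to M\to 0$, whereas the paper instead uses dimension shifting directly: it observes $\Ext_A^{l}(M,L)\simeq\Ext_A^{l-k}(N,L)$ for $l\ge k$ (since $N=\im\alpha_k=\ker\alpha_{k-1}$), and then concludes $\Ext_A^{s+k+1}(M,L)=0$ for all $L$ once $s=\pdim_A N<\infty$. This is precisely the alternative you record in your final sentence, so the two proofs are essentially the same; your splicing version is slightly more constructive, the paper's is marginally shorter. One minor wording point: in your check of exactness at $P_{k-1}$, the equality $\im\alpha_{k-1}=\ker\alpha_{k-2}$ that you mention is actually exactness at $P_{k-2}$ (already covered); what is needed at $P_{k-1}$ is only that the image of the inclusion $N\hookrightarrow P_{k-1}$ equals $\ker\alpha_{k-1}=N$, which you do state.
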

\begin{proof}
	First notice that $\Ext_A^l(M, L)\simeq \Ext_A^{l-k}(\im \alpha_k, L)=\Ext_A^{l-k}(N, L)$ for any $l\geq 0$ and $L\in A\m$. 
	
	If $\pdim_AN<\infty$, then there is nothing to show. Assume $\pdim_AN=s<\infty$. Then, \begin{align*}
		\Ext_A^{s+k+1}(M, L)\simeq \Ext_A^{s+1}(\im \alpha_k, L)=\Ext_A^{s+1}(N, L) = 0, \ \forall L\in A\m.
	\end{align*}
	Hence, $\pdim_AM\leq s+k=\pdim_AN + k$.
\end{proof}

\begin{Theorem}\label{qhglobaldimensionnoetherian}
	Let $A$ be a split quasi-hereditary algebra over a Noetherian commutative ring $R$ with split heredity chain $0=J_{t+1}\subset J_t\subset \cdots \subset J_1=A$. Then, $\gldim A\leq 2(t-1)+\gldim R$.
\end{Theorem}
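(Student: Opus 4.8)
The plan is to prove the global-dimension bound $\gldim A \le 2(t-1)+\gldim R$ by induction on the length $t$ of the split heredity chain, using the quotient $A/J_t$ (which is split quasi-hereditary with a chain of length $t-1$ by Proposition~\ref{quotientbysplitheredity}) as the inductive step and estimating how projective dimensions behave when passing from $A/J_t$-modules to $A$-modules. The base case $t=1$ means $A$ itself is a split heredity ideal of $A$, i.e. $J=A$ with $A/J=0$; then $A$ is Morita equivalent to $R$ (condition (iv) of Definition~\ref{splithereditydef} applied with $J=A$, since $\End_A({}_AA)^{op}\simeq A$ is Morita equivalent to $R$), so $\gldim A = \gldim R = 2(1-1)+\gldim R$, as desired.

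For the inductive step, suppose the bound holds for all split quasi-hereditary algebras with heredity chains of length $t-1$, and let $J=J_t$ be the top split heredity ideal of $A$, so $B:=A/J$ has a split heredity chain of length $t-1$ and $\gldim B \le 2(t-2)+\gldim R$. I would argue in two moves. \emph{First move:} every $B$-module $N$, viewed as an $A$-module via the surjection $A\to B$, satisfies $\pdim_A N \le \pdim_B N + \pdim_A B$; this is the standard change-of-rings inequality, and it requires controlling $\pdim_A B$. Since $J$ is a split heredity ideal, $0\to J\to A\to B\to 0$ is an $(A,R)$-exact sequence with $J$ projective as a left $A$-module (condition (ii) of Definition~\ref{splithereditydef}), so $\pdim_A B \le 1$. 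Hence $\pdim_A N \le \pdim_B N + 1 \le \gldim B + 1 \le 2(t-2)+\gldim R + 1 = 2t-3+\gldim R$ for every $B$-module $N$. \emph{Second move:} for an arbitrary $A$-module $M$, take a projective $A$-resolution and set $N=\ker(\alpha_0\colon P_0\to M)$ so $N$ fits in $0\to N\to P_0\to M\to 0$; I want to bound $\pdim_A N$. Here one uses that $JM\subseteq M$, and the key structural fact is that $J$ acts on modules in a controlled way: since $J$ is projective over $A$ and $J^2=J$, the "$J$-part" $JP_0$ of a projective module lies in $\add({}_AJ)$ (using $J = \im\tau_L$ for the projective $R$-split module $L\in\mathcal M(A)$, with $L$ a progenerator for the category $\add({}_AJ)$ up to $R$-twists), and modules in $\add({}_AJ)$ have finite projective dimension bounded in terms of $\gldim R$ via the Morita equivalence $\End_A({}_AJ)^{op}\Morita R$.

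More concretely, I expect the cleanest route is: for $M\in A\m$, the canonical surjection $M\twoheadrightarrow M/JM$ has kernel $JM$; one shows $\pdim_A(JM) \le \pdim_R(\text{something})+1$ where the "something" is an $\End_A({}_AJ)^{op}$-module, hence using $\End_A({}_AJ)^{op}\Morita R$ one gets $\pdim_A(JM)\le \gldim R + 1$ (the $+1$ absorbing one syzygy step coming from $J$ not being a projective $A/J$-module), while $M/JM$ is a $B$-module so $\pdim_A(M/JM)\le 2t-3+\gldim R$ by the first move. Combining via the short exact sequence $0\to JM\to M\to M/JM\to 0$ gives $\pdim_A M \le \max\{\pdim_A(JM),\ \pdim_A(M/JM)\} + 1 \le 2t-2+\gldim R = 2(t-1)+\gldim R$. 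Applying Lemma~\ref{pd of kernel} where needed to shift indices is routine.

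The main obstacle I anticipate is the precise bookkeeping in the "second move": establishing that $JM$ (equivalently the $J$-top or $J$-radical layer of $M$) has projective $A$-dimension at most $\gldim R + 1$ for \emph{arbitrary} $M$, not just for $M$ projective or $M\in\add({}_AJ)$. This needs the exact relationship between $J$-modules and $\End_A({}_AJ)^{op}$-modules (a recollement-type or localization argument: $J\otimes_{\End_A({}_AJ)}\Hom_A(J,-)$, together with the fact that $J\in A\proj$ and $J^2=J$ make $J\otimes_{eAe}-$ exact on the relevant subcategory), transported through the Morita equivalence with $R$ so that $\gldim$ over $\End_A({}_AJ)^{op}$ equals $\gldim R$. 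Once that lemma is in place, the induction closes immediately with the stated bound $2(t-1)+\gldim R$, and I would expect the slack in the constant $2(t-1)$ to come precisely from the two "$+1$"s per step (one for $\pdim_A B \le 1$, one for the syzygy separating $JM$ from $M/JM$).
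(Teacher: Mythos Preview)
Your inductive strategy is natural, but the second move has a genuine gap: the claimed bound $\pdim_A(JM)\le\gldim R+1$ for arbitrary $M$ is false. Take $R=k$ a field and $A=kQ/(ba)$ for the quiver $1\xrightarrow{a}2\xrightarrow{b}1$; with the order $2<1$ this is split quasi-hereditary with $t=2$, heredity ideal $J=Ae_1A$, and $\gldim A=2$. For $M=S_1$ one has $e_1\in J$ and $e_1S_1=S_1$, so $JM=M$ and $\pdim_A(JM)=\pdim_A S_1=2>1=\gldim R+1$. More structurally, the surjection $J\otimes_A M\twoheadrightarrow JM$ has kernel $\Tor_1^A(A/J,M)$, which is a $B$-module and can contribute the full $\gldim B$ to $\pdim_A(JM)$; your Morita-with-$R$ heuristic controls $J\otimes_A M$ (indeed $J\otimes_A M\simeq L\otimes_R\Hom_A(L,M)$ has $\pdim_A\le\gldim R$), not $JM$ itself. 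Note also that your short-exact-sequence estimate should read $\pdim_A M\le\max\{\pdim_A(JM),\pdim_A(M/JM)\}$ with no ``$+1$''; with your claimed bounds that would force $\gldim A\le 2t-3+\gldim R$, already contradicting the sharpness of $2(t-1)$ over a field. A repaired version of your argument does work if you track the four-term sequence $0\to\Tor_1^A(A/J,M)\to J\otimes_A M\to M\to M/JM\to 0$ and use the inductive bound on $B$ for both $B$-module terms, yielding $\pdim_A M\le\max\{\gldim R,\gldim B+2\}$.

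The paper's proof is entirely different and avoids this analysis. It first treats modules $M\in A\m\cap R\proj$: truncating a projective $A$-resolution at step $2(t-1)$ gives a kernel $N\in R\proj$, and since each $A(\mi)$ is split quasi-hereditary of the same chain length (hence $\gldim A(\mi)\le 2(t-1)$ by the classical field case), the image $N(\mi)$ is $A(\mi)$-projective for every maximal $\mi$; Theorem~\ref{projectivitytermsmaximal} then forces $N\in A\proj$, so $\pdim_A M\le 2(t-1)$. For general $M$, truncating at step $\gldim R$ lands in $A\m\cap R\proj$, and Lemma~\ref{pd of kernel} gives $\pdim_A M\le 2(t-1)+\gldim R$.
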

\begin{proof}
	Consider $M\in A\m \cap R\proj$. By Theorems \ref{quasihereditaryhwc} and \ref{hwcresiduefield},  $A(\mi)$ is split quasi-hereditary with split heredity chain of size $t$ for any maximal ideal $\mi$ of $R$.  By \citep[Statement 9]{Dlab1989d}, it follows that $\gldim A(\mi)\leq 2(t-1)$. Consider a projective $A$-resolution for $M$, $\cdots \rightarrow P_2\rightarrow P_1\xrightarrow{\alpha_1} P_0\xrightarrow{\alpha_0} M\rightarrow 0$. Let $N=\ker \alpha_{2(t-1)-1}$. Since $M$ is projective over  $R$ and $P_i$ is projective over $A$ we obtain $\im \alpha_i\in R\proj$ and this exact sequence is split as sequence of $R$-modules. In particular, $N\in R\proj$. Applying $-\otimes_R R(\mi)$ we obtain the exact sequence
	\begin{align}
		0\rightarrow N(\mi)\rightarrow P_{2(t-1)-1}(\mi)\rightarrow\cdots\rightarrow P_1(\mi)\rightarrow P_0(\mi)\rightarrow M(\mi)\rightarrow 0.
	\end{align}
	Since $\pdim_{A(\mi)}M(\mi)\leq 2(t-1)$, $N(\mi)$ is projective over $A(\mi)$ for every maximal ideal $\mi$ of $R$. Therefore, $N$ is projective $A$-module. Hence, $\pdim M\leq 2(t-1)$.
	
	Now consider $M$ an arbitrary module in $A\m$. Consider a projective $A$-resolution for $M$, $\cdots \rightarrow P_2\rightarrow P_1\xrightarrow{\alpha_1} P_0\xrightarrow{\alpha_0} M\rightarrow 0$. Define $K=\ker \alpha_{r-1}$, $r=\gldim R$. Then, we must have that $K$ is projective over $R$ as $\pdim_R M\leq r$ and all $P_i$ are projective over $R$. As we have seen $\pdim_A K\leq 2(t-1)$. By Lemma \ref{pd of kernel}, it follows that $\pdim_AM\leq 2(t-1)+r$. Therefore, $\gldim A\leq 2(t-1)+\gldim R$.
\end{proof}

It follows that if $R$ is a regular ring with finite Krull dimension, then a split quasi-hereditary algebra over $R$ has finite global dimension. Of course, this can fail if $R$ has infinite global dimension. In such a case, we just need to consider $A=R$. 
For rings $R$ with finite global dimension, we can give a precise value of the global dimension of a split quasi-hereditary algebra in terms of the global dimension of the finite-dimensional algebras $A(\mi)$.

The following proof is inspired on \citep[3]{zbMATH00966941}.
\begin{Theorem}Let $R$ be a commutative Noetherian ring with finite global dimension.\label{globaldimensionqh}
	Let $A$ be a split quasi-hereditary $R$-algebra. Then,
	$$\gldim A=\dim R+\max\{\gldim A(\mi)\colon \mi \in \MaxSpec R \}.$$
\end{Theorem}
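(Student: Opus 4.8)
The plan is to prove the two inequalities $\gldim A \geq \dim R + \max_{\mi}\gldim A(\mi)$ and $\gldim A \leq \dim R + \max_{\mi}\gldim A(\mi)$ separately, using Theorem~\ref{qhglobaldimensionnoetherian} (finiteness of $\gldim A$) as the basic input and the fact that, by Theorems~\ref{quasihereditaryhwc} and~\ref{hwcresiduefield}, each $A(\mi)$ is split quasi-hereditary and hence has finite global dimension.

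For the lower bound, I would fix a maximal ideal $\mi$ with $\gldim A(\mi)$ maximal. The key point is a change-of-rings comparison: since $A$ is projective over $R$ and a module $M\in A(\mi)\m$ can be lifted along a projective $A$-resolution, one compares $\pdim_{A(\mi)} M(\mi)$ with $\pdim_A M$ and $\pdim_{A_\mi} M_\mi$. Concretely, localizing at $\mi$ does not increase projective dimension, and for the local ring $R_\mi$ one has a change-of-rings spectral sequence (or, more elementarily, an iterated use of the standard short exact sequence $0\to \mi_\mi \to R_\mi \to R(\mi)\to 0$ together with the fact that $A$ is $R$-projective) giving $\pdim_{A_\mi} M \geq \pdim_{A(\mi)} M(\mi) + \pdim_{R_\mi} R(\mi) = \pdim_{A(\mi)} M(\mi) + \dim R_\mi$ for a suitable $M$ with $\pdim_{A(\mi)} M(\mi) = \gldim A(\mi)$; one also needs $\dim R_\mi$ to be attained at a maximal ideal where $\gldim A(\mi)$ is maximal, which is where regularity and the behaviour of $\dim$ enter. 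I expect the cleanest route is to take $M$ realizing $\gldim A(\mi)$, lift it to an $A$-module $\widetilde M\in A\m\cap R\proj$ with $\widetilde M(\mi)\simeq M$, and show $\pdim_A \widetilde M \geq \dim R_\mi + \gldim A(\mi)$ by combining the $R$-projective resolution bookkeeping from the proof of Theorem~\ref{qhglobaldimensionnoetherian} with a nonvanishing $\Ext$ computation over $A(\mi)$ that survives the relevant spectral sequence.

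For the upper bound, the argument in the proof of Theorem~\ref{qhglobaldimensionnoetherian} almost suffices but must be sharpened: there one bounded $\pdim_{A(\mi)}M(\mi)$ crudely by $2(t-1)$, and instead I would bound it by $\gldim A(\mi)$. Thus, given arbitrary $M\in A\m$, take a projective $A$-resolution and set $K = \ker\alpha_{r-1}$ with $r = \dim R = \gldim R$; as in that proof, $K\in R\proj$, so $K\in A\m\cap R\proj$, and then for every maximal ideal $\mi$ the module $K(\mi)$ has $\pdim_{A(\mi)}K(\mi)\leq \gldim A(\mi)\leq \max_{\mi'}\gldim A(\mi')$, whence by Theorem~\ref{projectivitytermsmaximal} (applied to the appropriate syzygy of $K$) we get $\pdim_A K \leq \max_{\mi}\gldim A(\mi)$. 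Lemma~\ref{pd of kernel} then yields $\pdim_A M \leq \dim R + \max_{\mi}\gldim A(\mi)$.

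The main obstacle will be the lower bound, specifically making precise the inequality $\pdim_{A_\mi}\widetilde M_\mi \geq \dim R_\mi + \pdim_{A(\mi)}\widetilde M(\mi)$ and ensuring the two maxima (over $\dim R_\mi$ and over $\gldim A(\mi)$) can be realized simultaneously at one prime; for a regular ring $\dim R = \sup_\mi \dim R_\mi$ and one must check the supremum interacts correctly with the finite quantity $\max_\mi \gldim A(\mi)$. I would handle the spectral-sequence step via the change-of-rings spectral sequence $\Ext^p_{A(\mi)}(\Tor^{R_\mi}_q(R(\mi),\widetilde M_\mi), -) \Rightarrow \Ext^{p+q}_{A_\mi}(\widetilde M_\mi, -)$, noting $\Tor^{R_\mi}_q(R(\mi),\widetilde M_\mi) = 0$ for $q>0$ since $\widetilde M$ is $R$-projective, so the spectral sequence degenerates and gives an isomorphism $\Ext^n_{A_\mi}(\widetilde M_\mi, -(\mi))\cong \bigoplus$-type comparison with $\Ext$ over $A(\mi)$ and $R_\mi$; combined with a top nonvanishing $\Ext$ over $A(\mi)$ tensored up, this produces the needed nonzero higher $\Ext$ over $A_\mi$, hence over $A$.
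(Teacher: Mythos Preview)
Your upper bound is correct and is exactly the refinement of Theorem~\ref{qhglobaldimensionnoetherian} the paper uses.

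Your lower bound, however, contains a genuine gap, and it is self-defeating. You propose to lift a module $M$ with $\pdim_{A(\mi)}M=\gldim A(\mi)$ to some $\widetilde M\in A\m\cap R\proj$ and then show $\pdim_A\widetilde M\geq \dim R+\gldim A(\mi)$. But your own upper-bound argument shows that every $N\in A\m\cap R\proj$ satisfies $\pdim_A N\leq \max_{\mi'}\gldim A(\mi')$: the $(A,R)$-split syzygies force $\pdim_{A(\mi')}N(\mi')\leq\gldim A(\mi')$ for each $\mi'$, and Theorem~\ref{projectivitytermsmaximal} then truncates the resolution at that stage. So no $R$-projective $A$-module can witness the lower bound when $\dim R>0$. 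Relatedly, your proposed spectral sequence, once $\Tor_q^{R_\mi}(R(\mi),\widetilde M_\mi)=0$ for $q>0$, degenerates to $\Ext^n_{A_\mi}(\widetilde M_\mi,-(\mi))\simeq \Ext^n_{A(\mi)}(\widetilde M(\mi),-(\mi))$, which only yields $\pdim_{A_\mi}\widetilde M_\mi\geq\gldim A(\mi)$ and misses the $\dim R$ contribution entirely.

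The paper's fix is precisely to drop the $R$-projectivity of the test module. After localizing, it takes $M=DA(\mi)$ viewed as an $A$-module via $A\twoheadrightarrow A(\mi)$, and applies the change-of-rings spectral sequence in the second variable,
\[
E_2^{i,j}=\Ext_{A(\mi)}^i\bigl(M,\Ext_A^j(A(\mi),A)\bigr)\ \Rightarrow\ \Ext_A^{i+j}(M,A).
\]
Since $A\in R\proj$, one has $\Ext_A^j(A(\mi),A)\simeq A\otimes_R\Ext_R^j(R(\mi),R)$, which is nonzero at $j=\dim R$ (regularity) and is projective over $A(\mi)$. Hence $E_2^{n,\dim R}\neq 0$ for $n=\gldim A(\mi)=\pdim_{A(\mi)}DA(\mi)$, and a corner argument shows it survives, giving $\Ext_A^{n+\dim R}(DA(\mi),A)\neq 0$. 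Your concern about realizing the two maxima at the same prime is legitimate and is handled by first reducing to the local case.
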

\begin{proof}We can assume that $R$ is a local commutative Noetherian ring with unique maximal ideal $\mi$.
	By the proof of Theorem \ref{qhglobaldimensionnoetherian}, we obtain $\gldim A\leq\dim R+\sup\{\gldim A(\mi)\colon \mi \in \MaxSpec R \}$. Consider the surjective map $A\rightarrow A(\mi)$. Let $M\in A\m$. By Theorem 10.75 of \cite{Rotman2009a}, we can consider the spectral sequence
	\begin{align}
		E_2^{i, j}=\Ext_{A(\mi)}^i(M, \Ext_A^j(A(\mi), A))\Rightarrow \Ext_A^{i+j}(M, A).
	\end{align} As $A\in R\proj$, we can write
	\begin{align}
		\Ext_A^j(A(\mi), A)\simeq \Ext_{A\otimes_R R}^j(R(\mi)\otimes_R A, A\otimes_R R)\simeq A\otimes_R \Ext_R^j(R(\mi), R), \ \forall j\geq 0.
	\end{align}
	Since $\pdim_R R(\mi)=\gldim R=\dim R$ we obtain that $\Ext_{R}^{\dim R}(R(\mi), R)\neq 0$. Since $A$ is faithful as $R$-module we obtain that $\Ext_A^{\dim R}(A(\mi), A)\neq 0$. 
	Pick $M=DA(\mi)$ regarded as $A$-module. Then, $\gldim A(\mi)=\pdim_{A(\mi)} DA(\mi)$ and denote by $n$ the value $\gldim A(\mi)$. We claim that $E_2^{n, \dim R}\neq 0$. In fact,
	we can see by induction that $E_k^{n, \dim R}=E_2^{n, \dim R}$ for all $k\geq 2$. Since $\Ext_R^{\dim R}(R(\mi), R)\in R(\mi)\m$  we obtain that $\Ext_A^{\dim R}(A(\mi), A)\in A(\mi)\proj$. Therefore, \begin{align}
		E_2^{n, \dim R}=\Ext_{A(\mi)}^n(DA(\mi), \Ext_A^{\dim R}(A(\mi), A))\neq 0.
	\end{align} Hence, $E_{\infty}^{n, \dim R}\neq0$. So, $\Ext_A^{n+\dim R}(DA(\mi), A)\neq 0$.  
By Theorem \ref{qhglobaldimensionnoetherian}, $\gldim A(\mi)$ is bounded above by the finite value $2(t-1)+\dim R$, where $t$ denotes the size of the split heredity chain of $A$. So, $\max\{\gldim A(\mi)\colon \mi \in \MaxSpec R \}=\sup\{\gldim A(\mi)\colon \mi \in \MaxSpec R \}.$
\end{proof}

Using the next Lemma, we can show that a split quasi-hereditary algebra has finite global dimension if and only if the ground ring has finite global dimension.

\begin{Lemma}\label{extheredityideal}
	Let $A$ be projective Noetherian $R$-algebra and let $J$ be a split heredity ideal in $A$. Then, \linebreak\mbox{$\Ext_A^i(M, N)\simeq \Ext_{A/J}^i(M, N)$} for all $M, N\in A/J\m$ and $i\geq 0$. 
\end{Lemma}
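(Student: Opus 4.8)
The plan is to reduce the statement to the two standard facts about a split heredity ideal $J$ in $A$: first, that $J^2 = J$ so that $A/J$-modules are exactly the $A$-modules annihilated by $J$, and second, that $J$ is projective as a left $A$-module. The key point is that the change-of-rings spectral sequence for the surjection $A \to A/J$ degenerates because $\Ext_A^i(A/J, N) = 0$ for $i > 0$ whenever $N$ is an $A/J$-module. Once that vanishing is in hand, the isomorphism $\Ext_A^i(M,N) \simeq \Ext_{A/J}^i(M,N)$ for $M, N \in A/J\m$ follows from the Cartan--Eilenberg change-of-rings spectral sequence $E_2^{p,q} = \Ext_{A/J}^p(M, \Ext_A^q(A/J, N)) \Rightarrow \Ext_A^{p+q}(M,N)$, which collapses to the edge isomorphisms on the row $q = 0$.

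So the first and main step is to prove that $\Ext_A^q(A/J, N) = 0$ for all $q \geq 1$ and all $N \in A/J\m$. For this I would use the $(A,R)$-exact sequence $0 \to J \to A \to A/J \to 0$; note this is genuinely exact (not merely $(A,R)$-exact) as a sequence of $A$-modules, and since $A/J \in R\proj$ it remains exact after any base change, though that is not needed here. Applying $\Hom_A(-, N)$ gives a long exact sequence, and since $A$ is projective we get $\Ext_A^q(A/J, N) \simeq \Ext_A^{q-1}(J, N)$ for $q \geq 2$ and a four-term sequence $0 \to \Hom_A(A/J,N) \to \Hom_A(A,N) \to \Hom_A(J,N) \to \Ext_A^1(A/J,N) \to 0$ at the bottom. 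Because $J$ is projective as a left $A$-module, $\Ext_A^{q-1}(J,N) = 0$ for $q - 1 \geq 1$, which kills $\Ext_A^q(A/J,N)$ for $q \geq 2$. For $\Ext_A^1(A/J,N)$ I would argue that the restriction map $\Hom_A(A,N) \to \Hom_A(J,N)$ is surjective: a homomorphism $J \to N$ with $N$ annihilated by $J$ kills $J^2 = J \cdot J$; more precisely, since $J N = 0$ and $J = J^2$, any $A$-map $f\colon J \to N$ satisfies $f(J) = f(J^2) = J f(J) \subseteq JN = 0$, so $\Hom_A(J,N) = 0$, and a fortiori $\Ext_A^1(A/J,N) = 0$ as well. (In fact this shows $\Ext_A^q(A/J,N) = 0$ for all $q \geq 1$ directly, once we know $\Ext_A^{q-1}(J,N) = 0$ for $q\geq 2$ and $\Hom_A(J,N)=0$.)

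For the second step, I would invoke the change-of-rings spectral sequence $E_2^{p,q} = \Ext_{A/J}^p\big(M, \Ext_A^q(A/J, N)\big) \Rightarrow \Ext_A^{p+q}(M,N)$, valid since $M$ is an $A/J$-module and $A/J = A \otimes_A A/J$; this is the same type of spectral sequence already used in the proof of Theorem~\ref{globaldimensionqh} via \citep[Theorem 10.75]{Rotman2009a}. By Step 1 the only nonzero row is $q = 0$, so the spectral sequence collapses and yields $\Ext_A^i(M,N) \simeq E_2^{i,0} = \Ext_{A/J}^i(M, \Hom_A(A/J,N)) \simeq \Ext_{A/J}^i(M,N)$, using $\Hom_A(A/J, N) \simeq N$ for $N \in A/J\m$. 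Alternatively, and perhaps cleanly enough to avoid spectral sequences entirely, one can argue by dimension shift: a projective resolution of $M$ over $A/J$ can be compared with one over $A$, and the vanishing of $\Ext_A^{\geq 1}(A/J, -)$ on $A/J$-modules shows that $\Hom_{A/J}(-, N)$ applied to the $A/J$-projective resolution computes the same cohomology as $\Hom_A(-,N)$ applied to an $A$-projective resolution.

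The main obstacle, and really the only subtle point, is the vanishing $\Ext_A^1(A/J,N) = 0$; everything else is formal. The cleanest route to it is the observation $\Hom_A(J,N) = 0$ for $N$ an $A/J$-module, which uses the idempotency $J^2 = J$ in an essential way — this is exactly where the heredity hypothesis enters and where a naive argument (forgetting $J^2 = J$) would stall.
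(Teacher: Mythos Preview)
Your proof is correct and follows essentially the same route as the paper: both first establish $\Ext_A^{q\geq 1}(A/J,N)=0$ via the exact sequence $0\to J\to A\to A/J\to 0$, projectivity of $J$, and the key observation $\Hom_A(J,N)=0$ from $J^2=J$ (the paper cites this as Lemma~\ref{A/Jmod}), and then deduce the comparison of Ext groups. The only cosmetic difference is that the paper passes directly through acyclic resolutions (your ``alternative'') rather than invoking the change-of-rings spectral sequence, but these are the same argument.
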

\begin{proof}
	For $i=0$, the result is clear since $A/J\m$ is a full subcategory of $A\m$. Consider the exact sequence\begin{align}
		0\rightarrow J\rightarrow A\rightarrow A/J\rightarrow 0. \label{exc1}
	\end{align} For any $A/J$-module $N$, we deduce $\Ext_A^{i-1}(J, N)\simeq \Ext_A^i(A/J, N)$ for $i\geq 2$ by applying the functor $\Hom_A(-, N)$ on $(\ref{exc1})$. $J$ is projective over $A$, thus $\Ext_A^i(A/J, N)=0$ for $i\geq 2$. Furthermore, by the same argument, the induced map $\Hom_A(J, N)\rightarrow \Ext_A^1(A/J, N)$ is surjective. By Lemma \ref{A/Jmod}, $\Hom_A(J, N)=0$. This implies that $\Ext_A^1(A/J, N)$ also vanishes. So, we conclude that free $A/J$-modules are acyclic for the functor $\Hom_A(-, N)$, for every $N\in A/J\m$. Thus, we can use $A/J$-free resolutions of $M\in A/J\m$ to compute $\Ext_A^{i\geq 0}(M, N)$. Moreover, let $M^{\bullet}$ be an $A/J$-free resolution of $M$ then using the fact that $A/J$ is a full subcategory of $A\m$ we conclude
	$
	\Ext_A^i(M, N)=H^i(\Hom_A(M^{\bullet}, N))=H^i(\Hom_{A/J}(M^{\bullet}, N))=\Ext_{A/J}^i(M, N), \ i\geq 0. 
	$
\end{proof}

\begin{Prop}
	Let $A$ be a split quasi-hereditary algebra over a commutative Noetherian ring $R$ with split heredity chain $0=J_{t+1}\subset J_t\subset \cdots \subset J_1=A$. If $\gldim A<+\infty$, then $\gldim R<+\infty$. Moreover, $R$ is a regular ring with finite Krull dimension.
\end{Prop}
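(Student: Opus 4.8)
The plan is to establish the sharp bound $\gldim R\le\gldim A$; the two ``moreover'' assertions then follow immediately, because, by the conventions recalled in Subsection \ref{Notation and change of rings}, a commutative Noetherian ring of finite global dimension is automatically regular (each localisation has finite global dimension, hence is regular local), and then $\dim R=\sup_{\pri\in\Spec R}\dim R_\pri=\sup_{\pri\in\Spec R}\gldim R_\pri\le\gldim R<+\infty$.

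First I would record two facts about the structure map $R\to A$. Since $A$ is split quasi-hereditary, ${}_AA$ is faithful over $R$ (the remark after Definition \ref{qhdef}), so $\Ann_R(A)=0$ and $R\hookrightarrow A$. As $A\in R\proj$ it is flat over $R$, and for every maximal ideal $\mi$ of $R$ the ring $A(\mi)=A/\mi A$ is nonzero: otherwise $A_\mi=\mi A_\mi$, and Nakayama's Lemma applied to the finitely generated $R_\mi$-module $A_\mi$ forces $A_\mi=0$, contradicting $R_\mi\hookrightarrow A_\mi$. Hence $A$ is \emph{faithfully} flat over $R$. Also, a projective $A$-module is projective over $R$, since $A\in R\proj$.

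The core of the argument is a descent of a high syzygy along $R\to A$. Fix a maximal ideal $\mi$, put $N=\gldim A$, and choose a resolution $\cdots\to P_1\to P_0\to R/\mi\to0$ by finitely generated free $R$-modules; let $\Omega_R$ be the $N$-th syzygy of $R/\mi$ in it. Applying the exact functor $A\otimes_R-$ yields an $A$-projective resolution of $A(\mi)$, and, $A$ being flat, its $N$-th syzygy is $A\otimes_R\Omega_R$. Since $\pdim_A A(\mi)\le\gldim A=N$, the module $A\otimes_R\Omega_R$ is projective over $A$, hence projective over $R$, in particular $R$-flat; by faithfully flat descent of flatness along $R\to A$, the finitely generated $R$-module $\Omega_R$ is then $R$-flat, and therefore $R$-projective. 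Thus $\pdim_R(R/\mi)\le N$ for every maximal ideal $\mi$, whence $\gldim R=\sup_{\mi\in\MaxSpec R}\gldim R_\mi=\sup_{\mi}\pdim_{R_\mi}(R_\mi/\mi R_\mi)\le\sup_{\mi}\pdim_R(R/\mi)\le N$, so $\gldim R\le\gldim A<+\infty$.

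The only place where the split quasi-hereditary hypothesis enters is to make ${}_AA$ faithful over $R$, hence $A\otimes_R-$ faithfully flat; so I expect the points requiring attention to be this faithfulness (equivalently $A(\mi)\ne0$ for all $\mi$) and the fact that one must work with an actual syzygy rather than merely with $\Ext_A^i(A(\mi),A)$ — testing against $A$ alone would only produce finite injective dimension of $R$ over itself, which does not force regularity. An alternative, avoiding base change, is induction on the length $t$ of a split heredity chain $0=J_{t+1}\subset\cdots\subset J_1=A$: for $t=1$, $J_1=A$ is a split heredity ideal of $A$, so $A\cong\End_A({}_AA)^{op}$ is Morita equivalent to $R$ and $\gldim R=\gldim A$; for the inductive step, Lemma \ref{extheredityideal} gives $\gldim A/J_t\le\gldim A$, and $A/J_t$ is split quasi-hereditary with a chain of length $t-1$, so the induction hypothesis applies.
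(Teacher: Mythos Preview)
Your proof is correct. Your main argument---establish faithful flatness of $A$ over $R$ from faithfulness of ${}_AA$, then descend projectivity of a high syzygy of $R/\mi$ via faithfully flat descent---is a genuinely different route from the paper's. It is in fact more general: as you yourself point out, the only place the split quasi-hereditary hypothesis enters is to guarantee that $A$ is faithful over $R$, so your argument really proves that $\gldim R\le\gldim A$ for \emph{any} projective Noetherian $R$-algebra $A$ with $A$ faithful as $R$-module.

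The paper's proof is precisely the ``alternative'' you sketch in your final paragraph. It observes that the top quotient $A/J_2$ is Morita equivalent to $R$ (since $J_1/J_2=A/J_2$ is a split heredity ideal of itself, whence $\End_{A/J_2}(A/J_2)^{op}\simeq A/J_2$ is Morita equivalent to $R$), and then uses Lemma~\ref{extheredityideal} iteratively along the chain to get $\gldim A/J_2\le\gldim A$. Your descent argument trades the structural input of Lemma~\ref{extheredityideal} for the elementary faithfully-flat-descent fact, at the cost of not using the heredity chain at all; the paper's approach is shorter once Lemma~\ref{extheredityideal} is in hand and makes the role of the chain transparent. Both yield the same sharp bound $\gldim R\le\gldim A$.
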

\begin{proof}
	By definition, $A/J_2=J_1/J_2$ is a split heredity ideal of $A/J_2$. Therefore, $\End_{A/J_2}(A/J_2)^{op}\simeq A/J_2$ is Morita equivalent to $R$.  By induction on the split heredity chain together with Lemma \ref{extheredityideal} it follows that $\gldim A/J_2\leq \gldim A$. Thus, the result follows for $R$.
\end{proof}
	
If $R$ has infinite global dimension, the previous arguments also provide an upper bound  to the finitistic dimension of a split quasi-hereditary algebra. Moreover, the next result generalises Theorem 2 of \cite{zbMATH02164791} since every Artinian commutative ring has Krull dimension zero.

\begin{Theorem}Let $R$ be a commutative Noetherian ring.
	Let $A$ be a split quasi-hereditary $R$-algebra. Then, \label{finitisticqh}
\begin{align*}
\sup\{\gldim A(\mi)\colon \mi \in \MaxSpec R \} &\leq \findim A\\ &\leq \findim R+\sup\{\gldim A(\mi)\colon \mi \in \MaxSpec R \} \\&\leq \dim R +\sup\{\gldim A(\mi)\colon \mi \in \MaxSpec R \}.
\end{align*}	
\end{Theorem}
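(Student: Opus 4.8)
The plan is to establish the three displayed inequalities separately: the rightmost is a classical fact about commutative Noetherian rings, the middle one mimics the first half of the proof of Theorem~\ref{qhglobaldimensionnoetherian}, and the leftmost one — the only genuinely new point — rests on a careful choice of test module. Fix a split heredity chain $0=J_{t+1}\subset J_t\subset\cdots\subset J_1=A$. By Theorems~\ref{quasihereditaryhwc} and~\ref{hwcresiduefield} each $A(\mi)$ is split quasi-hereditary with a heredity chain of the same length $t$, so $\gldim A(\mi)\le 2(t-1)$ by \citep[Statement~9]{Dlab1989d}, and hence $s:=\sup\{\gldim A(\mi)\colon\mi\in\MaxSpec R\}\le 2(t-1)$ is finite. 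I will use two facts, both coming from the $R$-splitness of projective $A$-resolutions of modules in $A\m\cap R\proj$: (a) if $M\in A\m\cap R\proj$ then $\pdim_A M\le s$ — this is precisely the argument in the first half of the proof of Theorem~\ref{qhglobaldimensionnoetherian} (truncate a projective $A$-resolution of $M$ at its $s$-th syzygy $N$, note $N\in R\proj$, reduce modulo $\mi$ to see $N(\mi)\in A(\mi)\proj$ for all $\mi$, and invoke Theorem~\ref{projectivitytermsmaximal}); (b) if $M\in A\m\cap R\proj$ then $\pdim_{A(\mi)}M(\mi)\le\pdim_A M$ for every maximal ideal $\mi$, since a projective $A$-resolution of $M$ of length $\pdim_A M$ has all its syzygies in $R\proj$, hence is $R$-split, hence remains exact after applying $R(\mi)\otimes_R-$.

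For the middle inequality we may assume $r:=\findim R<\infty$. Let $M\in A\m$ with $\pdim_A M<\infty$. Since $A\in R\proj$, a finite projective $A$-resolution of $M$ is also a finite projective $R$-resolution, so $\pdim_R M\le\pdim_A M<\infty$ and therefore $\pdim_R M\le r$. Truncating a projective $A$-resolution $\cdots\to P_1\xrightarrow{\alpha_1}P_0\xrightarrow{\alpha_0}M\to 0$ at the $r$-th syzygy $K:=\ker\alpha_{r-1}$, the fact that $\pdim_R M\le r$ together with $P_i\in R\proj$ forces $K\in R\proj$; then $\pdim_A K\le s$ by (a), and Lemma~\ref{pd of kernel} gives $\pdim_A M\le r+\pdim_A K\le r+s$. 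Taking the supremum over all such $M$ yields $\findim A\le\findim R+s$. Finally $\findim R\le\dim R$ is classical: by the Auslander--Buchsbaum formula, a finitely generated $R$-module of finite projective dimension has, at each maximal ideal $\mi$, projective dimension equal to $\operatorname{depth}R_\mi-\operatorname{depth}M_\mi\le\dim R_\mi\le\dim R$; this gives the rightmost inequality.

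For the leftmost inequality, fix a maximal ideal $\mi$ and set $n=\gldim A(\mi)$. Consider $M:=\Hom_R(A_A,R)$, the $R$-dual of the right regular module, regarded as an object of $A\m$; being the $R$-dual of a finitely generated projective $R$-module it lies in $R\proj$, so $M\in A\m\cap R\proj$. Since $A_A\in R\proj$, the formation of the $R$-dual commutes with $R(\mi)\otimes_R-$, so $M(\mi)$ is the $R(\mi)$-dual of the right regular $A(\mi)$-module — that is, the module denoted $DA(\mi)$ in the proof of Theorem~\ref{globaldimensionqh}, where it is observed that $\pdim_{A(\mi)}DA(\mi)=\gldim A(\mi)$. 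Combining this with (b) and (a) we get $n=\pdim_{A(\mi)}M(\mi)\le\pdim_A M\le s<\infty$, whence $n\le\pdim_A M\le\findim A$; taking the supremum over $\mi$ yields $\sup\{\gldim A(\mi)\colon\mi\in\MaxSpec R\}\le\findim A$.

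The main obstacle is the leftmost inequality: for each $\mi$ one needs a single $A$-module of \emph{finite} projective dimension whose reduction modulo $\mi$ still witnesses $\gldim A(\mi)$. The naive candidates fail, because any nonzero $A(\mi)$-module pulled back along $A\twoheadrightarrow A(\mi)$ (for instance a simple, or $DA(\mi)$ itself) is an $R(\mi)$-module, hence has $R$-projective dimension $\gldim R_\mi$, hence infinite $A$-projective dimension as soon as $R_\mi$ fails to be regular. The way around this is to use $\Hom_R(A_A,R)$, which is already $R$-projective, so that Theorem~\ref{qhglobaldimensionnoetherian} forces its $A$-projective dimension to be finite, while its reduction modulo $\mi$ is exactly the module used in the proof of Theorem~\ref{globaldimensionqh} to realise $\gldim A(\mi)$.
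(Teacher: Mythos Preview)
Your proof is correct and follows essentially the same approach as the paper: for the lower bound you use the same test module $DA=\Hom_R(A_A,R)$, and for the upper bound you truncate a projective $A$-resolution at its $\findim R$-th syzygy exactly as the paper does. The only cosmetic differences are that you spell out the two-sided estimate $\pdim_{A(\mi)}DA(\mi)\le\pdim_A DA\le s$ explicitly (the paper compresses this into a single equality) and that you justify $\findim R\le\dim R$ via the Auslander--Buchsbaum formula rather than by citation.
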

\begin{proof}
	Observe that \begin{align*}
		\pdim_A DA = \sup\{\pdim_{A(\mi)} DA(\mi)\colon \mi \in \MaxSpec R \} = \sup\{\gldim A(\mi)\colon \mi \in \MaxSpec R \}<+\infty
	\end{align*}since $DA\in A\m\cap R\proj$.
	
	Let $M$ be a left $A$-module with finite projective dimension. Since every projective $A$-module is also projective over $R$, the module $M$ viewed as $R$-module has finite projective dimensionConsider a projective $A$-resolution for $M$, $\cdots \rightarrow P_2\rightarrow P_1\xrightarrow{\alpha_1} P_0\xrightarrow{\alpha_0} M\rightarrow 0$. So we can write an exact sequence of $A$-modules
	\begin{align*}
		0\rightarrow X_{\findim R}\rightarrow P_{\findim R-1}\rightarrow \cdots \rightarrow P_1\rightarrow P_0\rightarrow M\rightarrow 0
	\end{align*}for some left $A$-module $X_{\findim R}$. Since every $P_i\in R\proj$ we obtain that $X_{\findim R}\in A\m\cap R\proj$. By the proof of Theorem \ref{qhglobaldimensionnoetherian}, we obtain that $\pdim_A X_{\findim R}\leq \sup\{\gldim A(\mi)\colon \mi \in \MaxSpec R \}$. So, we deduce that $\pdim_AM \leq \findim R+\sup\{\gldim A(\mi)\colon \mi \in \MaxSpec R \}$. By Theorem 1.6 of \cite{zbMATH03116683}, $\findim R\leq \dim R$. Hence, the result follows.
\end{proof}

Note that although we are working with split quasi-hereditary algebras these arguments about global dimensions and finitistic dimensions remain valid with integral quasi-hereditary algebras (not necessarily split).

	\section{Integral Cellular algebras of finite global dimension} \label{Cellular algebras} 

	The following proposition gives a criterion to check if a split quasi-hereditary algebra is cellular. This is a generalisation of Corollary 4.2 of \cite{zbMATH01218863} to commutative Noetherian rings.
	
	\begin{Prop}\label{splitqhwithdualityiscellular}
		Let $R$ be a commutative Noetherian ring. Let $A$ be a free Noetherian $R$-algebra. Assume that $A$ admits a set of orthogonal idempotents $\{e_1, \ldots, e_t \}$ such that for each maximal ideal $\mi$ of $R$ $\{e_1(\mi), \ldots, e_t(\mi) \}$ becomes a complete set of primitive orthogonal idempotents of $A(\mi)$. Suppose that there exists an involution $\iota\colon A\rightarrow A$ that fixes the set of orthogonal idempotents $\{e_1, \ldots, e_t \}$. If $A$ is a split quasi-hereditary with split heredity chain \begin{align}
			0\subset Ae_tA\subset \cdots \subset A(e_1+\cdots+ e_t)A=A, \label{celeq25}
		\end{align} then $A$ is a cellular algebra (with respect to $\iota$)  and with cell chain (\ref{celeq25}).
	\end{Prop}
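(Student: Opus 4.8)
The plan is to exhibit a cell datum $(\L, M, C, \iota)$ for $A$ whose associated cell chain is exactly (\ref{celeq25}); by Proposition \ref{cellulardeficoincide} this suffices, and I would set it up by induction on $t$, the common length of (\ref{celeq25}) and of the list of idempotents. As a preliminary reduction, since $\{e_1(\mi),\dots,e_t(\mi)\}$ is a \emph{complete} set of idempotents for every maximal ideal $\mi$, the finitely generated $R$-module $A\bigl(1-\sum_i e_i\bigr)$ vanishes modulo every $\mi$, hence is zero; so $e_1+\dots+e_t=1$, $A=\bigoplus_{i,j}e_iAe_j$ as $R$-modules with $\iota(e_iAe_j)=e_jAe_i$, and each $\mathcal{J}_k:=A(e_k+\dots+e_t)A$ is $\iota$-stable because $\iota$ is an anti-automorphism fixing every $e_j$.

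The core step is to show that $J:=\mathcal{J}_t=Ae_tA$ is a cell ideal of $A$ with cell module $\theta:=Ae_t$. The requirements $\iota(J)=J$ and $\theta\subseteq J$ are immediate, and $\theta$ is $R$-projective as a summand of $A$. The key is the identification $e_tAe_t\cong R$: the natural map $R\to e_tAe_t\cong\End_A(Ae_t)^{op}$, $r\mapsto re_t$, is injective ($Ae_t$ is $R$-faithful, being nonzero mod every $\mi$), while by Theorems \ref{splitqhchainschangeofring} and \ref{quasihereditaryhwc} the algebra $A(\mi)$ is split quasi-hereditary with chain $0\subset A(\mi)e_t(\mi)A(\mi)\subset\cdots$, so $A(\mi)e_t(\mi)$ is the top standard module of $A(\mi)\m$ and $e_t(\mi)A(\mi)e_t(\mi)\cong\End_{A(\mi)}(A(\mi)e_t(\mi))\cong R(\mi)$ by Definition \ref{splithwc}(v); hence $R\to e_tAe_t$ is an isomorphism mod every $\mi$, so an isomorphism by Lemma \ref{nakayamalemmasurjectiveproj}. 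Since $J$ is idempotent and projective as a left ideal, multiplication is an $A$-bimodule isomorphism $Ae_t\otimes_{e_tAe_t}e_tA\xrightarrow{\ \sim\ }J$; with $e_tAe_t\cong R$ and $\iota(Ae_t)=e_tA$ this becomes an isomorphism $\alpha^{-1}\colon\theta\otimes_R\iota(\theta)\xrightarrow{\ \sim\ }J$, $x\otimes y\mapsto xy$, whose compatibility with $\iota$ is the one-line check $\alpha(\iota(xy))=\alpha(\iota(y)\iota(x))=\iota(y)\otimes\iota(x)$.

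Next I would descend to $A/J$, which is $R$-projective by Definition \ref{splithereditydef}(i), carries the induced involution fixing $\bar e_1,\dots,\bar e_{t-1}$ (with $\bar e_t=0$), whose $\bar e_i(\mi)$ form a complete set of primitive orthogonal idempotents of $A(\mi)/J(\mi)$ (quotienting by the heredity ideal $A(\mi)e_t(\mi)A(\mi)$ deletes only $e_t(\mi)$ and keeps the others primitive), and which by Proposition \ref{quotientbysplitheredity} is split quasi-hereditary with heredity chain $0\subset\mathcal{J}_{t-1}/J\subset\cdots\subset A/J$; so the induction hypothesis gives that $\mathcal{J}_\l/\mathcal{J}_{\l+1}$ is a cell ideal of $A/\mathcal{J}_{\l+1}$ for each $\l<t$, with cell module $\theta(\l):=(A/\mathcal{J}_{\l+1})e_\l$. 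To glue this into a global cell datum realising (\ref{celeq25}), for each $\l$ I would choose preimages $\tilde c^{\,\l}_S\in Ae_\l$ ($S\in M(\l)$) of an $R$-basis of $\theta(\l)$ and set $C^\l_{S,T}:=\tilde c^{\,\l}_S\,\iota(\tilde c^{\,\l}_T)$. The virtue of this choice is that $\iota(C^\l_{S,T})=\iota^2(\tilde c^{\,\l}_T)\,\iota(\tilde c^{\,\l}_S)=C^\l_{T,S}$ automatically, so (C2) holds with no further work and one never has to split the filtration $\iota$-equivariantly; modulo $\mathcal{J}_{\l+1}$ these elements are the tensor basis $c^\l_S\otimes\iota(c^\l_T)$ of the graded piece $\mathcal{J}_\l/\mathcal{J}_{\l+1}\cong\theta(\l)\otimes_R\iota(\theta(\l))$, so the $C^\l_{S,T}$ form an $R$-basis of $A$ (the filtration being finite with free graded pieces), giving (C1), and (C3) follows from $a\tilde c^{\,\l}_S\equiv\sum_U r_a(U,S)\,\tilde c^{\,\l}_U \pmod{\mathcal{J}_{\l+1}e_\l}$ with coefficients independent of $T$. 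The cell chain of the resulting datum is (\ref{celeq25}), ordered so that smaller labels give larger ideals.

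I expect the crux to be the core step, and within it the corner isomorphism $e_\l(A/\mathcal{J}_{\l+1})e_\l\cong R$: this is exactly where the \emph{split} hypothesis (plain quasi-heredity would yield only a Morita equivalence of endomorphism algebras) and the primitivity of the $e_\l(\mi)$ over the residue fields are indispensable, and it is what permits each cell ideal to be written as $\theta(\l)\otimes_R\iota(\theta(\l))$ \emph{over $R$}. A secondary delicate point, which also deserves care, is verifying that the $R$-projective cell modules $\theta(\l)$ can be taken free over $R$, as the literal cellular basis requires.
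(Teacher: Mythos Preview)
Your argument is correct and follows essentially the same route as the paper's for the core inductive step. The paper packages things a bit more tersely: rather than proving $e_tAe_t\cong R$ by hand and then invoking projectivity of $J$ for the multiplication isomorphism, it cites Theorem~\ref{splitqhchainschangeofring} to conclude $Ae_t\in\mathcal{M}(A)$, which simultaneously gives $\End_A(Ae_t)\cong R$ (via Proposition~\ref{split modules characterization}) and the injectivity of $\tau_{Ae_t}\colon Ae_t\otimes_R e_tA\to A$; identifying $\Hom_A(Ae_t,A)=e_tA=\iota(Ae_t)$ then yields the same bimodule isomorphism $Ae_t\otimes_R\iota(Ae_t)\cong Ae_tA$, and the $\iota$-compatibility is checked exactly as you do. The paper then inducts along the chain and stops at the abstract Definition~\ref{abstractcellular}, without producing an explicit cell basis. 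Your construction $C^\l_{S,T}=\tilde c^{\,\l}_S\,\iota(\tilde c^{\,\l}_T)$ is a genuine addition that also handles the $\iota$-stable $R$-module splitting $A=\bigoplus_\l J_\l'$ required by Definition~\ref{abstractcellular}, which the paper's proof does not spell out. Finally, the freeness concern you flag is real: Definition~\ref{abstractcellular} asks $\theta$ to be $R$-free, while $Ae_\l$ (and hence each $\theta(\l)$) is a priori only $R$-projective; the paper's proof does not address this either, so your caution here is well placed.
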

	\begin{proof}
		Put $e=e_t$. Thus, $\iota(AeA)=A\iota(e)A=AeA$.
		By Theorem \ref{splitqhchainschangeofring}, $Ae\in \mathcal{M}(A)$. Moreover, $\Hom_A(Ae, A)=eA=\iota(e)A=\iota(Ae)$. So, the map $\tau_{Ae}\colon Ae\otimes_R \iota(Ae)\rightarrow AeA$ is an isomorphism. We can consider the diagram
		\begin{equation}
			\begin{tikzcd}
				AeA\arrow[r, "\tau_{Ae}^{-1}"] \arrow[d, "\iota"] &  Ae\otimes_R \iota(Ae) \arrow[r, "\tau_{Ae}"] \arrow[d, "\omega"] & AeA \arrow[d, "\iota"]\\  
				AeA \arrow[r, "\tau_{Ae}^{-1}"] & Ae\otimes_R \iota(Ae)\arrow[r, "\tau_{Ae}"] & AeA
			\end{tikzcd}, 
		\end{equation} where $\omega$ is the usual twist map.
		We claim that the diagram is commutative. To show that, note that
		\begin{align}
			\iota \tau_{Ae}(ae\otimes eb)=\iota(aeb)=\iota(b)e\iota(a)\\
			\tau_{Ae}\omega(ae\otimes eb)=\tau_{Ae}(\iota(eb)\otimes \iota(ae))=\tau_{Ae}(\iota(b)e\otimes e\iota(a))=\iota(b)e\iota(a).
		\end{align}It follows that  \begin{align}
			\tau_{Ae}\omega\tau_{Ae}^{-1}=\iota\tau_{Ae}\tau_{Ae}^{-1}=\iota.
		\end{align} Thus, all interior squares of the diagram are commutative. In particular, $AeA$ is a cell ideal. Proceeding by induction on the heredity chain, we get that (\ref{celeq25}) is a cell chain.
	\end{proof}
	
	We note that if $A$ is split quasi-hereditary with a poset $\L$, $\L$ indexes the cell basis of $A$ but with the reversed order.

	Proposition \ref{splitqhwithdualityiscellular} motivates the following definition of duality for Noetherian algebras.
	
	\begin{Def}
		Let $R$ be a commutative Noetherian ring. Let $A$ be a free Noetherian $R$-algebra. Assume that $A$ admits a set of orthogonal idempotents $\textbf{e}:=\{e_1, \ldots, e_t \}$ such that for each maximal ideal $\mi$ of $R$ $\{e_1(\mi), \ldots, e_t(\mi) \}$ becomes a complete set of primitive orthogonal idempotents of $A(\mi)$. We say that $A$ has a \textbf{duality} $\iota\colon A\rightarrow A$ (with respect to $\textbf{e}$) if  $\iota$ is an anti-isomorphism with $\iota^2=\id_A$ fixing the set of orthogonal idempotents $\{e_1, \ldots, e_t \}$. 
	\end{Def}
Here, a duality is a special case of an involution, which depends on the choice of set of orthogonal idempotents.

	Our next goal is to show a positive answer to Question \ref{IQ1}. The main idea is to show that for a cellular algebra $A$ the simple $A(\mi)$-modules arise from a finitely generated $A$-module which is projective over the ground ring.
	
	To facilitate our life, we will require further notation first. Let $A$ be a cellular algebra over a commutative Noetherian ring $R$. Denote by $A(\leq \l)$ the $A$-submodule of $A$ with $R$-basis $\{C_{S, T}^\mu \colon \mu\leq \l, \ S, T\in M(\mu) \}$ for $\l\in \L$. Denote by $A(<\l)$ the $A$-module with $R$-basis $\{C_{S, T}^\mu \colon \mu<\l, \ S, T\in M(\mu) \}$. In this notation, $A/A(<\l)$ is cellular and $A(\leq \l)/A(<\l)$ is a cell ideal of $A/A(<\l)$.
	
	Using Lemma 1.7 of \citep{zbMATH00871761}, we can define a bilinear form $\phi_\l\colon \theta(\l)\times \theta(\l)\rightarrow R$ by $\phi_\l(C_{U, T_0}^\l, C_{T, T_0}^\l)=\phi_{1_A}(U, T)$ where
	\begin{align}
		C_{U_1, T_1}^\l a C_{U_2, T_2}^\l -\phi_a(T_1, U_2) C_{U_1, T_2}^\l \in A(<\l), \quad U_1, T_1, U_2, T_2\in M(\l). \label{eqcelleq30}
	\end{align} Let $S$ be a commutative Noetherian ring which is an $R$-algebra. $S\otimes_R A$ is cellular $S$-algebra. So, associated with $S\otimes_R \theta(\l)$ there is a bilinear form $\phi_\l^S$. We shall relate the bilinear form $\phi_\l^S$ with $\phi_\l$. %
	
	By considering the maps that carry the basis of $(S\otimes_R A)(<\l)$ (resp. $(S\otimes_R A)(\leq\l)$ ) to $S\otimes_R (A(<\l))$ (resp. $S\otimes_R (A(\leq \l))$ ) we obtain $S\otimes_R A$-isomorphjsms
	\begin{align}
		(S\otimes_R A)(<\l)\simeq S\otimes_R (A(<\l)), \quad (S\otimes_R A)(\leq\l)\simeq S\otimes_R (A(\leq \l)). \label{celleq31}
	\end{align}
	Now observe that,
	\begin{align}
		C_{U_1, T_1}^\l a C_{U_2, T_2}^\l - \phi_a(T_1, U_2)C_{U_1, T_2}^\l\in A(<\l), \quad U_1, T_1, U_2, T_2\in M(\l).
	\end{align}So, for every $s\in S$,
	\begin{align}
		s\otimes (C_{U_1, T_1}^\l a C_{U_2, T_2}^\l - \phi_a(T_1, U_2)C_{U_1, T_2}^\l) \in S\otimes_R A(<\l), \quad U_1, T_1, U_2, T_2\in M(\l).
	\end{align}Under the isomorphism (\ref{celleq31}), we obtain that
	\begin{align}
		(1_S\otimes C_{U_1, T_1}^\l )(s\otimes a)(1_S\otimes C_{U_2, T_2}^\l)-\phi_a(T_1, U_2)s(1_S\otimes C_{U_1, T_2}^\l)\in (S\otimes_R A)(<\l), \quad U_1, T_1, U_2, T_2\in M(\l).
	\end{align}
	On the other hand, applying (\ref{eqcelleq30}) to $S$ and $s\otimes a$ we obtain that
	\begin{align}
		(1_S\otimes C_{U_1, T_1}^\l )(s\otimes a)(1_S\otimes C_{U_2, T_2}^\l)-\phi_{s\otimes a}^S(T_1, U_2)s(1_S\otimes C_{U_1, T_2}^\l)\in (S\otimes_R A)(<\l), \quad U_1, T_1, U_2, T_2\in M(\l).
	\end{align}
	Thus, by comparing basis, $\phi_a(T_1, U_2)s=\phi_{s\otimes a}^S(T_1, U_2)$, $T_1, U_2\in M(\l)$. In particular, $\phi_{1_A}(T_1, U_2)1_S=\phi_{1_{S\otimes_R A}}^S(T_1, U_2)$. We have shown that \begin{Lemma}For $\phi_\l$ and $\phi_\l^S$ the bilinear forms associated with $\theta(\l)$ and $S\otimes_R \theta(\l)$, respectively, we can write
		\begin{align}
			\phi_\l^S(1_S\otimes C_{U, T_0}^\l, 1_S\otimes C_{T, T_0}^\l)=\phi_\l(C_{U, T_0}^\l, C_{T, T_0}^\l)1_S, \label{eqcell36} \quad U, T\in M(\l).
		\end{align}
	\end{Lemma}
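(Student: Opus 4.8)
The plan is to obtain \eqref{eqcell36} as a special case of the comparison of structure constants $\phi_a$ and $\phi^S_{s\otimes a}$ under base change, using only the defining relation \eqref{eqcelleq30} and the identifications already recorded. First I would observe that, by Proposition \ref{changeringcellular}, $S\otimes_R A$ is cellular with cell basis $\{1_S\otimes_R C_{S,T}^\l\}$, so the bilinear form $\phi_\l^S$ on $S\otimes_R\theta(\l)$ is defined in exactly the same way as $\phi_\l$: namely $\phi_\l^S(1_S\otimes C_{U,T_0}^\l, 1_S\otimes C_{T,T_0}^\l)=\phi^S_{1_{S\otimes_R A}}(U,T)$, where $\phi^S_{1_{S\otimes_R A}}$ is read off from \eqref{eqcelleq30} applied inside $S\otimes_R A$. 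Since $\phi_\l(C_{U,T_0}^\l, C_{T,T_0}^\l)=\phi_{1_A}(U,T)$ by definition, it suffices to prove $\phi_{1_A}(U,T)\,1_S=\phi^S_{1_{S\otimes_R A}}(U,T)$ for all $U,T\in M(\l)$; I would in fact prove the more flexible identity $\phi_a(T_1,U_2)\,s=\phi^S_{s\otimes a}(T_1,U_2)$ for all $a\in A$, $s\in S$.

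For this, I would transport the relation \eqref{eqcelleq30} across the base change. Over $R$ we have $C_{U_1,T_1}^\l a C_{U_2,T_2}^\l-\phi_a(T_1,U_2)C_{U_1,T_2}^\l\in A(<\l)$, and since $A(\leq\l)$ is a two-sided ideal (by (C3) and (C3')) the whole left-hand side lies in $A(\leq\l)$. Applying the $R$-linear map $x\mapsto s\otimes_R x$ and then the $S\otimes_R A$-isomorphisms $(S\otimes_R A)(<\l)\simeq S\otimes_R(A(<\l))$, $(S\otimes_R A)(\leq\l)\simeq S\otimes_R(A(\leq\l))$ of \eqref{celleq31}, and using $s\otimes(C_{U_1,T_1}^\l a C_{U_2,T_2}^\l)=(1_S\otimes C_{U_1,T_1}^\l)(s\otimes a)(1_S\otimes C_{U_2,T_2}^\l)$, this becomes
\[
(1_S\otimes C_{U_1,T_1}^\l)(s\otimes a)(1_S\otimes C_{U_2,T_2}^\l)-\phi_a(T_1,U_2)s\,(1_S\otimes C_{U_1,T_2}^\l)\in (S\otimes_R A)(<\l).
\]
On the other hand, \eqref{eqcelleq30} applied to $S\otimes_R A$ and the element $s\otimes a$ gives the same membership with $\phi^S_{s\otimes a}(T_1,U_2)s$ in place of $\phi_a(T_1,U_2)s$. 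Subtracting the two memberships and comparing coefficients of the basis element $1_S\otimes C_{U_1,T_2}^\l$ forces $\phi_a(T_1,U_2)s=\phi^S_{s\otimes a}(T_1,U_2)$. Specialising $a=1_A$ and rewriting both sides via the previous paragraph gives \eqref{eqcell36}.

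The only point requiring care is the legitimacy of comparing coefficients: one must know that $1_S\otimes C_{U_1,T_2}^\l$ is a genuine $S$-basis element of $S\otimes_R A$ not lying in $(S\otimes_R A)(<\l)$, so that the difference of the two relations — which lies in $(S\otimes_R A)(<\l)$, the $S$-span of the basis elements with upper index strictly below $\l$ — can only have zero coefficient on it. This is immediate from Proposition \ref{changeringcellular}, which guarantees that $\{1_S\otimes C_{V,W}^\mu\}$ is an $S$-basis of $S\otimes_R A$ and that the submodules $(S\otimes_R A)(<\l)\subset(S\otimes_R A)(\leq\l)$ are spanned by the appropriate sub-families of it. Everything else is a routine unwinding of the definitions of $\phi_\l$ and $\phi_\l^S$ as the $a=1$ structure constants, so I do not anticipate further obstacles.
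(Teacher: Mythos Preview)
Your argument is correct and is essentially the same as the paper's own derivation (which is carried out in the text immediately preceding the lemma): transport \eqref{eqcelleq30} through the isomorphisms \eqref{celleq31}, compare with \eqref{eqcelleq30} applied inside $S\otimes_R A$ to the element $s\otimes a$, and read off $\phi_a(T_1,U_2)\,s=\phi^S_{s\otimes a}(T_1,U_2)$ before specialising to $a=1_A$, $s=1_S$. One small slip: when you say ``the same membership with $\phi^S_{s\otimes a}(T_1,U_2)s$ in place of $\phi_a(T_1,U_2)s$'', the coefficient coming from \eqref{eqcelleq30} over $S\otimes_R A$ is $\phi^S_{s\otimes a}(T_1,U_2)$, not $\phi^S_{s\otimes a}(T_1,U_2)s$; your subsequent conclusion already uses the correct version, so this is just a typo rather than a gap.
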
 
	
	We can now construct modules in $A\m\cap R\proj$ that over the finite-dimensional $A(\mi)$ become simple modules as long as $\phi_\l^{R(\mi)}\neq 0$.
	
	\begin{Lemma}\label{radicalcell}
		Let $R$ be a local commutative Noetherian ring with maximal ideal $\mi$. Let $A$ be a cellular $R$-algebra with cell datum $(\L, M, C, \iota)$. For each $\l\in \L$, define
		\begin{align}
			\rad (\phi_\l)=\{x\in \theta(\l) \  | \ \phi_\l(x, y)\in \mi, \quad \forall y\in \theta(\l) \}.
		\end{align}
		Then, for each $\l\in \L$, there exists an $(A, R)$-exact sequence
		\begin{align}
			0\rightarrow \rad(\phi_\l)\rightarrow \theta(\l)\rightarrow X_\l\rightarrow 0.
		\end{align}
	\end{Lemma}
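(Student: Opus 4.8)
The plan is to treat the statement in two stages: first, that $\rad(\phi_\l)$ is an $A$-submodule of $\theta(\l)$, so that with $X_\l:=\theta(\l)/\rad(\phi_\l)$ the displayed sequence becomes a short exact sequence of $A$-modules; and second, that this sequence is $(A,R)$-exact. The first stage rests on the associativity of the cellular bilinear form; I expect the $(A,R)$-exactness to be the main obstacle.

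For the submodule claim, the key point is the identity
\[
\phi_\l(a\cdot x,\,y)=\phi_\l\bigl(x,\,\iota(a)\cdot y\bigr)\qquad\text{for all }a\in A\text{ and }x,y\in\theta(\l).
\]
This is extracted from the defining relation \eqref{eqcelleq30} for the functions $\phi_a$ together with conditions (C2), (C3) and (C3'): condition (C3) describes the left $A$-action on $\theta(\l)$, applying $\iota$ and using (C3') describes the transposed action, and (C2) supplies the symmetry of $\phi_{1_A}$ and the index bookkeeping needed to match the two sides; it suffices to verify the identity on the basis vectors $x=C_{S,T_0}^\l+A(<\l)$ and $y=C_{T,T_0}^\l+A(<\l)$. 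Granting it, if $x\in\rad(\phi_\l)$ then for every $a\in A$ and every $y\in\theta(\l)$ we get $\phi_\l(a\cdot x,y)=\phi_\l(x,\iota(a)\cdot y)\in\mi$ since $\iota(a)\cdot y\in\theta(\l)$; hence $a\cdot x\in\rad(\phi_\l)$, so $\rad(\phi_\l)$ is an $A$-submodule of $\theta(\l)$. Then $X_\l:=\theta(\l)/\rad(\phi_\l)\in A\m$ and $0\to\rad(\phi_\l)\to\theta(\l)\to X_\l\to 0$ is exact as a sequence of $A$-modules.

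For the $(A,R)$-exactness I would pass to the Gram matrix $G_\l=\bigl(\phi_\l(C_{U,T_0}^\l,C_{T,T_0}^\l)\bigr)_{U,T\in M(\l)}$ and run a Nakayama argument over the local ring $R$. By \eqref{eqcell36}, reducing $G_\l$ modulo $\mi$ yields the Gram matrix of the classical form $\phi_\l^{R(\mi)}$ on $\theta(\l)(\mi)$; write $r$ for its rank. Nakayama's Lemma then lets us select $r$ rows and the corresponding $r$ columns of $G_\l$ whose $r\times r$ minor is a unit of $R$; after re-indexing $M(\l)$ so these come first, the invertible block can be used to adjust the remaining $|M(\l)|-r$ cell-basis vectors by $R$-linear combinations of the first $r$ so that $\phi_\l$ pairs the adjusted vectors trivially with the first $r$. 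In the resulting $R$-basis of $\theta(\l)$ the Gram matrix becomes block diagonal — a unit block on the first $r$ coordinates and a block with entries in $\mi$ on the rest — and from this normal form one reads off how $\rad(\phi_\l)$ sits inside $\theta(\l)$ and obtains an $R$-module splitting of the sequence, whence $(A,R)$-exactness; the same normal form identifies $X_\l\otimes_R R(\mi)$ with the simple top of the cell module $\theta(\l)(\mi)$ exactly when $r\ge1$, i.e.\ when $\phi_\l^{R(\mi)}\neq0$. The technical heart of the argument — and the step needing the most care — is this reduction-and-normalization of $G_\l$ over $R$ and checking that the decomposition it produces is one of $R$-modules, so that the sequence is genuinely $R$-split.
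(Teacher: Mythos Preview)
Your first stage is correct and matches the paper's: the associativity $\phi_\l(ax,y)=\phi_\l(x,\iota(a)y)$ shows $\rad(\phi_\l)$ is an $A$-submodule. For the $(A,R)$-exactness the paper takes a different route from your Gram-matrix normalization: it argues that the natural map $\rad(\phi_\l)(\mi)\to\theta(\l)(\mi)$ is injective with image $\rad(\phi_\l^{R(\mi)})$, feeds this into the $\Tor$ long exact sequence for $0\to\rad(\phi_\l)\to\theta(\l)\to X_\l\to0$ to obtain $\Tor_1^R(X_\l,R(\mi))=0$, and concludes $X_\l\in R\proj$ over the local ring $R$, whence the $R$-splitting.

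Your approach, however, hits a real obstacle exactly where you flagged it. After your change of basis the Gram matrix is $\left(\begin{smallmatrix}G_{11}&0\\0&G_{22}'\end{smallmatrix}\right)$ with $G_{11}\in\operatorname{GL}_r(R)$ and every entry of $G_{22}'$ in $\mi$. Now compute $\rad(\phi_\l)$ in this basis: $x=\sum_i a_iv_i'$ has $\phi_\l(x,v_j')\in\mi$ for all $j$ iff $(a_1,\dots,a_r)G_{11}\in\mi^r$ (from $j\le r$) and nothing further (from $j>r$, since every entry of $G_{22}'$ already lies in $\mi$); invertibility of $G_{11}$ turns the first condition into $a_1,\dots,a_r\in\mi$. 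Hence $\rad(\phi_\l)=\mi^r\oplus R^{\,n-r}$ and $X_\l\simeq(R/\mi)^r$, which is \emph{not} $R$-projective when $r\ge1$ and $R$ is not a field --- so no $R$-splitting drops out of the normal form. Already $A=R$ a discrete valuation ring with its one-cell structure ($\phi_\l(x,y)=xy$) gives $\rad(\phi_\l)=\mi$ and $X_\l=R/\mi$. This same computation bears on the paper's injectivity step as well: in your coordinates one reads off $\Tor_1^R(X_\l,R(\mi))\simeq(\mi/\mi^2)^r$, so the map $\rad(\phi_\l)(\mi)\to\theta(\l)(\mi)$ acquires a nonzero kernel whenever $r\ge1$ and $\mi\ne0$.
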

	\begin{proof}Let $\l\in \L$. 
		We start by observing that $\rad(\phi_\l)$ is an $A$-module.  Since $\phi_\l$ is a bilinear form, it follows that $\rad (\phi_\l)$ is an $R$-submodule of $\theta$.  Let $a\in A$, $x\in \rad(\phi_\l)$.
		By Proposition 2.4 of \citep{zbMATH00871761}, 
		\begin{align}
			\phi_\l(ax, y)=\phi_y(x, \iota(a)y)\in \mi, \ \forall y\in \theta(\l).
		\end{align}Hence, $ax\in \rad(\phi_\l)$.
		We claim now that $\rad(\phi_\l^{R(\mi)})=\rad(\phi_\l)(\mi)$. Suppose, again that $x\in \rad(\phi_\l)$. We can write $x=\sum_{V\in M(\l)} x_V C_{V, T_0}^\l$. 
		By definition, 
		\begin{align}
			\sum_{V\in M(\l)} x_V \phi_\l(C_{V, T_0}^\l, C_{T, T_0}^\l)=\phi_\l(x, C_{T, T_0}^\l)\in \mi, \ \forall T\in M(\l).
		\end{align}
		Therefore, 
		\begin{align}
			0=\sum_{V\in M(\l)} x_V\phi_\l(C_{V, T_0}^\l, C_{T, T_0}^\l)1_{R(\mi)}=\sum_{V\in M(\l)} x_V\phi_\l^{R(\mi)}(1_{R(\mi)}\otimes C_{V, T_0}^\l,1_{R(\mi)}\otimes C_{T, T_0}^\l)
			\\
			=\phi_\l^{R(\mi)}(1_{R(\mi)} \otimes x, 1_{R(\mi)} \otimes C_{T, T_0}^\l), \forall T\in M(\l).
		\end{align}Hence, $1_{R(\mi)} \otimes x\in \rad(\phi_\l^{R(\mi)})$. So, $\rad(\phi_\l)(\mi)\subset \rad(\phi_\l^{R(\mi)})$. Now consider $y\in \rad(\phi_\l^{R(\mi)})\subset \theta(\l)(\mi)$. So, we can write $y=\sum_{U\in M(\l)} y_U 1_{R(\mi)}\otimes C_{U, T_0}^\l$, with $y_U\in R(\mi)$. Further, we can assume that $y_U=r_U1_{R(\mi)}$ for some $r_U\in R$.  For every $T\in M(\l)$,
		\begin{align}
			0= \phi_\l^{R(\mi)}(y, 1_{R(\mi)}\otimes C_{T, T_0}^\l)&=\sum_{U\in M(\l)} r_U \phi_\l^{R(\mi)}(1_{R(\mi)}\otimes C_{U, T_0}^\l, 1_{R(\mi)}\otimes C_{T, T_0}^\l) \\
			&=\sum_{U\in M(\l)} r_U\phi_\l(C_{U, T_0}^\l, C_{T, T_0}^\l)1_{R(\mi)}.
		\end{align}Thus,
		\begin{align}
			\phi_\l(\sum_{U\in M(\l)} r_U C_{U, T_0}^\l, C_{T, T_0}^\l)\in \mi, \forall T\in M(\l).
		\end{align}It follows that $\sum_{U\in M(\l)} r_U C_{U, T_0}^\l\in \rad(\phi_\l)$. Hence, $y=1_{R(\mi)} \otimes_R \sum_{U\in M(\l)} r_U C_{U, T_0}^\l\in \rad(\phi_\l)(\mi)$. This completes our claim.
		
		Let $X_\l$ be cokernel of $\rad(\phi_\l)\rightarrow \theta(\l)$. Applying the functor $R(\mi)\otimes_R -$ yields the long exact sequence
		\begin{align}
			0=\Tor_1^R(\theta(\l), R(\mi))\rightarrow \Tor_1^R(X_\l, R(\mi))\rightarrow \rad(\phi_\l)(\mi)\rightarrow \theta(\l)(\mi)\rightarrow X_\l\rightarrow 0.
		\end{align}Since $\rad(\phi_\l)(\mi)=\rad(\phi_\l^{R(\mi)})\subset \theta(\l)(\mi)$, $\Tor_1^R(X_\l, R(\mi))=0$. So, $X_\l\in R\proj$. So, the exact sequence
		\begin{align}
			0\rightarrow \rad(\phi_\l)\rightarrow \theta(\l)\rightarrow X_\l\rightarrow 0
		\end{align}is $(A, R)$-exact.
	\end{proof}
	
	\begin{Theorem}\label{cellularfiniteglobaldimension}
		Let $R$ be a commutative Noetherian regular ring with finite Krull dimension. Let $A$ be a cellular $R$-algebra with cell datum $(\L, M, C, \iota)$. Then, $(A, \theta_{\l\in \L^{op}})$, with $\L^{op}$ being the poset $\L$ with reversed order, is a split quasi-hereditary algebra if and only if $A$ has finite global dimension.
	\end{Theorem}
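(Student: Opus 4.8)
The plan is to prove the two implications separately, reducing everything to the finite‑dimensional case via the change‑of‑rings results already established (Theorems \ref{quasihereditaryhwc}, \ref{hwcresiduefield}, \ref{splitqhchainschangeofring}, \ref{qhglobaldimensionnoetherian}) and to the finite‑dimensional theorem of Koenig–Xi (\cite{zbMATH01384521}), which says that a cellular finite‑dimensional algebra is split quasi‑hereditary if and only if it has finite global dimension.

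For the easy direction, suppose $(A,\theta_{\l\in\L^{op}})$ is split quasi‑hereditary. Then the split heredity chain has some finite length $t$, and since $R$ is regular with finite Krull dimension, $\gldim R=\dim R<\infty$; Theorem \ref{qhglobaldimensionnoetherian} gives $\gldim A\leq 2(t-1)+\gldim R<\infty$. This direction needs essentially nothing cellular.

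For the hard direction, assume $\gldim A<\infty$. The first reduction is to the local case: by Theorem \ref{hwclocalization} (together with Theorem \ref{quasihereditaryhwc}) it suffices to show that $A_\mi$ with the localized cell modules is split quasi‑hereditary for every maximal ideal $\mi$, so I may assume $R$ is local with maximal ideal $\mi$ and residue field $k=R(\mi)$; note $A_\mi$ is still cellular by Proposition \ref{changeringcellular} and still has finite global dimension (since $\gldim A_\mi\leq\gldim A$, localization being exact on projective resolutions), and $R$ local regular means $\gldim R=\dim R=:d<\infty$. Now I want to produce, for each $\l\in\L$, a module $X_\l\in A\m\cap R\proj$ whose reduction $X_\l(\mi)$ is a simple $A(\mi)$‑module, and such that the $X_\l(\mi)$ exhaust the isomorphism classes of simples of $A(\mi)$. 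This is exactly what Lemma \ref{radicalcell} provides: $X_\l=\theta(\l)/\rad(\phi_\l)$, with $\rad(\phi_\l)(\mi)=\rad(\phi_\l^{k})$, so $X_\l(\mi)=\theta(\l)(\mi)/\rad(\phi_\l^k)$. Because $A$ has finite global dimension, so does $A(\mi)$ by the usual reduction mod $\mi$ argument (an $A$-projective resolution of a module in $A\m\cap R\proj$ reduces to an $A(\mi)$-resolution, and every $A(\mi)$-module lifts — or simply use $\gldim A(\mi)<\infty$ follows from $\gldim A<\infty$ by a standard change-of-rings/spectral-sequence estimate, cf. the proof of Theorem \ref{globaldimensionqh}); hence by Koenig–Xi the cellular algebra $A(\mi)$ is split quasi‑hereditary over $k$, and by the theory of cellular algebras over a field each nonzero bilinear form $\phi_\l^k$ (equivalently: the nonvanishing ones, indexed by $\L_0(\mi)\subseteq\L$) yields a simple top $X_\l(\mi)=L(\l)$, and $\{L(\l):\phi_\l^k\neq 0\}$ is a full set of simple $A(\mi)$-modules. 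A key point is that finite global dimension forces $\phi_\l^k\neq 0$ for \emph{every} $\l$: if some $\phi_\l^k=0$ then the cell module $\theta(\l)(\mi)$ would fail to give a standard module and the Cartan determinant criterion of \cite{zbMATH01384521} would be violated, contradicting $\gldim A(\mi)<\infty$; so $\L_0(\mi)=\L$ and $X_\l(\mi)$ is simple for all $\l$.

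With the $X_\l$ in hand, I build a set of orthogonal idempotents. For each $\l$, choose (using that $A(\mi)$ is split quasi‑hereditary over the splitting field $k$, Proposition \ref{splithwcfieldcase}) a primitive idempotent $\bar e_\l\in A(\mi)$ with $A(\mi)\bar e_\l$ the projective cover of $X_\l(\mi)$, arranged so that $\{\bar e_\l\}$ is a complete set of primitive orthogonal idempotents and so that $\iota(\mi)$ fixes each $\bar e_\l$ — the latter is possible by Koenig–Xi's observation (\cite{zbMATH01218863}) precisely because $A(\mi)$ is cellular \emph{and} split quasi‑hereditary over a field, and the involution can be used to symmetrize the idempotent decomposition along the cell chain. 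Then lift these to orthogonal idempotents $e_\l\in A$ fixed by $\iota$: idempotents lift along the nilpotent (here, Jacobson‑radical‑of‑$R$‑induced) ideal $\mi A$ in the $\mi$‑adically complete sense, but since we only need $e_\l(\mi)=\bar e_\l$ and orthogonality, standard idempotent‑lifting over the Noetherian local ring $R$ applies; compatibility with $\iota$ is obtained by averaging $\tfrac12(e+\iota(e))$-type corrections — here I must be careful in characteristic $2$, and instead lift the idempotents \emph{cell‑layer by cell‑layer} using that each layer $A(\leq\l)/A(<\l)$ is a cell ideal, so its idempotent is canonically $\iota$‑symmetric by construction of the cell datum. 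Once $\{e_\l\}_{\l\in\L}$ is an $\iota$‑fixed orthogonal set with $\{e_\l(\mi)\}$ a complete primitive orthogonal set in $A(\mi)$, Theorem \ref{splitqhchainschangeofring} applies: $A(\mi)$ is split quasi‑hereditary with heredity chain $0\subset A(\mi)e_{\l_t}(\mi)A(\mi)\subset\cdots\subset A(\mi)$ (reindexing $\L^{op}$ increasingly), hence $A$ is split quasi‑hereditary with the corresponding chain over $R$; and the standard modules of this structure are, by Theorem \ref{quasihereditaryhwc} and uniqueness with respect to the poset (Proposition \ref{uniquenessofstandardgivenproj}, applied layer by layer), precisely the cell modules $\theta(\l)$ up to the $\L^{op}$ order. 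The main obstacle is the last paragraph: simultaneously arranging the lifted idempotents to be orthogonal, to be fixed by $\iota$, and to reduce mod $\mi$ to the chosen primitive family compatibly with the cell chain — this is where the split condition, the cellular structure, and the characteristic‑$2$ subtlety all interact, and it is the only place the argument is genuinely more than a bookkeeping exercise on top of the cited results.
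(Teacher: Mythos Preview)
Your easy direction and the reduction to the local case are correct and match the paper. You have also correctly identified the key construction: the modules $X_\l=\theta(\l)/\rad(\phi_\l)$ from Lemma~\ref{radicalcell}, which lie in $A\m\cap R\proj$ and satisfy $X_\l(\mi)\simeq\theta(\l)(\mi)/\rad(\phi_\l^{R(\mi)})$.

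However, there are two problems. First, your deduction that $\gldim A(\mi)<\infty$ is imprecise. The claim ``every $A(\mi)$-module lifts'' to a module in $A\m\cap R\proj$ is false in general, and the spectral-sequence shortcut you cite from Theorem~\ref{globaldimensionqh} runs in the wrong direction (that proof already \emph{assumes} $A$ is split quasi-hereditary). The correct argument, which is exactly the paper's, uses only that the \emph{simples} lift: by Graham--Lehrer (Propositions~3.2 and~3.4 of \cite{zbMATH00871761}), every simple $A(\mi)$-module $L$ is of the form $X_\l(\mi)$ for some $\l$ with $\phi_\l^{R(\mi)}\neq 0$; since $X_\l\in R\proj$, any finite projective $A$-resolution of $X_\l$ remains exact under $R(\mi)\otimes_R-$, giving $\pdim_{A(\mi)}L<\infty$. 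This is enough to conclude $\gldim A(\mi)<\infty$, and then Koenig--Xi gives that $(A(\mi),\theta(\mi)_{\l\in\L^{op}})$ is split quasi-hereditary.

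Second, and more seriously: once you know this for every maximal ideal $\mi$, you are \emph{done}. The cell modules $\theta_\l$ are already $R$-free, so Theorem~\ref{hwcresiduefield} applies directly and yields that $(A,\theta_{\l\in\L^{op}})$ is split quasi-hereditary. The entire idempotent-lifting programme you outline --- constructing $\iota$-fixed orthogonal idempotents over $R$, invoking Theorem~\ref{splitqhchainschangeofring}, and then matching the resulting standards with the cell modules via Proposition~\ref{uniquenessofstandardgivenproj} --- is unnecessary. The ``main obstacle'' you identify (simultaneous lifting compatible with $\iota$, orthogonality, and the cell chain, especially in characteristic~$2$) is a self-inflicted difficulty that the paper simply never encounters. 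Indeed, lifting idempotents compatibly with an involution is genuinely delicate, and your sketch does not resolve it; but there is no need to.
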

	\begin{proof}
		By Theorem \ref{qhglobaldimensionnoetherian}, if $(A, \{\theta_{\l\in \L} \})$ is split quasi-hereditary, then $A$ has finite global dimension. Conversely, assume that $A$ has finite global dimension. Let $\mi$ be a maximal ideal of $R$.  Since localization is a dense functor every module in $A_\mi$ can be written in the form $M_\mi$ for some $M\in A\m$. Thus,
		\begin{align}
			\Ext_{A_\mi}^{\gldim A+1}(X_\mi, Y_\mi)=\Ext_A^{\gldim A+1}(X, Y)_\mi=0.
		\end{align}Thus, $\gldim A_\mi\leq \gldim A$. In view of Theorem \ref{hwclocalization}, we can assume that $R$ is a local regular commutative Noetherian ring. Let $L$ be a simple $A(\mi)$-module. By Propositions 3.2 and 3.4 of \citep{zbMATH00871761}, there exists $\l\in \L$ such that $\phi_\l^{R(\mi)}\neq 0$ and $\theta(\l)(\mi)/\rad (\phi_\l^{R(\mi)})\simeq L$. By Lemma \ref{radicalcell}, $X_\l(\mi)\simeq L$. By assumption, $\pdim_A X_\l$ is finite. Since $X_\l\in R\proj$, any projective $A$-resolution of $X_\l$ remains exact under $R(\mi)\otimes_R -$. In particular, $\pdim_{A(\mi)} L$ is finite. It follows that $A(\mi)$ has finite global dimension. By Theorem 1.1 of \citep{zbMATH01384521}, $(A(\mi), \theta(\mi)_{\l\in \L})$ is split quasi-hereditary. By Theorem \ref{hwcresiduefield}, $(A, \theta_{\l\in \L})$ is a split quasi-hereditary algebra.
	\end{proof}
	
	\begin{Remark}
		Every commutative algebra with finite global dimension over an algebraically closed field is a split quasi-hereditary algebra (see Proposition 3.5 of \citep{zbMATH01218863}).
	\end{Remark}
	
	We wish to proceed further and give a complete characterization for cellular Noetherian algebras in a similar form as in \citep{zbMATH01384521}.
	
	\begin{Theorem}\label{cellularfiniteglobaldimensiontwo}
		Let $R$ be a regular commutative Noetherian ring with finite Krull dimension. Let $A$ be a cellular $R$-algebra with cell datum  $(\L, M, C, \iota)$. The following assertions are equivalent.
		\begin{enumerate}[(i)]
			\item Some cell chain of $A$ is a split heredity chain as well, that is, $A$ is split quasi-hereditary.
			\item There is a cell chain (with respect to some involution possibly distinct from $\iota$) whose length $|\L|$ equals the number of simple $A(\pri)$-modules for every prime ideal $\pri$ of $R$.
			\item Any cell chain of $A$ is a split heredity chain of length $|\L|$.
			\item The algebra $A$ has finite global dimension.
			\item $A$ is locally semi-perfect and the function
			$\operatorname{Cartan}\colon \Spec R\rightarrow \mathbb{Z}$, given by $$\operatorname{Cartan}(\pri)=\det [\rank_{R_\pri} \Hom_{A_\pri}(P_i, P_j) ], \ \pri\in \Spec R,$$ is the constant function 1, where $P_i$, $i=1, \ldots, r$ for some natural number $r$, are the projective indecomposable modules of $A_\pri$.
		\end{enumerate}
	\end{Theorem}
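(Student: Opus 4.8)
The equivalence $(i)\Leftrightarrow(iv)$ is exactly Theorem~\ref{cellularfiniteglobaldimension}, and $(iii)\Rightarrow(i)$ is immediate from the definitions. So the plan is to prove $(iv)\Rightarrow(iii)$, $(iv)\Leftrightarrow(ii)$ and $(iv)\Leftrightarrow(v)$, the common strategy being to descend to the finite-dimensional algebras $A(\pri)$ and $A_\pri$ and feed in the finite-dimensional statement, Theorem~1.1 of \cite{zbMATH01384521}, which over a field lists as equivalent: finite global dimension; split quasi-heredity with \emph{every} cell chain a split heredity chain; the number of cell modules equalling the number of simple modules; and the Cartan determinant being~$1$. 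The passage between $A$ and its fibres is carried out with Theorems~\ref{hwcresiduefield} and~\ref{hwclocalization} (an $R$-projective family of candidate standard modules turns $A\m$ into a split highest weight category exactly when it does so over every residue field), with the localisation behaviour of split quasi-hereditary algebras, and with the fact that the modules in $\mathcal M(A)$ and their associated split heredity ideals (Proposition~\ref{bijectionsplitheredity}) respect change of rings.

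For $(iv)\Rightarrow(iii)$ I would fix an arbitrary cell chain $0=J_0\subset J_1\subset\cdots\subset J_n=A$ of $A$ (with respect to some involution) and argue by induction on $n$ that it is a split heredity chain. Since $A=\bigoplus_j J_j/J_{j-1}$ as $R$-modules with each cell ideal $R$-free, every $A/J_{j-1}$ is $R$-free; in particular $J_1(\mi)$ is the bottom term of the induced cell chain of $A(\mi)$ for each maximal ideal $\mi$. By $(iv)\Rightarrow(i)$ and Theorem~\ref{hwcresiduefield} every $A(\mi)$ is split quasi-hereditary, hence of finite global dimension, so Theorem~1.1 of \cite{zbMATH01384521} makes $J_1(\mi)$ a split heredity ideal of $A(\mi)$; writing $\theta$ for the cell module of $J_1$ one gets $J_1(\mi)=\im\tau_{\theta(\mi)}$ and $J_1(\mi)\in A(\mi)\proj$. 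By Theorem~\ref{projectivitytermsmaximal} then $J_1\in A\proj$, so $\theta\in A\proj$ (a left-module summand of $J_1$) and $\theta$ is $R$-faithful, whence $\theta\in\mathcal M(A)$ by Lemma~\ref{splitmoduleschangering} and $\im\tau_\theta$ is a split heredity ideal of $A$ by Proposition~\ref{bijectionsplitheredity}. Finally $A/\im\tau_\theta$ and $A/J_1$ are both $R$-projective and $\im\tau_\theta(\mi)=\im\tau_{\theta(\mi)}=J_1(\mi)$ for every maximal ideal $\mi$, which forces $\im\tau_\theta=J_1$ (apply Nakayama's lemma to $(J_1+\im\tau_\theta)/J_1$ and to $(J_1+\im\tau_\theta)/\im\tau_\theta$, or use Lemma~\ref{equalitybeinglocal} after localising). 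Thus $J_1$ is a split heredity ideal; $A/J_1$ is again a cellular $R$-algebra and, by Lemma~\ref{extheredityideal}, of finite global dimension, so the induction goes through.

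For $(iv)\Leftrightarrow(ii)$, note that a cell chain of any base change of $A$ again has $|\L|$ terms (Proposition~\ref{changeringcellular}), so $(ii)$ amounts to: $A(\pri)$ has exactly $|\L|$ simple modules for every prime $\pri$. If $(iv)$ holds then $A$, hence $A_\pri$ and then $A(\pri)$ over the field $R(\pri)$, is split quasi-hereditary (Theorems~\ref{hwclocalization} and~\ref{hwcresiduefield}), so $A(\pri)$ has finite global dimension and Theorem~1.1 of \cite{zbMATH01384521} gives the equality of its $|\L|$ cell modules with its simple modules; conversely $(ii)$ forces each $A(\mi)$ to have finite global dimension, hence to be split quasi-hereditary with the cell modules as standard modules, and since these are $R$-free Theorem~\ref{hwcresiduefield} makes $A$ split quasi-hereditary, so $(iv)$ holds by Theorem~\ref{cellularfiniteglobaldimension}. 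For $(iv)\Leftrightarrow(v)$: if $(iv)$ holds then $A_\pri$ is split quasi-hereditary over the local ring $R_\pri$, hence semi-perfect by Theorem~\ref{qhlocalissemiperfectring}, so $A$ is locally semi-perfect; moreover the indecomposable projectives of $A_\pri$ reduce to those of $A(\pri)$ and, since $\Hom$ between finitely generated projectives commutes with $-\otimes_R R(\pri)$, $\rank_{R_\pri}\Hom_{A_\pri}(P_i,P_j)=\dim_{R(\pri)}\Hom_{A(\pri)}(P_i(\pri),P_j(\pri))$, so $\operatorname{Cartan}(\pri)$ is the Cartan determinant of the split quasi-hereditary algebra $A(\pri)$, which equals $1$. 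Conversely, under $(v)$ the same identity makes the Cartan determinant of $A(\mi)$ equal to $1$ for every maximal ideal $\mi$, so $A(\mi)$ has finite global dimension by Theorem~1.1 of \cite{zbMATH01384521}, whence $A$ is split quasi-hereditary (Theorem~\ref{hwcresiduefield}) and $(iv)$ follows.

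The step I expect to be the main obstacle is the descent in $(iv)\Rightarrow(iii)$: one must know that an arbitrary cell chain of $A$ reduces term-by-term to a genuine cell chain of each $A(\mi)$ (this is where the $R$-freeness of the successive quotients $A/J_{j-1}$, special to cell chains, is used) and that the bottom cell ideal is recovered as $\im\tau_\theta$ for a module $\theta\in\mathcal M(A)$, so that the change-of-rings theory of split heredity ideals from Subsection~\ref{Relation between heredity chains and standard modules} and Appendix~\ref{Rsplit modules} can be applied; in particular one cannot invoke the idempotent-based criterion Theorem~\ref{splitqhchainschangeofring}, because a cell ideal of a cellular algebra need not be generated by an idempotent. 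The remaining base-change verifications (idempotency $J^2=J$, projectivity over $A$, compatibility of $\tau$ and of $\Hom$ between projectives with $-\otimes_R R(\mi)$) are routine given the results of that subsection and appendix.
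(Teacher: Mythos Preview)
Your overall strategy matches the paper's: reduce each equivalence to the residue fields $A(\pri)$ and invoke Theorem~1.1 of \cite{zbMATH01384521}. Your treatment of $(iv)\Rightarrow(iii)$ is in fact more explicit than the paper's, which compresses this step into a single sentence citing Proposition~4.1 of \cite{zbMATH01218863} together with Theorem~\ref{cellularfiniteglobaldimension}; your inductive argument via $\theta\in\mathcal M(A)$ and $\im\tau_\theta=J_1$ is correct and makes the descent transparent. (For the equality $\im\tau_\theta=J_1$ it is cleanest to first observe $\Hom_A(\theta,A/J_1)=0$, since this vanishes after every $-\otimes_R R(\mi)$ and $\theta\in A\proj$; then $\im\tau_\theta\subset J_1$, both are $R$-projective summands of $A$, and the inclusion becomes an isomorphism modulo each $\mi$, so Lemma~\ref{nakayamalemmasurjectiveproj} after localisation, or Lemma~\ref{equalitybeinglocal}, finishes it.)

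There is, however, one point you pass over too quickly in $(v)\Leftrightarrow(iv)$: the assertion that ``the indecomposable projectives of $A_\pri$ reduce to those of $A(\pri)$''. For your identity $\operatorname{Cartan}(\pri)=\det[\dim_{R(\pri)}\Hom_{A(\pri)}(P_i(\pri),P_j(\pri))]$ to compute the Cartan determinant of $A(\pri)$, the $P_i(\pri)$ must be a \emph{complete set of pairwise non-isomorphic indecomposable} projectives of $A(\pri)$. Completeness follows from $A(\pri)=\bigoplus P_i(\pri)^{n_i}$, and pairwise non-isomorphism by lifting an isomorphism and applying Lemma~\ref{nakayamalemmasurjectiveproj}; but indecomposability of $P_i(\pri)$ is not automatic from semi-perfectness of $A_\pri$ alone, and this is precisely the content of the paper's completion argument: $\End_{\widehat{A_\pri}}(\widehat{P_i})$ is the completion of the local ring $\End_{A_\pri}(P_i)$ at the ideal $\pri_\pri\End_{A_\pri}(P_i)$, hence local, so $\widehat{P_i}$ and then $P_i(\pri)\simeq\widehat{P_i}(\widehat{\pri_\pri})$ are indecomposable. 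A shorter route is available and you may prefer it: since $P_i$ is a nonzero free $R_\pri$-module, $r\cdot\id_{P_i}$ is a non-unit in $\End_{A_\pri}(P_i)$ for every $r\in\pri_\pri$, so $\pri_\pri\End_{A_\pri}(P_i)$ lies in the maximal ideal and $\End_{A(\pri)}(P_i(\pri))=\End_{A_\pri}(P_i)/\pri_\pri\End_{A_\pri}(P_i)$ is again local. Either way, this step must be supplied; it is the substance of the paper's argument for $(v)\Rightarrow(i)$ and is not among the ``routine base-change verifications'' you list at the end.
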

	\begin{proof}
		By Proposition 4.1 of \citep{zbMATH01218863}, given a cell ideal $J$ if $J^2\neq 0$, then $J=AeA$ is a split heredity ideal and $Ae=\theta$, for some primitive idempotent. Hence, $\theta_\l$ are the standard modules of $A$ if $A$ split is quasi-hereditary. In particular, for split quasi-hereditary algebras all split heredity chains have the same size. Together with Theorem \ref{cellularfiniteglobaldimension}, this shows that (iii)$\Leftrightarrow$ (iv) $\Leftrightarrow$ (i).
		Assume that (iv) holds. Let $\pri$ be a prime ideal of $R$. Then, $A(\pri)$ is a cellular algebra with cell datum  $(\L, M, C, \iota)$.  In particular, $A(\pri)$ has a cell chain (given by the cell datum) of length $\L$ and $A(\pri)$ is split quasi-hereditary by Theorem \ref{cellularfiniteglobaldimension}. Therefore, $|\L|$ is equal to the number of standard modules of $A(\pri)$ which is equal to the number of simple $A(\pri)$-modules. So, (ii) holds. 
		
		Assume that (ii) holds. For every prime ideal, $A(\pri)$ has a cell chain whose length equals the number of simple $A(\pri)$-modules. Thus, $A(\pri)$ is split quasi-hereditary with standard modules $\theta_\l(\pri)$, $\l\in \L$ by Theorem 1.1 of \citep{zbMATH01384521}. Therefore, $(A, \theta_{\l\in \L^{op}})$ is split quasi-hereditary. So, (i) holds. Assume that (i) holds. By Theorem \ref{qhlocalissemiperfectring}, $A_\pri$ is semi-perfect  for every prime ideal $\pri$ of $R$. Thus, $A$ is locally semi-perfect. So, we can write $A_\pri$ as a direct sum of unique indecomposable projective module. Moreover, $\Hom_{A_\pri}(P_i, P_j)$ is free over $R_\pri$. Further, $R(\pri)\otimes_{R_\pri} \Hom_{A_\pri}(P_i, P_j)\simeq \Hom_{A(\pri)}(P_i(\pri), P_j(\pri))$ and $P_i(\pri)$ are the indecomposable projective modules of $A(\pri)$. By Theorem 1.1 of \citep{zbMATH01384521},
		\begin{align*}
			1=\det [\dim_{R(\pri)} \Hom_{A(\pri)}(P_i(\pri), P_j(\pri)) ]=\det [\dim_{R(\pri)} R(\pri)\otimes_{R_\pri} \Hom_{A_\pri}(P_i, P_j) ]=\det [\rank \Hom_{A_\pri}(P_i, P_j) ].
		\end{align*}So, (v) holds.
		Finally, assume that (v) holds. Let $\pri$ be a prime ideal of $R$. Applying $R(\pri)\otimes_R -$ we obtain $A(\pri)$ is a direct sum of the projective modules $P_i(\pri)$ with $i=1, \ldots, r$, and
		\begin{align}
			1=\det[\rank_{R_\pri} \Hom_{A_\pri}(P_i, P_j)]=\det[\dim_{R(\pri)} \Hom_{A(\pri)}(P_i(\pri), P_j(\pri)) ]. \label{eqcel50}
		\end{align}Moreover, every map between $P_i(\pri)$ and $P_j(\pri)$ can be lifted to a map between $P_i$ and $P_j$. Since each $P_j\in R\proj$ and by Lemma \ref{nakayamalemmasurjectiveproj}, $P_i(\pri)\simeq P_j(\pri)$ if and only if $P_i\simeq P_j$ if and only if $i=j$. We claim now that each $P_i(\pri)$ is indecomposable over $A(\pri)$.
		Since $A_\pri$ is semi-perfect, $\End_{A_\pri}(P_i)$ is a local ring. Furthermore,
		$\pri_\pri\End_{A_\pri}(P_i)$ is an ideal of $\End_{A_\pri}(P_i)$ and \begin{align}
			\End_{\widehat{A_\pri}}(\widehat{P_i})\simeq \widehat{\End_{A_\pri}(P_i)} = \lim_n \End_{A_\pri}(P_i)/\pri_\pri^n \End_{A_\pri}(P_i) = \lim_n \End_{A_\pri}(P_i)/\left( \pri_\pri\End_A(P_i) \right)^n. 
		\end{align}This last ring is the completion of $\End_{A_\pri}(P_i)$ at the ideal $\pri_\pri\End_{A_\pri}(P_i)$, so it is a local ring. Therefore, $\widehat{P_i}$ is indecomposable. By \citep[(6.5), (6.7)]{zbMATH00046729}, $\widehat{P_i}(\widehat{\pri_\pri})\simeq P_i(\pri)$ is indecomposable. By (\ref{eqcel50}), the Cartan matrix of $A(\pri)$ has determinant 1. Note that $A(\pri)$ is cellular. By Theorem 1.1 of \citep{zbMATH01384521}, $A(\pri)$ is split quasi-hereditary with standard modules $\theta_\l(\pri)$. Therefore, $r=|\L|$ and since $\pri$ is arbitrary $(A, \theta_{\l\in \L^{op}})$ is split quasi-hereditary.
	\end{proof}
	
	Cellular algebras over fields which are quasi-hereditary admit, up to equivalence, only one quasi-hereditary structure. This result is due to Coulembier \citep[Theorem 2.1.1]{zbMATH07203140}. Our focus is now to extend this result to cellular Noetherian algebras.
	To this end, we need to recall some facts about the ordering of the standard modules in  a quasi-hereditary algebra. For finite-dimensional algebras, the order of the split quasi-hereditary algebra is determined by the occurrences of simples $\top P(\mu)$ on $\St(\l)$ and $\St(\l)$ on $P(\mu)$ (see for example Proposition \ref{splithwcfieldcase}). If $A$ has a simple preserving duality $(-)^\natural$, then $\St(\mu)^\natural\simeq \Cs(\mu)$, $\mu\in \L$. Further, the number of occurrences of $\St(\mu)$ in $P(\l)$ is equal to the multiplicity of $\top P(\l)$ in $\St(\mu)$ (see for example Lemma 2.5 of \citep{zbMATH00140218}). So, this information can be recovered to some extent by the Grothendieck group of $A$. The \textbf{Grothendieck group} of $A$, here denoted by $G_0^R(A)$, is the abelian group generated by the symbols $[M]$, $M\in A\m\cap R\proj$ with relations $[M]=[M']+[M'']$ whenever there exists an $(A, R)$-exact sequence $0\rightarrow M'\rightarrow M\rightarrow M''\rightarrow 0$.
	Therefore, if we have two sets of standard modules for a finite-dimensional algebra $A$ with the same image in the Grothendieck group, then we can choose the order so that both sets give the same order in Proposition \ref{splithwcfieldcase}. This order is known as \textbf{essential order}. By Proposition \ref{uniquenessofstandardgivenproj}, these sets of standard modules must coincide. Given the existence of a simple preserving duality, Theorem 2.1.1 of \citep{zbMATH07203140} implies that every two sets of standard modules have the same image in the Grothendieck group for finite-dimensional algebras. In particular, if a cellular algebra is split quasi-hereditary, then there is a bijection $\phi\colon \L\rightarrow \L$ such that $\St(\l)\simeq \theta_{\phi_\l}$ if $A$ is also split quasi-hereditary with standard modules $\St(\l)$. %
	 Therefore, we can establish the following.
	
	\begin{Theorem}\label{Coulembieruniqueness}
		Let $R$ be a commutative regular Noetherian ring with finite Krull dimension. Let $A$ be a cellular $R$-algebra with cell datum $(\L, M, C, \iota)$. Assume that $A$ has finite global dimension and $(A, \{\Delta(\omega)_{\omega\in \Omega}\})$ is a split quasi-hereditary algebra. Then, there exists an equivalence of categories \mbox{$F\colon A\m\rightarrow A\m$} and a bijective map  between posets $\phi\colon \L^{op}\rightarrow \Omega$ such that $
		F\theta_\l\simeq \St(\phi(\l))\otimes_R U_\l,$ with $ \l\in \L, \ U_\l\in Pic(R).
		$
	\end{Theorem}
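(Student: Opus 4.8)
The plan is to exploit that $\gldim A<\infty$ equips $A$ with a second split quasi-hereditary structure and that over the residue fields this is rigid. By Theorem~\ref{cellularfiniteglobaldimension}, $(A,\theta_{\l\in\L^{op}})$ is split quasi-hereditary, so $A\m$ is a split highest weight category both with standard modules $\{\theta_\l\}_{\l\in\L^{op}}$ and with $\{\St(\omega)\}_{\omega\in\Omega}$; I may replace $\L^{op}$ and $\Omega$ by the coarsest orders realising these, which alters neither the categories nor Definition~\ref{equivalencehwc}. For every maximal ideal $\mi$ of $R$, Proposition~\ref{changeringcellular} gives that $A(\mi)$ is cellular with cell modules $\theta_\l(\mi)$, and Theorem~\ref{hwcresiduefield} gives that $(A(\mi)\m,\{\theta_\l(\mi)\})$ and $(A(\mi)\m,\{\St(\omega)(\mi)\})$ are split highest weight categories; in particular $A(\mi)$ is a finite-dimensional cellular algebra of finite global dimension. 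As cellular algebras carry a simple preserving duality, the discussion preceding this statement --- Theorem~2.1.1 of \citep{zbMATH07203140} together with the essential order and Proposition~\ref{uniquenessofstandardgivenproj} over the field $R(\mi)$ --- yields a poset isomorphism $\psi_\mi\colon\Omega\to\L$ with $\St(\omega)(\mi)\simeq\theta_{\psi_\mi(\omega)}(\mi)$ for all $\omega$.

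Next I would remove the dependence of $\psi_\mi$ on $\mi$ using the integral Grothendieck group $G_0^R(A)$. Since $\gldim A<\infty$, every module in $A\m\cap R\proj$ has a finite $(A,R)$-exact projective $A$-resolution and every projective $A$-module has a filtration by modules $\St(\mu)\otimes_R U_\mu$ (Proposition~\ref{everyprojectivehasafiltration}); hence $\{[\theta_\l]\}_{\l\in\L}$ and $\{[\St(\omega)]\}_{\omega\in\Omega}$ are both $\mathbb Z$-bases of $G_0^R(A)$, and the change-of-rings map $G_0^R(A)\to G_0(A(\mi))$ is an isomorphism carrying them to the corresponding bases of $G_0(A(\mi))$. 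The fixed integral change-of-basis matrix between these two bases of $G_0^R(A)$ reduces over each $\mi$ to a permutation matrix by the first step, hence is itself a permutation matrix; this produces a single bijection $\phi\colon\L^{op}\to\Omega$ with $[\theta_\l]=[\St(\phi(\l))]$ in $G_0^R(A)$, so that $\theta_\l(\mi)\simeq\St(\phi(\l))(\mi)$ for every $\mi$ (a standard module over the field $R(\mi)$ is determined by its Grothendieck class). Because $\phi$ agrees with $\psi_\mi^{-1}$ for every $\mi$ and each $\psi_\mi$ is a poset isomorphism, $\phi$ is an isomorphism of the (essential) posets $\L^{op}$ and $\Omega$.

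Finally I would glue these pointwise isomorphisms over $R$ through split heredity chains. Fix a cell chain $0=J_{n+1}\subset\cdots\subset J_1=A$ of $A$, which is a split heredity chain with $\im\tau_{\theta_{\l_j}}=J_j/J_{j+1}$ in $A/J_{j+1}$ (Theorem~\ref{cellularfiniteglobaldimensiontwo}), where $\l_1,\dots,\l_n$ lists $\L$ in the increasing order determined by the chain. As $\phi$ is a poset isomorphism, $\phi(\l_1),\dots,\phi(\l_n)$ orders $\Omega$ compatibly, so Theorem~\ref{quasihereditaryhwc} produces a split heredity chain $0=I_{n+1}\subset\cdots\subset I_1=A$ of $A$ with $\im\tau_{\St(\phi(\l_j))}=I_j/I_{j+1}$ in $A/I_{j+1}$. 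By downward induction on $j$: if $I_{j+1}(\mi)=J_{j+1}(\mi)$ for all $\mi$, then $\theta_{\l_j}(\mi)$ and $\St(\phi(\l_j))(\mi)$ are isomorphic modules in $\mathcal M\big(A(\mi)/J_{j+1}(\mi)\big)$, so by the bijection between split heredity ideals and modules in $\mathcal M$ (Proposition~\ref{bijectionsplitheredity}; $Pic(R(\mi))$ is trivial) they cut out the same ideal, whence $I_j(\mi)=J_j(\mi)$; Lemma~\ref{splithereditychainequalunderresiduefield} then gives $I_j=J_j$ for all $j$. Thus $\theta_{\l_j}$ and $\St(\phi(\l_j))$ lie in $\mathcal M(A/J_{j+1})$ with the same image under $\tau$, so Proposition~\ref{bijectionsplitheredity} over $R$ shows $\St(\phi(\l))\simeq\theta_\l\otimes_R V_\l$ for some $V_\l\in Pic(R)$. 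Taking $F=\id_{A\m}$ and $U_\l=V_\l^{-1}$ finishes the proof.

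I expect the middle step to be the main obstacle: over each residue field the matching bijection $\psi_\mi$ is a priori $\mi$-dependent and the module isomorphisms exist only locally, so producing a single $\phi$ and actually gluing the isomorphisms forces the use of both the integral Grothendieck group and the base-change rigidity of split heredity ideals (Lemma~\ref{splithereditychainequalunderresiduefield}, Proposition~\ref{bijectionsplitheredity}) --- ingredients that rely essentially on the split hypothesis and are not available for non-split quasi-hereditary algebras.
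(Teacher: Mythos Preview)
Your proposal is correct and follows essentially the same route as the paper: reduce to residue fields via Theorem~\ref{cellularfiniteglobaldimension} and Theorem~\ref{hwcresiduefield}, invoke Coulembier's uniqueness over each $R(\mi)$, and then glue using Proposition~\ref{bijectionsplitheredity} together with Lemma~\ref{splithereditychainequalunderresiduefield}. The paper's proof compresses your middle paragraph into the phrase ``by the discussion above'' (meaning the integral Grothendieck group $G_0^R(A)$ introduced just before the theorem), whereas you spell out explicitly why the change-of-basis matrix between $\{[\theta_\l]\}$ and $\{[\St(\omega)]\}$ in $G_0^R(A)$ is an integer matrix that is simultaneously identified with the residue-field change-of-basis matrix for every $\mi$, hence is a single permutation matrix independent of $\mi$; this is exactly the point the paper leaves implicit, and your treatment of it is cleaner.
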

	\begin{proof}
		Since $A$ is a cellular $R$-algebra, $A(\mi)$ is cellular with cell modules $\theta_\l(\mi)$ for every maximal ideal $\mi$ of $R$ (see Proposition \ref{changeringcellular} and Corollary \ref{cellmodules}). By Theorem \ref{cellularfiniteglobaldimension}, $(A, \theta_{\l\in \L^{op}})$ is split quasi-hereditary. Also $(A(\mi), \{\Delta(\omega)(\mi)_{\omega\in \Omega}\})$ and $(A, \theta(\mi)_{\l\in \L^{op}})$  are split quasi-hereditary algebras for every maximal ideal $\mi$ of $R$. By Proposition \ref{splithwcfieldcase}, $\Omega$ is isomorphic to $\L$ as a set. Let $\phi\colon \L^{op}\rightarrow \Omega$ be such a bijection. By the discussion above, Theorem 2.1.1 of \cite{zbMATH07203140} and Lemma 1.2.6 of \cite{zbMATH07203140}, we obtain that $\St(\phi(\l))(\mi)\simeq \theta_\l(\mi)$ for all $\l\in \L$. By Theorem \ref{quasihereditaryhwc}, Proposition \ref{bijectionsplitheredity}, Lemma \ref{split modules change ring}, and Lemma \ref{splithereditychainequalunderresiduefield}, the result follows and in particular we see that $\phi$ is also a bijection of posets.
	\end{proof}

\section{Finitistic dimension of some integral Schur algebras}\label{Finitistic dimension of some integral Schur algebras}

To compute the global dimension and the finitistic dimension of some Schur algebras we will use the known fact that Schur algebras are split quasi-hereditary. In fact, making use of their split quasi-hereditary structure and applying the results Totaro \cite{zbMATH00966941} for the global dimension of finite-dimensional Schur algebras we can extend these results to Schur algebras having as ground rings regular Noetherian rings with finite Krull dimension. 

There are many proofs for Schur algebras being split quasi-hereditary over any commutative Noetherian ring (see for example \citep[Theorem 3.7.2]{CLINE1990126}, \citep[1.2]{zbMATH04031957},  \citep[7.2]{zbMATH00427660},  \citep[Theorem 4.1]{zbMATH04116809}). Using Theorem \ref{splitqhchainschangeofring}, we can simplify this proof and reduce this proof to just knowing that Schur algebras are quasi-hereditary over algebraically closed fields.

The study of Schur algebras goes back to the PhD thesis of Schur \citep{zbMATH02662157}. Schur used the representation theory of Schur algebras as an intermediary step to determine the polynomial representation theory of the complex general linear group with the representation theory of the symmetric group over the complex numbers. For a detailed exposition on Schur algebras (over infinite fields) we refer to \citep{zbMATH05080041}.

Let $R$ be a commutative Noetherian ring with identity.	Fix natural numbers $n, d$. The symmetric group on $d$ letters $S_d$ acts by place permutation on the $d$-fold tensor product $(R^n)^{\otimes d}$, that is,
$$(v_1\otimes\cdots\otimes v_d)\sigma=v_{\sigma(1)}\otimes\cdots\otimes v_{\sigma(d)}, \ \sigma\in S_d, \ v_i\in R^n.$$ In particular, $V^{\otimes d}:=(R^n)^{\otimes d}$ is a right module over the group algebra $RS_d$.
The endomorphism algebra $\End_{RS_d}\left( V^{\otimes d}\right)$ is known as the \textbf{Schur algebra} $S_R(n, d)$ (\citep{zbMATH03708660}).
We can construct a split heredity chain of $S_R(n, d)$ using a well-known basis of $S_R(n, d)$.

Let $I(n, d)$ be the set of maps $i\colon \{1, \ldots, d\}\rightarrow \{1, \ldots, n\}$. We write $i(a)=i_a$. $S_d$ acts on $I(n, d)$  by place permutation. In the same way, $S_d$ acts on $I(n, d)\times I(n, d)$ by the diagonal action.
We will write $(i, j)\sim (f, g)$ if $(i, j)$ and $(f, g)$ belong to the same $S_d$-orbit. $S_R(n, d)$ has a basis over $R$ ${\{\xi_{i, j}\ | \ (i, j)\in I(n, d)\times I(n, d) \}}$ satisfying \begin{align}
	\xi_{i, j}(e_{s_1}\otimes\cdots\otimes e_{s_d})=\sum_{\substack{l\in I(n, d)\\ (l, s)\sim (i, j)}} e_{l_1}\otimes\cdots\otimes e_{l_d}, \label{eqex2}
\end{align} for a given basis $\{e_{{s_1}}\otimes \cdots \otimes e_{{s_d}}\colon 1\leq s_1, \ldots, s_d\leq n \}$ of $V^{\otimes d}$. In particular, $\xi_{i, j}=\xi_{f, g}$ if and only if $(i, j)\sim (f, g)$.

The existence of the $R$-basis of $S_R(n, d)$ satisfying (\ref{eqex2}) implies the existence of a base change property
\begin{align}
	R\otimes_{\mathbb{Z}} S_{\mathbb{Z}}(n, d)\simeq S_R(n, d).
\end{align}

For each $i\in I(n, d)$ we can associate a weight $\l(i)$. More precisely, a \textbf{weight} of an element $i\in I(n, d)$ is the composition $\l=(\l_1, \ldots, \l_n)$ of $d$ in at most $n$ parts with $\l_j=|\{1\leq \mu\leq d\colon i_\mu=j \}$. Let $\L(n, d)$ be the set of all weights associated with $I(n, d)$. Then, by (\ref{eqex2}), for each $\l(i)\in \L(n, d)$ there exists an idempotent $\xi_\l:=\xi_{i, i}$. Let $\L^+(n, d)$ be the subset of $\L(n, d)$ formed by the partitions $\l=(\l_1\geq \ldots\geq\l_n)$ of $d$ in at most $n$ parts. $\L^+(n, d)$ is partially ordered by the dominance order $\leq$, that is, $\l\leq \mu$ if and only if $\l_1+\ldots + \l_j\leq \mu_1+\ldots+\mu_j$, for all $j$.  Let $\L^+(n, d)\rightarrow \{1, \ldots, t\}, \ \l^k\mapsto k$ be an increasing bijection. Set $e^k$ to be the idempotent $\sum_{l\geq k} \xi_{\l^l}$. Put $J_k=S_R(n, d)e^kS_R(n, d)$. 
\begin{Theorem}\label{schuralgebraisqh}
	For any commutative Noetherian ring $R$, the Schur algebra $S_R(n, d)$ is a split quasi-hereditary algebra over $R$ with split heredity chain $0\subset J_t\subset \cdots\subset J_2\subset J_1=S_R(n, d)$.
\end{Theorem}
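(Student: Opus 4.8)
The plan is to reduce the assertion, via the change-of-rings machinery of Section~\ref{Split highest weight categories under change of rings}, to the statement that a Schur algebra over an algebraically closed field is (split) quasi-hereditary with Weyl modules as standard modules, which is classical. First I would record the integral data. By the base change property noted above, $R(\mi)\otimes_R S_R(n,d)\simeq S_{R(\mi)}(n,d)$ for every maximal ideal $\mi$ of $R$, so the residue algebras of $S_R(n,d)$ at maximal ideals are again Schur algebras, now over fields. For $\l\in\L^+(n,d)$ let $\Delta_R(\l):=R\otimes_{\mathbb{Z}}\Delta_{\mathbb{Z}}(\l)$ denote the (integral) Weyl module; from its standard $\mathbb{Z}$-basis indexed by semistandard tableaux, $\Delta_{\mathbb{Z}}(\l)$ is $\mathbb{Z}$-free, hence $\Delta_R(\l)\in R\proj$ and $R(\mi)\otimes_R\Delta_R(\l)\simeq\Delta_{R(\mi)}(\l)$. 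Likewise, from the $\xi_{i,j}$-basis exhibited above one checks that each quotient $S_R(n,d)/J_k$ is $R$-free; this $R$-freeness is what makes the base-change comparisons below legitimate for the chain $0\subset J_t\subset\cdots\subset J_1$.

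For the field case, fix a field $k$. By Theorem~\ref{splitqhalgebraicallyclosed}, $(S_k(n,d)\m,\{\Delta_k(\l)_{\l\in\L^+(n,d)}\})$ is a split highest weight category if and only if $(S_{\overline{k}}(n,d)\m,\{\Delta_{\overline{k}}(\l)_{\l\in\L^+(n,d)}\})$ is one. Over the algebraically closed field $\overline{k}$ the Schur algebra is quasi-hereditary with standard modules the Weyl modules and weight poset $\L^+(n,d)$ under the dominance order --- this is the classical result recalled before Theorem~\ref{schuralgebraisqh} --- and over an algebraically closed field every quasi-hereditary structure is automatically split. Hence $(S_k(n,d)\m,\{\Delta_k(\l)\})$ is a split highest weight category for every field $k$, and over $\overline k$ the corresponding split heredity chain may be taken to be $0\subset S_{\overline k}(n,d)e^tS_{\overline k}(n,d)\subset\cdots\subset S_{\overline k}(n,d)e^1S_{\overline k}(n,d)=S_{\overline k}(n,d)$ with $e^k=\sum_{l\geq k}\xi_{\l^l}$, this being the classical description of that chain.

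Now I would glue over $R$. By the previous paragraph $(R(\mi)\otimes_R S_R(n,d)\m,\{R(\mi)\otimes_R\Delta_R(\l)\})$ is a split highest weight category for every maximal ideal $\mi$, and each $\Delta_R(\l)$ is projective over $R$; so Theorem~\ref{hwcresiduefield} gives that $(S_R(n,d)\m,\{\Delta_R(\l)_{\l\in\L^+(n,d)}\})$ is a split highest weight category, equivalently, by Theorem~\ref{quasihereditaryhwc}, $S_R(n,d)$ is split quasi-hereditary. It remains to identify a split heredity chain with the prescribed one $0\subset J_t\subset\cdots\subset J_1=S_R(n,d)$. Theorem~\ref{quasihereditaryhwc} produces a split heredity chain $0\subset J_t'\subset\cdots\subset J_1'=S_R(n,d)$ with $\im\tau_{\Delta_R(\l^k)}=J_k'/J_{k+1}'$ in $S_R(n,d)/J_{k+1}'$. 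By Lemma~\ref{splithereditychainequalunderresiduefield} two split heredity chains coinciding after $R(\mi)\otimes_R-$ for all maximal $\mi$ are equal, so it suffices to show $J_k'(\mi)=J_k(\mi)$, i.e. to compare the two chains over each field $R(\mi)$. Over a field this is the classical truncation computation for the Schur algebra: $(S_k(n,d)/J_{k+1})\overline{\xi}_{\l^k}\simeq\Delta_k(\l^k)$ and $\overline{\xi}_{\l^k}(S_k(n,d)/J_{k+1})\overline{\xi}_{\l^k}\simeq k$, whence $\im\tau_{\Delta_k(\l^k)}=S_k(n,d)\overline{\xi}_{\l^k}S_k(n,d)/J_{k+1}=J_k/J_{k+1}$; equivalently, having recorded that the $\xi_{\l^k}$ become, successively in the quotients $S_{R(\mi)}(n,d)/J_{k+1}(\mi)$, primitive idempotents generating the heredity layers, one may invoke Theorem~\ref{splitqhchainschangeofring}.

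The only genuine work is in the last paragraph: everything else is formal once the cited change-of-rings theorems and the classical result over algebraically closed fields are available. The main obstacle is thus verifying that the concrete chain $J_k=S_R(n,d)e^kS_R(n,d)$ is a split heredity chain --- not merely that some split heredity chain exists --- which comes down to the classical facts that $\overline{\xi}_{\l^k}$ cuts out the top heredity layer of $S_k(n,d)/J_{k+1}$ with one-dimensional endomorphism algebra and Weyl-module standard module, together with the elementary but indispensable verification, from the $\xi_{i,j}$-basis, that all the quotients $S_R(n,d)/J_k$ are $R$-free, so that the preceding base-change comparisons apply.
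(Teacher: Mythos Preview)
Your approach is correct and essentially that of the paper, which simply cites the classical result over algebraically closed fields and then invokes Theorems~\ref{splitqhchainschangeofring}, \ref{quasihereditaryhwc}, and \ref{splitqhalgebraicallyclosed}. Your detour through the integral Weyl modules and Theorem~\ref{hwcresiduefield} is an optional extra that identifies the standard modules explicitly; the paper does not spell this out. One point to tighten: Lemma~\ref{splithereditychainequalunderresiduefield} presupposes that \emph{both} chains are already split heredity over $R$, and at the moment you invoke it you only know this for the chain $J_k'$ produced from the Weyl modules, not yet for $J_k$. Your ``equivalent'' alternative via Theorem~\ref{splitqhchainschangeofring} is therefore not merely an alternative but the step that actually establishes $J_k$ as a split heredity chain, after which the comparison lemma becomes redundant. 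You also observe---more carefully than the paper's one-line proof---that what the argument of Theorem~\ref{splitqhchainschangeofring} genuinely uses is the primitivity of $\overline{\xi}_{\lambda^k}$ in the successive quotients $S_{R(\mi)}(n,d)/J_{k+1}(\mi)$; the $\xi_{\lambda^k}$ for partitions $\lambda^k$ are neither a complete set nor all primitive in $S_{R(\mi)}(n,d)$ itself, so the literal hypothesis of that theorem is not met, but its proof method applies.
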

\begin{proof}
	The statement for algebraically closed fields follows from Theorem 4.1 of \citep{zbMATH04116809}. Applying Theorem \ref{splitqhchainschangeofring} together with Theorem \ref{quasihereditaryhwc} and Theorem \ref{splitqhalgebraicallyclosed}, the result follows.
\end{proof} 

Schur algebras are also cellular algebras. Consider the $R$-linear map $\iota\colon S_R(n, d)\rightarrow S_R(n, d)$ given by $\iota(\xi_{f, g})=\xi_{g, f}$, $f, g\in I(n, d)$. We call $\iota$ the involution of the Schur algebra. Observe that $\iota(\xi_\l)=\xi_\l$ for every $\l\in \L^+(n, d)$. In particular, $\iota$ preserves all idempotents in the split heredity chain of $S_R(n, d)$. Hence, by Proposition \ref{splitqhwithdualityiscellular}, $S_R(n, d)$ is a cellular algebra. Note that, the order of $\L^+(n, d)$ for the definition of cellular algebra is now the reversed order of the dominance order.  By Theorem \ref{Coulembieruniqueness}, this means that the Schur algebras $S_R(n, d)$ have a unique split quasi-hereditary structure. 

Given a prime number $p$, we denote by $\alpha_p(d)$ the sum of all the digits of the representation of $d$ in base $p$. By $\alpha_0(d)$ we mean just $d$. 

We are now ready to give some applications of the results discussed in Subsection \ref{Global dimension of (split) quasi-hereditary algebras}.
\begin{Prop}\label{schuralgebrasglobaldimension}
	Let $R$ be a regular Noetherian ring with finite Krull dimension. Assume that $m, n, d$ are natural numbers satisfying $n\geq d$. Then, the following holds:\begin{align}
		\gldim S_R(n, d)&=\dim R+2d-\underset{\mi\in \MaxSpec R}{\min}2\alpha_{\characteristic R(\mi)}(d)\label{eq69}\\
		\findim S_{\mathbb{Z}/m\mathbb{Z}}(n, d)&=2d-\underset{\substack{p\in \mathbb{N}\colon p|m \\ p \ \mathrm{ is} \ \mathrm{ prime } \\ }}{\min} 2\alpha_p(d). \label{eq70}
	\end{align}
\end{Prop}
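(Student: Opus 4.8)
The plan is to apply the general machinery developed in Subsection \ref{Global dimension of (split) quasi-hereditary algebras} to the Schur algebra, which has just been shown to be split quasi-hereditary (Theorem \ref{schuralgebraisqh}). For the global dimension formula \eqref{eq69}, I would invoke Theorem \ref{globaldimensionqh}, which gives
\[
\gldim S_R(n,d) = \dim R + \max\{\gldim S_R(n,d)(\mi)\colon \mi\in\MaxSpec R\}.
\]
By the base change property $R\otimes_{\mathbb Z} S_{\mathbb Z}(n,d)\simeq S_R(n,d)$, applied with the residue field, we get $S_R(n,d)(\mi)\simeq S_{R(\mi)}(n,d)$, a finite-dimensional Schur algebra over the field $R(\mi)$. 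Since $n\ge d$, Totaro's formula \citep{zbMATH00966941} computes $\gldim S_{R(\mi)}(n,d) = 2d - 2\alpha_{\characteristic R(\mi)}(d)$ (in characteristic zero $\alpha_0(d)=d$ makes this $0$, consistent with semisimplicity). Substituting and taking the supremum (which is a max, being bounded) over maximal ideals yields \eqref{eq69}.

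For the finitistic dimension formula \eqref{eq70}, the ring $R=\mathbb Z/m\mathbb Z$ is Artinian, hence Noetherian of Krull dimension $0$, but in general not regular, so I would use Theorem \ref{finitisticqh} rather than the global dimension formula. That theorem gives
\[
\findim S_{\mathbb Z/m\mathbb Z}(n,d) = \sup\{\gldim S_{\mathbb Z/m\mathbb Z}(n,d)(\mi)\colon \mi\in\MaxSpec \mathbb Z/m\mathbb Z\},
\]
because the lower and upper bounds in Theorem \ref{finitisticqh} coincide: $\findim R = \dim R = 0$ for an Artinian ring. The maximal ideals of $\mathbb Z/m\mathbb Z$ correspond to the prime divisors $p$ of $m$, with residue field $\mathbb F_p$, so by base change $S_{\mathbb Z/m\mathbb Z}(n,d)(\mi)\simeq S_{\mathbb F_p}(n,d)$, and again Totaro gives $\gldim S_{\mathbb F_p}(n,d) = 2d - 2\alpha_p(d)$. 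Taking the supremum over prime divisors of $m$ — equivalently, the minimum of $2\alpha_p(d)$ subtracted from $2d$ — gives \eqref{eq70}.

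The only genuinely nontrivial inputs are (i) Totaro's theorem on the global dimension of finite-dimensional Schur algebras, which I am quoting as a black box, and (ii) the base change identification $S_R(n,d)(\pri)\simeq S_{R(\pri)}(n,d)$, which is immediate from the $R$-basis $\{\xi_{i,j}\}$ and the displayed isomorphism $R\otimes_{\mathbb Z} S_{\mathbb Z}(n,d)\simeq S_R(n,d)$ applied over $R(\pri)$. I expect the main (minor) obstacle to be bookkeeping: checking that the hypothesis $n\ge d$ of Totaro's formula is what is needed, that $\alpha_{\characteristic R(\mi)}$ is the correct invariant (including the characteristic-zero convention $\alpha_0(d)=d$, so that the term contributes $2d-2d=0$ at maximal ideals of residue characteristic $0$), and that $\sup$ may be replaced by $\min$/$\max$ because all the quantities involved are finite — the latter is guaranteed by Theorem \ref{qhglobaldimensionnoetherian}, which bounds $\gldim A(\mi)$ uniformly by $2(t-1)+\dim R(\mi)$ in terms of the length $t$ of the split heredity chain.
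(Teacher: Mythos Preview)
Your proposal is correct and follows essentially the same route as the paper: invoke Theorem~\ref{globaldimensionqh} for \eqref{eq69} and Theorem~\ref{finitisticqh} for \eqref{eq70}, reducing both to Totaro's formula via the base change $S_R(n,d)(\mi)\simeq S_{R(\mi)}(n,d)$. The only minor difference is in justifying $\findim \mathbb{Z}/m\mathbb{Z}=0$: you use that $\mathbb{Z}/m\mathbb{Z}$ is Artinian so $\dim R=0$ and hence $\findim R\le\dim R=0$ (which is already part of the chain of inequalities in Theorem~\ref{finitisticqh}), whereas the paper instead observes that $\mathbb{Z}/m\mathbb{Z}$ is self-injective (as the quotient of a PID by an ideal, via Baer's criterion) and that self-injective rings have finitistic dimension zero---both arguments are valid and equally short.
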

\begin{proof}
	By Theorem 2 of \cite{zbMATH00966941}, \begin{align}
		\max\{\gldim S_R(n, d)(\mi)\colon \mi \in \MaxSpec R \}&= \max \{\gldim S_{R(\mi)}(n, d)\colon \mi\in \MaxSpec R \}
		\\&=\max \{2d-2\alpha_{\characteristic R(\mi)}(d) \colon \mi \in \MaxSpec R \}\\&= 2d-\underset{\mi\in \MaxSpec R}{\min}2\alpha_{\characteristic R(\mi)}(d).
	\end{align}
By Theorem \ref{globaldimensionqh}, (\ref{eq69}) follows. 

Observe that the maximal ideals of $\mathbb{Z}/m\mathbb{Z}$ are of the form $p\mathbb{Z}/m\mathbb{Z}$ where $p$ is a prime number dividing $m$. Hence, again by  Theorem 2 of \cite{zbMATH00966941},
\begin{align*}
	2d-\underset{\substack{p\in \mathbb{N}\colon p|m \\ p \ \mathrm{ is} \ \mathrm{ prime } \\ }}{\min} 2\alpha_p(d) = 	\max\{\gldim S_{\mathbb{Z}/m\mathbb{Z}}(n, d)(\mi)\colon \mi \in \MaxSpec \mathbb{Z}/m\mathbb{Z} \}.
\end{align*}Using Baer's criterion we can see that for any principal ideal domain $S$ and any ideal $I$ of $S$, the ring $S/I$ is self-injective. Since the finitistic dimension of any self-injective ring is zero we obtain (\ref{eq70}) by applying Proposition \ref{finitisticqh}.
\end{proof}
Here, $\characteristic R$ denotes the characteristic of $R$.
\begin{appendices}

\appendix

For the convenience of the reader, we will collect some results based on \cite{Rouquier2008} aiming to strengthen the foundations of the theory of integral split highest weight categories. We will give a detailed exposition on the bijection between standard modules and split heredity ideals in the integral setup and filtrations of modules into standard modules making the material of integral highest weight categories more accessible. Along the way, we give more properties of projective $R$-split $A$-modules which are the building blocks of split quasi-hereditary algebras. In particular, we clarify that such objects are a generalization of maximal standard modules over finite-dimensional quasi-hereditary algebras over algebraically closed fields. 

\section{Projective $R$-split $A$-modules} \label{Rsplit modules} 

Throughout this appendix assume, unless stated otherwise, that $R$ is a commutative Noetherian ring with identity and $A$ is a projective Noetherian $R$-algebra.
Recall that a module $L\in A\proj$ which is faithful over $R$ is called projective $R$-split $A$-module if the canonical morphism 
	\begin{align}
	\tau_{L, P}\colon L\otimes_R \Hom_A(L, P)\rightarrow P, \quad l\otimes f\mapsto f(l)
\end{align} is an $(A, R)$-monomorphism for all projective $A$-modules $P$.
We denote $\mathcal{M}(A)$ the set of isomorphism classes of projective $R$-split $A$-modules.  

For $P=A$ we can consider the map $\tau_{L}\colon L\otimes_{\End_A(L)^{op}} \Hom_A(L, P)\rightarrow P, \quad l\otimes f\mapsto f(l)$.

We will see that these modules  in $\mathcal{M}(A)$ are exactly the standard (not necessarily indecomposable) modules  with maximal index when $A$ is split quasi-hereditary. Whereas the image of the map $\tau_{L, A}$ is a split heredity ideal for $L\in \mathcal{M}(A)$. For rings $R$ with non-trivial idempotents, we will be able to say that the modules in $\mathcal{M}(A)$ are projective indecomposable.

\subsection{Properties of projective $R$-split $A$-modules}

The motivation behind the definition of projective $R$-split $A$-modules lies in the following result.
\begin{Lemma}\label{standardfromsplitquasi}
	Let $K$ be a field and $A$ a finite-dimensional $K$-algebra. If $AeA$ is a split heredity ideal of $A$ for some primitive idempotent $e\in A$, then $Ae\in \mathcal{M}(A)$.
\end{Lemma}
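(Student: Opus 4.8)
The plan is the following. Since here $R=K$ is a field, every short exact sequence of $K$-modules splits, so an $(A,K)$-monomorphism is just an injective $A$-homomorphism; also ${}_AAe$ is a nonzero direct summand of ${}_AA$, hence projective, and it is automatically faithful over $K$. Thus it remains to show that $\tau_{Ae,P}\colon Ae\otimes_K\Hom_A(Ae,P)\to P$ is injective for every projective $A$-module $P$. I would first reduce to the single case $P={}_AA$: the maps $\tau_{Ae,P}$ form a natural transformation between the additive functors $Ae\otimes_K\Hom_A(Ae,-)$ and $\id$, so for a split monomorphism $P\hookrightarrow Q$ the resulting commutative square shows that $\tau_{Ae,P}$ is injective once $\tau_{Ae,Q}$ is (using that $\Hom_A(Ae,-)$ is additive and $Ae\otimes_K-$ is exact), and every projective $A$-module is a summand of some ${}_AA^{\oplus n}$ with $\tau_{Ae,{}_AA^{\oplus n}}=\tau_{Ae,{}_AA}^{\oplus n}$. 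Via the isomorphism $\Hom_A(Ae,A)\cong eA$, $f\mapsto f(e)$, the map $\tau_{Ae,{}_AA}$ is identified with the multiplication map $\mu\colon Ae\otimes_K eA\to A$, $x\otimes y\mapsto xy$, whose image is the two-sided ideal $J:=AeA$ (since $Ae\cdot eA=AeA$); so $\mu$ is a surjection onto $J$, and the claim becomes the equality $\dim_K J=\dim_K(Ae)\cdot\dim_K(eA)$.

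The core of the proof uses the split heredity axioms of Definition \ref{splithereditydef} for $J=AeA$. Since ${}_AJ$ is projective it is a finite direct sum of indecomposable projective left $A$-modules, and since $\End_A({}_AJ)^{op}$ is Morita equivalent to $K$ it has exactly one simple module, hence $J$ has (up to isomorphism) exactly one indecomposable direct summand. As a left $A$-module $Ae\otimes_K eA\cong(Ae)^{\oplus \dim_K eA}$, and since $J$ is projective the epimorphism $\mu$ splits, so $(Ae)^{\oplus \dim_K eA}\cong J\oplus\ker\mu$; by the Krull--Schmidt theorem the unique indecomposable summand of $J$ must be $Ae$, i.e. $J\cong(Ae)^{\oplus m}$ with $m\ge1$. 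Consequently $\End_A({}_AJ)^{op}\cong M_m(eAe)$, which is Morita equivalent to $eAe$, hence to $K$; as $eAe\cong\End_A(Ae)$ is a local ring ($e$ being primitive) and a local ring Morita equivalent to a field is that field, we conclude $eAe\cong K$. Finally, the elementary identity $eAeA=eA$ (clear since $ea=e\cdot 1\cdot e\cdot a$ for all $a\in A$) together with exactness of $eA\otimes_A-$ gives $eJ=eAeA=eA$, while $J\cong(Ae)^{\oplus m}$ and $eAe\cong K$ give $eJ\cong(eAe)^{\oplus m}\cong K^{\oplus m}$; hence $\dim_K(eA)=m$, so $\dim_K J=m\dim_K(Ae)=\dim_K(eA)\cdot\dim_K(Ae)$ and the surjection $\mu$ is an isomorphism. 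This proves $\tau_{Ae,{}_AA}$, and therefore every $\tau_{Ae,P}$, is injective, so $Ae\in\mathcal{M}(A)$.

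The main obstacle is precisely the structural step above, namely deducing $J\cong(Ae)^{\oplus m}$ and $eAe\cong K$ from the four axioms; the reduction to $P={}_AA$, the identification with multiplication, and the concluding dimension count are routine. It is worth noting that the "split" condition (iv) of Definition \ref{splithereditydef} is essential here: if $eAe$ were merely a division ring $D\neq K$, the $D$-balanced multiplication map $Ae\otimes_{eAe}eA\to J$ would be bijective but $\mu$, being $K$-balanced, would have nonzero kernel, so $Ae$ would fail to be projective $R$-split.
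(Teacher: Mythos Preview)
Your proof is correct and follows essentially the same strategy as the paper: both arguments identify $\tau_{Ae,A}$ with the multiplication map $Ae\otimes_K eA\to AeA$, observe that $J=AeA\in\add Ae$ (the paper via a surjection from a power of $Ae$ onto the projective $J$, you via Krull--Schmidt), deduce that $eAe$ is Morita equivalent to $\End_A(J)^{op}$ and hence to $K$, and conclude $eAe\cong K$ from locality. The only notable difference is in the final step: the paper invokes the standard fact (Dlab--Ringel) that $Ae\otimes_{eAe}eA\to AeA$ is an isomorphism whenever $AeA$ is projective, whereas you rederive this in the special case $eAe=K$ by the clean dimension count $\dim_K J=m\dim_K Ae$ with $m=\dim_K eA$; your version is thus slightly more self-contained.
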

\begin{proof}
	Since $AeA$ is projective as left ideal of $A$ we obtain that the multiplication map $Ae\otimes_{eAe} eA\rightarrow AeA$ is an isomorphism (see Statement 7 \citep{Dlab1989d}). Since $\Hom_A(Ae, A)\simeq eA$ it remains to show that $eAe=K$. Again, as $AeA$ is projective, $AeA\in \add_A Ae$. By projectivization,
	\begin{align}
		eA=eAeA=\Hom_A(Ae, AeA)\in \End_A(Ae)\proj=eAe\proj.
	\end{align} The identification $eA=eAeA$ is obtained by applying the tensor product $eA\otimes_A-$ to the multiplication map $Ae\otimes_{eAe} eA\rightarrow AeA$.  On the other hand, we can write $eA=eAe\oplus eA(1-e)$ as left $eAe$-modules. Thus, $eA$ is an $eAe$-progenerator. By Tensor-Hom adjunction,
	\begin{align}
		\End_A(AeA)\simeq \Hom_{eAe}(eA, \Hom_A(Ae, Ae\otimes_{eAe} eA))\simeq \End_{eAe}(eA).
	\end{align}Therefore, $eAe$ is Morita equivalent to $\End_A(AeA)$. By assumption, $\End_A(AeA)$ is Morita equivalent to $K$. So, $eAe$ is Morita equivalent to $K$.
	Since $e$ is primitive, $Ae$ is indecomposable. Thus, $eAe$ is local. So, we must have $eAe=K$.
\end{proof}

In the Noetherian case, not all projective $R$-split modules can be considered under this form. The first problem we encounter is that projective modules cannot be decomposed into projective modules defined by idempotents. And therefore, the definition of heredity ideals in the form $AeA$ is no longer suitable, and so neither is the choice of standard $Ae$.

\begin{Lemma}\label{imtauisideal}
	Let $L$ be a finitely generated projective $A$-module. Then, $\tau_{L}$ is an $(A,A)$-bimodule morphism. Moreover, $J=\im \tau_L$ is an ideal of $A$ and $J^2=J$. \label{im tau is ideal}
\end{Lemma}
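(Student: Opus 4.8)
The plan is to first pin down the bimodule structure on the source and verify the morphism property, then read off that $J$ is a two‑sided ideal, and finally use a dual basis for $L$ to prove $J^2=J$.

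I would begin by recording how $L\otimes_{\End_A(L)^{op}}\Hom_A(L,A)$ becomes an $(A,A)$-bimodule: the left $A$-action is induced from the left $A$-action on $L$, and the right $A$-action from the right $A$-action on $\Hom_A(L,A)$ given by post-composition with right multiplication, i.e. $(f\cdot a)(l)=f(l)a$. Both actions pass to the balanced tensor product — the left action because $A$-linear endomorphisms of $L$ commute with the $A$-action (so $\phi(al)\otimes f=(a\phi(l))\otimes f$), the right action by the analogous computation on $\Hom_A(L,A)$ — and the two actions manifestly commute. With these structures fixed, $\tau_L$ is a bimodule morphism by a one-line check: $\tau_L(a\cdot(l\otimes f))=f(al)=af(l)=a\,\tau_L(l\otimes f)$ using $A$-linearity of $f$, and $\tau_L((l\otimes f)\cdot a)=(f\cdot a)(l)=f(l)a=\tau_L(l\otimes f)\,a$.

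Since $\tau_L$ is left $A$-linear, $J=\im\tau_L$ is a left ideal of $A$; since it is right $A$-linear, $J$ is a right ideal; hence $J$ is a two-sided ideal, and in particular $J^2\subseteq J$. For the reverse inclusion I would invoke the dual basis lemma for the finitely generated projective left module ${}_AL$: there exist $l_1,\dots,l_n\in L$ and $f_1,\dots,f_n\in\Hom_A(L,A)$ such that $l=\sum_{i=1}^n f_i(l)\,l_i$ for every $l\in L$. Given a generator $f(l)$ of $J$, with $f\in\Hom_A(L,A)$ and $l\in L$, apply $f$ and its $A$-linearity to obtain $f(l)=\sum_{i=1}^n f_i(l)\,f(l_i)$. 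Now each $f_i(l)=\tau_L(l\otimes f_i)\in J$ and each $f(l_i)=\tau_L(l_i\otimes f)\in J$, so $f(l)\in J\cdot J=J^2$; since such elements generate $J$ additively, $J\subseteq J^2$, whence $J^2=J$.

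The bimodule bookkeeping in the first two paragraphs is routine; the step I expect to be the real content is the identity $J^2=J$, where the point is to recognise that finite generation together with projectivity of $L$ is exactly what supplies a dual basis, and that this dual basis is the device that lets one factor an arbitrary element $f(l)$ of $J$ through $J\cdot J$. One should be mildly careful to note that the coefficients $f_i(l)$ genuinely lie in $J$ (being images under $\tau_L$), which is what makes the factorisation land in $J^2$ rather than merely in $J$.
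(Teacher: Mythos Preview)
Your argument is correct. The paper does not give its own proof but simply cites \cite[Lemma 4.3]{Rouquier2008}; your direct verification via the dual basis lemma is precisely the standard argument behind that reference, so you have supplied what the paper leaves implicit.
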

\begin{proof}
	See 	\citep[Lemma 4.3]{Rouquier2008}.
\end{proof}

The reason to require the map $\tau_{L}$ to be an $(A, R)$-monomorphism is that in this way $A/\im \tau_{L}$ is a projective Noetherian $R$-algebra. 

\begin{Lemma}(see \citep[Lemma 4.4]{Rouquier2008})\label{AmoduloJmod}
	Let $J$ be an ideal of $A$. Assume that $J^2=J$. Let $M$ be an $A$-module. Then $\Hom_A(J, M)=0$ if and only if $JM=0$ if and only if $M\in A/J\m$.\label{A/Jmod}
\end{Lemma}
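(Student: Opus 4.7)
The plan is to prove the equivalence as a cycle, noting first that the equivalence $JM=0 \iff M \in A/J\m$ is essentially a matter of definition: an $A$-module factors through $A/J$ precisely when every element of $J$ annihilates it. So the real content is the equivalence $\Hom_A(J,M)=0 \iff JM=0$.

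For $JM=0 \Rightarrow \Hom_A(J,M)=0$, I would take any $f \in \Hom_A(J,M)$ and use the hypothesis $J^2 = J$. Given $j \in J$, write $j = \sum_i a_i b_i$ with $a_i, b_i \in J$. Then
\[
f(j) = \sum_i f(a_i b_i) = \sum_i a_i f(b_i),
\]
and each $a_i f(b_i) \in JM = 0$. Hence $f = 0$.

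For the converse $\Hom_A(J,M)=0 \Rightarrow JM=0$, I would argue by contrapositive. Suppose $JM \neq 0$, so there exist $j_0 \in J$ and $m \in M$ with $j_0 m \neq 0$. The right multiplication by $m$ yields a map $\varphi_m \colon J \to M$, $x \mapsto xm$, which is $A$-linear since $\varphi_m(ax) = axm = a\varphi_m(x)$. As $\varphi_m(j_0) = j_0 m \neq 0$, we obtain a nonzero element of $\Hom_A(J,M)$, contradicting the hypothesis.

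No real obstacle arises; the only subtlety is remembering to invoke $J^2 = J$ in the first implication, which is exactly what makes the argument go through despite $J$ not having an identity. Combining these implications with the immediate equivalence $JM=0 \iff M \in A/J\m$ gives the full statement.
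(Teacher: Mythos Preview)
Your proof is correct and follows essentially the same approach as the paper: both use the right-multiplication map $j \mapsto jm$ for one direction and the idempotence $J^2=J$ to write elements of $J$ as (sums of) products for the other. Your version is in fact slightly more careful in writing $j=\sum_i a_ib_i$ rather than a single product.
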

\begin{proof}
	Assume that $\Hom_A(J, M)=0$. Consider $m\in M.$ We can define the $A$-homomorphism $f\colon J\rightarrow M$, with $f(j)=jm, \ j\in J$. By assumption $f=0$, hence $JM=0$. 
	
	Reciprocally, assume $JM=0$. Let $g\in \Hom_A(J, M)$. For any $j\in J$, there exists $j_1, j_2$ such that $j=j_1 j_2$, hence $g(j)=g(j_1j_2)=j_1g(j_2)\in JM=0$. Hence, $g=0$. 
\end{proof}

Note that the condition $J=J^2$ is fundamental. In fact, assume that if $JM=0$, then $\Hom_A(J, M)=0$. Then, consider $M:=J/J^2$. We have $\Hom_A(J, J/J^2)=0$. In particular, the canonical epimorphism $J\twoheadrightarrow J/J^2$ is zero. Hence, $J=J^2$.

\begin{Prop}(see \citep[Lemma 4.5]{Rouquier2008}) \label{split modules characterization}
	Let $L$ be a finitely generated projective $A$-module which is a faithful $R$-module. The following are equivalent:
	\begin{enumerate}[(i)]
		\item $\tau_{L, P}\colon L\otimes_R \Hom_A(L, P)\rightarrow P$ is an $(A, R)$-monomorphism for all $P\in A\proj$.
		\item $\tau_{L, A}\colon L\otimes_R \Hom_A(L, A)\rightarrow A$ is an $(A, R)$-monomorphism.
		\item $R\simeq \End_A(L)$ and given $P\in A\proj$, there is a submodule $P_0$ of $P$ satisfying the following conditions
		\begin{enumerate}[(a)]
			\item $P/P_0\in R\proj$,
			\item $\Hom_A(L, P/P_0)=0$,
			\item $P_0\simeq L\otimes_R U$ for some $R$-progenerator $U$.
		\end{enumerate}
	\end{enumerate}
\end{Prop}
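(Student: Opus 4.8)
Since $(i)\Rightarrow(ii)$ is trivial (take $P=A$), the plan is to prove $(ii)\Rightarrow(i)$, then $(i)\Rightarrow(iii)$, then $(iii)\Rightarrow(i)$. Two observations carry most of the argument. First, the functors $L\otimes_R-$ and $\Hom_A(L,-)$ are additive and the maps $\tau_{L,-}$ form a natural transformation, so $\tau_{L,-}$ respects finite biproducts; combined with the standard exchange isomorphism $\Hom_A(L,M)\otimes_R U\simeq\Hom_A(L,M\otimes_R U)$ for $U\in R\proj$, this lets one compute everything from the case $P=A$. Second, $L$ is a faithful finitely generated projective $R$-module, hence has positive rank at every maximal ideal, hence is faithfully flat over $R$; so $-\otimes_R L$ reflects isomorphisms, and kernels and cokernels of maps between $R$-projectives stay $R$-split.

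\textbf{$(ii)\Rightarrow(i)$ and $\End_A(L)\simeq R$.} Given $P\in A\proj$, choose $Q$ with $P\oplus Q\simeq A^n$. By additivity and naturality, $\tau_{L,A^n}$ is identified both with $\tau_{L,P}\oplus\tau_{L,Q}$ and with $(\tau_{L,A})^{\oplus n}$. The latter is an $(A,R)$-monomorphism because a finite direct sum of $(A,R)$-monomorphisms is one (injectivity is blockwise, and a direct sum of $R$-split short exact sequences is $R$-split, by additivity of $\Ext_R^1$); since $\tau_{L,A^n}$ is block-diagonal, each block, in particular $\tau_{L,P}$, is an $(A,R)$-monomorphism. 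Next, applying $(i)$ with $P=L$, the map $\tau_{L,L}\colon L\otimes_R\End_A(L)\to L$, $l\otimes\varphi\mapsto\varphi(l)$, is an $(A,R)$-monomorphism; precomposing with $\id_L\otimes\sigma$, where $\sigma\colon R\to\End_A(L)$ is the structure map, yields $\id_L$. Hence $\tau_{L,L}$ is an epimorphism, and being also a monomorphism it is an isomorphism; this forces $\id_L\otimes\sigma$ to be an isomorphism, and by faithful flatness of $L$ the map $\sigma$ itself is an isomorphism, so $\End_A(L)\simeq R$.

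\textbf{Rest of $(i)\Rightarrow(iii)$ and $(iii)\Rightarrow(i)$.} For $(i)\Rightarrow(iii)$: fix $P\in A\proj$ and put $P_0:=\im\tau_{L,P}$. Since $\tau_{L,P}$ is an $(A,R)$-monomorphism, $P/P_0=\coker\tau_{L,P}$ is a direct summand of $P\in R\proj$, so $P/P_0\in R\proj$; any $f\in\Hom_A(L,P)$ has image inside $P_0$ (as $f(l)=\tau_{L,P}(l\otimes f)$), and since $L\in A\proj$ every map $L\to P/P_0$ lifts to $P$, whence $\Hom_A(L,P/P_0)=0$; finally injectivity of $\tau_{L,P}$ gives $P_0\simeq L\otimes_R U$ with $U:=\Hom_A(L,P)$, which is projective over $R$ (a summand of a power of $\Hom_A(L,A)\in R\proj$) and a progenerator. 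Conversely, for $(iii)\Rightarrow(i)$: given $P$, take $P_0$ as in $(iii)$ and apply $\Hom_A(L,-)$ to $0\to P_0\to P\to P/P_0\to0$; left exactness together with $\Hom_A(L,P/P_0)=0$ gives an isomorphism $\Hom_A(L,P_0)\simeq\Hom_A(L,P)$. Using $P_0\simeq L\otimes_R U$, the exchange isomorphism $\Hom_A(L,L\otimes_R U)\simeq\End_A(L)\otimes_R U$ and $\End_A(L)\simeq R$ give $\Hom_A(L,P_0)\simeq U$, and tracing definitions one checks $\tau_{L,P_0}$ becomes an isomorphism onto $P_0$; by naturality of $\tau$ in the second variable, $\tau_{L,P}$ is then the composite of this isomorphism with the inclusion $P_0\hookrightarrow P$, so it is injective with cokernel $P/P_0\in R\proj$, i.e.\ an $(A,R)$-monomorphism.

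\textbf{Main obstacle.} The delicate point is the identification $\End_A(L)\simeq R$ inside $(i)\Rightarrow(iii)$: the only leverage available is the abstract $(A,R)$-monomorphism property applied to $P=L$, and turning the resulting isomorphism $\id_L\otimes\sigma$ into an isomorphism $\sigma$ genuinely requires that $L$ is faithfully flat over $R$ — equivalently, that the faithful finitely generated projective $R$-module $L$ has positive rank everywhere. Everything else is bookkeeping: additivity of $L\otimes_R-$ and $\Hom_A(L,-)$, naturality of $\tau_{L,-}$ with respect to summands and inclusions, and the exchange isomorphism $\Hom_A(L,M)\otimes_R U\simeq\Hom_A(L,M\otimes_R U)$ for $U\in R\proj$.
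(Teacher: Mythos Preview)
Your proof is correct and follows essentially the same route as the paper's: $(ii)\Rightarrow(i)$ by additivity of $\tau_{L,-}$ on summands of $A^n$, $(i)\Rightarrow(iii)$ by taking $P_0=\im\tau_{L,P}$, and $(iii)\Rightarrow(i)$ by showing $\Hom_A(L,P_0)\simeq\Hom_A(L,P)$ and identifying $\tau_{L,P_0}$ with an isomorphism via the exchange $\Hom_A(L,L\otimes_R U)\simeq\End_A(L)\otimes_R U$. The only cosmetic difference is in the step $\End_A(L)\simeq R$: you argue that $\tau_{L,L}\circ(\id_L\otimes\sigma)=\id_L$ forces $\id_L\otimes\sigma$ to be an isomorphism and then invoke faithful flatness of $L$ to conclude $\sigma$ is an isomorphism, whereas the paper phrases the same step through the Morita equivalence $L\otimes_R-\colon R\m\to\End_R(L)^{op}\m$ (both rest on $L$ being an $R$-progenerator).
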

\begin{proof}
	(i)$\implies$ (ii). Clear since $A\in A\proj$.
	
	(ii)$\implies$ (i). Notice that for $P=P_1\oplus P_2$, \begin{align*}
		L\otimes_R \Hom_A(L, P)\simeq L\otimes_R (\Hom_A(L, P_1)\oplus \Hom_A(L, P_2)) \simeq (L\otimes_R \Hom_A(L, P_1))\oplus L\otimes_R \Hom_A(L, P_2),
	\end{align*} hence $\tau_{L, P_1\oplus P_2}$ is equivalent to $\tau_{L, P_1}\oplus \tau_{L, P_2}$. So, it follows that $\tau_{L, P}$ is an $(A, R)$-monomorphism for any $P\in A\proj$.
	
	(i)$\implies$ (iii). By (i), $\tau_{L, L}$ is an $(A, R)$-monomorphism. Putting $f=\id_L$, we see that ${\tau_{L, L}(l\otimes f)=l}, l\in L$. Hence, it is an $R$-isomorphism.
	Since $L\in A\proj$, it follows that $L\in R\proj$. As $R$ is commutative and $L$ is faithful, $L$ is an $R$-progenerator \citep[Proposition 12.2]{Faith1973}. Define $B=\End_R(L)^{op}$. Then, ${F=L\otimes_R -\colon R\m \rightarrow B\m}$ is an equivalence of categories with right adjoint \linebreak${G=\Hom_B(L, -)\colon B\m \rightarrow R\m}$. Notice that $F\End_A(L)=L\otimes_R \End_A(L)\simeq L$. Furthermore, $\End_A(L)\simeq GF\End_A(L)\simeq GL=\End_B(L)\simeq R$, since the double centralizer property holds on generators.
	
	Let $P$ be a finitely generated projective $A$-module. Define $P_0=\im \tau_{L, P}$. As $\tau_{L, P}$ is an $(A, R)$-monomorphism we obtain that $P_0$ is an $R$-summand of $P$. Moreover $P/P_0$ is an $R$-summand of $P$, hence it is projective over $R$. Since $L\in A\proj$, we have the exact sequence
	\begin{align}
		0\rightarrow \Hom_A(L, P_0)\rightarrow \Hom_A(L, P)\rightarrow \Hom_A(L, P/P_0)\rightarrow 0.\label{eq3}
	\end{align}
	However, the canonical map $\Hom_A(L, P_0)\rightarrow \Hom_A(L, P)$ is surjective:  
	In fact, for each $h\in \Hom_A(L, P)$, define $g\in \Hom_A(L, P_0)$ such that $g(l)=l\otimes h$. Hence, $\tau_{L, P}\circ g=h$.
	Therefore, by the exactness of (\ref{eq3}), $\Hom_A(L, P/P_0)=0$. Since $L$ is faithful over $R$, it follows that $U=\Hom_A(L, P)$ is faithful over $R$, and $P_0\simeq L\otimes U$.
	
	(iii)$\implies$ (i). Let $P\in A\proj$. Consider the exact sequence $0\rightarrow P_0\rightarrow P\rightarrow P/P_0$. Applying $\Hom_A(L, -)$ we obtain the exact sequence
	$0\rightarrow \Hom_A(L, P_0)\rightarrow \Hom_A(L, P)\rightarrow \Hom_A(L, P/P_0)=0\rightarrow 0$. Therefore, the map $\Hom_A(L, P_0)\rightarrow \Hom_A(L, P)$ is an isomorphism. 
	
	We the following diagram
	
	\begin{center}
		\begin{tikzcd}
			\Hom_A(L, P_0) \arrow[r, "w"] & \Hom_A(L, L\otimes_R U) \arrow[r, "z"] & \Hom_A(L, L)\otimes_R U \arrow[d, "\simeq"]\\
			& & R\otimes_R U\arrow[d, "\simeq"]\\
			\Hom_A(L, P)\arrow[uu, "\simeq"]\arrow[rr] & & U
		\end{tikzcd}.
	\end{center}
	Here, $w$ is an isomorphism since  $P_0\simeq L\otimes_R U$ by assumption, and $z$ is an isomorphism since the map \linebreak$\Hom_A(Q, L)\otimes_R U\rightarrow \Hom_A(Q, L\otimes U)$ is an isomorphism for any $Q\in A\proj$.
	
	Therefore,
	\begin{center}
		\begin{tikzcd}
			L\otimes_R \Hom_A(L, P)\arrow[r, "\tau_{L, P}"] \arrow[d, "\simeq"]& P\\
			L\otimes_R U \arrow[r, "\simeq"] & P_0\arrow[u, hookrightarrow]
		\end{tikzcd}.
	\end{center}
	Now since $P_0$ is a summand of $P$, we get that $\tau_{L, P}$ is an $(A, R)$-monomorphism.
\end{proof}

\begin{Remark}
	Notice that for any $L\in \mathcal{M}(A)$, $R\simeq \End_A(L)$. Hence, $\tau_{L, A}=\tau_L$. Furthermore, $\End_R(L)\simeq \Hom_A(\im \tau_L, A)$.  \label{remendprojonma}
\end{Remark}
In fact, \begin{align*}
	\Hom_{A^{op}}(\im \tau_L, A)&\simeq \Hom_{A^{op}}(L\otimes_R \Hom_A(L, A), A)\\
	&\simeq \Hom_R(L, \Hom_{A^{op}}(\Hom_A(L, A), A))\simeq \End_R(L).
\end{align*}

We can also use the relative projective modules to determine if a given projective module is $R$-split. But first, we need the following elementary lemma.

\begin{Lemma}\label{tensoronsecondcomponetofhom}
	Let $M\in A\proj$. Then, the $R$-homomorphism $$\varsigma_{M, N, U}\colon \Hom_A(M, N)\otimes_R U\rightarrow \Hom_A(M, N\otimes_R U),$$ given by $ \ g\otimes u\mapsto g(-)\otimes u$ is an $R$-isomorphism.
\end{Lemma}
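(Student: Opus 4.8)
The claim is a standard "tensoring commutes with Hom out of a finitely generated projective" statement, so the plan is to reduce to the free case and then verify the isomorphism on a single free module by a direct identification. First I would check that $\varsigma_{M,N,U}$ is well-defined and natural in all three arguments; naturality in $M$ (contravariant) and in $N,U$ (covariant) is immediate from the formula $g\otimes u\mapsto g(-)\otimes u$, and additivity in each slot is clear. In particular, for a fixed $N$ and $U$, the assignment $M\mapsto \varsigma_{M,N,U}$ gives a morphism of additive functors $\Hom_A(-,N)\otimes_R U\to \Hom_A(-,N\otimes_R U)$ on $A\proj$.

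Next I would settle the base case $M=A$. Here $\Hom_A(A,N)\simeq N$ and $\Hom_A(A,N\otimes_R U)\simeq N\otimes_R U$ canonically (evaluation at $1_A$), and under these identifications $\varsigma_{A,N,U}$ becomes the identity on $N\otimes_R U$; hence it is an $R$-isomorphism. The only subtlety is that $M$ is a finitely generated projective $A$-module in the Noetherian sense, not necessarily free, so I would then argue that $M$ is a direct summand of some $A^{\oplus m}$: by definition of $A\proj=\add{}_AA$, there is $M'\in A\m$ with $M\oplus M'\simeq A^{\oplus m}$ for some $m$. Since all three functors $\Hom_A(-,N)\otimes_R U$, $\Hom_A(-,N\otimes_R U)$, and the transformation $\varsigma$ between them are additive, $\varsigma_{A^{\oplus m},N,U}$ decomposes as $\varsigma_{M,N,U}\oplus \varsigma_{M',N,U}$ (using the natural isomorphisms $\Hom_A(X\oplus Y,-)\simeq \Hom_A(X,-)\oplus\Hom_A(Y,-)$ and distributivity of $\otimes_R$ and $\Hom_A(-,\cdot\otimes_R U)$ over finite direct sums). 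From the base case, $\varsigma_{A^{\oplus m},N,U}$ is an isomorphism, and a direct summand of an isomorphism (in the sense of a direct sum of two maps being invertible) forces each summand to be an isomorphism; therefore $\varsigma_{M,N,U}$ is an $R$-isomorphism.

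I do not expect any real obstacle here; the statement is essentially bookkeeping. The one point that deserves a line of care is making the summand argument precise: one should note that if $f\oplus g\colon X_1\oplus X_2\to Y_1\oplus Y_2$ is an isomorphism and $f,g$ are the components of a map that is genuinely diagonal (which is the case here, since $\varsigma$ is natural and hence respects the splitting idempotents of $A^{\oplus m}$), then $f$ and $g$ are each isomorphisms — because the inverse of $f\oplus g$ is again diagonal, being obtained by conjugating by the same idempotents. Everything else — well-definedness, $R$-linearity, compatibility with the identifications — is routine and can be stated without computation.
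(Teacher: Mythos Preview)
Your proposal is correct and follows essentially the same approach as the paper: verify the base case $M=A$ via the identification $\Hom_A(A,N)\simeq N$, then use compatibility of $\varsigma$ with finite direct sums to pass to arbitrary $M\in A\proj$. The paper is terser about the summand step, but the content is identical.
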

\begin{proof}
	Consider $M=A$. The following diagram is commutative:
	\begin{center}
		\begin{tikzcd}
			\Hom_A(A, N)\otimes_R U \arrow[r, "\varsigma_{A, N, U}"] \arrow[d]& \Hom_A(M, N\otimes_R U)\arrow[d]\\
			N\otimes_R U\arrow[r, equal] &N\otimes_R U
		\end{tikzcd}
	\end{center} 
	Both columns are isomorphisms, thus $\varsigma_{A, N, U}$ is an isomorphism. Since this map is compatible with direct sums, it follows that $\varsigma_{M, N, U}$ is an isomorphism for every $M\in A\proj$ and any $N\in A\m$, $U\in R\m$.
\end{proof}

\begin{Lemma}
	Let $L\in A\proj$ which is a faithful $R$-module. The following are equivalent:
	\begin{enumerate}[(i)]
		\item $\tau_{L, A}\colon L\otimes_R \Hom_A(L, A)\rightarrow A$ is an $(A, R)$-monomorphism.
		\item $\tau_{L, M}\colon L\otimes_R \Hom_A(L, M)\rightarrow M$ is an  $(A, R)$-monomorphism for every finitely generated $(A, R)$-projective module $M$.
	\end{enumerate}
\end{Lemma}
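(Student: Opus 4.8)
The plan is to prove the equivalence by reducing the statement about arbitrary finitely generated $(A,R)$-projective modules $M$ to the case $P=A$, which is precisely condition (ii) of Proposition \ref{split modules characterization}. The implication $(ii)\Rightarrow(i)$ is immediate, since $A$ is itself a finitely generated $(A,R)$-projective module (indeed it is $A$-projective, and $A$-projective modules are $(A,R)$-projective). So the work is entirely in $(i)\Rightarrow(ii)$.

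For $(i)\Rightarrow(ii)$, first I would invoke Proposition \ref{split modules characterization} to upgrade the hypothesis $(i)$ to its equivalent form $(iii)$: $R\simeq\End_A(L)$, and every $P\in A\proj$ has a submodule $P_0$ with $P/P_0\in R\proj$, $\Hom_A(L,P/P_0)=0$, and $P_0\simeq L\otimes_R U$ for an $R$-progenerator $U$. Now let $M$ be finitely generated and $(A,R)$-projective. By the definition of $(A,R)$-projective module, $M$ is a direct summand (as an $A$-module) of a module of the form $A\otimes_R N$ for a finitely generated $R$-module $N$; since $A\in R\proj$ we may even take $N$ to be a finitely generated $R$-module and, if desired, use a free presentation of $N$. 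The key point is that $\tau_{L,-}$ is additive in the second variable (the splitting argument already used in the proof of $(ii)\Rightarrow(i)$ in Proposition \ref{split modules characterization}), so it suffices to check that $\tau_{L,M}$ is an $(A,R)$-monomorphism when $M=A\otimes_R N$ for $N\in R\m$, and then cut down to the summand.

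The technical heart is the module $M=A\otimes_R N$. Here I would use Lemma \ref{tensoronsecondcomponetofhom}: since $L\in A\proj$, the canonical map $\varsigma_{L,A,N}\colon \Hom_A(L,A)\otimes_R N\to\Hom_A(L,A\otimes_R N)$ is an $R$-isomorphism. Under this identification one checks directly that $\tau_{L,A\otimes_R N}$ factors as
\begin{align*}
L\otimes_R\Hom_A(L,A\otimes_R N)\xrightarrow{\ \id_L\otimes\varsigma_{L,A,N}^{-1}\ } L\otimes_R\Hom_A(L,A)\otimes_R N\xrightarrow{\ \tau_{L,A}\otimes\id_N\ } A\otimes_R N,
\end{align*}
i.e. $\tau_{L,A\otimes_R N}\simeq \tau_{L,A}\otimes_R\id_N$ as maps of $A$-modules. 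By hypothesis $\tau_{L,A}$ is an $(A,R)$-monomorphism, so $0\to L\otimes_R\Hom_A(L,A)\xrightarrow{\tau_{L,A}}A$ is split exact over $R$; applying the additive functor $-\otimes_R N$ preserves this split exactness over $R$, so $\tau_{L,A}\otimes_R\id_N$ is an $(A,R)$-monomorphism, hence so is $\tau_{L,A\otimes_R N}$. Finally, writing $M$ as an $A$-summand $M\oplus M'\simeq A\otimes_R N$ and using additivity of $\tau_{L,-}$ in the second variable once more, $\tau_{L,M}$ is a direct summand of an $(A,R)$-monomorphism and is therefore itself an $(A,R)$-monomorphism.

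The main obstacle I anticipate is bookkeeping the compatibility of the various canonical maps — verifying that the factorization of $\tau_{L,A\otimes_R N}$ through $\varsigma_{L,A,N}$ and $\tau_{L,A}\otimes\id_N$ really commutes on the nose, and that passing to an $A$-direct summand of $A\otimes_R N$ genuinely preserves the $(A,R)$-splitting (as opposed to merely $A$-splitting). Both are routine diagram chases, but they require care since $(A,R)$-monomorphism is a condition about splitting over $R$ while the summand decomposition is over $A$; the point is simply that an $A$-summand of an $R$-split monomorphism is $R$-split, which follows by restricting scalars.
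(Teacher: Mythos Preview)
Your proposal is correct and follows essentially the same route as the paper: reduce to $M=A\otimes_R N$ by additivity of $\tau_{L,-}$, identify $\tau_{L,A\otimes_R N}$ with $\tau_{L,A}\otimes_R\id_N$ via the isomorphism $\varsigma_{L,A,N}$ of Lemma~\ref{tensoronsecondcomponetofhom}, and then use that an $R$-retraction $t$ of $\tau_{L,A}$ induces the $R$-retraction $(\id_L\otimes\varsigma_{L,A,N})\circ(t\otimes\id_N)$ of $\tau_{L,A\otimes_R N}$. The one superfluous step is your initial invocation of Proposition~\ref{split modules characterization} to pass to condition $(iii)$: you never use it, and neither does the paper --- the direct factorization through $\tau_{L,A}\otimes\id_N$ is all that is needed.
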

\begin{proof}
	The implication (ii)$\implies$ (i) is clear since $A$ is $(A, R)$-projective.
	Assume that (i) holds. Let $M$ be an $(A, R)$-projective module. Since  $\tau_{L, X_1\oplus X_2}$ is equivalent to $\tau_{L, X_1}\oplus \tau_{L, X_2}$ for every $X_1, X_2\in A\m$ we can assume that $M=A\otimes_R X$ for some $X\in R\m$.  There is a commutative diagram
	\begin{equation}
		\begin{tikzcd} 
			L\otimes_R \Hom_A(L, A)\otimes_R X \arrow[d, "L\otimes_R \varsigma_{L{,}A{,}X}", outer sep=0.75ex] \arrow[r, "\tau_{L {, }A}\otimes_R X", outer sep=0.75ex]& A\otimes_R X \arrow[d, equal]\\
			L\otimes_R \Hom_A(L, A\otimes_R X)\arrow[r, "\tau_{L{, }A\otimes_R X}"] & A\otimes_R X
		\end{tikzcd}.
	\end{equation}
	In fact, following the notation of Proposition \ref{tensoronsecondcomponetofhom}, for every $l\in L$, $g\in \Hom_A(L, A)$, $x\in X$,
	\begin{align}
		\tau_{L, A\otimes_R X}\circ L\otimes_R \varsigma_{L, A, X}(l\otimes g\otimes x)=\tau_{L, A\otimes_R X}(l\otimes g(-)\otimes x)=g(l)\otimes x=\tau_A\otimes_R X(l\otimes g\otimes x).
	\end{align}
	By assumption, there exists an $R$-map $t\colon A\rightarrow L\otimes_R \Hom_A(L, A)$ satisfying $t\circ \tau_{L, A}=\id_{L\otimes_R \Hom_A(L, A)}$. It follows that
	$L\otimes_R \varsigma_{L, A, X}\circ t\otimes_R X\circ \tau_{L, A\otimes_R X}=\id_{L\otimes_R \Hom_A(L, A\otimes_R X)}$.
\end{proof}

We can observe that the  projective $R$-split left $A$-modules determine the projective $R$-split right  $A$-modules.

\begin{Lemma}\label{rightprojectivesplitmodules}
	If $L\in \mathcal{M}(A)$, then $\Hom_A(L, A)\in \mathcal{M}(A^{op})$.
\end{Lemma}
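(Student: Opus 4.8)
The plan is to verify the three defining conditions of a projective $R$-split $A^{op}$-module for $L':=\Hom_A(L,A)$: namely that $L'\in \add A_A$, that $L'$ is faithful over $R$, and that $\tau_{L',A}\colon L'\otimes_R \Hom_{A^{op}}(L',A)\rightarrow A$ is an $(A^{op},R)$-monomorphism. The first point is immediate: since $L\in A\proj$, the Hom-functor $\Hom_A(-,A)$ carries $\add {}_AA$ into $\add A_A$, so $L'$ is a finitely generated projective right $A$-module. For faithfulness over $R$, I would use that $L$ is an $R$-progenerator (as in the proof of Proposition \ref{split modules characterization}, via \citep[Proposition 12.2]{Faith1973}) and that $\Hom_A(L,A)$ is then also an $R$-progenerator — indeed, by Proposition \ref{split modules characterization}$(iii)$ applied with $P=A$, the module $U=\Hom_A(L,A)$ is faithful over $R$.

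The heart of the argument is the claim about $\tau_{L',A}$. The key computational input is Remark \ref{remendprojonma}: for $L\in\mathcal{M}(A)$ one has $\End_R(L)\simeq \Hom_{A^{op}}(\im\tau_L,A)$, and more generally the identifications there show $\Hom_{A^{op}}(L\otimes_R\Hom_A(L,A),A)\simeq \End_R(L)\simeq R$. So I would first identify $\Hom_{A^{op}}(L',A)$. Applying $\Hom_{A^{op}}(-,A)$ (i.e. $\Hom_A(-,A)$ on the left side) to $L'=\Hom_A(L,A)$ and using that $L$ is finitely generated projective over $A$, the biduality map $L\rightarrow \Hom_{A^{op}}(\Hom_A(L,A),A)$ is an isomorphism; hence $\Hom_{A^{op}}(L',A)\simeq L$ canonically. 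Under this identification, the map $\tau_{L',A}\colon L'\otimes_R L \rightarrow A$ sends $f\otimes l\mapsto f(l)$, which is exactly the twist/composite of $\tau_{L,A}\colon L\otimes_R\Hom_A(L,A)\rightarrow A$ with the commutativity isomorphism $L'\otimes_R L\simeq L\otimes_R L'$ of the tensor product over the commutative ring $R$. Since $\tau_{L,A}$ is an $(A,R)$-monomorphism by hypothesis and the swap map is an $R$-isomorphism, $\tau_{L',A}$ is injective with $R$-split image; it remains only to observe that it is a morphism of $(A^{op},A^{op})$-bimodules, which follows from Lemma \ref{imtauisideal} applied to $L'$ over $A^{op}$ (or directly from the bimodule structure of $\tau$).

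The main obstacle I anticipate is bookkeeping the left/right module structures carefully: one must check that the biduality isomorphism $\Hom_{A^{op}}(\Hom_A(L,A),A)\simeq L$ is compatible with the $R$-module structures in the way needed so that $\tau_{L',A}$ really becomes $\tau_{L,A}$ post-composed with the symmetry of $\otimes_R$, and that the $(A^{op},R)$-splitting is inherited. This is where the commutativity of $R$ is essential and where the argument, while not deep, requires precision. Everything else reduces to functoriality of $\Hom_A(-,A)$ on $A\proj$ together with the already-established Remark \ref{remendprojonma} and Proposition \ref{split modules characterization}.
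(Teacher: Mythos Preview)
Your proposal is correct and follows essentially the same route as the paper: projectivity of $L'=\Hom_A(L,A)$ from $L\in A\proj$, faithfulness over $R$, biduality $\Hom_{A^{op}}(L',A)\simeq L$, and then identifying $\tau_{L',A}$ with $\tau_{L,A}$ up to the symmetry of $\otimes_R$. The paper's version is slightly terser (it records $\End_A(L')\simeq\End_A(L)\simeq R$ and draws the commutative square directly), but the content is identical.
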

\begin{proof}
	Since $L\in A\proj$, $\Hom_A(L, A)$ is projective as right $A$-module and $\End_A(\Hom_A(L, A))\simeq \End_A(L)\simeq R$. Further, $\Hom_A(\Hom_A(L, A), A)\simeq L$ as $A$-modules and the following diagram is commutative:
	\begin{equation}
		\begin{tikzcd}
			\Hom_A(\Hom_A(L, A), A)\otimes_R \Hom_A(L, A) \arrow[r, "\tau_{\Hom_A(L {,}A) {, A}}", outer sep=0.75ex] & A \\
			L\otimes_R \Hom_A(L, A)\arrow[u, "\simeq"] \arrow[r, "\tau_{L{,} A}"] & A \arrow[u, equal]
		\end{tikzcd}. \tag*{\qedhere}
	\end{equation}
\end{proof}

In the following proposition, we determine when a projective $R$-split $A$-module is indecomposable.
\begin{Prop} Assume that $R$ has no non-trivial idempotents. Then, all modules in $\mathcal{M}(A)$ are projective indecomposable $A$-modules.\label{faithfulmodulesovermaximalideals}
\end{Prop}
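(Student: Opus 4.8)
The plan is to observe that the indecomposability of $L$ is governed entirely by its endomorphism ring, which we already control completely. Let $L\in\mathcal{M}(A)$. By the very definition of $\mathcal{M}(A)$ we have $L\in A\proj$, so $L$ is a finitely generated projective $A$-module and the only thing left to establish is that $L$ is indecomposable as an $A$-module.

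First I would recall that, by Proposition~\ref{split modules characterization} (see also Remark~\ref{remendprojonma}), every $L\in\mathcal{M}(A)$ satisfies $\End_A(L)\simeq R$; one checks that this identification, built from the equivalence $L\otimes_R-$ and the double centralizer property, is an isomorphism of rings, so in particular it carries idempotents to idempotents. The key step is then the standard fact that an object $X$ in an additive category is indecomposable if and only if its endomorphism ring has no idempotents other than $0$ and $\id_X$: a non-trivial decomposition $L\simeq L_1\oplus L_2$ with $L_1,L_2\neq 0$ yields the projection onto $L_1$ along $L_2$ as a non-trivial idempotent of $\End_A(L)$, and conversely a non-trivial idempotent $e\in\End_A(L)$ produces the non-trivial decomposition $L=eL\oplus(\id_L-e)L$ with both summands non-zero.

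Putting these together: since $R$ has no non-trivial idempotents by hypothesis and $\End_A(L)\simeq R$ as rings, $\End_A(L)$ has no non-trivial idempotents, hence $L$ is indecomposable. As $L$ was already a projective $A$-module, this shows that every module in $\mathcal{M}(A)$ is a projective indecomposable $A$-module.

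I do not expect a serious obstacle here; the proof is essentially a one-line consequence of $\End_A(L)\simeq R$. The only point that deserves a moment of care is to make explicit that the isomorphism furnished by Proposition~\ref{split modules characterization} is multiplicative (so that idempotents correspond to idempotents), which is transparent from its construction.
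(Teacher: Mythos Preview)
Your proposal is correct and follows essentially the same approach as the paper: both argue that a non-trivial direct-sum decomposition of $L$ would yield a non-trivial idempotent in $\End_A(L)\simeq R$, contradicting the hypothesis on $R$. Your write-up is slightly more expansive (you spell out both directions of the idempotent/decomposition correspondence and flag that the isomorphism is one of rings), but the underlying argument is identical.
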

\begin{proof} 
	Let $L\in \mathcal{M}(A)$. By definition, $L$ is projective over $A$. Assume that $L$ is decomposable, $L\simeq X_1\bigoplus X_2$. Then, we have a non-trivial idempotent $L\twoheadrightarrow X_1\hookrightarrow L$ in $\End_A(L)\simeq R$. So, $L$ must be indecomposable as $A$-module.
\end{proof}

The following lemma shows that $\mathcal{M}(A)$ behaves well with respect to ground ring change. 

\begin{Lemma}
	(see \citep[Proof of Lemma 4.10]{Rouquier2008}) Let $L$ be a finitely generated $A$-module. Let $S$ be a commutative  $R$-algebra and Noetherian ring. If $L\in \mathcal{M}(A)$, then $S\otimes_R L\in \mathcal{M}(S\otimes_R A)$. Moreover, the following are equivalent: \label{split modules change ring}
	\begin{enumerate}[(i)]
		\item $L\in \mathcal{M}(A)$;
		\item The localization $L_\mi=R_\mi\otimes_R L\in \mathcal{M}(A_\mi)$ for every maximal ideal $\mi$ of $R$;
		\item $L$ is projective over $R$ and $L(\mi)\in \mathcal{M}(A(\mi))$ for every maximal ideal $\mi$ of $R$.		
	\end{enumerate}\label{splitmoduleschangering}
\end{Lemma}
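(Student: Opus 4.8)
The plan is to deduce everything from the characterisation of $\mathcal{M}(A)$ in Proposition~\ref{split modules characterization}, combined with the standard fact that for $M\in A\proj$ and any commutative Noetherian $R$-algebra $S$ one has a natural isomorphism $S\otimes_R\Hom_A(M,N)\simeq\Hom_{S\otimes_R A}(S\otimes_R M,S\otimes_R N)$. I would first prove the unnumbered assertion: if $L\in\mathcal{M}(A)$, then $L$ is faithful and projective over $R$, hence an $R$-progenerator, so $S\otimes_R L$ is an $S$-progenerator (in particular faithful over $S$) and lies in $(S\otimes_R A)\proj$; it then suffices to verify condition (iii) of Proposition~\ref{split modules characterization} for $S\otimes_R L$ over $S\otimes_R A$ with $P=S\otimes_R A$. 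Taking the submodule $A_0\subseteq A$ furnished by Proposition~\ref{split modules characterization}(iii) for $L$ (so $A/A_0\in R\proj$, $\Hom_A(L,A/A_0)=0$, $A_0\simeq L\otimes_R U$ with $U$ an $R$-progenerator) and applying $S\otimes_R-$: since $A/A_0\in R\proj$ we get $\Tor_1^R(S,A/A_0)=0$, so $S\otimes_R A_0\hookrightarrow S\otimes_R A$ with quotient $S\otimes_R(A/A_0)\in S\proj$; the base-change isomorphism for $\Hom$ gives $\End_{S\otimes_R A}(S\otimes_R L)\simeq S$ and $\Hom_{S\otimes_R A}(S\otimes_R L,S\otimes_R(A/A_0))\simeq S\otimes_R\Hom_A(L,A/A_0)=0$; and $S\otimes_R A_0\simeq(S\otimes_R L)\otimes_S(S\otimes_R U)$ with $S\otimes_R U$ an $S$-progenerator. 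This yields $S\otimes_R L\in\mathcal{M}(S\otimes_R A)$.

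For the equivalence I would prove the cycle $(i)\Rightarrow(ii)\Rightarrow(iii)\Rightarrow(i)$. The implication $(i)\Rightarrow(ii)$ is the unnumbered assertion with $S=R_\mi$, and $(i)\Rightarrow(iii)$ combines $L\in A\proj\subseteq R\proj$ with the assertion for $S=R(\mi)$. For $(ii)\Rightarrow(iii)$: from $L_\mi\in\mathcal{M}(A_\mi)$ we get $L_\mi\in R_\mi\proj$ for every maximal $\mi$, so $L\in R\proj$ by the local--global principle for projectivity of finitely generated modules over a Noetherian ring; then $L(\mi)\simeq R(\mi)\otimes_{R_\mi}L_\mi\in\mathcal{M}(A(\mi))$ by the assertion applied over $R_\mi$.

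The substance is $(iii)\Rightarrow(i)$, which I expect to be the main obstacle. Here $L\in R\proj$ and $L(\mi)\in\mathcal{M}(A(\mi))$ for every maximal $\mi$. Since $\mathcal{M}(A(\mi))\subseteq A(\mi)\proj$, Theorem~\ref{projectivitytermsmaximal} gives $L\in A\proj$; and $L(\mi)$, being faithful over the field $R(\mi)$, is nonzero, so each $L_\mi$ is free of positive rank and hence $\Ann_R(L)=0$, i.e. $L$ is faithful over $R$. By Proposition~\ref{split modules characterization} it now remains to show that $\tau_{L,A}\colon M:=L\otimes_R\Hom_A(L,A)\to A$ is an $(A,R)$-monomorphism. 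The module $M$ lies in $R\proj$, and base change for $\Hom$ identifies $\tau_{L,A}\otimes_R R(\mi)$ with $\tau_{L(\mi),A(\mi)}$, which is injective because $L(\mi)\in\mathcal{M}(A(\mi))$ over the field $R(\mi)$. Setting $J=\im\tau_{L,A}$, $K=\ker\tau_{L,A}$ and $Q=A/J$: right exactness makes $M(\mi)\twoheadrightarrow J(\mi)$, and its composite with $J(\mi)\to A(\mi)$ is the injective map $\tau_{L,A}(\mi)$, so $M(\mi)\xrightarrow{\sim}J(\mi)$ and $J(\mi)\hookrightarrow A(\mi)$; feeding the latter into the $\Tor$-sequence of $0\to J\to A\to Q\to0$ (using $\Tor_1^R(A,R(\mi))=0$) forces $\Tor_1^R(Q,R(\mi))=0$ for all $\mi$, whence $Q\in R\proj$, the sequence $0\to J\to A\to Q\to0$ is $(A,R)$-exact, and $J\in R\proj$. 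Finally, reducing $0\to K\to M\to J\to0$ modulo $\mi$ (exact since $J\in R\proj$) and using $M(\mi)\xrightarrow{\sim}J(\mi)$ gives $K(\mi)=0$ for all maximal $\mi$, so $K=0$ by Nakayama's Lemma, $K$ being finitely generated over $R$. Thus $\tau_{L,A}$ is injective with $R$-split image, hence an $(A,R)$-monomorphism, and $L\in\mathcal{M}(A)$. The genuinely non-formal step is precisely this extraction of the $R$-projectivity of $Q$ and the vanishing of $K$ from the mere fibrewise injectivity of $\tau_{L,A}$, where the $\Tor$-bookkeeping and the care about which modules are $R$-projective are essential.
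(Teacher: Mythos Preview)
Your proof is correct, and for the implication $(iii)\Rightarrow(i)$ it is genuinely different from, and more elementary than, the paper's. The paper establishes that $\tau_{L,A}$ is an $(A,R)$-monomorphism by passing through the standard duality $D=\Hom_R(-,R)$: it introduces the map $\zeta_L\colon DA\to\Hom_R(L,DA\otimes_A L)$, sets up a commutative square identifying $D\tau_{L,A}$ with $\zeta_L$ composed with canonical isomorphisms, shows $\zeta_L(\mi)$ is surjective for each $\mi$ via $\zeta_{L(\mi)}$, and then invokes Nakayama's Lemma to get $D\tau_{L,A}$ surjective, hence $\tau_{L,A}$ an $(A,R)$-monomorphism after double-dualising. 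Your direct $\Tor$-bookkeeping (deducing $\Tor_1^R(A/J,R(\mi))=0$ from the fibrewise injectivity of $J(\mi)\to A(\mi)$, hence $A/J\in R\proj$, hence $J\in R\proj$, and then killing $K$ via $K(\mi)=0$) avoids the duality detour entirely and is cleaner. For the unnumbered assertion the paper instead checks directly that $\tau_{S\otimes_R L,\,S\otimes_R A}$ agrees with $S\otimes_R\tau_{L,A}$ up to the base-change isomorphism and observes that tensoring an $R$-split monomorphism with $S$ yields an $S$-split monomorphism; your route through condition~(iii) of Proposition~\ref{split modules characterization} is equally valid but slightly less direct. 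Finally, the paper proves $(ii)\Rightarrow(i)$ directly (flatness of $R_\mi$ gives injectivity of $\tau_{L,A}$ and $R$-projectivity of $\coker\tau_{L,A}$ by localisation), whereas you route through $(iii)$; both are fine.
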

\begin{proof}
	Since $L\in \mathcal{M}(A)$, $L$ is a projective $A$-module and an $R$-progenerator. This gives that $S\otimes_R L$ is projective over $S\otimes_R A$ and $R$ is a summand of a finite direct sum of copies of $L$.
	Thus, $(S\otimes_R L)^t\simeq S\otimes_R L^t\simeq S\otimes_R R\oplus K\simeq S\oplus S\otimes K$, for some $K$. Hence, $S\otimes_R L$ is an $S$-progenerator. 
	
	Note that \begin{align*}
		S\otimes_R (L\otimes_R \Hom_A(L, A))&\simeq S\otimes_S S\otimes_R (L\otimes_R \Hom_A(L, A))\simeq S\otimes_R L\otimes_S S\otimes_R \Hom_A(L, A)\\&\simeq S\otimes_R L\otimes_S \Hom_{S\otimes_R A}(S\otimes_RL, S\otimes_R A).
	\end{align*}
	Denote this isomorphism by $\alpha$ and its inverse by $\beta$. The following diagram is commutative:
	\begin{center}
		\begin{tikzcd}[every arrow/.append style={shift left}] 
			S\otimes_R L\otimes_S \Hom_{S\otimes_R A}(S\otimes_RL, S\otimes_R A) \arrow[rr, "\tau_{S\otimes_R L, S\otimes_R A}"] \arrow[d, "\beta"]& & S\otimes_R A\\
			S\otimes_R (L\otimes_R \Hom_A(L, A)) \arrow[u, "\alpha"] \arrow[rru, "S\otimes_R \tau_{L, A}", swap] & &
		\end{tikzcd}.
	\end{center}
	In fact, \begin{multline*}
		\tau_{S\otimes_R L, S\otimes_R A} \circ \alpha (s\otimes l\otimes g)=\tau_{S\otimes_R L, S\otimes_R A}(s\otimes l\otimes 1_S\otimes g)= (1_S\otimes g)(s\otimes l)=s\otimes g(l)\\=\id_S\otimes_R\tau_{L, A}(s\otimes l\otimes g).
	\end{multline*}
	Thus, $\tau_{S\otimes_R L, S\otimes_R A}$ is a composition of an $(S\otimes_R A, S)$-monomorphism with an isomorphism, and so it is an $(S\otimes_RA, S)$-monomorphism. By \ref{split modules characterization}, $S\otimes_R L\in \mathcal{M}(S\otimes_R A)$.
	
	Now assume $i$. $ii$ follows putting $S=R_\mi$ for each maximal ideal $m$ of $R$. $iii$ follows putting $S=R(\mi)$ for each maximal ideal $\mi$ of $R$. Clearly, in this case, $L$ is projective over $R$. 
	
	Now assume $ii$. $L_\mi$ is faithful for any $\mi$ maximal ideal of $R$. Hence, $\Ann L_\mi=0$, where $\Ann L$ denotes the annihilator of $L$. Take $r\in \Ann L$, then $s\otimes r s'\otimes l= ss'\otimes rl=ss'\otimes 0=0$ for any $s,s'\in R_\mi$, $l\in L$. This means that $s\otimes r\in \Ann L_\mi=0$. So, any element in $(\Ann L)_\mi=R_\mi\otimes \Ann L$ is zero. Thus, $\Ann L=0$, that is, $L$ is faithful over $R$. As $L_\mi$ is projective over $A_\mi$ for any maximal ideal $\mi$ of $R$, it follows that $L$ is projective over $A$. Now, $\tau_{L_\mi, A_\mi}=(\tau_{L, A})_\mi $ is injective, so $\tau_{L, A}$ is injective. $R_\mi$ is a flat $R$-module, hence $(\coker \tau_{L, A})_\mi = \coker \tau_{L_\mi, A_\mi}$ which is projective over $R_\mi$. So, $\coker \tau_{L, A}$ is projective over $R$. Therefore, $\tau_{L, A}$ is an $(A, R)$-monomorphism and $i$ follows.
	
	Finally, assume $iii$. Since $L$ is projective over $R$ and $L(\mi)$ is projective over $A(\mi)$, it follows that $L$ is projective over $A$. Consider the canonical map $R\rightarrow \End_A(L)$, given by $r\mapsto (l\mapsto rl)$. Denote this map by $\phi$. Since $L\in A\proj$, we have $\End_A(L)$ is a projective $R$-module. By assumption, $L(\mi)\in \mathcal{M}(A(\mi))$ and by Proposition \ref{split modules characterization}, $\phi(\mi)$ is an isomorphism for every maximal ideal $\mi$ in $R$. According to Lemma \ref{nakayamalemmasurjectiveproj}, $\phi$ is an $R$-isomorphism. In particular, $L$ is faithful over $R$. Let $D$ be the standard duality. By Tensor-Hom adjunction, we have isomorphisms $k_1\colon \Hom_R(L, DA\otimes_AL)\rightarrow D(D(DA\otimes_A L)\otimes_RL)$ and $k_2\colon \Hom_A(L, A)\rightarrow D(DA\otimes_A L)$ (see for example \citep[Proposition 2.1]{CRUZ2022410}). 
	
	Consider the right $A$-homomorphism $\zeta_{L}\colon DA\rightarrow \Hom_R(L, DA\otimes_A L)$, given by, $\zeta(f)(l)=f\otimes l, \ f\in DA, \ l\in L$. There is a commutative diagram 
	\begin{equation}
		\begin{tikzcd}
			DA \arrow[r, "\zeta_L"] \arrow[d, equal] & \Hom_R(L, DA\otimes_A L) \arrow[d, "D(Dk_2 \otimes_R L)\circ k_1"] \\
			DA \arrow[r, "D\tau_{L, A}"] & D(L\otimes_R \Hom_A(L, A))
		\end{tikzcd}. \label{eqqh6}
	\end{equation} 
	In fact, for $f\in DA, \l\in L$ and $g\in \Hom_A(L, A)$
	\begin{align}
		D(k_2\otimes_R L)\circ k_1\circ \zeta_L(f)(l\otimes g)&= k_1(\zeta_L(l))\circ k_2\otimes_R L(l\otimes g) = k_1(\zeta_L(f))(k_2(g)\otimes l) = \\
		k_2(g)(\zeta_L(f)(l)) &=k_2(g)(f\otimes l) = f(g(l)) = f \circ \tau_{L, A}(l\otimes g)=D\tau_{L, A}(f)(l\otimes g).
	\end{align}
	By assumption, $\tau_{L(\mi), A(\mi)}$ is a monomorphism for every maximal ideal $\mi$ in $R$. Denote by $D_{(\mi)}$ the standard duality in $R(\mi)$. Then, $D_{(\mi)}\tau_{L(\mi), A(\mi)}$ is surjective for every maximal ideal in $R$. By the diagram (\ref{eqqh6}), it follows that $\zeta_{L(\mi)}$ is surjective for every maximal ideal $\mi$ in $R$. Using the following commutative diagram
	\begin{equation}
		\begin{tikzcd}
			DA(\mi) \arrow[r, "\zeta_L(\mi)"] \arrow[d, "\simeq"] & \Hom_R(L, DA\otimes_A L)(\mi) \arrow[d, "\simeq"] \\
			D_{(\mi)}A(\mi) \arrow[r, "\zeta_{L(\mi)}"] & \Hom_{R(\mi)}(L(\mi), D_{(\mi)}A(\mi)\otimes_{A(\mi)} L(\mi))
		\end{tikzcd}
	\end{equation} we deduce that $\zeta_L(\mi)$ is surjective for every maximal ideal $\mi$ in $R$. By Nakayama's Lemma, $\zeta_L$ is surjective. By the commutativity of diagram (\ref{eqqh6}), $D\tau_{L, A}$ is surjective. As $L\in A\proj$, $D(L\otimes_R \Hom_A(L, A))\in R\proj$. Consequently, $D\tau_{L, A}$ is an $(A, R)$-epimorphism. Hence, $DD\tau_{L, A}$ is an $(A, R)$-monomorphism. Taking into account that $L\otimes_R \Hom_A(L, A)$ and $DA\in R\proj$ we conclude that $\tau_{L, A}$ is an $(A, R)$-monomorphism.
\end{proof}

The following result completes Lemma \ref{splitmoduleschangering} and it reduces the study of projective $R$-split $A$-modules to the study of maximal standard modules over finite-dimensional algebras over algebraically closed fields.

\begin{Lemma}\label{splitmodulesalgebraicclosure}
	Let $k$ be a field and let $A$ be a finite-dimensional $k$-algebra. Assume that $\overline{k}$ is the algebraic closure of $k$. Given $L\in A\m$, if $\overline{k}\otimes_k L\in \mathcal{M}(\overline{k}\otimes_k A)$ then $L\in \mathcal{M}(A)$.
\end{Lemma}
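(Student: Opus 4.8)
The plan is to reduce the statement to the characterization of projective $R$-split modules given in Proposition \ref{split modules characterization} (parts $(i)$ and $(iii)$) together with the faithfully flat descent provided by the field extension $k\subseteq \overline{k}$. First I would recall that since $\overline{k}$ is free (hence faithfully flat) over $k$, a $k$-linear map $f$ is injective (respectively surjective, respectively bijective) if and only if $\id_{\overline{k}}\otimes_k f$ is. Also, over a field every module is projective, so conditions \ref{splithwc}-type subtleties about $R\proj$ are automatic: we only need to check that $L\in A\proj$, that $L$ is faithful over $k$, and that $\tau_{L,A}\colon L\otimes_k\Hom_A(L,A)\rightarrow A$ is an $(A,k)$-monomorphism, which over a field just means it is an injective $A$-module map.

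The key steps, in order, would be: (1) Show $L\in A\proj$. We have $\overline{k}\otimes_k L\in (\overline{k}\otimes_k A)\proj$ by hypothesis; since $\overline{k}\otimes_k-$ is faithfully exact and commutes with $\Hom$ from the finitely presented module $L$ (as in Theorem \ref{projectivitytermsmaximal} applied over the field $k$, or directly because over a field a finitely generated module is projective iff it becomes so after a faithfully flat base change), $L$ is projective over $A$. (2) Show $L$ is faithful over $k$: if $r\in k$ annihilates $L$ then $1\otimes r$ annihilates $\overline{k}\otimes_k L$, which is faithful over $\overline{k}$, forcing $1\otimes r=0$ and hence $r=0$. (3) Identify the base change of $\tau_{L,A}$: as in the proof of Lemma \ref{splitmoduleschangering}, there is a natural isomorphism $\overline{k}\otimes_k\big(L\otimes_k\Hom_A(L,A)\big)\simeq (\overline{k}\otimes_k L)\otimes_{\overline{k}}\Hom_{\overline{k}\otimes_k A}(\overline{k}\otimes_k L,\overline{k}\otimes_k A)$ making a commutative triangle with $\tau_{\overline{k}\otimes_k L,\overline{k}\otimes_k A}$ and $\id_{\overline{k}}\otimes_k\tau_{L,A}$. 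Since $\overline{k}\otimes_k L\in\mathcal{M}(\overline{k}\otimes_k A)$, the map $\tau_{\overline{k}\otimes_k L,\overline{k}\otimes_k A}$ is injective, hence $\id_{\overline{k}}\otimes_k\tau_{L,A}$ is injective, hence by faithful flatness $\tau_{L,A}$ is injective. (4) Conclude $L\in\mathcal{M}(A)$ via Proposition \ref{split modules characterization}$(ii)\Rightarrow(i)$: being an injective $A$-map of finite-dimensional $k$-spaces, $\tau_{L,A}$ is automatically an $(A,k)$-monomorphism, and $L$ is faithful projective over $k$ and projective over $A$.

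I expect the only genuine point requiring care to be step (3), namely writing down the natural isomorphism on the source of $\tau$ and verifying that the triangle with $\tau_{\overline{k}\otimes_k L,\overline{k}\otimes_k A}$ and $\id_{\overline{k}}\otimes_k\tau_{L,A}$ commutes; but this is essentially identical to the corresponding verification already carried out in the proof of Lemma \ref{splitmoduleschangering} (with $S=\overline{k}$), so it can be invoked rather than redone. Everything else is formal: faithful flatness of $\overline{k}/k$ handles injectivity, and the fact that over a field every finitely generated module is projective and every short exact sequence of $R$-modules splits makes the relative notions coincide with the absolute ones. Thus the lemma follows.
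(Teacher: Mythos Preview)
Your proposal is correct and follows essentially the same route as the paper: faithful flatness of $\overline{k}/k$ is used to descend projectivity of $L$ over $A$, faithfulness over $k$, and injectivity of $\tau_{L,A}$ (the latter via the same commutative triangle you describe). The only cosmetic difference is that the paper establishes $L\in A\proj$ via the vanishing $\overline{k}\otimes_k\Ext_A^1(L,N)\simeq\Ext_{\overline{k}\otimes_k A}^1(\overline{k}\otimes_k L,\overline{k}\otimes_k N)=0$ rather than invoking Theorem~\ref{projectivitytermsmaximal} (which is vacuous when $R$ is already a field); your alternative ``direct'' justification by faithfully flat descent is the right one.
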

\begin{proof}
	It is immediate that $L$ is faithful over $k$. We will proceed to show that $L$ is projective over $A$. To see this observe that $\overline{k}$ is faithfully flat over $k$ and \begin{align}
		\overline{k}\otimes_k \Ext_A^1(L, N)=\Ext_{\overline{k}\otimes_k A}^1(\overline{k}\otimes_k L, \overline{k}\otimes_k N )=0, \quad \forall N\in A\m.
	\end{align}
	It remains to check that the map $\tau_{L, A}$ is injective. By assumption, $\tau_{\overline{k}\otimes_k L, \overline{k}\otimes_k A}$ is injective. Since $\overline{k}$ is faithfully flat over $k$ this implies that $\overline{k}\otimes_k \tau_{L, A}$ is injective and consequently $\tau_{L, A}$ is injective. Therefore, $L\in \mathcal{M}(A)$.
\end{proof}

\begin{Prop}\label{bijectionsplits}
	(see \citep[Proposition 4.7]{Rouquier2008}) There is a bijection from $\mathcal{M}(A)$ to the set of isomorphism classes of pairs $(J, P)$ where $J$ is a split heredity ideal of $A$ and $P$ is a progenerator for $B:=\End_A(J)^{op}$ such that $R\simeq \End_{\End_A(J)^{op}}(P)$. Here the equivalence is given in the following way: $(J, P)\sim (J', P')$ if and only if $J=J'$ and $P\simeq P'$ as $B$-modules. \label{bijection splits}
	Explicitly, \begin{align*}
		&\alpha\colon \mathcal{M}(A)\longrightarrow \{\text{isomorphism classes of pairs } (J, P)\}/_\sim \colon L\mapsto (\im \tau_L, \Hom_R(\Hom_A(L, A), R))\\
		&\beta\colon \{\text{isomorphism classes of pairs } (J, P)\}/_\sim \longrightarrow \mathcal{M}(A)\colon (J, P)\mapsto J\otimes_B P.
	\end{align*}
\end{Prop}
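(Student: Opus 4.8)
The plan is to verify that $\alpha$ and $\beta$ are well-defined and mutually inverse. For $\alpha$, let $L\in\mathcal{M}(A)$ and put $Q=\Hom_A(L,A)$ and $J=\im\tau_L$. By Proposition \ref{split modules characterization} we have $R\simeq\End_A(L)$ and $\tau_{L,A}$ is an $(A,R)$-monomorphism, so by Lemma \ref{imtauisideal} the bimodule morphism $\tau_L$ identifies $J$ with $L\otimes_R Q$ as $(A,A)$-bimodules, and $J^2=J$; since $L\in A\proj$ and $Q$ is a summand of $A_A^{\,n}$ (hence in $R\proj$), $J\simeq L\otimes_R Q$ is projective as a left $A$-module, and $A/J$ is an $R$-summand of $A$, so $A/J\in R\proj$. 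Using Hom–tensor adjunction together with Lemma \ref{tensoronsecondcomponetofhom} and $\End_A(L)\simeq R$ one computes $\End_A(J)\simeq\End_A(L\otimes_R Q)\simeq\End_R(Q)$ as $R$-algebras; by Lemma \ref{rightprojectivesplitmodules}, $Q\in\mathcal{M}(A^{op})$, so $Q$ is an $R$-progenerator and $B=\End_A(J)^{op}\simeq\End_R(Q)^{op}$ is Morita equivalent to $R$. Thus $J$ is a split heredity ideal, and $P:=\Hom_R(Q,R)$, viewed as a $B$-module through the isomorphism $B\simeq\End_R(Q)^{op}$, is exactly the second bimodule of the Morita context between $R$ and $\End_R(Q)$; hence it is a $B$-progenerator with $\End_B(P)\simeq R$. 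This makes $\alpha$ well-defined, and its value clearly depends only on the isomorphism class of $L$.

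For $\beta$, let $J$ be a split heredity ideal, $B=\End_A(J)^{op}$, and $P$ a $B$-progenerator with $\End_B(P)\simeq R$; regarding $J$ as an $(A,B)$-bimodule, set $L=J\otimes_B P$. Since $P$ is a summand of $B^{\,n}$, $L$ is a summand of $J^{\,n}$, hence $L\in A\proj$. Using the adjunction $\Hom_A(J\otimes_B P,-)\simeq\Hom_B(P,\Hom_A(J,-))$, together with $J$ being finitely generated projective over $A$ and $B$ Morita equivalent to $R$, one gets $\End_A(L)\simeq\End_B(P)\simeq R$; in particular $L$ is $R$-faithful. It remains to check that $\tau_{L,A}$ is an $(A,R)$-monomorphism, and here I would invoke Lemma \ref{splitmoduleschangering}: it suffices to verify $L(\mi)\in\mathcal{M}(A(\mi))$ for every maximal ideal $\mi$. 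Since $A/J\in R\proj$, base change is exact on $0\to J\to A\to A/J\to 0$, so $J(\mi)$ is a split heredity ideal of $A(\mi)$, $B(\mi)\simeq\End_{A(\mi)}(J(\mi))^{op}$, and $L(\mi)\simeq J(\mi)\otimes_{B(\mi)}P(\mi)$; over the field $R(\mi)$ one can write $J(\mi)=A(\mi)eA(\mi)$ for a primitive idempotent $e$ and identify $L(\mi)$ with $A(\mi)e$, which lies in $\mathcal{M}(A(\mi))$ by Lemma \ref{standardfromsplitquasi}. Hence $\beta$ is well-defined (and $J\otimes_B P$ depends only on $(J,P)$ up to the stated equivalence).

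For the round trips, $\beta\circ\alpha=\id$ reduces, via $J\simeq L\otimes_R Q$ and $B\simeq\End_R(Q)^{op}$, to the isomorphism $L\otimes_R\bigl(Q\otimes_{\End_R(Q)^{op}}\Hom_R(Q,R)\bigr)\simeq L\otimes_R R\simeq L$, which is the Morita identity $Q\otimes_{\End_R(Q)^{op}}Q^{*}\simeq R$; one checks this is an isomorphism of $A$-modules. For $\alpha\circ\beta=\id$ I would first use the Morita equivalence $B\sim R$ to replace $P$ by an invertible $R$-module $U$, so that $J\otimes_B P\simeq L_0\otimes_R U$ where $\im\tau_{L_0}=J$ and $B\simeq\End_R(Q_0)^{op}$ with $Q_0=\Hom_A(L_0,A)$; then $\Hom_A(L_0\otimes_R U,A)\simeq Q_0\otimes_R U^{*}$, so $\tau_{L_0\otimes_R U}$ has image $\im\tau_{L_0}=J$ and $\Hom_R(\Hom_A(L_0\otimes_R U,A),R)\simeq\Hom_R(Q_0,R)\otimes_R U$ corresponds to $P$ under the identifications, recovering $(J,P)$.

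The main obstacle is the bimodule bookkeeping running through the last two paragraphs: keeping track of which ring acts on which side of $J$, $P$, $Q$ and their $R$-duals, checking that the various adjunction and Morita isomorphisms respect the $A$-module (resp.\ $B$-module) structure and not merely the underlying $R$-module structure, and — the genuinely delicate step — proving that $\tau_{J\otimes_B P}$ is an $(A,R)$-monomorphism with image exactly $J$; the reduction to residue fields together with Lemma \ref{standardfromsplitquasi} is what makes this last step manageable.
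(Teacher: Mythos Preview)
For $\alpha$ well-defined and $\beta\circ\alpha=\id$ your argument matches the paper's. For $\beta$ well-defined (showing $L=J\otimes_B P\in\mathcal{M}(A)$) you take a genuinely different route: the paper argues \emph{directly}, first exhibiting explicit elements of $\Hom_A(L,A)$ (built from the inclusion $J\hookrightarrow A$ and the fact that $B$ is a summand of $P^t$) to prove $J\subset\im\tau_L$, then using $\Hom_A(L,A/J)\simeq\Hom_B(P,\Hom_A(J,A/J))=0$ to get $\im\tau_L\subset J$, and finally observing that the surjection $\tau_L\colon L\otimes_R\Hom_A(L,A)\twoheadrightarrow J$ is between isomorphic finitely generated modules, hence an isomorphism by Nakayama; since $A/J\in R\proj$, $\tau_{L,A}$ is then $(A,R)$-mono. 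Your reduction to residue fields via Lemmas~\ref{splitmoduleschangering} and~\ref{standardfromsplitquasi} is a legitimate alternative and conceptually appealing, at the cost of the Morita bookkeeping needed to identify $L(\mi)$ with $A(\mi)e$.

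There is, however, a real gap in your $\alpha\circ\beta=\id$. First, your Picard-twist argument only says: if $\alpha\circ\beta(J,P_0)=(J,P_0)$ for \emph{one} fixed $P_0$, then it holds for all $P$---but you never verify the base case $\Hom_R(\Hom_A(L_0,A),R)\simeq P_0$ as $B$-modules. Second, knowing $\im\tau_L(\mi)=J(\mi)$ for all $\mi$ does \emph{not} by itself force $\im\tau_L=J$: a nonzero map between free $R_\mi$-modules can vanish modulo $\mi_\mi$, so two $R$-split submodules of $A$ with equal images in every $A(\mi)$ need not coincide. What rescues this is precisely the direct inclusion $\im\tau_L\subset J$ from $\Hom_A(L,A/J)=0$; once you have it, $J/\im\tau_L\in R\proj$ (it sits in $0\to J/\im\tau_L\to A/\im\tau_L\to A/J\to 0$) and $(J/\im\tau_L)(\mi)=0$ for all $\mi$ finishes the job. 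For the second coordinate, the paper computes in one line
\[
\Hom_A(L,A)\;\simeq\;\Hom_B\bigl(P,\Hom_A(J,A)\bigr)\;\simeq\;\Hom_B(P,B)\;\simeq\;\Hom_R(P,R),
\]
using $\Hom_A(J,A)\simeq\End_A(J)$ (which follows from $\Hom_A(J,A/J)=0$) and the Morita identity for the $R$-progenerator $P$; double-dualising then gives $\Hom_R(\Hom_A(L,A),R)\simeq P$ for \emph{every} $P$, which is exactly the missing computation in your sketch.
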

\begin{proof}
	Let $L\in \mathcal{M}(A)$. Let $J=\im \tau_L$ and $B=\End_A(J)^{op}$. By assumption, $\tau_L$ is $(A, R)$-monomorphism, so $J$ is an $R$-summand of $A$. Hence, $A/J$ is an $R$-summand of $A$. Since $L$ is projective $A$-module, by Lemma \ref{im tau is ideal}, $J^2=J$. Notice that $L\otimes_R \Hom_A(L, A)\simeq \im\tau_{L}=J$ as left $A$-modules. Since $L$ is faithful over $R$, it follows that $\Hom_A(L, A)$ is faithful over $R$. Since $L$ is an $R$-progenerator, $R$ is a summand of $L^s$ for some $s>0$. Hence, $\Hom_A(L, A)\simeq R\otimes_R\Hom_A(L, A)$ is an $R$-summand of $L^s\otimes_R \Hom_A(L, A)\simeq J^s$. Hence, $\Hom_A(L, A)$ is projective over $R$. As $R$ is commutative, $\Hom_A(L, A)$ is a progenerator for $R\m$. Now $J\simeq L\otimes_R \Hom_A(L, A)$ is projective over $A\otimes_R R\simeq A$. It remains to show that $B$ is Morita equivalent to $R$. 
	
	By Tensor-Hom adjunction, \begin{align*}
		\End_R(\Hom_A(L, A))\simeq \Hom_R(\Hom_A(L, A), \Hom_A(L, A))\simeq \Hom_A(L\otimes_R \Hom_A(L, A), A)\simeq \Hom_A(J, A).
	\end{align*} Consider the exact sequence
	\begin{align}
		0\rightarrow J\rightarrow A\rightarrow A/J\rightarrow 0. \label{eq6}
	\end{align}
	Applying $\Hom_A(J, -)$ to (\ref{eq6}) yields 
	\begin{align*}
		0\rightarrow \End_A(J)\rightarrow \Hom_A(J, A)\rightarrow \Hom_A(J, A/J)\rightarrow 0.
	\end{align*}
	Now since $J(A/J)=0$ and $J=J^2$, we get $\Hom_A(J, A/J)=0$ by Lemma \ref{A/Jmod}. It follows that \linebreak$B^{op}\simeq \Hom_A(J, A)\simeq \End_R(\Hom_A(L, A))$.
	On the other hand, $\Hom_A(L, A)$ is a progenerator of $R\m$, so the functor $\Hom_R(\Hom_A(L, A), -)\colon R\m\rightarrow B\m$ is an equivalence of categories. By Morita theorem, $P:=\Hom_R(\Hom_A(L, A), R)$ is a progenerator for $B\m$ and $R\simeq \End_B(P)$. Hence, $\alpha$ is well defined. 
	
	We claim now that $L\simeq \im \tau_L\otimes_B \Hom_R(\Hom_A(L, A), R)$ as $A$-modules.
	
	By the Morita theorem for progenerators (see e.g. \citep[Proposition 12.10]{Faith1973}), $\Hom_R(\Hom_A(L, A), R)\simeq \Hom_B(\Hom_A(L, A), B)$ as $(B, R)$-bimodules. Note the action of $A$ in $\im \tau_L\otimes_B \Hom_R(\Hom_A(L, A), R)$ is the one induced by $A$ in $L$. Hence, as left $A$-modules,
	\begin{align*}
		\im \tau_L\otimes_B \Hom_R(\Hom_A(L, A), R)&\simeq L\otimes_R \Hom_A(L, A)\otimes_B \Hom_B(\Hom_A(L, A), B)\\
		&\simeq L\otimes_R \Hom_B(\Hom_A(L, A), \Hom_A(L, A)), \text{ since } \Hom_A(L, A)\in \projr B\\
		&\simeq L\otimes_R R, \text{ since } \add \Hom_A(L, A)=R\proj \\
		&\simeq L.
	\end{align*}
For the third isomorphism see for example \citep[59.4]{Curtis2006}.
	
	Reciprocally, consider a pair $(J, P)$ such that $R\simeq \End_B(P)^{op}$, where $B=\End_A(J)^{op}$. Let $L=J\otimes_B P$.
	$J$ is projective over $A$ and $P$ is projective over $B$, hence $L$ is a projective $A\otimes_B B\simeq A$-module.
	Notice that for $M\in A\proj$ and $M'\in C\proj$ there exists a canonical isomorphism $$\Hom_A(M, N)\otimes_R \Hom_C(M', N')\rightarrow \Hom_{A\otimes_R C}(M\otimes_R M', N\otimes_R N').$$
	So, 
	\begin{align}
		\End_A(L)\simeq \End_{A\otimes_B B}(J\otimes_B P)\simeq \End_A(J)\otimes_B\End_B(P)\simeq B\otimes_B \End_B(P)\simeq \End_B(P)\simeq R.
	\end{align} 
	Consequently, $L$ is faithful over $R$. 
	
	Let $i\colon J\rightarrow A$ be the inclusion $A$-homomorphism. We can consider $f\in\Hom_B(B, \Hom_A(J, A))$ such that $f(1_B)=i$. Since $P$ is a progenerator of $B\m$, $B$ is a summand of some direct sum of copies of $P$. So, we can extend the map $f$ to $f\in \Hom_B(P^t, \Hom_A(J, A))$ such that there exists $x\in P^t$ with $f(x)=i$. Consider the canonical inclusions and projections $k_j\colon P\rightarrow P^t, \ \pi_j\colon P^t\rightarrow P$. Define $f_j=f\circ k_j\in \Hom_B(P, \Hom_A(J, A))$.
	We have $i=f(x)=\sum_j f\circ k_j\circ \pi_j(x)=\sum_j f\circ k_j(x_j)=\sum_j f_j(x_j)$ for some $x_j\in P$.
	
	Consider the adjoint map $\Hom_B(P, \Hom_A(J, A))\rightarrow \Hom_A(J\otimes_B P, A)$, which sends $f\in \Hom_B(P, \Hom_A(J, A))$ to the map \mbox{$((y\otimes x)\mapsto f(x)(y))$,} and let $g_j$ be the image of $f_j$ in $\Hom_A(J\otimes_B P, A)$. Then, for any $y\in J$,
	\begin{align*}
		\tau_{L=J\otimes_B P}(\sum_j y\otimes x_j\otimes g_j)=\sum_j \tau_L(y\otimes x_j\otimes g_j)=\sum_j g_j(y\otimes x_j)=\sum_j f_j(x_j)(y)=i(y)=y.
	\end{align*}
	Therefore, $J\subset \im \tau_L$. Note that $\Hom_A(L, A/J)\simeq \Hom_B(P, \Hom_A(J, A/J))=0$, by Tensor-Hom adjunction and by Lemma \ref{im tau is ideal}. The functor $\Hom_A(L, -)$ yields the exact sequence $0\rightarrow \Hom_A(L, J)\rightarrow \Hom_A(L, A)\rightarrow \Hom_A(L, A/J)=0$. Thus, we get $\tau_L(L\otimes_R \Hom_A(L, A))=\tau_L(L\otimes_R \Hom_A(L, J))\subset J$. We conclude that $\im \tau_L =J$. Since $P$ is a left $B$-progenerator and $\End_B(P)\simeq R$ then $\Hom_B(P, B)\simeq \Hom_R(P, R)$ as $(R, B)$-bimodules (see for example \citep[Chapter 12, pages 447-453]{Faith1973}). Now the functor $\Hom_A(J,-)$ yields the exact sequence \begin{align*}
		0\rightarrow B=\Hom_A(J, J)\rightarrow \Hom_A(J, A)\rightarrow \Hom_A(J, A/J)=0.
	\end{align*} Hence, $B\simeq \Hom_A(J, A)$ as left $B$-modules.
	Thus, as $R$-modules,
	\begin{align}
		\Hom_A(L, A)\simeq \Hom_B(P, \Hom_A(J, A))\simeq \Hom_B(P, B)\simeq \Hom_R(P, R).
	\end{align}
	Finally as $A$-modules, \begin{align*}
		J\simeq J\otimes_B B\simeq J\otimes_B \End_R(P)\simeq J\otimes_B P\otimes_R \Hom_R(P, R)\simeq J\otimes_B P\otimes_R \Hom_A(L, A)\simeq L\otimes_R \Hom_A(L, A).
	\end{align*}
	We conclude that the map $\tau_L\colon L\otimes_R \Hom_A(L, A)\rightarrow J$ is surjective between two isomorphic finitely generated $A$-modules. By Nakayama's Lemma, $\tau_L\colon L\otimes_R \Hom_A(L, A)\rightarrow J$ is an isomorphism. In particular, \linebreak$\tau_L\colon L\otimes_R \Hom_A(L, A)\rightarrow A$ is injective. Now since $A/J$ is projective over $R$, the exact sequence $0\rightarrow J\rightarrow A\rightarrow A/J\rightarrow 0$ splits over $R$. Hence, $\tau_L$ is an $(A, R)$-monomorphism, so $L\in \mathcal{M}(A)$. Thus, $\beta$ is well defined.
	
	We claim that $\alpha\circ \beta(J, P)=(J, P)$.
	
	In fact, $$\alpha\circ \beta(J, P)=\alpha(J\otimes_B P)=(\im \tau_{J\otimes_B P}, \Hom_R(\Hom_A(J\otimes_B P, A), R))=(J, \Hom_R(\Hom_A(J\otimes_B P, A), R)).$$ Since $L=J\otimes_B P\in \mathcal{M}(A)$ by the first direction we can regard $\Hom_A(L, A)$ as a right $B$-module. Recall that the functors $\Hom_B(P, -)$ and $\Hom_R(\Hom_B(P, B), -)$ form an equivalence. Hence, we obtain as left $B$-modules,
	\begin{align}
		\Hom_R(\Hom_A(L, A), R)&\simeq \Hom_R(\Hom_R(P,R), R)\simeq \Hom_R(\Hom_B(P, B), \Hom_B(P, P))\\&\simeq \Hom_R(\Hom_B(P, B), \Hom_B(P, -)) P\simeq P.
	\end{align}
	So, the claim follows.
	
	We have shown also that for any $L\in \mathcal{M}(A)$, \begin{align*}
		\beta\circ \alpha (L)=\beta(\im \tau_L, \Hom_R(\Hom_A(L, A), R))=\im \tau_L\otimes_{\End_A(\im \tau_L)^{op}} \Hom_R(\Hom_A(L, A), R)\simeq L
	\end{align*}as $A$-modules. Hence, $\beta\circ \alpha=\id$. Thus, $\alpha$ and $\beta$ are bijections.
\end{proof}

\begin{Cor}(see \citep[Second part of Proposition 4.10]{Rouquier2008})
	For any $L\in \mathcal{M}(A)$, the canonical functor $A/\im\tau_L\m\rightarrow A\m$ induces an equivalence between $A/\im\tau_L\m$ and the full subcategory of $A\m$ whose objects $M$ satisfy $\Hom_A(L, A)=0$. \label{full subcategory in terms of split module}
\end{Cor}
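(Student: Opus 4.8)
The plan is to reduce the statement to two facts already in hand: writing $J := \im \tau_L$, we have $J^2 = J$ and $J \cong L \otimes_R \Hom_A(L, A)$ as left $A$-modules (Lemma \ref{imtauisideal} together with the $(A,R)$-monomorphism property of $\tau_L$, which identifies $L \otimes_R \Hom_A(L,A)$ with $\im \tau_L$, as in the proof of Proposition \ref{bijectionsplits}), and $\Hom_A(L, A)$ is an $R$-progenerator (again from the proof of Proposition \ref{bijectionsplits}). I will also use the standard fact that the canonical functor $A/J\m \to A\m$ is restriction of scalars along $A \twoheadrightarrow A/J$, hence fully faithful with essential image the full subcategory $\{M \in A\m \colon JM = 0\}$; by Lemma \ref{AmoduloJmod} (applicable since $J^2 = J$), this subcategory equals $\{M \in A\m \colon \Hom_A(J, M) = 0\}$. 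Note that the condition on $M$ in the statement should be read as $\Hom_A(L, M) = 0$; so the whole assertion comes down to the equivalence, for $M \in A\m$,
\[
\Hom_A(J, M) = 0 \quad\Longleftrightarrow\quad \Hom_A(L, M) = 0 .
\]

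To establish this equivalence I would combine the isomorphism $J \cong L \otimes_R \Hom_A(L, A)$ of left $A$-modules with the tensor--hom adjunction $\Hom_A(L \otimes_R U, M) \cong \Hom_R(U, \Hom_A(L, M))$ for $U \in R\m$ (valid because $L$ is an $(A, R)$-bimodule via the central action of $R$), obtaining
\[
\Hom_A(J, M) \;\cong\; \Hom_R\bigl(\Hom_A(L, A),\, \Hom_A(L, M)\bigr) .
\]
Since $\Hom_A(L, A)$ is an $R$-progenerator, the $R$-module $R$ is a direct summand of $\Hom_A(L, A)^{\oplus s}$ for some $s$, so for any $N \in R\m$ the module $N \cong \Hom_R(R, N)$ is a direct summand of $\Hom_R(\Hom_A(L, A), N)^{\oplus s}$; hence $\Hom_R(\Hom_A(L, A), N) = 0$ forces $N = 0$, and the reverse implication is trivial. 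Applying this with $N = \Hom_A(L, M)$ gives the displayed equivalence. Consequently the full subcategory of $A\m$ on the objects $M$ with $\Hom_A(L, M) = 0$ coincides with the essential image of the fully faithful functor $A/J\m \to A\m$, which is exactly the claim.

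I do not anticipate a genuine obstacle here, as all the substantive content is packaged in Lemma \ref{imtauisideal}, Lemma \ref{AmoduloJmod}, and Proposition \ref{bijectionsplits}. The only point requiring a little care is to check that the adjunction isomorphism $\Hom_A(L \otimes_R U, M) \cong \Hom_R(U, \Hom_A(L, M))$ is compatible with the identification $J \cong L \otimes_R \Hom_A(L, A)$ coming from $\tau_L$ (so that no extra twist is introduced), but this is routine bookkeeping and not really needed for the zero-detection argument above, which only concerns whether the two Hom-groups vanish.
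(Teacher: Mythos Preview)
Your proof is correct. Both you and the paper reduce the statement to the equivalence $\Hom_A(J,M)=0 \Leftrightarrow \Hom_A(L,M)=0$ via Lemma~\ref{AmoduloJmod}, and both establish this equivalence by a tensor--hom adjunction together with a progenerator argument. The difference is which factorisation is used: the paper writes $L \simeq J \otimes_B P$ with $B=\End_A(J)^{op}$ and $P$ a $B$-progenerator (coming from Proposition~\ref{bijectionsplits}), adjoins over $B$ to get $\Hom_A(L,M)\simeq \Hom_B(P,\Hom_A(J,M))$, and then uses that $P$ is a $B$-progenerator; you instead write $J \simeq L \otimes_R \Hom_A(L,A)$, adjoin over $R$ to get $\Hom_A(J,M)\simeq \Hom_R(\Hom_A(L,A),\Hom_A(L,M))$, and use that $\Hom_A(L,A)$ is an $R$-progenerator. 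Your route is marginally more elementary in that it never needs to name the algebra $B$ or its module category, while the paper's route makes the Morita picture with $B$ more visible; both draw on the same ingredients already established in the proof of Proposition~\ref{bijectionsplits}.
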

\begin{proof}
	For any $L\in \mathcal{M}(A),$ let $J=\im \tau_L$. $J$ is ideal and $J=J^2$. Hence, by Lemma \ref{im tau is ideal}, for $M\in A\m$, $M\in A/J\m$ if and only if $\Hom_A(J, M)=0$.
	
	But $L\simeq J\otimes_B P$ for some progenerator $P$ of $B$ by Proposition \ref{bijection splits}. By Tensor-Hom adjunction, $\Hom_A(L, M)\simeq \Hom_B(P, \Hom_A(J, M))$.
	
	We claim that $\Hom_A(L, M)\simeq \Hom_B(P, \Hom_A(J, M))=0$ if and only if $\Hom_A(J, M)=0$.
	
	Assume that $\Hom_A(J, M)=0$, then it is clear that $\Hom_A(L, M)\simeq \Hom_B(P, \Hom_A(J, M))=0$.
	Reciprocally, assume that $\Hom_A(L, M)\simeq \Hom_B(P, \Hom_A(J, M))=0$. If $\Hom_A(J, M)\neq 0$, then there exists a non-zero $B$-epimorphism $P^t\rightarrow \Hom_A(J, M)$. This would imply that $\Hom_B(P, \Hom_A(J, M))\neq 0$. The result follows.
\end{proof}

	A full subcategory $\mathcal{A}$ of an abelian category $\mathcal{B}$ is known as \textbf{Serre subcategory} if for any exact sequence in $\mathcal{B}$
	\begin{align*}
		0\rightarrow X\rightarrow M\rightarrow Y\rightarrow 0
	\end{align*}
	$M\in \mathcal{A}$ if and only if $X, Y\in \mathcal{A}$.

Hence, a Serre subcategory is a subcategory closed under extensions, submodules and quotients.
\begin{Cor}\label{serresubcategory}
	For any $L\in \mathcal{M}(A)$, let $J=\im \tau_L$. Then, $A/J\m$ is a Serre subcategory of $A\m$.
\end{Cor}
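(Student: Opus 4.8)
The plan is to invoke the description of $A/J\m$ as a full subcategory of $A\m$ already established in Corollary~\ref{full subcategory in terms of split module}, namely that $M\in A/J\m$ if and only if $\Hom_A(L,M)=0$, and then to observe that this defining condition is visibly a Serre condition simply because $\Hom_A(L,-)$ is an \emph{exact} functor.

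Concretely, the first point is that $L\in A\proj$, so $L$ is a finitely generated projective $A$-module and hence $\Hom_A(L,-)\colon A\m\to R\m$ is exact, not merely left exact. Given an exact sequence $0\to X\to M\to Y\to 0$ in $A\m$, applying this functor produces a short exact sequence of $R$-modules $0\to\Hom_A(L,X)\to\Hom_A(L,M)\to\Hom_A(L,Y)\to 0$. In a short exact sequence the middle term vanishes if and only if both outer terms vanish; so $\Hom_A(L,M)=0$ is equivalent to $\Hom_A(L,X)=0$ together with $\Hom_A(L,Y)=0$. By Corollary~\ref{full subcategory in terms of split module} this is exactly the assertion that $M\in A/J\m$ if and only if $X\in A/J\m$ and $Y\in A/J\m$, which is the definition of $A/J\m$ being a Serre subcategory of $A\m$.

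An alternative route, avoiding $L$ altogether, is to argue directly with the ideal $J$: by Lemma~\ref{im tau is ideal}, $J$ is a two-sided ideal with $J^2=J$, and by Lemma~\ref{A/Jmod}, $M\in A/J\m$ if and only if $JM=0$. Closure of the class $\{M : JM=0\}$ under submodules and quotients is immediate; closure under extensions is the one step where $J^2=J$ is genuinely used: if $JX=0=JY$ and $m\in M$, then the image of $Jm$ in $Y$ is zero, so $Jm$ lies in (the image of) $X$, whence $J(Jm)=0$ and therefore $Jm=J^2m=0$. I do not anticipate any real obstacle here; the only conceptual point worth stressing is that the hypothesis $L\in\mathcal{M}(A)$ enters solely through the idempotency $J^2=J$ (equivalently, the projectivity of $L$), and that extension-closure fails in general without it.
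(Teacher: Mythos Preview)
Your primary argument is correct and is essentially identical to the paper's own proof: both apply the exact functor $\Hom_A(L,-)$ to the short exact sequence and invoke Corollary~\ref{full subcategory in terms of split module}. Your alternative argument via $J^2=J$ and Lemma~\ref{A/Jmod} is also correct and is a nice supplement, though the paper does not include it.
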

\begin{proof}
	Let $0\rightarrow X\rightarrow M\rightarrow Y\rightarrow 0$ be an exact sequence of $A$-modules.
	Applying the functor $\Hom_A(L, -)$ yields
	\begin{align*}
		0\rightarrow \Hom_A(L, X)\rightarrow \Hom_A(L, M)\rightarrow \Hom_A(L, Y)\rightarrow 0.
	\end{align*}
	Thus, $\Hom_A(L, M)=0$ if and only if $\Hom_A(L, X)=\Hom_A(L, Y)=0$. By Corollary \ref{full subcategory in terms of split module}, the result follows.
\end{proof}

\subsection{Picard Group and invertible modules}

To write this bijection in terms of split heredity ideals instead of pairs $(J, P)$ we need the notion of invertible module.
The theory of invertible modules can be studied with more detail, for example, in \citep{Faith1973}.

\begin{Def}
	Let $R$ be a commutative ring. \label{invertiblemodules}
	A module $M$ is called \textbf{invertible} if the functor $M\otimes_R -\colon R\m\rightarrow R\m$ is an equivalence of categories.
\end{Def}

\begin{Prop}Let $R$ be a commutative Noetherian ring. Let $M$ be a finitely generated $R$-module.
	The following assertions are equivalent.
	\begin{enumerate}[(i)]
		\item $M$ is invertible;
		\item There exists an $R$-module $N$ such that $M\otimes_R N\simeq R$;
		\item $M_\mathfrak{p}\simeq R_\mathfrak{p}$ for all prime ideals $\mathfrak{p}$ of $R$;
		\item $M_\mi\simeq R_\mi$ for all maximal ideals $\mi$ of $R$.
	\end{enumerate}
\end{Prop}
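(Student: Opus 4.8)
The plan is to prove the equivalences through the cycle $(a)\Rightarrow(b)\Rightarrow(c)\Rightarrow(d)\Rightarrow(a)$, where the first three implications are essentially formal and $(d)\Rightarrow(a)$ carries the real content.

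For $(a)\Rightarrow(b)$, I would take a quasi-inverse $G$ of the equivalence $F:=M\otimes_R-\colon R\m\rightarrow R\m$ and set $N:=G(R)$; this is a finitely generated $R$-module since $M$ is, and $M\otimes_R N=FG(R)\simeq R$. For $(b)\Rightarrow(c)$, fix $\pri\in\Spec R$ and localize an isomorphism $M\otimes_R N\simeq R$ to obtain $M_\pri\otimes_{R_\pri}N_\pri\simeq R_\pri$. Tensoring further with the residue field $R(\pri)$ gives $M(\pri)\otimes_{R(\pri)}N(\pri)\simeq R(\pri)$; since $M(\pri)$ is a finite-dimensional $R(\pri)$-vector space, a dimension count forces $\dim_{R(\pri)}M(\pri)=1$, so by Nakayama's Lemma $M_\pri$ is cyclic, say $M_\pri\simeq R_\pri/I$. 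Then $R_\pri\simeq (R_\pri/I)\otimes_{R_\pri}N_\pri\simeq N_\pri/IN_\pri$ is annihilated by $I$, and as $R_\pri$ is faithful over itself this forces $I=0$, i.e.\ $M_\pri\simeq R_\pri$. The implication $(c)\Rightarrow(d)$ is immediate since maximal ideals are prime.

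For $(d)\Rightarrow(a)$: because $R$ is Noetherian and $M$ is finitely generated, $M$ is finitely presented; and for any $\pri\in\Spec R$ we may pick a maximal ideal $\mi\supseteq\pri$ and localize $M_\mi\simeq R_\mi$ further at $\pri$ to get $M_\pri\simeq R_\pri$. Thus $M$ is finitely presented and locally free of rank $1$, hence a finitely generated projective $R$-module of constant rank $1$ by the standard local--global criterion for projectivity (see \citep{Faith1973}). The evaluation map $M\otimes_R\Hom_R(M,R)\rightarrow\End_R(M)$ is then an isomorphism, and the structure map $R\rightarrow\End_R(M)$ is an isomorphism because it becomes so after localizing at every maximal ideal; writing $M^\ast:=\Hom_R(M,R)$ we conclude $M\otimes_R M^\ast\simeq R$. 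Finally, for every $X\in R\m$ the natural isomorphisms $M^\ast\otimes_R(M\otimes_R X)\simeq(M^\ast\otimes_R M)\otimes_R X\simeq X$ and $M\otimes_R(M^\ast\otimes_R X)\simeq X$ exhibit $M^\ast\otimes_R-$ as a quasi-inverse of $M\otimes_R-$, so $M$ is invertible.

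The step I expect to be the main obstacle is $(d)\Rightarrow(a)$: the passage from ``$M_\mi$ free for every maximal ideal $\mi$'' to ``$M$ projective'' genuinely requires the finite presentation of $M$ --- which is where Noetherianity of $R$ is used --- together with the local characterization of projective modules. Once rank-one projectivity is established, producing the inverse functor $M^\ast\otimes_R-$ is routine.
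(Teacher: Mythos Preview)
Your proof is correct. The paper itself does not give a proof of this proposition at all: it simply writes ``See for example \citep[12.13]{Faith1973} and \citep[p.114, 115]{zbMATH01194716}.'' So there is no argument in the paper to compare yours against; you have supplied a complete and standard proof where the paper only points to references.

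One small remark on $(b)\Rightarrow(c)$: you do not actually need $N$ to be finitely generated for the dimension count, and the statement of $(b)$ does not assume it. Your argument still goes through: $M(\pri)\simeq R(\pri)^m$ for some finite $m$ (since $M$ is finitely generated), so $M(\pri)\otimes_{R(\pri)}N(\pri)\simeq N(\pri)^m\simeq R(\pri)$ forces $m=1$ regardless of whether $N(\pri)$ is finite-dimensional a priori. The rest of your reasoning (Nakayama applied to the finitely generated $M_\pri$, and the annihilator argument) is fine as written.
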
\label{invertiblemodulescharacterization}
\begin{proof}
	See for example \citep[12.13]{Faith1973} and \citep[p.114, 115]{zbMATH01194716}.
\end{proof}
Note that for $L, L'$ invertible $R$-modules, exists $N, N'$ such that $L\otimes_R N\simeq R$ and $L'\otimes_R N'\simeq R$. So, \begin{align*}
	L\otimes_R L'\otimes N\otimes N'\simeq L\otimes_R N\otimes_R L'\otimes_R N'\simeq R\otimes_R R\simeq R.
\end{align*}Hence, $L\otimes_R L'$ is invertible.  
The isomorphism classes of invertible $R$-modules together with the tensor product form a group. This group is called the \textbf{Picard group} of the ring $R$. We denote it by $Pic(R)$. The unit is the equivalence class of the regular module $R$ and the inverse of $M$ is $\Hom_R(M, R)$.

\begin{Example}\label{picardgroupofalocalring}
	The Picard group of a local ring is trivial.
	
	Let $M\in Pic(R)$. Then, $M_\mi\simeq R_\mi$ for the maximal ideal $\mi$ of $R$, hence $M$ is projective. Since $R$ is local, $M$ is free, hence $M\simeq R^n$ for some $n$. On the other hand, $M_\mi\simeq R_\mi^n$ for the maximal ideal $\mi$. Therefore, we must have $n=1$, so $M$ is isomorphic to $R$.
\end{Example}
Now we can see that the Picard group $Pic(R)$ acts on $\mathcal{M}(A)$.

\begin{Lemma}\label{actionofpicardgroup}
	Let $F\in Pic(R),\ L\in\mathcal{M}(A)$. Then, $L\otimes_R F\in \mathcal{M}(A)$. Moreover, this gives an action of $Pic(R)$ on $\mathcal{M}(A)$. 
\end{Lemma}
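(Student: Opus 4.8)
Let $F\in \operatorname{Pic}(R)$ and $L\in\mathcal{M}(A)$. Then $L\otimes_R F\in\mathcal{M}(A)$, and the assignment $(F,L)\mapsto L\otimes_R F$ defines an action of $\operatorname{Pic}(R)$ on $\mathcal{M}(A)$.

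**Plan of proof.** The plan is to verify the three defining requirements for a module to lie in $\mathcal{M}(A)$ — namely that $L\otimes_R F$ is finitely generated projective over $A$, faithful over $R$, and that $\tau_{L\otimes_R F, A}$ is an $(A,R)$-monomorphism — and then to check the group-action axioms, which are essentially formal consequences of associativity and unitality of $\otimes_R$ together with the structure of $\operatorname{Pic}(R)$. First I would record that $F$, being invertible, is finitely generated projective over $R$ (by Proposition~\ref{invertiblemodulescharacterization}, since $F_\mi\simeq R_\mi$ for all maximal $\mi$ forces $F\in R\proj$); hence $L\otimes_R F$ is a finitely generated projective $A$-module, the $A$-action coming from $L$. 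Faithfulness of $L\otimes_R F$ over $R$ follows because $\operatorname{Ann}_R(L\otimes_R F)$ localises to $\operatorname{Ann}_{R_\mi}(L_\mi\otimes_{R_\mi}F_\mi)=\operatorname{Ann}_{R_\mi}(L_\mi)=0$ at every maximal ideal $\mi$ (using $F_\mi\simeq R_\mi$ and faithfulness of $L$), so the annihilator vanishes locally and therefore vanishes.

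**The main point.** The key step is to show $\tau_{L\otimes_R F, A}$ is an $(A,R)$-monomorphism, and the cleanest route is via the local characterization in Lemma~\ref{splitmoduleschangering}: it suffices to prove that $L\otimes_R F$ is projective over $R$ (already done) and that $(L\otimes_R F)(\mi)\in\mathcal{M}(A(\mi))$ for every maximal ideal $\mi$. But $F(\mi)=R(\mi)\otimes_R F$ is an invertible module over the field $R(\mi)$, hence $F(\mi)\simeq R(\mi)$ (Example~\ref{picardgroupofalocalring}), so $(L\otimes_R F)(\mi)\simeq L(\mi)\otimes_{R(\mi)}R(\mi)\simeq L(\mi)$; since $L\in\mathcal{M}(A)$ gives $L(\mi)\in\mathcal{M}(A(\mi))$ by the same Lemma~\ref{splitmoduleschangering}, we conclude $(L\otimes_R F)(\mi)\in\mathcal{M}(A(\mi))$ and the implication $(iii)\Rightarrow(i)$ of Lemma~\ref{splitmoduleschangering} finishes this part. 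Alternatively one can argue directly: using Lemma~\ref{tensoronsecondcomponetofhom} there is a natural isomorphism $\Hom_A(L\otimes_R F, A)\simeq \Hom_A(L,A)\otimes_R \Hom_R(F,R)$ (via the projectivity of $L$ over $A$ and hence over $R$, plus invertibility of $F$), and one checks that $\tau_{L\otimes_R F,A}$ is obtained from $\tau_{L,A}$ by tensoring with the evaluation isomorphism $F\otimes_R\Hom_R(F,R)\simeq R$; an $(A,R)$-monomorphism tensored (over $R$) with an isomorphism of $R$-split objects stays an $(A,R)$-monomorphism, because the $R$-splitting is preserved. I expect the bookkeeping in this direct argument — keeping the $A$-module and $R$-module structures straight through the adjunction isomorphisms and verifying the diagram relating $\tau_{L\otimes_R F,A}$ to $\tau_{L,A}$ commutes — to be the only mildly technical obstacle; the local-characterization route sidesteps it entirely, so that is the one I would write up.

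**The action axioms.** Finally, to see this is a group action: the natural isomorphism $L\otimes_R R\simeq L$ of $A$-modules shows the identity class of $\operatorname{Pic}(R)$ acts trivially, and for $F,F'\in\operatorname{Pic}(R)$ the associativity isomorphism $(L\otimes_R F)\otimes_R F'\simeq L\otimes_R(F\otimes_R F')$ is an $A$-module isomorphism (the $A$-action sits entirely on the $L$ factor), so the action is compatible with the group operation $F\cdot F'=F\otimes_R F'$ of $\operatorname{Pic}(R)$. These isomorphisms are functorial, so passing to isomorphism classes we obtain a genuine action of the group $\operatorname{Pic}(R)$ on the set $\mathcal{M}(A)$, as claimed.
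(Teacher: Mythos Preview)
Your proof is correct and takes essentially the same approach as the paper: reduce to the local situation via Lemma~\ref{splitmoduleschangering}, where $F$ becomes trivial and the claim is immediate. The only cosmetic difference is that the paper invokes the equivalence $(i)\Leftrightarrow(ii)$ of that lemma (localizations $F_\mi\simeq R_\mi$) while you use $(i)\Leftrightarrow(iii)$ (residue fields $F(\mi)\simeq R(\mi)$); both work for the same reason, and your additional explicit checks of projectivity and faithfulness, as well as the sketched direct argument via $\tau_{L,A}$, are fine but not needed once you appeal to the lemma.
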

\begin{proof}
	Let $F\in Pic(R),\ L\in\mathcal{M}(A)$. By Lemma \ref{split modules change ring}, $L_\mi\in \mathcal{M}(A_\mi)$ for each maximal ideal $\mi$ of $R$.
	Note that for each maximal ideal $\mi$ of $R$
	\begin{align*}
		(L\otimes_R F)_\mi\simeq L_\mi\otimes_{R_\mi} F_\mi\simeq L_\mi\otimes_{R_\mi} R_\mi\simeq L_\mi\in \mathcal{M}(A_\mi).
	\end{align*}
	Again, by Lemma \ref{split modules change ring}, $L\otimes_R F\in \mathcal{M}(A)$. Since $R\otimes_R L\simeq L$ and $(F_1\otimes_R F_2)\otimes_R L\simeq F_1\otimes_R(F_2\otimes_R L)$, the second claim follows.
\end{proof}

Note that two elements in $L, L'\in \mathcal{M}(A)$ are in the same orbit if and only if there exists $F\in Pic(R)$ such that $L'\simeq L\otimes_R F$ as $A$-modules.
We denote by $\mathcal{M}(A)/Pic(R)$ the set of orbits of $\mathcal{M}(A)$ under the action of $Pic(R)$.

\begin{Prop}\label{bijectionsplitheredity}
	There is a bijection from $\mathcal{M}(A)/Pic(R)$ to the set of split heredity ideals of $A$. More precisely, \begin{align*}
		\delta\colon \mathcal{M}(A)/Pic(R)\rightarrow \{\text{split heredity ideals of } A\}, \ L\mapsto \im \tau_L\\
		\vartheta\colon \{\text{split heredity ideals of } A\}\rightarrow \mathcal{M}(A)/Pic(R), \ J \mapsto J\otimes_B P
	\end{align*} where $B=\End_A(J)^{op}$ and $P$ is an arbitrary $B$-progenerator that satisfies $\End_B(P)^{op}\simeq R$. 
\end{Prop}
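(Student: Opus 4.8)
The plan is to deduce this from the bijection $\alpha,\beta$ of Proposition \ref{bijectionsplits} by passing to $Pic(R)$-orbits. The first step is to understand how the $Pic(R)$-action of Lemma \ref{actionofpicardgroup} is transported along $\alpha$. Given $L\in\mathcal{M}(A)$ with $J=\im\tau_L$ and $B=\End_A(J)^{op}$, and $F\in Pic(R)$, invertibility of $F$ yields $\Hom_A(L\otimes_R F,A)\simeq F^{-1}\otimes_R\Hom_A(L,A)$ by Tensor-Hom adjunction, and the resulting canonical isomorphism $(L\otimes_R F)\otimes_R\Hom_A(L\otimes_R F,A)\simeq L\otimes_R\Hom_A(L,A)$ intertwines $\tau_{L\otimes_R F}$ with $\tau_L$. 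Hence $\im\tau_{L\otimes_R F}=\im\tau_L=J$, while $\Hom_R(\Hom_A(L\otimes_R F,A),R)\simeq P\otimes_R F$. So $\alpha$ carries the $Pic(R)$-action on $\mathcal{M}(A)$ to the action $(J,P)\mapsto(J,P\otimes_R F)$ on the set of pairs $\{(J,P)\}/_\sim$ of Proposition \ref{bijectionsplits} (and one checks $P\otimes_R F$ is again a $B$-progenerator with $\End_B(P\otimes_R F)^{op}\simeq R$, so this is indeed an action). This step is a routine computation with the adjunction and with $F^{-1}\otimes_R F\simeq R$.

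The crux is then the following Morita-theoretic claim: for a fixed split heredity ideal $J$ with $B=\End_A(J)^{op}$, the $B$-progenerators $P$ with $\End_B(P)^{op}\simeq R$ form a single $Pic(R)$-orbit. Existence of such a $P$ is exactly condition (iv) of Definition \ref{splithereditydef}. For uniqueness up to $Pic(R)$, suppose $P'$ is another such progenerator. Since $\Hom_B(P,-)\colon B\m\to R\m$ is an equivalence with inverse $P\otimes_R-$, the module $F:=\Hom_B(P,P')$ is a finitely generated projective $R$-module with $P'\simeq P\otimes_R F$ and $\End_R(F)\simeq\End_B(P')\simeq R$; localizing at each prime $\pri$ gives $\operatorname{Mat}_{n(\pri)}(R_\pri)\simeq R_\pri$ as $R_\pri$-modules, forcing $n(\pri)=1$, so $F$ has rank one everywhere and $F\in Pic(R)$ by the characterization of invertible modules (Proposition \ref{invertiblemodulescharacterization}). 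The main obstacle I anticipate is bookkeeping of sides in this argument — which of $B\m$, $\rmod B$, $R\m$ each object lives in, and which adjoint realizes $P'\simeq P\otimes_R F$ — rather than any essential difficulty; the rank computation itself is short.

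Finally I would assemble the bijection. Composing $\alpha$ with the "forget $P$" map $\{(J,P)\}/_\sim\to\{\text{split heredity ideals of }A\}$ gives a map which is surjective by the existence half of the crux step and whose fibres, by the orbit description of step one, are exactly the $Pic(R)$-orbits of $\mathcal{M}(A)$; hence it descends to a bijection $\delta\colon\mathcal{M}(A)/Pic(R)\to\{\text{split heredity ideals of }A\}$, $[L]\mapsto\im\tau_L$. Well-definedness of $\vartheta\colon J\mapsto[J\otimes_B P]$ (independence of the choice of $B$-progenerator $P$) is precisely the uniqueness-up-to-$Pic(R)$ half of the crux step applied to $\beta$, and $\delta\circ\vartheta=\id$, $\vartheta\circ\delta=\id$ then follow immediately from $\alpha\circ\beta=\id$, $\beta\circ\alpha=\id$ together with the explicit formulas $\beta(J,P)=J\otimes_B P$ and $\alpha(L)=(\im\tau_L,\Hom_R(\Hom_A(L,A),R))$ of Proposition \ref{bijectionsplits}.
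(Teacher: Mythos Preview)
Your proposal is correct and follows essentially the same strategy as the paper: reduce to the bijection $\alpha,\beta$ of Proposition \ref{bijectionsplits} and verify that passing to $Pic(R)$-orbits collapses exactly the second coordinate. The only tactical differences are that the paper checks $\im\tau_{L\otimes_R F}=\im\tau_L$ by localizing and invoking Lemma \ref{equalitybeinglocal} rather than your direct adjunction argument, and it shows independence of $\vartheta$ on $P$ by exhibiting the inverse $P'\otimes_B Q$ of $Q'\otimes_B P$ directly (with $P'=\Hom_B(P,B)$, $Q'=\Hom_B(Q,B)$) rather than your rank-one localization argument via $\End_R(F)\simeq R$; both pairs of arguments are equally valid and of comparable length.
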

\begin{proof}
	Consider $L$ and $L\otimes_R F$, $F\in Pic(R)$. For every maximal ideal $\mi$ of $R$,
	\begin{align*}
		(L\otimes_R F\otimes_R \Hom_A(L\otimes_R F, A))_\mi\simeq L_\mi\otimes_{R_\mi} F_\mi\otimes_{R_\mi} \Hom_{A_\mi}(L_\mi\otimes_{R_\mi} F_\mi, A_\mi)\\\simeq L_\mi\otimes_{R_\mi} R_\mi\otimes_{R_\mi} \Hom_{A_\mi}(L_\mi\otimes_{R_\mi} R_\mi, A_\mi) \simeq L_\mi\otimes_{R_\mi} \Hom_{A_\mi}(L_\mi, A_\mi) \simeq (L\otimes_R \Hom_A(L, A))_\mi
	\end{align*}
	Hence, ${\tau_{L\otimes_R F}}_\mi={\tau_L}_\mi$ for every maximal ideal $\mi$ of $R$. Therefore, $(\im \tau_{L\otimes_R F})_\mi\simeq (\im \tau_L)_\mi$ for every maximal ideal $\mi$ of $R$. Since $(\im \tau_{L\otimes_R F}), \im \tau_L\subset A$ it follows that $\im \tau_{L\otimes_R F}=\im \tau_L$. So, $\delta$ is well defined. 
	
	Now we have to see that the image of $\vartheta$ is independent of the choice of $P$. Consider $P$ and $Q$ are $B$-progenerators such that $\End_B(P)^{op}\simeq R$ and $\End_B(Q)^{op}\simeq R$. Then, \begin{align}
		P\otimes_B \Hom_B(P, B)\simeq R \ \text{ as } R\text{-modules and }\\
		Q\otimes_B \Hom_B(Q, B)\simeq R \ \text{ as } R\text{-modules}.
	\end{align} Fix $P'=\Hom_B(P, B)$ and $Q'=\Hom_B(Q, B)$. Since $P$ and $Q$ are generators $Q'\otimes_R Q\simeq B$ and $P'\otimes_R P\simeq B$ as $(B, B)$-bimodules (see for example \citep[Chapter 12, pages 447-453]{Faith1973} ). It follows as left $A$-modules,
	\begin{align}
		J\otimes_B P\simeq J\otimes_B (B\otimes_B P)\simeq J\otimes_B (Q'\otimes_R Q)\otimes_B P\simeq (J\otimes_B Q)\otimes_R (Q'\otimes_B P)
	\end{align}
	Now $Q'\otimes_B P\in Pic(R)$. In fact, \begin{align*}
		(Q'\otimes_B P)\otimes_R (P'\otimes_B Q)\simeq Q'\otimes_B(P\otimes_R P')\otimes_B Q\simeq Q'\otimes_B B\otimes_B Q\simeq Q'\otimes_B Q\simeq R.
	\end{align*}
	Hence, $J\otimes_B P= J\otimes_B Q $ in $\mathcal{M}(A)$. Therefore, $\vartheta$ is well defined. Recall the maps $\alpha$ and $\beta$ from Proposition \ref{bijection splits}. Notice that $\delta$ is the projection onto the first coordinate of the map $\alpha$. Denote this projection by $\pi$. On the other hand, $\vartheta(J)=\beta(J, P)$ for some $B$-progenerator $P$. 
	
	Therefore, $\vartheta\circ \delta (L)= \vartheta (\im \tau_L)=\beta(\im \tau_L, \Hom_R(\Hom_A(L, A), R))=\beta\circ \alpha(L)=L$ for any $L\in \mathcal{M}(A)/Pic(R)$ and
	$\delta \circ \vartheta (J)=\delta \circ \beta(J, P)= \pi\circ \alpha\circ \beta(J, P)=\pi(J, P)=J$ for any split heredity $J$. Thus, both $\vartheta$ and $\delta$ are bijections.
\end{proof}

\section{Constructing filtrations in integral split highest weight categories}\label{Filtrations in split highest weight categories} 

We will now provide the details of how filtrations can be constructed in integral split highest weight categories.

\begin{Lemma}\label{Lemmaforfiltrationsone}
	Let $F$ be a free $R$-module of finite rank and let $L, Q\in A\m$ with $\End_A(L)\simeq R$. Let $f\colon F\rightarrow \Ext^1_A(Q, L)$ be surjective.
Then,	there is an isomorphism $$\Hom_R(F, \Ext_A^1(Q, L))\rightarrow \Ext_A^1(Q, L\otimes_R DF).$$ Moreover, the image of $f$ in $\Ext_A^1(Q, L\otimes_R DF)$\begin{align*}
		0\rightarrow L\otimes_R DF\rightarrow X\rightarrow Q\rightarrow 0
	\end{align*}satisfies $\Ext_A^1(X, L)=0$. \label{Lemma for filtrations1}
\end{Lemma}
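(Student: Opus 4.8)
The plan is to first establish the isomorphism $\Hom_R(F, \Ext_A^1(Q,L)) \simeq \Ext_A^1(Q, L\otimes_R DF)$, and then use the universal property encoded in this isomorphism to show that the extension corresponding to the surjection $f$ is, in a suitable sense, ``universal'' and hence kills $\Ext_A^1(-,L)$ after pullback. First I would note that since $F$ is free of finite rank, $DF = \Hom_R(F,R)$ is also free of finite rank, and $L\otimes_R DF$ is a finite direct sum of copies of $L$; more precisely, choosing a basis of $F$ gives $DF \simeq R^m$ and $\Hom_R(F, -) \simeq (-)^m$ on both sides, so the desired isomorphism reduces to the additivity of $\Ext_A^1(Q, -)$ in the second variable together with the identification $L\otimes_R DF \simeq L^m$. (One should be slightly careful that the isomorphism is natural enough to transport the element $f$ correctly, but this is routine once bases are fixed.) Under this identification, the surjection $f\colon F \to \Ext_A^1(Q,L)$, viewed as an element of $\Hom_R(F, \Ext_A^1(Q,L))$, corresponds to an extension class $\xi \in \Ext_A^1(Q, L\otimes_R DF)$ represented by $0\to L\otimes_R DF \to X \to Q \to 0$.

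Next I would apply $\Hom_A(-, L)$ to this short exact sequence to obtain the long exact sequence
\begin{align*}
\Hom_A(L\otimes_R DF, L) \xrightarrow{\partial} \Ext_A^1(Q, L) \to \Ext_A^1(X, L) \to \Ext_A^1(L\otimes_R DF, L).
\end{align*}
The connecting map $\partial$ is (up to the canonical identifications) precisely the map that sends an endomorphism-type morphism $L\otimes_R DF \to L$ to the pushout of $\xi$ along it; using $\End_A(L)\simeq R$ and $\Hom_A(L\otimes_R DF, L) \simeq \Hom_R(DF, \End_A(L)) \simeq \Hom_R(DF, R) \simeq F$, one checks that $\partial$ is identified with the original map $f\colon F \to \Ext_A^1(Q,L)$. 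Since $f$ is surjective by hypothesis, $\partial$ is surjective, so the connecting map $\Ext_A^1(Q,L) \to \Ext_A^1(X,L)$ is zero, i.e. the map $\Ext_A^1(X,L) \to \Ext_A^1(L\otimes_R DF, L)$ is injective.

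To conclude $\Ext_A^1(X,L) = 0$ it then remains to show $\Ext_A^1(L\otimes_R DF, L) = 0$, which follows from $\Ext_A^1(L\otimes_R DF, L) \simeq \Ext_A^1(L, L)^m$ together with $\Ext_A^1(L,L) = 0$. The vanishing of $\Ext_A^1(L,L)$ is where one invokes that $L$ is the maximal standard module; in the context of the paper $L$ is a projective $R$-split module in $\mathcal{M}(A)$ and, via Proposition \ref{extonstandards}(b), $\Ext_A^i(\St(\l),\St(\l)) = 0$ for $i>0$ (or directly: for a standard module of maximal index, projectivity of the relevant quotient forces this). The main obstacle I anticipate is getting the identification of the connecting homomorphism $\partial$ with $f$ exactly right — i.e. verifying that the abstract Yoneda/pushout description of $\partial$ matches the given $R$-linear map under the chain of adjunction isomorphisms $\Hom_A(L\otimes_R DF, L) \simeq F$. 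This is a diagram-chasing exercise that is conceptually clear but requires care to keep all the canonical maps coherent; everything else is formal homological algebra plus the additivity of $\Ext$ over finite direct sums.
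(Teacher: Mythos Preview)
Your approach is essentially the same as the paper's: establish the isomorphism via additivity in the second variable, then apply $\Hom_A(-,L)$ to the constructed extension and identify the connecting homomorphism with $f$ under $\Hom_A(L\otimes_R DF, L)\simeq F$. The paper carries out the identification of $\partial'$ with $f$ explicitly via pushout diagrams (computing $\partial'(\pi_j)$ for the projections $\pi_j\colon L^n\to L$), which is exactly the ``diagram-chasing exercise'' you anticipate.

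You in fact notice a point the paper glosses over. The paper writes only the three-term piece $\Hom_A(L^n,L)\xrightarrow{\partial'}\Ext_A^1(Q,L)\to\Ext_A^1(X,L)$ and concludes $\Ext_A^1(X,L)=0$ ``by exactness'' once $\partial'$ is surjective. But exactness of the long sequence only gives that $\Ext_A^1(X,L)$ injects into $\Ext_A^1(L^n,L)\simeq\Ext_A^1(L,L)^n$; one still needs $\Ext_A^1(L,L)=0$, as you observe. Strictly, the lemma as stated (only $\End_A(L)\simeq R$ assumed) does not suffice: take $Q=0$ and $F=R$, so that $X=L$ and the conclusion becomes $\Ext_A^1(L,L)=0$, which is not forced by $\End_A(L)\simeq R$. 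In the paper's sole application (Lemma~\ref{AandAmoduloJproj}(b)) one has $L\in\mathcal{M}(A)\subset A\proj$, so $\Ext_A^1(L,L)=0$ trivially and the issue is harmless; your appeal to projectivity of $L$ (rather than Proposition~\ref{extonstandards}, which logically comes later) is the cleaner fix.
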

\begin{proof}
	Note first that there is such isomorphism. Let $Q^{\bullet}$ be a projective $A$-resolution for $Q$. Then, \begin{align*}
		\Hom_R(F, \Ext_A^1(Q, L))&=\Hom_R(F, H^1(\Hom_A(Q^{\bullet}, L)))\\&\simeq H^1(\Hom_R(F, \Hom_A(Q^{\bullet}, L))), \ \text{since }\Hom_R(F, -) \text{ is exact}\\
		&\simeq H^1(\Hom_R(F, R)\otimes_R \Hom_A(Q^{\bullet}, L)), \ \text{since } F\in R\proj\\
		&\simeq H^1(\Hom_A(Q^{\bullet}, L\otimes_R DF))= \Ext_A^1(Q, L\otimes_R DF), \text{since } Q^{\bullet} \text{is a projective chain.} 
	\end{align*}
	Now we need to know how to obtain explicitly the image of $f$. Consider $F=R^n$, and $\{e_i, 1\leq i\leq n\}$ a basis. Then, $
	\left\lbrace f(e_i)\colon \quad 0\rightarrow L\rightarrow X_i\rightarrow Q\rightarrow 0 \quad|\quad 1\leq i\leq n\right\rbrace
	$ is an $R$-generator set for $\Ext_A^1(Q, L)$. Note that the previous isomorphism can be viewed as \begin{align*}
		\Hom_R(F, \Ext_A^1(Q, L))\rightarrow \Ext_A^1(Q, L)^n\rightarrow \Ext_A^1(Q, L^n)\rightarrow \Ext_A^1(Q, \Hom_R(F, L)).
	\end{align*}
	Consider a projective presentation for $Q$, $0\rightarrow N\xrightarrow{k} M\xrightarrow{\pi} Q\rightarrow 0 $. Apply the functors $\Hom_A(-, L)$ and $\Hom_A(-, L^n)$. We obtain a commutative diagram
	\begin{center}
		\begin{tikzcd}
			\Hom_A(N, L)^n\arrow[r, "\partial^n"]\arrow[d, "\simeq"]&\Ext_A^1(Q, L)^n\arrow[d, "\simeq"]\arrow[r]&0\\
			\Hom_A(N, L^n)\arrow[r, "\partial_n"]&\Ext_A^1(Q, L^n)\arrow[r]& 0
		\end{tikzcd}.
	\end{center} 
	For every $i$, since $\partial$ is surjective there exists a map $s_i\in \Hom_A(N, L)$ such that $\partial(s_i)=f(e_i)$. This map relates with $f(e_i)$ by the following pushout diagram:
	\begin{center}
		\begin{tikzcd}
			0\arrow[r]& N\arrow[r, "k"]\arrow[d, "s_i"]& M\arrow[r, "\pi"]\arrow[d, "p_i"]& Q\arrow[r] \arrow[d, equal]& 0\\
			0\arrow[r]& L\arrow[r]& X_i\arrow[r]& Q\arrow[r] & 0\\[-0.5cm]
			& &\arrow[u, equal]  _{pushout(s_i, k)} & &
		\end{tikzcd}.
	\end{center} 
	This description of the map $\partial$ can be found with more detail in any book of homological algebra (see e.g \citep[Theorem 2.4]{Hilton1997}).
	
	Now the image of $f$ in $\Ext_A^1(Q, L^n)$ is just $\partial_n(s_1, \ldots, s_n)$. Hence, it is given by the following diagram
	\begin{equation} 
		\begin{tikzcd}
			0\arrow[r]& N\arrow[r, "k"]\arrow[d, "({s_1}{,}\ldots{,}{s_n})"]& M\arrow[r, "\pi"]\arrow[d, "p"]& Q\arrow[r] \arrow[d, equal]& 0\\
			0\arrow[r]& L^n\arrow[r, "g"]& X\arrow[r]& Q\arrow[r] & 0\\[-0.5cm]
			& &\arrow[u, equal]  _{pushout(({s_1}{,}\ldots{,}{s_n}), k)} & &
		\end{tikzcd}. \label{qheqtikz21}
	\end{equation} 
	Now applying $\Hom_A(-, L)$ to the bottom row of (\ref{qheqtikz21}) yields 
	\begin{align}
		\Hom_A(L^n, L)\xrightarrow{\partial'}\Ext_A^1(Q, L)\rightarrow \Ext_A^1(X, L) \label{s1}
	\end{align}
	Note that $\Hom_A(L^n, L)\simeq \Hom_A(L, L)^n\simeq R^n=F$. Denote this isomorphism by $h\colon F\rightarrow \Hom_A(L^n, L)$. We claim that $f=\partial' \circ h$.
	
	Consider $\pi_j\colon L^n\rightarrow L$ the canonical epimorphism. $\partial'(\pi_j)$ is given by
	\begin{center}
		\begin{tikzcd}
			0\arrow[r]& L^n\arrow[r, "g"]\arrow[d, "\pi_j"]& X\arrow[r]\arrow[d]& Q\arrow[r] \arrow[d, equal]& 0\\
			0\arrow[r]& L\arrow[r]& Y_j\arrow[r]& Q\arrow[r] & 0\\[-0.5cm]
			& &\arrow[u, equal]  _{pushout(\pi_j, g)} & &
		\end{tikzcd}.
	\end{center} 
	Now consider the diagram
	\begin{center}
		\begin{tikzcd}[every arrow/.append style={shift left}]
			0\arrow[r]& N\arrow[r, "k"]\arrow[d, swap, "({s_1}{,}\ldots{,}{s_n})"]\arrow[dd, bend right=200, "s_j"{yshift=-15pt}]& M\arrow[r, "\pi"]\arrow[d, "p"]& Q\arrow[r] \arrow[d, equal]& 0\\
			0\arrow[r]& L^n\arrow[r, "g"]\arrow[d, "\pi_j", swap]& X\arrow[r]\arrow[d]& Q\arrow[r] \arrow[d, equal]& 0\\
			0\arrow[r]& L\arrow[r]& Y_j\arrow[r]& Q\arrow[r] & 0\\[-0.5cm]
		\end{tikzcd}.
	\end{center}
	
	So, the external diagram is a pushout as well. In fact, $Y_j$ is the pushout of $(s_j, k)$. By the universal property of pushouts, it follows that the exact sequences $0\rightarrow L\rightarrow Y_i\rightarrow Q\rightarrow 0$ and $0 \rightarrow L\rightarrow X_i\rightarrow Q \rightarrow 0$ are equivalent. Therefore, $\partial'(\pi_j)=f(e_j)$. So, the claim follows, and hence, $\partial'$ is surjective. By the exactness of (\ref{s1}), it follows that $\Ext_A^1(X, L)=0$.
\end{proof}

\begin{Lemma}\label{Lemmaforfiltrationstwo}
	Let $F$ be a free $R$-module and let $L, T\in A\m$ with $\End_A(L)\simeq R$. Let ${g\colon F\rightarrow \Ext^1_A(L, T)}$ be surjective.
	There is an isomorphism $\Hom_R(F, \Ext_A^1(L, T))\rightarrow \Ext_A^1(L\otimes_R F, T)$. Then, the image of $f$ in \mbox{$\Ext_A^1(L\otimes_R F, T)$}\begin{align*}
		0\rightarrow T\rightarrow Y\rightarrow L\otimes_R F\rightarrow 0
	\end{align*}satisfies $\Ext_A^1(L, Y)=0$.
\end{Lemma}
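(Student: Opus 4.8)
The plan is to dualise the proof of Lemma \ref{Lemmaforfiltrationsone} in the obvious way: replace the contravariant functor $\Hom_A(Q,-)$ by the covariant $\Hom_A(-,T)$, replace $\Hom_A(-,L)$ by $\Hom_A(L,-)$, and replace $L\otimes_R DF$ by $L\otimes_R F$. As in that lemma we may assume $F\simeq R^n$ is free of finite rank, which is harmless since $\Ext_A^1(L,T)$ is finitely generated over $R$. (I shall abbreviate $[g]$ for "the image of $g$", the statement containing the harmless typo "$f$''.)

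First I would produce the natural isomorphism $\Hom_R(F,\Ext_A^1(L,T))\xrightarrow{\sim}\Ext_A^1(L\otimes_R F,T)$. Choose a projective $A$-resolution $P^{\bullet}\to L$. Since $F\in R\proj$, the complex $P^{\bullet}\otimes_R F$ is again a projective $A$-resolution, now of $L\otimes_R F$ (each $P^i\otimes_R F$ is a finite direct sum of copies of $P^i$), and $\Hom_A(P^i\otimes_R F,T)\simeq\Hom_R(F,\Hom_A(P^i,T))$ functorially in $i$, so that $\Hom_A(P^{\bullet}\otimes_R F,T)\simeq\Hom_R(F,\Hom_A(P^{\bullet},T))$ as complexes; because $\Hom_R(F,-)$ is exact it commutes with $H^1$, yielding the claimed isomorphism. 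Equivalently this is just additivity of $\Ext_A^1$ in the first variable, $\Ext_A^1(L^n,T)\simeq\Ext_A^1(L,T)^n$, after the identification $L\otimes_R F\simeq L^n$.

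Next I would describe $[g]$ concretely. Fixing a basis $e_1,\dots,e_n$ of $F$, the isomorphism above identifies $g$ with $(g(e_1),\dots,g(e_n))\in\Ext_A^1(L,T)^n$. Pick a projective presentation $0\to N\xrightarrow{k}M\xrightarrow{\pi}L\to 0$; applying $\Hom_A(-,T)$ gives a surjection $\partial\colon\Hom_A(N,T)\to\Ext_A^1(L,T)$, and I would choose $s_i\in\Hom_A(N,T)$ with $\partial(s_i)=g(e_i)$. Then $[g]\in\Ext_A^1(L\otimes_R F,T)\simeq\Ext_A^1(L^n,T)$ is represented by the extension $0\to T\to Y\to L^n\to 0$ obtained as the pushout of $k^{\oplus n}\colon N^n\to M^n$ along $s:=(s_1,\dots,s_n)\colon N^n\to T$: indeed, restricting $[Y]$ along the $j$-th coordinate inclusion $\iota_j\colon L\hookrightarrow L^n$ gives, by compatibility of pushouts with pullbacks, the pushout of $k$ along $s\circ\iota_j^N=s_j$, which represents $\partial(s_j)=g(e_j)$. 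Finally, applying $\Hom_A(L,-)$ to $0\to T\to Y\to L^n\to 0$ produces the exact sequence
\begin{align*}
\Hom_A(L,L\otimes_R F)\xrightarrow{\ \partial'\ }\Ext_A^1(L,T)\longrightarrow\Ext_A^1(L,Y),
\end{align*}
and here $\Hom_A(L,L^n)\simeq\End_A(L)^n\simeq R^n\simeq F$, the middle isomorphism being exactly the hypothesis $\End_A(L)\simeq R$; calling the composite $h\colon F\to\Hom_A(L,L^n)$, one has $h(e_j)=\iota_j$, and $\partial'(\iota_j)$ is the pullback of $0\to T\to Y\to L^n\to 0$ along $\iota_j$, which by the construction of $Y$ is the pushout of $k$ along $s_j$, i.e. $g(e_j)$. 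Hence $\partial'\circ h=g$, so $\partial'$ is surjective, and the three-term exact sequence above then forces $\Ext_A^1(L,Y)=0$, exactly as in Lemma \ref{Lemmaforfiltrationsone}.

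The main obstacle is purely bookkeeping: matching the canonical isomorphism of the first step with the explicit pushout extension $Y$ of the second step, and then checking in the third step that the connecting map $\partial'$ attached to that very extension recovers $g$ under $\Hom_A(L,L^n)\simeq F$. Both are diagram chases with pushout/pullback squares of precisely the kind already carried out for Lemma \ref{Lemmaforfiltrationsone}; once they are in place the surjectivity of $\partial'$, and hence the vanishing of $\Ext_A^1(L,Y)$, are immediate.
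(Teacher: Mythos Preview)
Your proof is correct and reaches the same conclusion by the same final mechanism (apply $\Hom_A(L,-)$ to the extension, identify the connecting map with $g$ via $\End_A(L)\simeq R$, and deduce surjectivity). The route you take to build the extension $Y$, however, is genuinely different from the paper's.

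The paper dualises Lemma~\ref{Lemmaforfiltrationsone} by passing to an \emph{injective} presentation $0\to T\to M\to Q\to 0$ of $T$: the connecting map is then $\partial\colon\Hom_A(L,Q)\to\Ext_A^1(L,T)$, one lifts each $g(e_i)$ to some $s_i\in\Hom_A(L,Q)$, and $Y$ is constructed as the \emph{pullback} of $0\to T\to M\to Q\to 0$ along $(s_1,\dots,s_n)\colon L^n\to Q$. The identification $\partial'(k_j)=g(e_j)$ is then a comparison of iterated pullbacks. You instead keep working with a \emph{projective} presentation $0\to N\to M\to L\to 0$ of $L$, lift each $g(e_i)$ to $s_i\in\Hom_A(N,T)$, and realise $Y$ as the \emph{pushout} of $k^{\oplus n}$ along $(s_1,\dots,s_n)\colon N^n\to T$; your identification $\partial'(\iota_j)=g(e_j)$ then mixes a pullback (the connecting map) with the pushout description of $Y$, handled by naturality of $\partial$ with respect to the map of presentations induced by $\iota_j$. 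Both constructions produce the same class in $\Ext_A^1(L\otimes_R F,T)$ and both diagram chases go through. A small practical advantage of your version is that it never invokes injective modules or resolutions, staying entirely within projective presentations, which is pleasant over a general Noetherian base; the paper's version is more literally the categorical dual of Lemma~\ref{Lemmaforfiltrationsone}.
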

\begin{proof}
	The proof is the dual version of the previous one. For the sake of completeness, we will write a proof. The isomorphism exists:
	Let $T^{\bullet}$ be an injective resolution for $T$.
	\begin{align*}
		\Hom_R(F, \Ext_A^1(L, T))&=\Hom_R(F, H^1(\Hom_A(L, T^{\bullet})))\simeq H^1(\Hom_R(F, \Hom_A(L, T^{\bullet})))\\
		&\simeq H^1(\Hom_A(F\otimes_R L, T^{\bullet})), \ \text{by Tensor-Hom adjunction}\\
		&\simeq \Ext^1_A(F\otimes_R L, T).
	\end{align*}
	Consider $F=R^n$, and let $\{e_i, 1\leq i\leq n\}$ be a basis. Then,
	\begin{align*}
		\left\lbrace 0\rightarrow T\rightarrow Y_i\rightarrow L\rightarrow 0| 1\leq i\leq n\right\rbrace .
	\end{align*}is an $R$-generator set for $\Ext_A^1(L, T)$. Consider an injective presentation for $T$ \begin{align}
		0\rightarrow T\xrightarrow{k}M\xrightarrow{\pi}Q\rightarrow 0.
	\end{align} Applying $\Hom_A(L, -)^n$ and $\Hom_A(L^n, -)$ yields the commutative diagram
	\begin{center}
		\begin{tikzcd}
			\Hom_A(L, Q)^n\arrow[r, "\partial^n"]\arrow[d, "\simeq"]&\Ext_A^1(L, T)^n\arrow[d, "\simeq"]\arrow[r]&0\\
			\Hom_A(L^n, Q)\arrow[r, "\partial_n"]&\Ext_A^1(L^n, T)\arrow[r]& 0
		\end{tikzcd}.
	\end{center} 
	For every $i$, since $\partial$ is surjective there exists a map $s_i\in \Hom_A(L, Q)$ such that $\partial(s_i)=g(e_i)$,
	\begin{center}
		\begin{tikzcd}
			0\arrow[r]& T\arrow[r, "k"]\arrow[d, equal]& M\arrow[r, "\pi"]& Q\arrow[r] & 0\\
			0\arrow[r]& T\arrow[r]& Y_i\arrow[r]\arrow[u]& L\arrow[r]\arrow[u, "s_i"] & 0\\[-0.5cm]
			& &\arrow[u, equal]  _{pullback(s_i, \pi)} & &
		\end{tikzcd}.
	\end{center} 
	Now the image of $g$ in $\Ext_A^1(L^n, T)$ is just $\partial_n(s_1, \ldots, s_n)$. Hence, it is given by the following diagram
	\begin{center}
		\begin{tikzcd}
			0\arrow[r]& T\arrow[d, equal]\arrow[r, "k"]& M\arrow[r, "\pi"]& Q\arrow[r] & 0\\
			0\arrow[r]& T\arrow[r]& Y\arrow[r, "h"]\arrow[u]& L^n\arrow[u, "({s_1}{,}\ldots{,}{s_n})"]\arrow[r] & 0\\[-0.5cm]
			& &\arrow[u, equal]  _{pullback(({s_1}{,}\ldots{,}{s_n}), \pi)} & &
		\end{tikzcd}.
	\end{center} 
	Now applying $\Hom_A(L, -)$ to the image of $g$ yields
	\begin{align}
		\Hom_A(L, L^n)\xrightarrow{\partial'}\Ext_A^1(L, T)\rightarrow \Ext_A^1(L, Y)\label{s2}.
	\end{align}Let $w$ denote  the canonical isomorphism $w\colon F\rightarrow \Hom_A(L, L^n)$.
	Now computing $\partial'(k_j)$ for the canonical monomorphisms $k_j\colon L\rightarrow L^n$ and comparing with the pullback diagram that gives the image $g$, it follows again that the induced external diagram is again a pullback. By the universal property of pullbacks, it follows that $g=\partial'\circ w$. So, we conclude by the exactness of (\ref{s2}) that $\Ext_A^1(L, Y)=0$.
\end{proof}

Let $L\in \mathcal{M}(A)$. The following result shows how to reconstruct the subcategories $A\proj$ and $A/J\proj$ for $J=\im \tau_L$ from one another.

\begin{Lemma}(see \citep[Lemma 4.9]{Rouquier2008})
	Let $L\in \mathcal{M}(A)$ and let $J=\im \tau_L$.
	\begin{enumerate}[(i)]
		\item Given $P\in A\proj$, then $\im \tau_{L, P}=JP$ and $P/JP\in A/J\proj$.
		\item Let $Q\in A/J\proj$. Let $F$ be a free $R$-module and $f\colon F\rightarrow \Ext_A^1(Q, L)$ surjective. 
		
		Let $0\rightarrow L\otimes_R DF\xrightarrow{g} P\xrightarrow{h} Q\rightarrow 0$ be the extension in $\Ext_A^1(Q, L\otimes_R DF)$ corresponding to $f$ via the isomorphism $\Hom_R(F, \Ext_A^1(Q, L)) \rightarrow \Ext_A^1(Q, L\otimes_R DF)$. Then, $P\in A\proj$.
	\end{enumerate} \label{AandAmoduloJproj}
\end{Lemma}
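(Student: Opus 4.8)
The plan is to prove the two parts in order, using the structural description of $L\in\mathcal{M}(A)$ coming from Proposition \ref{split modules characterization} and the explicit pushout construction of the extension in part (b) provided by Lemma \ref{Lemmaforfiltrationsone} (with the roles of $Q$ and $L$ as in its statement, writing $DF=\Hom_R(F,R)$ for the free module that appears there).

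For part (a): fix $P\in A\proj$. First I would recall from Lemma \ref{imtauisideal} that $J=\im\tau_L$ is an idempotent ideal of $A$. The identity $\im\tau_{L,P}=JP$ is essentially functoriality of $\tau$ together with the fact that both sides are additive in $P$, so it suffices to check it for $P=A$, where $\im\tau_{L,A}=\im\tau_L=J=JA$, and then pass to summands of free modules. Since $L\in\mathcal{M}(A)$, the map $\tau_{L,P}\colon L\otimes_R\Hom_A(L,P)\to P$ is an $(A,R)$-monomorphism, so $JP=\im\tau_{L,P}$ is an $R$-summand of $P$; hence $P/JP$ is projective over $R$. Moreover $J(P/JP)=0$ because $J^2=J$ gives $J\cdot JP = J^2P = JP$, so by Lemma \ref{AmoduloJmod} we get $P/JP\in A/J\m$. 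To upgrade ``$A/J$-module, projective over $R$'' to ``$A/J$-projective'', I would invoke Theorem \ref{projectivitytermsmaximal} applied to the algebra $A/J$ (which is a projective Noetherian $R$-algebra because $\tau_L$ is an $(A,R)$-monomorphism, so $A/J\in R\proj$): it is enough that $(P/JP)(\mi) = P(\mi)/J(\mi)P(\mi)$ be $A(\mi)/J(\mi)$-projective for every maximal ideal $\mi$, and over the field $R(\mi)$ this follows since $P(\mi)$ is $A(\mi)$-projective and $J(\mi)P(\mi)$ is, by the field-case theory (or by a direct argument using $\tau_{L(\mi)}$), a direct summand of $P(\mi)$ as an $A(\mi)$-module. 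Alternatively, one can argue directly that $JP$ is an $A$-summand of $P$ locally and conclude $P/JP$ is $A$-projective, hence $A/J$-projective.

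For part (b): take $Q\in A/J\proj$ and the extension $0\to L\otimes_R DF\xrightarrow{g}P\xrightarrow{h}Q\to 0$ corresponding to a surjection $f\colon F\to\Ext_A^1(Q,L)$. By Lemma \ref{Lemmaforfiltrationsone}, this $P$ satisfies $\Ext_A^1(P,L)=0$. The goal is $P\in A\proj$, and for this I would again use Theorem \ref{projectivitytermsmaximal}: I must show $P\in R\proj$ and $P(\mi)\in A(\mi)\proj$ for all maximal $\mi$. Projectivity over $R$ is immediate since $L\otimes_R DF$ and $Q$ are both in $R\proj$ (the former because $L\in\mathcal{M}(A)\subset R\proj$ and $DF$ is $R$-free, the latter because $Q\in A/J\proj\subset R\proj$), and an extension of $R$-projectives is $R$-projective. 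Because $P$ is $R$-projective, the sequence is $(A,R)$-exact and stays exact after applying $R(\mi)\otimes_R-$, giving $0\to L(\mi)\otimes_{R(\mi)}DF(\mi)\to P(\mi)\to Q(\mi)\to 0$; moreover, since $\Ext_A^1(Q,L)$ commutes with $-\otimes_R R(\mi)$ in the relevant range (using $Q\in R\proj$ and $L\in R\proj$, as in the isomorphism $\Hom_R(F,\Ext_A^1(Q,L))\simeq\Ext_A^1(Q,L\otimes_R DF)$ and base change), this reduced sequence represents the extension corresponding to the surjection $f(\mi)\colon F(\mi)\to\Ext_{A(\mi)}^1(Q(\mi),L(\mi))$. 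Now over the field $R(\mi)$, the finite-dimensional split quasi-hereditary picture applies: $L(\mi)\in\mathcal{M}(A(\mi))$ by Lemma \ref{split modules change ring}, $J(\mi)=\im\tau_{L(\mi)}$ is a split heredity ideal, $Q(\mi)$ is $A(\mi)/J(\mi)$-projective, and killing a minimal set of generators of $\Ext^1$ by a module built from copies of the standard module $L(\mi)$ yields precisely the $A(\mi)$-projective cover behavior — i.e.\ $P(\mi)$ is $A(\mi)$-projective. I would state this last step as an appeal to the field-case theory of projective $R$-split modules / split heredity ideals (Lemma \ref{standardfromsplitquasi} and the standard construction of projective covers of $\Delta$'s), possibly isolating it as the core computation.

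The main obstacle I anticipate is the last step of part (b): making rigorous that over the residue field the extension-by-copies-of-$L(\mi)$ produces an $A(\mi)$-\emph{projective} module. This requires knowing that $\Ext^1_{A(\mi)}(Q(\mi),L(\mi))$ measures exactly the multiplicity of (the projective cover of) $L(\mi)$ needed to complete $Q(\mi)$ to a projective, which is the heart of the relation ``projectives of $A(\mi)$ are built from projectives of $A(\mi)/J(\mi)$ by universal extensions with the standard module'' — essentially the field case of the lemma itself. I would handle it by citing the finite-dimensional theory (and the fact that $F$, hence $DF$, can be taken of rank equal to $\dim_{R(\mi)}\Ext^1_{A(\mi)}(Q(\mi),L(\mi))$ at a chosen $\mi$, then applying Nakayama/Theorem \ref{projectivitytermsmaximal} to conclude the global statement regardless of the a priori larger choice of $F$), so that the integral statement reduces cleanly to the known field-theoretic one.
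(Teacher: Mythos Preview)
Your plan for part (a) is essentially correct but takes an unnecessarily roundabout path. The paper simply notes that if $P$ is a summand of $A^s$, then $P/JP$ is a summand of $A^s/JA^s\simeq(A/J)^s$ and hence lies in $A/J\proj$ directly---no appeal to residue fields or to Theorem~\ref{projectivitytermsmaximal} is needed.

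For part (b), your approach diverges substantially from the paper's, and the gap you yourself identify is real. Reducing to residue fields and then appealing to ``the field-case theory'' is circular: what you need over $R(\mi)$ is precisely the field case of this very lemma, and you offer no independent argument for it---Lemma~\ref{standardfromsplitquasi} goes in the wrong direction, and the ``standard construction of projective covers'' presupposes exactly the statement you are trying to prove. Your remark about ``taking $F$ of rank equal to $\dim_{R(\mi)}\Ext^1$ at a chosen $\mi$'' does not help either, since $F$ is \emph{given} in the hypothesis and the conclusion must hold for any surjection from any free $F$. You would also need to justify that $\Ext_A^1(Q,L)$ base-changes correctly to $R(\mi)$ (this can be done, using $\pdim_A Q\le 1$ since $J\in A\proj$) and that the identification $\Hom_R(F,\Ext_A^1(Q,L))\simeq\Ext_A^1(Q,L\otimes_RDF)$ is compatible with base change; neither point is addressed.

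The paper's argument for (b) is entirely different and self-contained, working uniformly over any $R$. For $Q=(A/J)^n$, one lifts the canonical surjection $\pi\colon A^n\to Q$ through $h$ to obtain $\phi\colon A^n\to P$, forms the surjection $\psi=\phi+g\colon A^n\oplus(L\otimes_RDF)\twoheadrightarrow P$, and checks that its kernel $N$ is a direct summand of $J^{\oplus n}\oplus(L\otimes_RDF)\simeq L\otimes_R(V\oplus DF)$ for some $V\in R\proj$. Since $\Ext_A^1(P,L)=0$ by Lemma~\ref{Lemmaforfiltrationsone}, one gets $\Ext_A^1(P,N)=0$, so the sequence $0\to N\to A^n\oplus(L\otimes_RDF)\to P\to 0$ splits and $P\in A\proj$. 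The case of a general $Q\in A/J\proj$ then follows by writing $(A/J)^n\simeq Q\oplus Q_1$, enlarging $f$ by a surjection onto $\Ext_A^1(Q_1,L)$, and taking summands. This direct route uses only the vanishing $\Ext_A^1(P,L)=0$ you already noted, and avoids any reduction to fields.
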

\begin{proof}
	If $P=A$, then $\im \tau_{L, A}=\im \tau_L=J=JA$ and $P/JP=A/JA=A/J\in A/J\proj$.
	
	If $P=A^s$ for some $s>0$, then $\im \tau_{L, P}=(\im \tau_{L})^{\oplus s}=J^{\oplus s}=JP$. In such a case, $P/JP=A^{\oplus s}/J^{\oplus s}\simeq  (A/J)^{\oplus s}\in A/J\proj$.
	
	Finally, assume that $P$ is a summand of $A^s$, say $A^s\simeq P\oplus K$ for some $A$-module $K$ and some $s>0$. Then, $JP\oplus JK=JA^s=\im \tau_{L, A^s}=\im \tau_{L, P}\oplus \im \tau_{L, K}$. Since $\im \tau_{L, P}\subset P$, it follows $\im \tau_{L, P}=JP$.
	Moreover, $(P\oplus K)/(JP\oplus JK)\simeq P/JP\oplus K/JK$, hence $P/JP$ is a summand of $A^s/JA^s\in A/J\proj$, thus (i) follows.
	
	Assume $Q=(A/J)^n$ for some $n>0$. Consider the canonical epimorphism $\pi\colon A^n\rightarrow Q$. Since $A^n$ is projective over $A$, $\pi$ factors through $h$, that is, there exists $\phi\colon A^n\rightarrow P$ such that $\pi=h\circ \phi$. Let $\psi=\phi+ g\colon A^n\oplus L\otimes_R DF\rightarrow P$. Define $N=\ker \psi$.
	
	\textit{Claim 1.} $\psi$ is surjective.   
	
	Let $p\in P$. Then, $h(p)\in Q$. Since $\pi$ is surjective, there exists $x\in A^n$ such that $\pi(x)=h(p)$. Note that $h\circ \phi(x)=\pi(x)=h(p)$. Thus, $p-\phi(x)\in \ker h=\im g=L\otimes_R DF$. So, the claim follows.
	
	\textit{Claim 2.} $N\subset J^{\oplus n}\oplus L\otimes_R DF$.
	
	Notice that $(x, y)\in N$ if and only if $0=\psi(x, y)=\psi(x)+g(y)$ if and only if $\psi(x)=g(-y)$. In particular, $\pi(x)=h\circ \phi(x)= 0$, hence $x\in J^{\oplus n}$. So, the claim follows.
	
	Now note that for any $x\in J^{\oplus n}$, $h\circ \phi(x)=0.$ So, $\phi(x)\in\ker h=\im g$. Since $g$ is a monomorphism \linebreak[4]$\im g\simeq L\otimes_R DF\in A\proj$. 
	Therefore, the following sequence is $A$-split exact 
	\begin{align*}
		0\rightarrow N \xrightarrow{z} J^{\oplus n}\oplus L\otimes_R DF\xrightarrow{w} \im g\rightarrow 0 
	\end{align*} where $z(x, y)=(x, y)$ and $w(x, y)=\phi(x)+g(y)$. Since $J^{\oplus n}\oplus L\otimes_R DF$ is projective over $A$, we obtain that \linebreak[4]$N\oplus \im g\simeq J^{\oplus n}\oplus L\otimes_R DF $. Furthermore, $J^{\oplus n}\simeq (L\otimes_R \Hom_A(L, A))^n\simeq L\otimes_R V$, $V\in R\proj$. Hence,
	\begin{align*}
		L\otimes_R(V\oplus DF)\simeq L\otimes_R V \oplus L\otimes_R DF\simeq N\oplus \im g.
	\end{align*}
	
	By Lemma  \ref{Lemma for filtrations1}, $\Ext_A^1(P, L)=0$. Hence, $\Ext_A^1(P, L\otimes_R (V\oplus DF))=0$ as $V\oplus DF\in R\proj$. In particular, $\Ext_A^1(P, N)=0$. Thus, the exact sequence
	\begin{align*}
		0\rightarrow N\xrightarrow{\psi} A^n\oplus L\otimes_R DF\rightarrow P\rightarrow 0
	\end{align*}splits over $A$. Thus, it follows that $P$ is projective.
	
	Now assume that exists $n$ such that $Q_0=(A/J)^n\simeq Q\oplus Q_1$. Consider a free $R$-presentation for \linebreak[4]$\Ext_A^1(Q_1, L), g\colon R^s\rightarrow \Ext_A^1(Q_1, L)$.  Therefore, $f\oplus g\colon F\oplus R^s\rightarrow \Ext_A^1(Q_0, L)$ is surjective.
	The image of $g$ in $\Ext_A^1(Q_1, L\otimes_R D(R^s))$ is \begin{align}
		0\rightarrow L\otimes_R D(R^s)\rightarrow P_1\rightarrow Q_1\rightarrow 0.
	\end{align}So, the image of $f\oplus g$ in $\Ext_A^1(Q_0, L\otimes_R D(F\oplus R^s))$ is \begin{align}
		0\rightarrow L\otimes_R D(F\oplus R^s)\rightarrow P\oplus  P_1\rightarrow Q_0\rightarrow 0.
	\end{align}By the previous case, $P\oplus P_1\in A\proj$. So, we conclude that $P\in A\proj$, so (ii) follows.
\end{proof}

\begin{Lemma}(see \citep[Lemma 4.12]{Rouquier2008})
	Let $A$ be a projective Noetherian $R$-algebra. Let ${\{\St(\l)\colon \l\in\L \}}$ be a finite set of modules in $A\m$ together with a poset structure on $\L$. Let $\alpha$ be a maximal element in $\L$. Then, $(A\m, \{\Delta(\lambda)_{\lambda\in \Lambda}\})$ is a split highest weight category if and only if $\St(\alpha)\in \mathcal{M}(A)$ and $(A/J\m, \{\Delta(\lambda)_{\lambda\in \Lambda\backslash \{\alpha \} } \})$  is a split highest weight category, where $J=\im \tau_{\St(\alpha)}$.\label{splithwcinduction}
\end{Lemma}
\begin{proof}
	Let $(A\m, \{\Delta(\lambda)_{\lambda\in \Lambda}\})$ be a split highest weight category. Let $\alpha$ be a maximal element in $\L$. By (iv) of Definition \ref{splithwc}, $\ker \pi_\alpha=0$, so $\St(\alpha)\simeq P(\alpha)\in A\proj$. By \ref{splithwc}(v), $\End_A(\St(\alpha))\simeq R$. As $\St(\alpha)$ is faithful over $\End_A(\St(\alpha))^{op}$, it follows that $\St(\alpha)$ is faithful over $R$. Let $\l\in \L\backslash \{\alpha\}$. By (iv) of Definition \ref{splithwc}, $C(\l)$ has a finite filtration by modules of the form $\St(\mu)\otimes_R U_\mu$ with $U_\mu\in R\proj$ and $\mu>\l$. 
	In particular, $\St(\alpha)$ can appear. 
	Note that $\St(\alpha)$ is projective over $A$, so we can rearrange the filtration so that all modules of the form $\St(\alpha)\otimes_R U_\alpha$, $U_\alpha\in R\proj$, appear at the bottom of the filtration. In fact, consider the filtration \begin{align*}
		0\subset X_1\subset \cdots \subset X_i\subset \cdots\subset X_n=C(\l).
	\end{align*} Assume that $X_i/X_{i-1}\simeq \St(\alpha)\otimes_R U_\alpha\in A\proj$ for some $U_\alpha\in R\proj$. Thus, $X_i\simeq \St(\alpha)\otimes_R U_\alpha\bigoplus X_{i-1}$. So, $X_i/(\St(\alpha)\otimes_R U_\alpha)\simeq X_{i-1}$, and hence the filtration until $X_{i-1}$ can be written in the form
	\begin{align*}
		0\subset \tilde{X}_1/(\St(\alpha)\otimes_R U_\alpha)\subset \cdots\subset \tilde{X}_{i-1}/(\St(\alpha)\otimes_R U_\alpha)=X_{i-1}.
	\end{align*}
	Notice that $\tilde{X}_{j}/\tilde{X}_{j-1}\simeq \tilde{X}_{j}/(\St(\l)\otimes_R U_\alpha)/\tilde{X}_{j-1}/(\St(\l)\otimes_R U_\alpha)\simeq X_j/X_{j-1}$. Thus, we obtain a filtration\begin{align}
		0\subset \St(\alpha)\otimes_R U_\alpha\subset \tilde{X}_1\subset \cdots\subset \tilde{X}_{i-1}=X_i\subset X_{i+1}\subset \cdots \subset C(\l).
	\end{align}
	Hence, we have an exact sequence
	\begin{align}
		0\rightarrow \St(\alpha)\otimes_R U_\l\rightarrow C(\l)\rightarrow X(\l)\rightarrow 0,
		\label{eq28}\end{align} where the projective $R$-module $U_\l$ encodes all the occurrences of $\St(\alpha)$ in the filtration of $C(\l)$, and consequently in the filtration of $P(\l)$. Thus, $X(\l)$ has a filtration by modules of the form $\St(\mu)\otimes_R U_\mu$ with $\mu>\l$, $\mu\neq \alpha$.
	Applying $\Hom_A(\St(\alpha), -)$ to the filtration of $P(\l)$, we get the exact sequence
	\begin{align}
		0\rightarrow \Hom_A(\St(\alpha), C(\l))\rightarrow \Hom_A(\St(\alpha), P(\l))\rightarrow \Hom_A(\St(\alpha), \St(\l))\rightarrow 0.
	\end{align} By Condition (ii) of split highest weight category, we have $\Hom_A(\St(\alpha), \St(\l))=0$, since $\alpha$ is maximal. Hence, \begin{align*}
		\Hom_A(\St(\alpha), C(\l))\simeq \Hom_A(\St(\alpha), P(\l)).
	\end{align*} 
	Applying $\Hom_A(\St(\alpha), -)$ to (\ref{eq28}), we get the exact sequence
	\begin{align}
		0\rightarrow \Hom_A(\St(\alpha), \St(\alpha)\otimes_R U_\l)\rightarrow \Hom_A(\St(\alpha), C(\l))\rightarrow \Hom_A(\St(\alpha), X(\l))\rightarrow 0.
	\end{align}
	We have $\Hom_A(\St(\alpha), X(\l))=0$. In fact, if $\Hom_A(\St(\alpha), X(\l))\neq 0$, then by induction on the size of the filtration of $X(\l)$ it would exist $\St(\mu)$ with $\mu\neq\alpha$ such that $\Hom_A(\St(\alpha), \St(\mu))\neq 0$. Since $\alpha$ is maximal, this cannot happen.
	
	So, $
	U_\l\simeq\Hom_A(\St(\alpha), P(\l)).
	$
	
	As we have discussed $P=\sumSt P(\l)$ is a progenerator for $A\m$. 
	Put $P_0:= \St(\alpha)\otimes_R \sumSt U_\l=\im \tau_{\St(\alpha), P}\subset P$. Thus, $P/P_0$ is an extension of $\St(\l)$ by $\sumSt X(\l)$ with $U_\alpha=R$ and $X(\alpha)=0$. So, $\Hom_A(\St(\alpha), P/P_0)=0$. Since all standard modules are projective over $R$, we have that $P/P_0$  is projective over $R$. By Proposition \ref{split modules characterization}, it follows that $\tau_{\St(\alpha), P}$ is an $(A, R)$-monomorphism. Since $P$ is a progenerator, it follows by the proof of Proposition \ref{split modules characterization}, that $\tau_{\St(\alpha), A}$ is an $(A, R)$-monomorphism, thus $\St(\alpha)\in \mathcal{M}(A)$. 
	
	Fix $J=\im \tau_{\St(\alpha)}$. 
	Since $\Hom_A(\St(\alpha), \St(\l))=0$ for $\l\neq \alpha$ it follows that $\St(\l)\in A/J\m$ by Corollary \ref{full subcategory in terms of split module}.
	Now we will show that $(A/J\m, \{\Delta(\lambda)_{\lambda\in \Lambda\backslash \{\alpha \} } \})$ is a split highest weight category. Condition \ref{splithwc}(i) is clear.
	Since $A/J\m$ is a full subcategory of $A\m$, it follows that
	\begin{align}
		0\neq \Hom_{A/J}(\St(\l'), \St(\l''))=\Hom_A(\St(\l'), \St(\l'')).
	\end{align}By \ref{splithwc}, we get $\l'\leq \l''.$ So, Condition \ref{splithwc}(ii) for $A/J$ holds.
	In the same way,\begin{align}
		\End_{A/J}(\St(\l))=\End_A(\St(\l))\simeq R.
	\end{align}
	Let $N\in A/J\m$ satisfying $\Hom_{A/J}(\St(\l), N)=0$ for all $\l\in \Lambda\backslash \{\alpha \}$. Then, \begin{align}
		\Hom_A(\St(\l), N)=\Hom_{A/J}(\St(\l), N)=0.
	\end{align} By Corollary \ref{full subcategory in terms of split module}, $\Hom_A(\St(\alpha), N)=0$ since $N\in A/J\m$. Since $(A\m, \{\Delta(\lambda)_{\lambda\in \Lambda}\})$ is split highest weight category $N=0$, and thus Condition \ref{splithwc}(iii) holds.
	
	For any $\l\neq \alpha$, define $Q(\l)=\St(\alpha)\otimes_R U_\l$. We have that,\begin{align*}
		\im \tau_{\St(\alpha), P(\l)}\simeq \St(\alpha)\otimes_R \Hom_A(\St(\alpha), P(\l))\simeq \St(\alpha)\otimes_R U_{\l}=Q(\l).
	\end{align*}
	By Lemma \ref{AandAmoduloJproj} (i), $P(\l)/Q(\l)\in A/J\proj$. Since $Q(\l)\subset C(\l)$, it follows that the following exact sequence yields Condition \ref{splithwc}(iv)
	\begin{align}
		0\rightarrow X(\l)=C(\l)/Q(\l)\rightarrow P(\l)/Q(\l)\rightarrow \St(\l)\rightarrow 0.
	\end{align} 
	
	Conversely, assume now that $\St(\alpha)\in\mathcal{M}(A)$ and $(A/J\m, \{\Delta(\lambda)_{\lambda\in \Lambda\backslash \{\alpha \} } \})$ is a split highest weight category. 
	
	By Remark \ref{remendprojonma}, $\End_A(\St(\alpha))\simeq R$. Now by Condition \ref{splithwc}(v) of $(A/J\m, \{\Delta(\lambda)_{\lambda\in \Lambda\backslash \{\alpha \} } \})$ being a split highest weight category, $R\simeq \End_{A/J}(\St(\l))=\End_A(\St(\l))$ for $\l\neq \alpha$. Thus, condition \ref{splithwc}(v) holds for $A$. By Condition \ref{splithwc}(i) of $(A/J\m, \{\Delta(\lambda)_{\lambda\in \Lambda\backslash \{\alpha \} } \})$ being split highest weight category, each $\St(\l)$ is projective over $R$. By definition of $\St(\alpha)\in \mathcal{M}(A)$, $\St(\alpha)\in R\proj$. Thus, Condition \ref{splithwc}(i) for $A$ holds. 
	Now by Corollary \ref{full subcategory in terms of split module} and the fact that $A/J\m$ is full subcategory of $A\m$, it follows Condition \ref{splithwc}(ii) and (iii) for $A$.
	
	Since $\St(\alpha)$ is projective over $A$, we define $P(\alpha)=\St(\alpha)$. Now consider for $\l\neq \alpha$ the exact sequences provided by Condition \ref{splithwc}(iv) of $A/J$ being a split highest weight category
	\begin{align}
		0\rightarrow C'(\l)\xrightarrow{i_\l^{A/J}} P_{A/J}(\l)\xrightarrow{\pi_\l^{A/J}} \St(\l)\rightarrow 0.
	\end{align}
	Consider an $R$-free presentation for $\Ext_A^1(P_{A/J}(\l), \St(\alpha))$, say $f_\l\colon F_\l\twoheadrightarrow \Ext_A^1(P_{A/J}(\l), \St(\alpha))$.
	
	By Lemma \ref{AandAmoduloJproj} (ii), we have an exact sequence 
	\begin{align}
		0\rightarrow \St(\l)\otimes_R DF_\l\xrightarrow{k_\l} P(\l)\xrightarrow{h_\l} P_{A/J}(\l)\rightarrow 0,
	\end{align} where $P(\l)\in A\proj$. So, we have an exact sequence
	\begin{align}
		0\rightarrow C(\l)\xrightarrow{i_\l} P(\l)\xrightarrow{\pi_\l^{A/J}\circ h_\l} \St(\l)\rightarrow 0.
	\end{align} We define $\pi_\l=\pi_\l^{A/J}\circ h_\l$.
	We have the following commutative diagram
	\begin{center}
		\begin{tikzcd}
			0\arrow[r]&C''(\l)\arrow[r, "l_\l"] \arrow[d, "\exists w"]& C(\l)\arrow[r, "\exists g"]\arrow[d, hookrightarrow, "i_\l"]& C'(\l)\arrow[d, hookrightarrow, "i_\l^{A/J}"]\arrow[r]& 0\\
			0\arrow[r]&\St(\l)\otimes_R DF_\l \arrow[r, "k_\l"] & P(\l)\arrow[d, twoheadrightarrow, "\pi_\l"] \arrow[r, "h_\l"]& P_{A/J}(\l)\arrow[d, twoheadrightarrow, "\pi_\l^{A/J}"]\arrow[r]& 0\\
			&  & \St(\l)\arrow[r, equal] & \St(\l) &
		\end{tikzcd}.
	\end{center} Here some observations are in order. The existence of $g$ comes from the fact $\pi_\l^{A/J}\circ h_\l\circ i_\l=\pi_\l\circ i_\l =0$. So, $C''(\l)=\ker g$. The existence of $w$ comes from the fact $h_\l\circ i_\l\circ l_\l=i_\l^{A/J}\circ g\circ l_\l=0$. By Snake Lemma, $w$ is injective. 
	On the other hand, $\pi_\l\circ k_\l=\pi_\l^{A/J}\circ h_\l\circ k_\l=0$, so there exists $q_\l\colon \St(\l)\otimes_R DF_\l\rightarrow C(\l)$ such that $i_\l\circ q_\l=k_\l$. Now note that $
	i_\l^{A/J}\circ g\circ q_\l=h_\l\circ i_\l \circ q_\l=h_\l\circ k_\l=0.
	$ Since 	$i_\l^{A/J}$ is injective, $g\circ q_\l=0$. Thus, for every $x\in\St(\l)\otimes_R DF_\l$, $
	k_\l(x)=i_\l\circ q_\l(x)=i_\l\circ l_\l (y)=k_\l (w(y)) \text{ for some } y\in C''(\l).
	$ Thus, $w$ is an isomorphism. So, $C(\l)$ has a filtration by standard modules given by the one of $\St(\l)\otimes_R DF_\l$ on the bottom and the filtration of $C'(\l)$ on the top. So, it follows that $C(\l)$ has a  filtration by standard modules where only $\St(\mu)\otimes_R X$,  with $\mu>\l$ and $X\in R\proj$ can appear. So, we conclude that $(A\m, \{\Delta(\lambda)_{\lambda\in \Lambda}\})$ is a split highest weight category.
\end{proof}

\begin{Prop}(see \citep[Proposition 4.13]{Rouquier2008})\label{everyprojectivehasafiltration}
	Suppose $(A\m, \{\Delta(\lambda)_{\lambda\in \Lambda}\})$ is a split highest weight category over a commutative Noetherian ring $R$. Let $P\in A\proj$. Let $\St\rightarrow \{1, \ldots, n\}$, $\St_i\mapsto i$, be an increasing bijection. Then, there is a filtration \begin{align*}
		0=P_{n+1}\subset P_n\subset \cdots\subset P_1=P \quad \text{ with} \quad P_i/P_{i+1}\simeq \St_i\otimes_R U_i, \quad \text{for some} \quad U_i\in R\proj.
	\end{align*} %
\end{Prop}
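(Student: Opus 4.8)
The plan is to induct on $n=|\Lambda|$, at each stage peeling off a maximal weight by means of the split heredity ideal it determines, and then lifting the filtration obtained over the quotient algebra back up to $A$.

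For the base case $n=1$, write $\Lambda=\{\alpha\}$. Since $\alpha$ is maximal, $C(\alpha)=0$ in \ref{splithwc}(iv), so $\St(\alpha)\simeq P(\alpha)$ is projective and $\St(\alpha)\in\mathcal{M}(A)$; moreover, by condition \ref{splithwc}(iii), $A/J\m=0$, hence $J=A$, where $J=\im\tau_{\St(\alpha)}$. Given $P\in A\proj$, Lemma \ref{AandAmoduloJproj}(a) gives $\im\tau_{\St(\alpha),P}=JP=P$, and the proof of Proposition \ref{split modules characterization} identifies $\im\tau_{\St(\alpha),P}$ with $\St(\alpha)\otimes_R U_1$ for the $R$-progenerator $U_1=\Hom_A(\St(\alpha),P)$. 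Setting $P_1=P$ and $P_2=0$ settles this case.

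For the inductive step, assume the statement for split highest weight categories with fewer than $n$ weights. Let $\alpha=\St_n$ be the maximal weight (it is maximal because $\St\to\{1,\dots,n\}$ is increasing), put $L=\St(\alpha)$ and $J=\im\tau_L$. By Lemma \ref{splithwcinduction}, $L\in\mathcal{M}(A)$ and $(A/J\m,\{\St_i\}_{1\le i\le n-1})$ is a split highest weight category. Fix $P\in A\proj$. By Lemma \ref{AandAmoduloJproj}(a), $JP=\im\tau_{L,P}$ and $\bar P:=P/JP\in A/J\proj$; by Proposition \ref{split modules characterization}, $JP\simeq L\otimes_R U_n$ with $U_n=\Hom_A(L,P)\in R\proj$. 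The induction hypothesis, applied to the algebra $A/J$ and its projective module $\bar P$, yields a filtration $0=\bar P_n\subset\bar P_{n-1}\subset\cdots\subset\bar P_1=\bar P$ with $\bar P_i/\bar P_{i+1}\simeq\St_i\otimes_R U_i$, $U_i\in R\proj$, for $1\le i\le n-1$.

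It then remains to lift this along the quotient map $\pi\colon P\twoheadrightarrow\bar P$: set $P_i=\pi^{-1}(\bar P_i)$ for $1\le i\le n$ and $P_{n+1}=0$. Then $P_1=P$, $P_n=\ker\pi=JP$, and since every $P_i$ with $i\le n$ contains $\ker\pi$, the third isomorphism theorem gives $P_i/P_{i+1}\simeq\bar P_i/\bar P_{i+1}\simeq\St_i\otimes_R U_i$ for $1\le i\le n-1$, while $P_n/P_{n+1}=JP\simeq\St_n\otimes_R U_n$. This is the desired filtration, with the highest weight appearing at the bottom, in accordance with Proposition \ref{extonstandards}. I do not expect a genuine obstacle here; the only delicate points are invoking Lemma \ref{AandAmoduloJproj}(a) to secure simultaneously that $JP\simeq L\otimes_R\Hom_A(L,P)$ (via Proposition \ref{split modules characterization}) and that $P/JP$ is genuinely projective over $A/J$ (so that the induction hypothesis actually applies), together with the fact — again part of Proposition \ref{split modules characterization} — that $\Hom_A(L,P)$ is projective over $R$.
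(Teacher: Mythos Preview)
Your proof is correct and follows essentially the same approach as the paper's: induction on $|\Lambda|$, using Lemma \ref{splithwcinduction} to peel off the maximal weight, Lemma \ref{AandAmoduloJproj}(a) together with Proposition \ref{split modules characterization} to identify $JP\simeq\St_n\otimes_R U_n$ and to ensure $P/JP\in A/J\proj$, and then lifting the inductive filtration via the correspondence theorem. The only cosmetic difference is that the paper phrases the base case directly via $\Hom_A(\St_1,P/P_0)=0$ and condition \ref{splithwc}(iii), whereas you go through $J=A$ and $JP=P$; these are the same argument.
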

\begin{proof}
	We shall proceed by induction on $|\L|=n$. Assume $n=1$. Consider $\St_1\in \mathcal{M}(A)$. Let $P\in A\proj$. By Proposition \ref{split modules characterization}, there exists $P_0=\im \tau_{\St_1, P}=\St_1\otimes_R U_1\subset P$, $U_1\in R\proj$ and $\Hom_A(\St_1, P/P_0)=0$. Thus, $P/P_0=0$. Hence, $0\subset \St_1\otimes_R U_1=P$ is a filtration with the desired properties.
	
	Assume now the result known for $|\L|=n-1$. Let $(A\m, \{\Delta(\lambda)_{\lambda\in \Lambda}\})$ be a split highest weight category with $|\L|=n$. By Lemma \ref{splithwcinduction}, $\St_n\in \mathcal{M}(A)$ and $(A/J\m, \{{\Delta_{j}}_{j=1, \ldots, n-1}  \})$ is a split highest weight category where $J=\im \tau_{\St_n}.$
	
	Let $P\in A\proj$. By Proposition \ref{split modules characterization}, there exists $U_n\in R\proj$ such that $\im \tau_{L, P}=\St_n\otimes_R U_n$. By Lemma \ref{AandAmoduloJproj} (i), $JP=\im \tau_{L, P}=\St_n\otimes_R U_n$ and $P/JP\in A/J\proj$. By induction, there is a filtration for $P/JP$:
	\begin{align*}
		0=P_n'\subset P_{n-1}'\subset \cdots\subset P_1'=P/JP, \text{ with } P_i/P_{i+1}\simeq \St_i\otimes_R U_i, \ i=1, \ldots, n-1.
	\end{align*}
	As the submodules of $P/JP$ are exactly the submodules of $P$ which contain $JP$, we get a filtration
	\begin{align*}
		0=P_{n+1}\subset P_n\subset P_{n-1}\subset \cdots\subset P_1=P,
	\end{align*} where $P_i'\simeq P_i/JP$ and $P_n=JP$. Note that $P_i/P_{i+1}\simeq (P_i'/JP)/(P_{i+1}'/JP)$ for $i=1, \ldots n$. Thus, the claim follows.
\end{proof}

\begin{Prop}\label{usualfiltrationisthisoneaswell}
	Let $(A\m, \{\Delta(\lambda)_{\lambda\in \Lambda}\})$ be a split highest weight category. Let $\St\rightarrow \{1, \ldots, n\}$, $\St_i\mapsto i$, be an increasing bijection. If $M\in \mathcal{F}(\Stsim)$, then there is a filtration
	\begin{align*}
	0=F_{n+1}\subset F_n\subset \cdots\subset F_1=M \quad \text{ with} \quad F_i/F_{i+1}\simeq \St_i\otimes_R U_i, \quad \text{for some} \quad U_i\in R\proj.
\end{align*} %
\end{Prop}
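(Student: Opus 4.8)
The statement says: for a split highest weight category $(A\m,\{\Delta(\lambda)_{\lambda\in\Lambda}\})$ with an increasing bijection $\Delta\rightarrow\{1,\dots,n\}$, every module $M\in\mathcal F(\Stsim)$ admits a filtration $0=F_{n+1}\subset F_n\subset\cdots\subset F_1=M$ with $F_i/F_{i+1}\simeq\St_i\otimes_R U_i$ for some $U_i\in R\proj$. The obvious model is Proposition \ref{everyprojectivehasafiltration}: there the statement was proven for $P\in A\proj$ by induction on $|\L|$, using that $\St_n\in\mathcal M(A)$ and $JP=\im\tau_{\St_n,P}\simeq\St_n\otimes_R U_n$, $P/JP\in A/J\proj$. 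The plan is to mimic this, but now for an arbitrary $M\in\mathcal F(\Stsim)$ rather than a projective. The key replacement for ``$\im\tau_{L,P}$'' is the trace $JM=\im\tau_{\St_n,M}$, i.e. the sum of images of all maps $\St_n\otimes_R U\to M$; one expects $JM\simeq\St_n\otimes_R U_n$ for some $U_n\in R\proj$ and $M/JM\in\mathcal F(\Stsim_{j<n})$ as an $A/J$-module, and then one finishes by induction on $|\L|$ together with the fact that submodules of $M$ containing $JM$ correspond to submodules of $M/JM$.

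First I would set up the base case $n=1$: then $\mathcal F(\Stsim)$ consists of modules of the form $\St_1\otimes_R U$ with $U\in R\proj$, since $\St_1\in\mathcal M(A)$ and any self-extension of $\St_1\otimes_R U$ splits by Proposition \ref{extonstandards}(b) (indeed $\Ext^1_A(\St_1,\St_1)=0$, and all the $U$'s are projective over $R$, so $\Ext^1_A(\St_1\otimes_R U',\St_1\otimes_R U'')=0$). For the induction step, fix a maximal element, namely $\St_n$, and let $J=\im\tau_{\St_n}$, so that $(A/J\m,\{\Delta_j\}_{j<n})$ is split highest weight by Lemma \ref{splithwcinduction}. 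Take $M\in\mathcal F(\Stsim)$ with a filtration by modules $\St_i\otimes_R U_i$. Because $\St_n$ is projective over $A$ (as $\St_n\in\mathcal M(A)$) and $\Hom_A(\St_n,\St_i)=0$ for $i<n$ (maximality of $n$ and condition \ref{splithwc}(ii)), while $\Hom_A(\St_n,\St_n\otimes_R U)=U$, one can argue exactly as in the proof of Lemma \ref{splithwcinduction} that the occurrences of $\St_n$ in the filtration of $M$ can all be pushed to the bottom: the summands $\St_n\otimes_R U$ split off as projective submodules, so there is an exact sequence $0\to\St_n\otimes_R U_n\to M\to \bar M\to 0$ where $\bar M$ has a filtration by $\St_i\otimes_R U_i$ with $i<n$, hence $\bar M\in A/J\m$ by Corollary \ref{full subcategory in terms of split module}. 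Moreover $\St_n\otimes_R U_n=\im\tau_{\St_n,M}=JM$, using $\Hom_A(\St_n,\bar M)=0$ as in Lemma \ref{splithwcinduction}; in particular $U_n\simeq\Hom_A(\St_n,M)\in R\proj$.

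Then $\bar M\simeq M/JM$ is an object of $A/J\m$ having a filtration by $\St_i\otimes_R U_i$ with $i<n$, so $\bar M\in\mathcal F(\Stsim_{j<n})$ inside $A/J\m$ (using Lemma \ref{extheredityideal} or directly that $A/J\m$ is a full subcategory, so the $A$-filtration is an $A/J$-filtration). By the induction hypothesis applied to the split highest weight category $(A/J\m,\{\Delta_j\}_{j<n})$, there is a filtration $0=\bar F_n\subset\bar F_{n-1}\subset\cdots\subset\bar F_1=M/JM$ with $\bar F_i/\bar F_{i+1}\simeq\St_i\otimes_R U_i$, $i<n$. Pulling this back along $M\twoheadrightarrow M/JM$ (submodules of $M/JM$ correspond to submodules of $M$ containing $JM$), and appending $F_{n+1}=0\subset F_n=JM=\St_n\otimes_R U_n$ at the bottom, yields the required filtration $0=F_{n+1}\subset F_n\subset\cdots\subset F_1=M$ with $F_i/F_{i+1}\simeq\St_i\otimes_R U_i$ for all $i$.

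\textbf{Main obstacle.} The delicate point, exactly as in Lemma \ref{splithwcinduction}, is the rearrangement of the given filtration of $M$ so that all copies of $\St_n$ sit at the bottom and assemble into a single subobject $\St_n\otimes_R U_n$ that is genuinely the trace $JM$. This needs: (i) projectivity of $\St_n$ over $A$ to split each step $X_i/X_{i-1}\simeq\St_n\otimes_R U_\alpha$ off; (ii) the vanishing $\Hom_A(\St_n,\St_i)=0$ for $i<n$ to see that the quotient $\bar M$ has no $\St_n$ left and lies in $A/J\m$; and (iii) the identification $\im\tau_{\St_n,M}\simeq\St_n\otimes_R\Hom_A(\St_n,M)$ with $\Hom_A(\St_n,M)\in R\proj$, which follows since $\St_n\in\mathcal M(A)$ and the filtration of $M$ forces $\Hom_A(\St_n,M)$ to be a summand of a projective $R$-module. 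Once this bookkeeping is in place, the induction is routine.
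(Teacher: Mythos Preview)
Your proof is correct and follows essentially the same inductive strategy as the paper: push the maximal-index factors $\St_n$ to the bottom of a given filtration, identify the quotient as an object of $A/J\m$ with $J=\im\tau_{\St_n}$, and apply induction on $|\Lambda|$ using Lemma~\ref{splithwcinduction}. The only cosmetic difference is the rearrangement step: the paper first reorders the \emph{entire} filtration into non-increasing index order by swapping adjacent factors via the $\Ext$-vanishing of Proposition~\ref{extonstandards}, whereas you (more economically, and exactly as in the proof of Lemma~\ref{splithwcinduction}) only push the $\St_n$-factors to the bottom using that $\St_n\in A\proj$, so each subquotient $\St_n\otimes_R U$ splits off. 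Both mechanisms yield the same submodule $\St_n\otimes_R U_n\subset M$ with $M/(\St_n\otimes_R U_n)\in\mathcal F(\Stsim_{j<n})$, after which the arguments are identical. Your side remark identifying this submodule with $\im\tau_{\St_n,M}$ and $U_n$ with $\Hom_A(\St_n,M)$ is correct but not needed for the induction.
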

\begin{proof}
	First, we will prove that we can assume, without loss of generality, that the factors in the filtration of $M$ appear in non-increasing order due to Proposition \ref{extonstandards}. In fact,
	consider a filtration \begin{align}
		0=M_{m+1}\subset M_m\subset \cdots\subset M_1=M.
	\end{align} Consider $k$ such that $M_k/M_{k+1}\simeq \St_i\otimes_R U_k$, $M_{k+1}/M_{k+2}\simeq \St_j\otimes_R U_{k+1}$ and $i>j$.  
Since $M_{k+2}\subset M_{k+1}\subset M_{k}$, there is a canonical monomorphism $\St_j\otimes_R U_{k+1}\simeq M_{k+1}/M_{k+2}\rightarrow M_k/M_{k+2}$. As
\begin{align}
	(M_{k}/M_{k+2})/(M_{k+1}/M_{k+2})\simeq M_k/M_{k+1}\simeq \St_i\otimes_R U_k,
\end{align} there exists a short exact sequence $0\rightarrow\St_j\otimes_R U_{k+1} \rightarrow M_k/M_{k+2}\rightarrow \St_i\otimes_R U_k\rightarrow 0$. By Proposition \ref{extonstandards}, $\Ext_A^1(\St_i\otimes_R U_k, \St_j\otimes_R U_{k+1})=0$. Hence, this short exact sequence splits and we obtain a surjective map $h\colon M_k\rightarrow M_k/M_{k+2}\simeq \St_i\otimes_R U_k\oplus \St_j\otimes_R U_{k+1}\rightarrow \St_j\otimes_R U_{k+1}$. Define $\overline{M_{k+1}}:=\ker h$. Thus, $M_k/\overline{M_{k+1}}\simeq \im h=\St_j\otimes_R U_{k+1}$ and observe that $\overline{M_{k+1}}/M_{k+2}\simeq \St_i\otimes_R U_k$. In fact, the latter follows applying the Snake Lemma to the commutative diagram
	\begin{center} 
	\begin{tikzcd}
		0\arrow[r]& M_{k+2}\arrow[d, hookrightarrow] \arrow[r] & M_{k} \arrow[d, equal] \arrow[r]& \St_i\otimes_R U_k\bigoplus \St_j\otimes_R U_{k+1}\arrow[d]\arrow[r] &0\\
		0\arrow[r]& \overline{M_{k+1}}\arrow[r]&M_{k}\arrow[r]&\St_j\otimes_R U_{k+1}\arrow[r]& 0
	\end{tikzcd}.
\end{center} Therefore, $0=M_{m+1}\subset M_m\subset \cdots \subset M_{k+2}\subset \overline{M_{k+1}}\subset M_k\subset \cdots\subset M_1=M$ is still a filtration of $M$. Using, if necessary, this argument a finite number of times we can assume that $M$ has a filtration by objects of the form $\St_i\otimes_R U_i$ where the factors appear in a non-increasing order.

	Let \begin{align}
	0=M_{t+1}\subset M_t\subset M_{t-1}\subset \cdots \subset M_1=M \quad \text{where } M_j/M_{j+1}\simeq \St_{k_j}\otimes_R U_{j}, \ U_j\in R\proj
\end{align} be a filtration of $M$ where its factors appear in a non-increasing order.
Choose $\l\in \L$ maximal. So, there is a minimal index $i$ (possibly $t+1$) such that
\begin{align}
	0\subset M_t \subset M_{t-1} \subset \cdots \subset M_i \quad \text{ with } M_j/M_{j+1}\simeq \St(\l)\otimes_R U_j, \ t\geq j\geq i.
\end{align}
Using the fact that $\St(\l)$ is projective over $A$, also $\St(\l)\otimes_R U_j$ is projective and so the exact sequences \begin{align*}
	0\rightarrow M_{j+1}\rightarrow M_j\rightarrow \St(\l)\otimes_R U_j\rightarrow 0, \ i\leq j\leq t,
\end{align*} split over $A$. So, we deduce that, as $A$-modules, $M_i\simeq \St(\l)\otimes_R U_t\oplus \cdots\oplus \St(\l)\otimes_R U_i\simeq \St(\l)\otimes_R U_\l$ where $U_\l:=U_t\oplus \cdots \oplus U_i$.

We shall prove the claim by induction on $n=|\L|$. 	Assume that $n=1$. Then, $\St(1)$ is maximal, and by the previous discussion, the claim follows. Assume now that the result holds for split quasi-hereditary algebras with $|\L|=n-1$. Let $A$ be a split quasi-hereditary algebra with $|\L|=n$. Let $\l\in \L$ maximal. By the previous discussion, $M_i\simeq \St(\l)\otimes_R U_\l$, for some $U_\l\in R\proj$ and $M/M_i$ has a filtration  	\begin{align}
	0\subset M_{i-1}/M_i\subset M_{i-2}/M_i\subset \cdots \subset M/M_i.
\end{align} 
where factors in additive closure of $\St(\l)$ do not appear. 	It follows that $\Hom_A(\St(\l), M/M_i)=0$. By Corollary \ref{full subcategory in terms of split module}, $M/M_i\in A/J_\l\m$, where $J_\l$ denotes the image of $\tau_{\St(\l)}$. Since $(A/J_\l\m, \St(\mu)_{\mu\neq \l})$ is a split highest weight category with $|\L\backslash\{\l\}|=n-1$, we obtain, by induction, that 
$M/M_i$ has a filtration
\begin{align}
	0=F_n\subset \cdots \subset F_1=M/M_i \quad \text{with } F_j/F_{j+1}\simeq \St_j\otimes_R X_j, \ X_j\in R\proj.
\end{align} Here, $\L\backslash\{\l\}\rightarrow \{1, \ldots, n-1\}$ is an increasing bijection.
Put $\l\longleftrightarrow n$. So, the induced map $\L\rightarrow \{1, \ldots, n \}$ is an increasing bijection. Note that each $F_j$ is written on the form $F_j'/M_i$. Therefore,
\begin{align}
	0\subset F_n'=M_i\subset F_{n-1}'\subset \cdots\subset F_1'=M
\end{align}is a filtration of $M$ such that $F_n'\simeq \St_n\otimes_R U_\l$ and $F_j'/F_{j+1}'\simeq F_j/M_i/F_{j+1}/M_i\simeq F_j/F_{j+1}\simeq \St_j\otimes_R X_j$. 
\end{proof}

\end{appendices}

\section*{Acknowledgments}
Most of the results of this paper are contained in the author's PhD thesis \citep{thesis}, financially supported by \textit{Studienstiftung des Deutschen Volkes}. The author would like to thank Steffen Koenig for all the conversations on these topics, his comments and suggestions towards improving this manuscript.

\bibliographystyle{alphaurl}
\bibliography{bibliografia/bibarticle}

\Address
\end{document}